\documentclass[11pt,a4paper]{amsart}
\usepackage[utf8]{inputenc}
\usepackage[T1]{fontenc}
\usepackage{amsmath, amsthm, amssymb, amsfonts, mathtools, graphicx, multicol, array, enumerate, bbm, bm, comment, tikz-cd, hyperref, blindtext, subfiles}
\usepackage[dvipsnames]{xcolor}
\usepackage[margin=1in]{geometry}
\usepackage{hyperref}
\usetikzlibrary[decorations.pathmorphing]

\newtheorem{theorem}{Theorem}[section]
\newtheorem{definition}[theorem]{Definition}
\newtheorem{exmp}[theorem]{Example}
\newtheorem{lemma}[theorem]{Lemma}
\newtheorem{proposition}[theorem]{Proposition}
\newtheorem{corollary}[theorem]{Corollary}
\newtheorem{rem}[theorem]{Remark}

\newenvironment{example}{\begin{exmp}\rm}{\end{exmp}}
\newenvironment{remark}{\begin{rem}\rm}{\end{rem}\rm}

\DeclareMathOperator{\Spec}{\mathrm{Spec}}

\DeclareMathOperator{\ch}{\mathrm{ch}}

\DeclareMathOperator{\Aut}{\mathrm{Aut}}
\DeclareMathOperator{\Coh}{\mathrm{Coh}}

\DeclareMathOperator{\Hom}{\mathrm{Hom}}
\DeclareMathOperator{\Ext}{\mathrm{Ext}}
\DeclareMathOperator{\ext}{\mathrm{ext}}
\DeclareMathOperator{\Syst}{\mathrm{Syst}}
\DeclareMathOperator{\SSyst}{\mathcal{S}\mathrm{yst}}
\DeclareMathOperator{\Q}{\mathbb{Q}}
\DeclareMathOperator{\R}{\mathbb{R}}

\DeclareMathOperator{\C}{\mathbb{C}}
\DeclareMathOperator{\Z}{\mathbb{Z}}

\DeclareMathOperator{\Pic}{\mathrm{Pic}}
\DeclareMathOperator{\rk}{\mathrm{rk}}

\DeclareMathOperator{\Gr}{\mathrm{Gr}}

\DeclareMathOperator{\ev}{\mathrm{ev}}
\DeclareMathOperator{\coker}{\mathrm{coker}}
\DeclareMathOperator{\supp}{\mathrm{supp}}

\DeclareMathOperator{\Spl}{\mathrm{Spl}}

\renewcommand{\P}{\mathbb{P}}
\renewcommand{\O}{\mathcal{O}}

\renewcommand{\L}{\left}
\renewcommand{\R}{\right}
\renewcommand{\tilde}{\widetilde}

\newcommand{\nocontentsline}[3]{}
\let\origcontentsline\addcontentsline
\newcommand\stoptoc{\let\addcontentsline\nocontentsline}
\newcommand\resumetoc{\let\addcontentsline\origcontentsline}

\author{Samir Geiger and Andreas Kretschmer}

\title{On the Brill--Noether Theory of Sheaves on Regular Surfaces}

\begin{document}
\begin{abstract}
    We study the Brill--Noether theory of higher rank sheaves on surfaces~$X$ with $H^1(X,\O_X) = 0$ and its interactions with the classical Brill--Noether theory of line bundles on curves lying on~$X$. Denoting by $\Spl(v)$ the moduli space of simple sheaves on $X$ with Chern character $v = (r,c_1,\ch_2)$, we introduce the Brill--Noether loci $BN^k(v)^\circ \subseteq \Spl(v)$ of \emph{generically $k$-generated} sheaves and show, under assumptions on $c_1$ and $K_X$, certain well-behavedness results. In particular, if $c_1$ is \emph{indecomposable} and $X$ is a K3 surface, we generalize a Brill--Noether theorem for sheaves on K3s due to Yoshioka. This is achieved by employing He's deformation theory of the moduli space of coherent systems and a globalization of the Lazarsfeld--Mukai construction, providing a correspondence between the Brill--Noether theories of generically $r$-generated sheaves on $X$ and that of line bundles on curves $C \in |c_1|$. The same techniques yield upper bounds on the dimension of $\mathcal{W}^{r,\mathrm{bpf}}_d(|c_1|) \setminus \mathcal{W}^{r+1}_d(|c_1|)$ consisting of basepoint-free line bundles in terms of the number of endomorphisms of associated Lazarsfeld--Mukai bundles, for arbitrary $c_1$. This generalizes a result of Aprodu and Farkas in the context of Green's conjecture beyond K3 surfaces. In the case $r = 1$ of pencils, we obtain smoothness and expected-dimension results under an effectivity condition on $-K_X|_C$ for $C \in |c_1|$.
\end{abstract}
\maketitle

\section{Introduction}
\noindent The classical Brill--Noether theorem asserts that, for a general curve $C$ of genus $g$, the Brill--Noether loci
$$
    W^r_d(C) \coloneqq \{L\in \Pic^d(C)\colon h^0(C,L)\geq r+1\}
$$
are irreducible of the expected dimension $\rho(g,r,d)= g-(r+1)(g-d+r)$ whenever this number is positive, non-empty and $0$-dimensional when it is zero, and empty otherwise, see, e.g., \cite{HarrisBNSurvey} and the references therein. It is natural to ask for generalizations of this statement to Brill--Noether loci of higher rank sheaves on surfaces.
Let $\mathrm{Spl}(v)$ and $M(v)$, respectively, be the moduli spaces of simple and slope-stable sheaves (with respect to a chosen polarization) on a smooth projective surface $X$ over $\C$ with Chern character $v=(r,c_1,\ch_2)$.
Define the Brill--Noether loci
$$
    BN^k(v) \coloneqq \{E\in \Spl(v)\colon h^0(X,E)\geq k\}
$$
and the locally closed subloci
$$
    \widetilde{BN}^k(v) \coloneqq \{E\in \mathrm{Spl}(v) \colon h^0(X,E)=k\}
$$
as subsets of $\mathrm{Spl}(v)$ or $M(v)$.
For K3 surfaces $X$, Yoshioka \cite{YoshiokaBNTHeory} and Leyenson \cite{Leyenson} proved that these Brill--Noether loci, in $M(v)$, have the expected dimension whenever $c_1$ satisfies a strong numerical minimality assumption with respect to the polarization (it is, e.g., sufficient that $\Pic(X) = \Z \cdot c_1$). For Hirzebruch surfaces, Costa--Miró-Roig \cite{CostaHirzebruch} give conditions for the existence of an irreducible component of the expected dimension for $\ch_2 \gg 0$. More recently, for $\P^2$, Gould--Liu--Lee \cite{GouldLiuLee} give criteria for non-emptiness and (ir)reducibility as well as results on the dimensions of irreducible components for many Brill--Noether loci in $M(v)$ under various conditions on $v$. For a recent general survey on the Brill--Noether theory for sheaves on surfaces, we refer to \cite{CoskunSurvey}.

Our approach is to work instead on the moduli space of simple sheaves $\Spl(v)$ and unify some of the techniques in the literature, going back to Lazarsfeld \cite{Lazarsfeld}, Donagi--Morrison \cite{DonagiMorrison}, Le Potier \cite{LePotier}, He \cite{He}, Yoshioka \cite{YoshiokaMukaiReflections,YoshiokaBNTHeory}, Aprodu--Farkas \cite{FarkasAproduGreenConj} and others, to obtain Brill--Noether-type results for a larger class of surfaces and under weaker assumptions on the Chern character. In particular, we make no assumptions on the Picard rank.
We identify the notion of \emph{generically $k$-generated sheaves} (Definition \ref{def:genericallygenerated}) to be well-suited for generalizing several classical techniques to a broader class of surfaces and Chern characters, and without the necessity of choosing a stability condition.
Denoting by $BN^k(v)^{\circ}$ the locus of all generically $k$-generated sheaves in $\Spl(v)$, we prove:

\begin{theorem}[Theorem \ref{BNconnected}]
\label{intro:thm1}
    Let $X$ be a smooth projective surface with $H^1(X,\O_X) = 0$ and let $v = (r, c_1, \ch_2)$, $c_1 \in \Pic(X)$, be a Chern character with $|c_1| \neq \emptyset$.
    \begin{enumerate}[(i)]
        \item Assume that either $K_X = 0$ or $-K_X.c_1 \geq \max(p_a(C),1)$ for $C \in |c_1|$. Then the open sublocus of $\widetilde{BN}^r(v)^{\circ}$ consisting of sheaves $E$ with $\mathrm{coker}(H^0(E) \otimes \O_X \rightarrow E)$ supported on an integral curve $C \in |c_1|$ is smooth of the expected dimension, or empty.
        \item If $0 \leq k < r$, assume $-K_X$ effective. Then the open sublocus of $\widetilde{BN}^k(v)^{\circ}$ consisting of sheaves $E$ with $\mathrm{coker}(H^0(E) \otimes \O_X \rightarrow E)$ simple and torsion-free in a neighborhood of the base locus of $|-K_X|$ is smooth of the expected dimension, or empty.
        \item If $k \geq r$, assume $-K_X$ effective and $c_1$ indecomposable (see Definition~\ref{def:indecomposable}). Then all connected components of the Brill--Noether loci $BN^k(v \cdot \ch(\omega_X))^{\circ}$ are irreducible of the expected dimension. Moreover, the number of connected components of $BN^k(v \cdot \ch(\omega_X))^{\circ}$ is bounded above by the that of
        $$
            \mathrm{Spl}(k-rh^0(\omega_X)-\chi(v'),c_1,\ch_2),
        $$ where $v' = (0,c_1,\ch_2+c_1.K_X)$.
    \end{enumerate}
\end{theorem}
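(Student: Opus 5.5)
The plan is to pass through coherent systems. To a sheaf $E\in\widetilde{BN}^k(v)^\circ$ we attach the coherent system $(E,H^0(E))$, or more generally $(E,V)$ with $V\subseteq H^0(E)$ of dimension $k$. We then compare deformations of $E$ with deformations of the evaluation triple
$$0\to V\otimes\O_X\to E\to Q\to 0,\qquad Q=\coker(\ev),$$
which is exact on the left because $E$ is generically $k$-generated. Following He, the tangent space and obstruction space of the moduli of coherent systems at $(E,V)$ are the hypercohomology groups $\Ext^1$ and $\Ext^2$ of the two-term complex $V\otimes\O_X\to E$ into $E$. This complex is quasi-isomorphic to $Q$, so $\Ext^i((V\to E),E)$ is governed by $\Ext^i(Q,E)$ together with the long exact sequence coming from $\Hom(E,E)\to\Hom(V\otimes\O_X,E)$. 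The forgetful map to $\Spl(v)$ is a local isomorphism onto $\widetilde{BN}^k$ wherever $V=H^0(E)$ and $H^0$ has constant dimension. Smoothness of the expected dimension $\dim\Spl(v)-k\cdot h^1$-type count, i.e.\ $1-\chi(E,E)-k\chi(E)$, then follows once we show that the obstruction map is surjective. Concretely, this means showing that $\Ext^2$ of the complex into $E$ maps onto the trace-free part, which in turn reduces to $\Hom(E,Q\otimes\omega_X)$-type vanishing by Serre duality.

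For (i), where $k=r$, the cokernel $Q$ is a sheaf on $C\in|c_1|$, generically a line bundle $L$ on the integral curve $C$. This is the global Lazarsfeld--Mukai correspondence: $E^\vee$ is the Lazarsfeld--Mukai bundle of $(C,L)$, and the obstructions live in $\Ext^2(Q,E)\cong\Hom(E,Q\otimes\omega_X)^\vee$. When $K_X=0$, we use the trace map together with $H^1(\O_X)=0$ to kill the relevant piece. Otherwise, restricting to $C$ gives $\Hom(E|_C,L\otimes\omega_X|_C)$. The map $E|_C\to L$ is surjective and $V\otimes\O_C\to E|_C$ is generic, so a nonzero map is controlled by $\deg(L\otimes K_X|_C)=d+K_X.c_1$. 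The hypothesis $-K_X.c_1\geq\max(p_a,1)$ forces this degree to be small enough to contradict generic generation; one checks $d\le 2p_a-2$ or similar. For (ii), where $k<r$, we use an effective divisor $D\in|-K_X|$. A map $E\to Q\otimes\omega_X$ composed with $Q\otimes\omega_X\hookrightarrow Q$, which is injective because $Q$ is torsion-free near $D$, gives an endomorphism-type map $E\to Q$. Since $Q$ is simple, this map, precomposed appropriately, factors through a scalar that must vanish along $D$, hence is zero. This yields the vanishing.

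For (iii), we first twist by $\omega_X$ and use Serre duality to relate $BN^k(v\cdot\ch(\omega_X))$ to extensions of the form $0\to H^0\otimes\O_X\to F\to Q'\to 0$ with $Q'$ of rank $k-r\,h^0(\omega_X)-\chi(v')$ and first Chern class $c_1$; this is where $\chi(v')$ enters. Indecomposability of $c_1$ makes the cokernel $Q'$ simple whenever $E$ is simple. We then obtain a fibration from each connected component to $\Spl(k-rh^0(\omega_X)-\chi(v'),c_1,\ch_2)$ whose fibres are open subsets of Grassmannians (or projective spaces) of extension classes, hence irreducible. The number of components is then bounded by that of the base, and the dimension count follows by combining (ii)-type smoothness on the base with the constant fibre dimension.

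I expect the main obstacle to be the vanishing of the obstruction in (i) in the non-Calabi--Yau case: extracting a contradiction from the degree inequality requires careful handling of $Q$ when it is not locally free on $C$. I would first treat $L$ locally free and then deduce the general case by semicontinuity on the integral curve.
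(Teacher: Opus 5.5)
Your overall framework matches the paper's: He's tangent--obstruction theory, $\Ext^2(\Lambda,\Lambda)\simeq\Ext^2(Q,E)\simeq\Hom(E,Q\otimes\omega_X)^\vee$, and $\pi$ bijective over $\widetilde{BN}^k(v)^\circ$ for $k\le r$. However, in (i) and (ii) you never use the one fact that makes the obstruction vanish, namely $V=H^0(E)$. Together with $H^1(X,\O_X)=0$, the sequence $0\to V\otimes\O_X\to E\to Q\to 0$ gives $H^0(Q)=0$, hence $\Hom(E,Q)\simeq\Hom(Q,Q)\simeq\C\cdot f$, where $f\colon E\to Q$ is the quotient map. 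Here $Q$ is simple: in (i) because it is a pure (Lemma~\ref{nosubskyscraper}) rank-one sheaf on the integral curve $C$, in (ii) by hypothesis.

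This is needed already for $K_X=0$, where the trace only kills $\Ext^2(\Lambda,\Lambda)$ once you know it is one-dimensional. For $K_X\neq 0$, the hypothesis $-K_X.c_1\geq\max(p_a(C),1)$ enters only through Riemann--Roch on $C$ (Lemma~\ref{lem:numericsimplyupgrade}). It produces a nonzero, non-invertible section $s$ of $\omega_X^\vee|_C$. For $h\in\Hom(E,Q\otimes\omega_X)$, the composite $s\circ h$ vanishes along $V(s)$, so it is a non-surjective element of $\C\cdot f$, hence zero. Then $h=0$, because multiplication by $s$ is injective on the pure sheaf $Q\otimes\omega_X$.

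Your degree heuristic cannot replace this. The hypothesis on $-K_X.c_1$ is independent of $\ch_2$, so it gives no bound on $d$. A bound like $d\le 2p_a-2$ (which really comes from $h^0(L)=0$) still allows $L\otimes\omega_X|_C$ to have sections. Your fallback of treating locally free $L$ first and concluding by semicontinuity runs the wrong way: upper semicontinuity makes vanishing an open condition and cannot propagate it to the special points.

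In (ii), a single $D\in|-K_X|$ does not suffice, since $Q$ is only assumed torsion-free near $\mathrm{Bs}|-K_X|$. You must use all $s\in H^0(\omega_X^\vee)$ and conclude that $h$ factors through $\bigcap_s\ker(s\colon Q\otimes\omega_X\to Q)$. This is a torsion subsheaf supported on the base locus, hence zero by the hypothesis.

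For (iii) the architecture is wrong. There is no fibration from $BN^k(\omega(v))^\circ$ to $\Spl(k-rh^0(\omega_X)-\chi(v'),c_1,\ch_2)$, and $k'\coloneqq k-rh^0(\omega_X)-\chi(v')$ is not the rank of a cokernel. The paper composes two correspondences through coherent systems.
\begin{enumerate}[(a)]
\item $\Syst^r(v)^\circ$ maps via $\pi$ onto $BN^r(v)^\circ$, and via $\phi_v$ onto a rank-zero Brill--Noether locus in the relative compactified Jacobian, with fibres $\Gr(r,\Ext^1(Q,\O_X))$ (Lemma~\ref{lemma:QuotientMapSurjective}, which uses Simplicity Lemma~\ref{simplicitylemma1}). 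The identity $h^0(E\otimes\omega_X)=h^0(Q\otimes\omega_X)+rh^0(\omega_X)$ (Lemma~\ref{lemma:correspondencetranslate}) then matches $\widetilde{BN}^k(\omega(v))^\circ$ with $\widetilde{BN}^{k-rh^0(\omega_X)}(v')$.
\item A rank-zero $L$ with $h^1(L\otimes\omega_X)=k'$ is the cokernel of extensions $0\to\O_X^{k'}\to F\to L\to 0$ with $F$ simple of rank $k'$. This gives a correspondence through $\Syst^{k'}(w)^\circ$ with $\rk(w)=k'$, and that is where $\Spl(w)$ and the component bound come from.
\end{enumerate}

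The smoothness input is Theorem~\ref{theorem:SystSmoothnessFork=r}, which is not of type (ii). At points $(F,V)$ with $V\subsetneq H^0(F)$ one has $H^0(Q)\neq 0$, so $\Hom(F,Q)\simeq\C$ fails. The paper instead passes to the tautological extension $G$ of $F$ by $\O_X\otimes\Ext^1(F,\O_X)^\vee$ and proves $G$ simple. It then combines $\Hom(F,Q\otimes\omega_X)\subseteq\Hom(G,Q\otimes\omega_X)$ with $\Hom(G,Q)\simeq\Hom(G,G)\simeq\C$.

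Finally, (iii) concerns $BN^k$, not $\widetilde{BN}^k$, so you must show every component of $BN^k(\omega(v))^\circ$ meets $\widetilde{BN}^k(\omega(v))^\circ$. The paper does this using the determinantal lower bound (Corollary~\ref{corollary:determinantal}) and the strictly smaller expected dimension of the equidimensional loci $\widetilde{BN}^{k+s}(\omega(v))^\circ$ (Proposition~\ref{proposition:ClosureOfBNTilde}). Irreducible fibres alone do not make connected components irreducible; that comes from smoothness. Also, the expected dimension is $\dim\Spl(v)-k(k-\chi(v))$, not $1-\chi(E,E)-k\chi(E)$.
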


\noindent Here, the expected dimension of $BN^k(v)$ is given by the formula
$$
\mathrm{vdim}(BN^k(v)) = \dim \Spl(v)-k(k-\chi(v)).
$$
Non-emptiness of the loci discussed in Theorem~\ref{intro:thm1} can fail, see however Theorem~\ref{BNconnected}(iv) for a positive result.

A special case of Theorem~\ref{intro:thm1} is when $X$ is a K3 surface and $c_1$ is indecomposable (equivalently, the complete linear system $|c_1|$ consists only of irreducible and reduced curves). In Picard rank~$> 1$, this assumption on $c_1$ is weaker than the numerical minimality assumption with respect to a polarization used by Yoshioka \cite[Cor. 3.11]{YoshiokaBNTHeory} and Leyenson \cite[Thm. 3.9]{Leyenson} whose results we recover in particular (see Remark~\ref{rem:specializingmainthm}).

It is a feature of the Lazarsfeld--Mukai construction that it connects the Brill--Noether theory of line bundles on curves in a fixed linear system $|c_1|$ on $X$ to that of a certain class of higher rank vector bundles on $X$ (namely, the class of Lazarsfeld--Mukai bundles, or, more generally, \emph{generically $r$-generated} torsion-free sheaves). This leads naturally to studying the moduli space of coherent systems (see Section~\ref{coherentsystems}) as introduced by Le Potier \cite{LePotier}. Its deformation theory has been worked out thoroughly by He \cite{He}. The latter may be applied directly to obtain smoothness, irreducibility and dimension results for the (relative) Brill--Noether loci of line bundles on curves in $|c_1|$.

One of our results in this direction is a generalization of a dimension bound due to Aprodu and Farkas \cite[Prop. 2.4]{FarkasAproduGreenConj} especially relevant to Green's conjecture: Denote by $\mathcal{W}^r_d(|c_1|)$ the relative Brill--Noether locus over the linear system $|c_1|$ and by $W^r_d(C)$ its fiber over $C\in |c_1|$. Denote by $E_{C,A}$ the Lazarsfeld--Mukai bundle associated to a curve $C\subset X$ and a basepoint-free torsion-free rank $1$ sheaf $A$ on $C$. We show:

\begin{proposition}[Proposition \ref{prop:generalizedAproduFarkas}]
 \label{intro:thm2}
     Let $X$ be a smooth projective surface with $H^1(X,\O_X) = 0$. Let $C \in |c_1|$ be an integral curve with $p_a(C)\geq 2$.     
     \begin{enumerate}[(i)]
         \item If $\omega_X\neq \O_X$, we assume in addition that $-K_X|_C > 0$. Let $A$ be a torsion-free rank~$1$ sheaf on $C$, basepoint-free of degree $d$ with $h^0(A) = r+1$. Then
        $$
        \dim_{(C,A)} \mathcal{W}^r_d(|c_1|)\leq \rho(g,r,d)+ \dim|c_1|+\mathrm{dim}\Hom(E_{C,A},E_{C,A})-1.
        $$
        \item Assume that either $-K_X \geq 0$ on $X$ or $-K_X.c_1 \geq p_a(C)$. Then, for the general $C \in |c_1|_{\mathrm{int}}$ and any basepoint-free degree~$d$ line bundle $A$ on $C$ with $h^0(A) = r+1$, we have
     $$
     \dim _AW^r_d(C) \leq \rho(g,r,d)+\dim \Hom(E_{C,A},E_{C,A})-1.
     $$
     \end{enumerate}
 \end{proposition}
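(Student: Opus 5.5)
We adapt the Aprodu--Farkas argument to the setting of coherent systems, following He's deformation theory. The Lazarsfeld--Mukai construction gives a correspondence. To a pair $(C,A)$ we associate $E_{C,A}$ (its dual being $F_{C,A}=\ker(H^0(A)\otimes\O_X\to A)$), together with the coherent system $(E_{C,A},V)$, where $V=H^0(A)^\vee\hookrightarrow H^0(E_{C,A})$. Conversely, $A$ is recovered as the cokernel of the evaluation map $V\otimes\O_X\to E_{C,A}$, up to the usual twist and dualization. Because $H^1(X,\O_X)=0$, this construction is inverse to taking cokernels on the locus of generically $r$-generated coherent systems. In this way a neighbourhood of $(C,A)$ in $\mathcal W^r_d(|c_1|)\setminus\mathcal W^{r+1}_d(|c_1|)$ is identified with a neighbourhood of $(E_{C,A},V)$ in the moduli space of coherent systems $\Syst$. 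Basepoint-freeness of $A$ is exactly what makes $E_{C,A}$ locally free and makes the correspondence bijective near our point.

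\textbf{Dimension count.} We bound the dimension of $\Syst$ at $\Lambda=(E,V)$ by $\dim$ of its Zariski tangent space, which is $\Ext^1(\Lambda,\Lambda)$ modulo the automorphism contribution. He's long exact sequence
\[0\to\Hom(\Lambda,\Lambda)\to\Hom(E,E)\to\Hom(V,H^0(E)/V)\to\Ext^1(\Lambda,\Lambda)\to\Ext^1(E,E)\to\Hom(V,H^1(E))\to\cdots\]
gives
\[\dim\Ext^1(\Lambda,\Lambda)\le \dim\Hom(V,H^0(E)/V)+\ext^1(E,E)-\hom(E,E)+\hom(\Lambda,\Lambda).\]
We then compute each term for $E=E_{C,A}$, using $h^0(E)=h^0(A)+h^1(A)$, $h^1(E)=h^2(E)=0$ (here $H^1(\O_X)=0$ is used), and $\chi(E,E)=\hom-\ext^1+\ext^2$. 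The $\Ext^2(E,E)=\Hom(E,E\otimes\omega_X)^\vee$ term is where the hypothesis of (i) enters. If $\omega_X=\O_X$, Serre duality gives $\ext^2=\hom$. If $-K_X|_C>0$, we must show that the obstruction/trace-free part of $\Ext^2$ vanishes, or at least is controlled by $\hom(E,E)$. For this we use the sequence $0\to H^0(A)^\vee\otimes\O_X\to E\to K_C\otimes A^\vee\to0$ and the positivity of $-K_X$ on $C$ to kill $\Hom(E,E\otimes\omega_X)$ modulo scalars. Substituting $\chi(E,E)$ via Riemann--Roch, $c_1(E)=c_1$, $c_2(E)=d$, $\operatorname{rk} E=r+1$, together with $\hom(\Lambda,\Lambda)=1$ for the simple system (since $V$ generates $E$ generically), should yield exactly
\[\rho(g,r,d)+\dim|c_1|+\hom(E,E)-1.\]
Here $\dim|c_1|=g-1+\chi$ correction terms arise from $\chi(\O_X)$ and $h^0(\O_X(C))$. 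This is a routine but careful Riemann--Roch computation, and it completes (i).

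\textbf{Part (ii).} For (ii) we consider the forgetful map $\mathcal W^r_d(|c_1|)\to|c_1|$. For the general integral $C$, the fibre dimension equals $\dim_{(C,A)}\mathcal W^r_d-\dim|c_1|$ on the component dominating $|c_1|$, by generic smoothness and upper semicontinuity of fibre dimension. Components not dominating do not meet the general fibre. Thus (ii) follows from (i) once the hypothesis of (i) holds for the general $C$. If $-K_X\ge0$ is effective and $C$ is general, $C$ is not contained in the base locus of $|-K_X|$, so $-K_X|_C\ge0$; the case $-K_X|_C=0$ with $\omega_X\ne\O_X$ must be handled by the alternative assumption. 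If instead $-K_X.c_1\ge p_a(C)$, we bound $\hom(E,E\otimes\omega_X)$ directly by degree reasons on $C$.

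\textbf{Main obstacle.} I expect the hardest step to be the vanishing or control of $\Ext^2(E_{C,A},E_{C,A})$, equivalently the unobstructedness in He's sequence, when $\omega_X$ is nontrivial. My first attempt would be to restrict a putative map $E\to E\otimes\omega_X$ to $C$ and use that $\omega_X|_C$ has negative degree to force it into $H^0(A)^\vee\otimes\omega_X$. That map then vanishes since $H^0(\omega_X)$ is controlled (by $h^0(\omega_X)$, which the count must absorb).
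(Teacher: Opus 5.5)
Your overall strategy is the one the paper uses: identify a neighbourhood of $(C,A)$ with a neighbourhood of $\Lambda=(E_{C,A},H^0(A)^\vee)$ in the space of coherent systems, then bound the local dimension by $\ext^1(\Lambda,\Lambda)$. For $\omega_X\simeq\O_X$ your count goes through as written. It is the classical Aprodu--Farkas computation and gives $\ext^1(\Lambda,\Lambda)=\rho(g,r,d)+g-1+\hom(E,E)$, with $\dim|c_1|=g$.

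The gap is the case $\omega_X\not\simeq\O_X$. The values $h^0(E)=h^0(A)+h^1(A)$ and $h^1(E)=h^2(E)=0$ are K3 facts; they do not follow from $H^1(X,\O_X)=0$. From $0\to\O_X^{r+1}\to E\to i_*L\to 0$ with $L=\omega_C\otimes A^\vee\otimes\omega_X^\vee|_C$ you only get $h^0(E)=r+1+h^0(C,L)$. When $p_g(X)=0$ (which holds in (i) whenever $\omega_X\not\simeq\O_X$) you also get $h^2(E)=0$. But $H^1(E)\simeq H^1(C,L)\simeq H^0(C,A\otimes\omega_X|_C)^\vee$, which is nonzero as soon as $A\otimes\omega_X|_C$ has a section. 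For example, take $A=\O_C(m)$ with $m\geq 3$ on a smooth plane curve of degree $\geq 4$. Your inequality discards the map $\Ext^1(E,E)\to\Hom(V,H^1(E))$. With the correct cohomology it reads
\[
\ext^1(\Lambda,\Lambda)\leq \rho(g,r,d)+\dim|c_1|+(r+1)\,h^0(C,A\otimes\omega_X|_C)+\hom(E,E\otimes\omega_X).
\]
So even for simple $E$ you overshoot by $(r+1)h^0(C,A\otimes\omega_X|_C)$. No control of $\Ext^2(E,E)\simeq\Hom(E,E\otimes\omega_X)^\vee$ can repair this.

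What is missing is to keep that connecting map, that is, to bound $\Ext^2(\Lambda,\Lambda)$ rather than $\Ext^2(E,E)$.
\begin{itemize}
\item For $-K_X|_C>0$, the Euler characteristic of He's sequence gives exactly $\ext^1(\Lambda,\Lambda)=\rho(g,r,d)+\dim|c_1|+\ext^2(\Lambda,\Lambda)$.
\item By He's Lemma~1.7 and Serre duality, $\Ext^2(\Lambda,\Lambda)\simeq\Ext^2(Q,E)\simeq\Hom(E,Q\otimes\omega_X)^\vee$ with $Q=i_*L$. This space contains $\Hom(E,E\otimes\omega_X)$ with cokernel inside $V\otimes H^1(E)^\vee$, which is where the term you lost is absorbed.
\item The paper then takes a nonzero section $s$ of $\omega_X^\vee|_C$, which embeds $\Hom(E,Q\otimes\omega_X)$ into $\Hom(E,Q)$. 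Every map in the image vanishes along $V(s)$, so it is not surjective, while $\Hom(E,Q)$ contains the surjection $E\twoheadrightarrow Q$.
\item The Obstruction Lemma gives $\Hom(E,Q)\simeq\Hom(E,E)$, using $H^0(E^\vee)=H^1(E^\vee)=0$. Together these give $\ext^2(\Lambda,\Lambda)\leq\hom(E,E)-1$.
\end{itemize}
Your idea of restricting to $C$ and using the negativity of $\omega_X|_C$ is the right mechanism. However, it has to be applied to $\Hom(E,Q\otimes\omega_X)$, and when $E$ is not simple it yields a bound, not a vanishing.

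Finally, in (ii) the hypothesis is a disjunction. When you work under $-K_X\geq 0$, you cannot hand the case $\omega_X|_C\simeq\O_C$ to ``the alternative assumption'', so that case is left open in your argument. It can be closed by the same $\Ext^2(\Lambda,\Lambda)$ computation: then $\ext^2(\Lambda,\Lambda)=\hom(E,E)$, while $\rho(g,r,d)+\dim|c_1|=\ext^1(\Lambda,\Lambda)-\ext^2(\Lambda,\Lambda)+1$.
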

 In the $r=1$ case, similar results have been obtained for certain rational surfaces in \cite{Lelli-ChiesaGreenConj} and for $r=2$ on K3 surfaces in \cite{Lelli-ChiesaRank3LM}.

 In the case of basepoint-free complete pencils $\widetilde{\mathcal{W}}^{1, \mathrm{bpf}}_d(|c_1|_{\mathrm{int}})$, we identify an explicit subspace containing its entire singular locus. Namely, set 
  $$
  \mathcal{S}\coloneqq \{(C,A)\colon A \text{ is split or surface-induced}\}\subseteq \widetilde{\mathcal{W}}^{1,\mathrm{bpf}}_d(|c_1|_{\mathrm{int}}) \quad (\text{Definition } \ref{def:surface-induced}).
  $$

  \noindent Then we have:
  \begin{theorem}[Theorem~\ref{theorem:GlobalCompletePencilIsSmooth}]
  \label{intro:thm3}
    Let $X$ be a smooth projective surface with $H^1(X,\O_X) = 0$. Let $c_1 \in \Pic(X)$ be a nonzero effective divisor class and let $C\in |c_1|$ be an integral curve. Set $g \coloneqq p_a(C)$.
    
    \begin{enumerate}[(i)]
        \item If $K_X \neq 0$, assume $C$ satisfies $-K_X|_C>0$. Let $A$ be a complete basepoint-free pencil of degree $d$ on $C$ that is neither split nor surface-induced. Then,
    $$
    (C,A)\in \mathcal{W}^1_d(|c_1|)
    $$
    is a smooth point of local dimension $\rho(g,1,d)+\dim |c_1|$.
    \item  Assume that either $K_X = 0$ or $-K_X > 0$ and $g \geq 2$. Then, 
    $$
\widetilde{\mathcal{W}}^{1,\mathrm{bpf}}_d(|c_1|_{\mathrm{int}})\setminus\mathcal{S}
    $$
    is smooth of dimension $\rho(g,1,d)+\dim |c_1|$ over the locus of integral curves in $|c_1|$, or empty.
    \end{enumerate} 
\end{theorem}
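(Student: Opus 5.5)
The plan is to compute the Zariski tangent space of $\mathcal{W}^1_d(|c_1|)$ at $(C,A)$ through the Lazarsfeld--Mukai construction and He's deformation theory of coherent systems. Since $A$ is a complete basepoint-free pencil, the evaluation map $H^0(A)\otimes\O_X\to A$ is surjective. Its kernel is $F_{C,A}$ and $E_{C,A}=F_{C,A}^\vee$ is a rank $2$ bundle with $h^0(E_{C,A})\geq 2$. The coherent system $\Lambda=(A,H^0(A))$, viewed as a sheaf on $X$ together with a $2$-dimensional space of sections, represents the point $(C,A)$. Because $h^0(A)=2$ exactly, a neighbourhood of $(C,A)$ in $\mathcal{W}^1_d(|c_1|)$ is identified with a neighbourhood of $\Lambda$ in the moduli space of coherent systems of type $(0,c_1,\ch_2;2)$.

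By He's theory, the tangent space is $\Ext^1(\Lambda,\Lambda)$ and obstructions lie in $\Ext^2(\Lambda,\Lambda)$. He's long exact sequence relates these to $\Hom(H^0(A),H^0(A))$, $\Ext^i(A,A)$ and $H^0(A)^\vee\otimes H^i(A)$. Since $\chi(\Lambda,\Lambda)$ is computed from $v$, the expected dimension should come out as $\rho(g,1,d)+\dim|c_1|$. The identity $\dim\Hom(\Lambda,\Lambda)=1$ is where simplicity enters: $A$ is a rank $1$ torsion-free sheaf on the integral curve $C$.

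So it suffices to show $\Ext^2(\Lambda,\Lambda)=0$. Equivalently, via the standard comparison between $\Ext$ of $\Lambda$ and $\Ext$ of the associated kernel, it suffices to show that $\Ext^2(F_{C,A},F_{C,A})_0$, or the relevant piece of it, vanishes. By Serre duality this is dual to $\Hom(F_{C,A},F_{C,A}\otimes\omega_X)$ modulo the trace part, and since $H^1(\O_X)=0$ the trace parts cause no trouble. If $K_X=0$, the condition becomes that $E_{C,A}$ is simple. If $-K_X|_C>0$, I would use the inclusion $\Hom(F,F\otimes\omega_X)\hookrightarrow \Hom(F,F)$, obtained by multiplying by a section of $-K_X$ that does not vanish identically on $C$. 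I would then show that any nonzero traceless element of $\Hom(F,F\otimes\omega_X)$ is forced to vanish, using positivity of $-K_X$ on $C$ to kill the homotheties.

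This is the main obstacle: identifying non-simplicity of $E_{C,A}$ (more precisely, a traceless twisted endomorphism) with $A$ being split or surface-induced. Here I would use the Donagi--Morrison / Lelli-Chiesa argument. A nonzero nilpotent or non-scalar endomorphism $\phi$ of $E_{C,A}$ has a rank $1$ image or kernel, so it gives a line bundle $M\subset E_{C,A}$ with torsion-free quotient. Restricting to $C$ and using the sequence $0\to H^0(A)^\vee\otimes\O_X\to E_{C,A}\to \omega_C\otimes A^\vee\to 0$, one shows that $A$ is cut out by $M$ or by $c_1-M$, i.e.\ $A$ is surface-induced. If $\phi$ is instead a nontrivial idempotent, $E_{C,A}$ decomposes and $A$ is split. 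By Definition~\ref{def:surface-induced}, these are exactly the excluded cases, so $\Ext^2(\Lambda,\Lambda)=0$. This gives smoothness at $(C,A)$ of dimension $\chi$-computed, namely $\rho(g,1,d)+\dim|c_1|$, which proves (i).

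For (ii), I would apply (i) pointwise. If $K_X=0$ there is nothing to check. If $-K_X>0$, I need $-K_X|_C>0$ for every integral $C\in|c_1|$. When $C$ is not a component of the anticanonical curves this holds as long as $-K_X.C>0$; in the degenerate case where $C\subseteq$ every member of $|-K_X|$, I would use $g\geq 2$ and adjunction $2g-2=C^2+K_X.C$ to rule out $-K_X.C\leq 0$ for a non-fixed integral curve. Finally, I would check that $\widetilde{\mathcal{W}}^{1,\mathrm{bpf}}_d(|c_1|_{\mathrm{int}})$ is open in $\mathcal{W}^1_d(|c_1|)$, since basepoint-freeness, $h^0=2$ and integrality are open conditions, and that $\mathcal{S}$ is closed in it. Together these give (ii).
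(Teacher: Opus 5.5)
Your overall architecture (coherent systems, He's obstruction theory, simplicity of $E_{C,A}$ outside the split and surface-induced cases) is the paper's. However, the central obstruction step fails as written: the claimed equivalence between unobstructedness and $\Ext^2(F_{C,A},F_{C,A})_0=0$ is false.

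He's comparison (Proposition~\ref{proposition:CalculatingExt}) needs an \emph{injective} evaluation map. This is why the paper first transports $(C,A)$ to $\Lambda=(E,V^\vee)$, with $E=E_{C,A}$, $V=H^0(A)$ and cokernel $Q=i_*L$ (Proposition~\ref{prop:WRDopeninSyst}). Then $\Ext^2(\Lambda,\Lambda)\simeq\Ext^2(Q,E)\simeq\Hom(E,Q\otimes\omega_X)^\vee$. The sequence of Proposition~\ref{extsequence} shows that, besides $\Ext^2(E,E)$, this space receives the cokernel of $\Ext^1(E,E)\to\Hom(V^\vee,H^1(E))$, i.e.\ the obstruction to carrying the sections along. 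If $p_g(X)=0$, then $H^1(E)\simeq H^0(C,A\otimes\omega_X|_C)^\vee$, which is nonzero e.g.\ when $-K_X|_C$ is a single point. For $K_X=0$ one has $H^1(E)=0$, so your reduction is harmless only there. Hence $\Ext^2(E,E)_0=0$ does not by itself give $\Ext^2(\Lambda,\Lambda)_0=0$; you must kill $\Hom(E,Q\otimes\omega_X)$ itself.

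Your tool for this when $K_X\neq 0$ is also unavailable. Multiplying by a global section of $-K_X$ to get $\Hom(F,F\otimes\omega_X)\hookrightarrow\Hom(F,F)$ needs such a section, but (i) only assumes $-K_X|_C>0$, i.e.\ a section of $\omega_X^\vee|_C$. $-K_X$ need not be effective on $X$ (cf.\ Example~\ref{example}).

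The missing ingredient is the Obstruction Lemma~\ref{obstructionLemma}. As $A$ is complete, $H^0(E^\vee)=H^1(E^\vee)=0$, so $\Hom(E,Q)\simeq\Hom(E,E)$. Hence $\Hom(E,Q)=\C\cdot f$ for the surjection $f\colon E\twoheadrightarrow Q$ once $E$ is simple. Since $Q$ is supported on $C$, a nonzero $s\in H^0(C,\omega_X^\vee|_C)$ gives an injection $\Hom(E,Q\otimes\omega_X)\hookrightarrow\Hom(E,Q)$. Its image consists of maps vanishing on $V(s)$, hence non-surjective, hence zero. For $K_X=0$, the one-dimensional $\Ext^2(\Lambda,\Lambda)$ is killed by the trace.

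Two smaller points.
\begin{enumerate}
\item A saturated line subbundle $M\subset E$ does not produce the condition $2H\le C$ of Definition~\ref{def:surface-induced}. That condition comes from factoring a rank-dropping endomorphism as $E\to M\to N\to E$, so that $c_1=2c_1(N)+c_1(M_{\mathrm{tor}})$ (Lemma~\ref{lemma:simplicity2}), followed by the diagram chase in Proposition~\ref{proposition:surfaceinducedpencils}.
\item In (ii) you never actually prove $-K_X.C>0$. In fact $-K_X>0$ and $g\ge 2$ do not force $-K_X|_C>0$. Blow up $\P^2$ at the $12$ points of a transverse cubic--quartic intersection. The strict transform of the quartic is a smooth genus~$3$ curve disjoint from the unique anticanonical curve, so $-K_X|_C\simeq\O_C$. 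The paper's proof simply declares this implication automatic, so in either version this case needs an extra hypothesis or a separate argument.
\end{enumerate}
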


Naively bounding the dimension of $\mathcal{S}$, this result yields:

\begin{corollary}[Corollary \ref{cor:degreeconditionspencil}]
    Let $X$ be a smooth projective surface with $H^1(X,\O_X) = 0$ and $-K_X \geq 0$. Let $c_1 \in \Pic(X)$ be a nonzero effective divisor class and let $C \in |c_1|$ be a general integral curve such that $g \coloneqq p_a(C) \geq 4$. Then, if 
    $$
    3d\geq 2g+1-\frac{1}{2}C.K_X,
    $$
    the Brill--Noether locus of basepoint-free complete pencils $\widetilde{W}^{1,\mathrm{bpf}}_d(C)$ is of the expected dimension $\rho(g,1,d)$, or empty.
\end{corollary}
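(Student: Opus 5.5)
The plan is to apply Theorem~\ref{intro:thm3}(ii) on a neighbourhood of the general curve $C$, and to show that under the degree inequality the exceptional locus $\mathcal{S}$ is too small to contain an irreducible component of $\widetilde{\mathcal{W}}^{1,\mathrm{bpf}}_d(|c_1|_{\mathrm{int}})$ that dominates $|c_1|$. The hypothesis $-K_X\geq 0$ places us in case (ii), either $K_X=0$ or $-K_X>0$, and $g\geq 4\geq 2$. So $\widetilde{\mathcal{W}}^{1,\mathrm{bpf}}_d(|c_1|_{\mathrm{int}})\setminus\mathcal{S}$ is smooth of dimension $\rho(g,1,d)+\dim|c_1|$. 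For general $C$, generic smoothness of the projection to $|c_1|$ then shows that the fibre $\widetilde{W}^{1,\mathrm{bpf}}_d(C)\setminus\mathcal{S}_C$ is smooth of dimension $\rho(g,1,d)$, or empty. It therefore remains to check that $\mathcal{S}_C$ has dimension at most $\rho(g,1,d)$ for general $C$, or at least contributes no component of larger dimension.

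To bound $\mathcal{S}$, I treat the two kinds of points in Definition~\ref{def:surface-induced} separately.

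\emph{Split pencils.} I expect these to correspond to Lazarsfeld--Mukai bundles $E_{C,A}$ that are extensions or direct sums of line bundles $M\oplus N$ with $M+N=c_1$. Such a splitting is governed by a decomposition $c_1=M+N$ with $M,N$ effective and $M.N=d$ (up to correction terms from $c_2(E_{C,A})=d$). Only countably many, and in fact finitely many relevant, classes $M$ occur. For each of them the family of pairs $(C,A)$ is parametrised by sections of $M$ and $N$ modulo scalars, so its dimension is about $h^0(M)+h^0(N)-2$, compared with $\dim|c_1|$. I would then use Riemann--Roch on $X$, together with $-K_X\ge0$ and $H^1(\O_X)=0$, to show that on the general $C$ the fibre dimension is at most $\rho(g,1,d)$ exactly when $3d\ge 2g+1-\frac12 C.K_X$.

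\emph{Surface-induced pencils.} These are pencils $A=L|_C$, possibly minus base points, coming from a pencil $|L|$ on $X$. For fixed $C$ there are only finitely many such $L$ up to the choice of pencil, so they form a family of dimension at most $\dim\Gr(2,H^0(L))$ or smaller. I would bound this in the same way, via Hodge index and adjunction on $X$, writing $2g-2=c_1^2+c_1.K_X$.

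The main obstacle is the numerical step: converting the "naive" dimension bound on $\mathcal{S}$ into the inequality $\dim\mathcal{S}_C\le\rho(g,1,d)=2d-g-2$. My approach here is to start from $h^0(E_{C,A})\ge 2$ and the decomposition $d=M.N$ ($=c_2$), bound $h^0(M)+h^0(N)$ by $\chi(M)+\chi(N)$ (using $h^2=0$, which holds because $-K_X\ge0$), and then combine this with $c_1^2=2g-2-c_1.K_X$. I expect the term $\frac12 C.K_X$ to come out of averaging $M.K_X+N.K_X=C.K_X$, and the factor $3$ from using the Hodge-type inequality $M^2N^2\le (M.N)^2$. The condition $g\ge4$ should rule out the small cases in which the bound degenerates.
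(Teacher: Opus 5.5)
\textbf{There is a genuine gap: the dimension count for $\mathcal{S}_C$.} Your reduction is exactly the paper's. Theorem~\ref{theorem:GlobalCompletePencilIsSmooth}(ii) plus generic smoothness over $|c_1|_{\mathrm{int}}$ reduces everything to showing $\dim\mathcal{S}_C\leq\rho(g,1,d)=2d-g-2$. That estimate is the whole content of the corollary, and the counts you propose for it are not the right ones.

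\textbf{Surface-induced pencils.} Here $A=H|_C-B$ with $H>0$, $2H\leq C$ and $B\geq 0$ on $C$. The datum is the line bundle $A$, and for fixed $H$ it is determined by the effective divisor $B$ of degree $C.H-d$. So this locus has dimension at most $C.H-d$. Your bound $\dim\Gr(2,H^0(L))$ ignores $d$ altogether, so it cannot produce any criterion in $d$, and it presumes the pencil comes from $X$. Since $C$ is integral, it cannot be a component of an effective divisor in the class $C-2H$, so $C.(C-2H)\geq 0$, i.e.\ $C.H\leq\frac12 C^2$. There are only finitely many such $H$, so the surface-induced part of $\mathcal{S}_C$ has dimension at most $\frac12C^2-d$. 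By adjunction $\frac12C^2=g-1-\frac12C.K_X$, and then $\frac12C^2-d\leq 2d-g-2$ is literally $3d\geq 2g+1-\frac12C.K_X$. The $3$ is $d+2d$; no Riemann--Roch on $X$, Hodge index inequality or averaging of $M.K_X+N.K_X$ is involved. Note also that $h^2=0$ only gives $h^0\geq\chi$, which is the wrong direction for an upper bound. The hypothesis $g\geq 4$ just makes $\rho(g,1,d)\geq 0$ follow from the degree inequality.

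\textbf{Split pencils.} Your count $h^0(M)+h^0(N)-2$ is the dimension of the reducible members $M+N$ of $|c_1|$, not of pencils on a fixed $C$. A split coherent system is a $2$-plane $V\subseteq H^0(M)\oplus H^0(N)$ whose determinant cuts out $C$. Also, non-split extensions of line bundles are the Donagi--Morrison, i.e.\ surface-induced, case, not the split one.

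The paper disposes of split pencils by asserting they form a finite set: finitely many decompositions $c_1=D_1+D_2$, and $(C,A)\mapsto(E_{C,A},H^0(A)^\vee)$ is bijective on complete pencils. Be aware that finiteness of the bundles does not give finiteness of the pencils. On $\P^1\times\P^1$ with $c_1=(3,3)$ and $d=5$, every complete basepoint-free pencil on a smooth $C$ has $E_{C,A}\simeq\O(2,1)\oplus\O(1,2)$. So the split locus is all of $\widetilde{W}^{1,\mathrm{bpf}}_5(C)$, a dense open subset of $\Pic^5(C)$ of dimension $4$.

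What does work, as in the proof of the K3 corollary, is the following. $A=\ker(E|_C\to L)$, and both projections $A\to D_i|_C$ are nonzero; otherwise some $\O_X(D_j)$ with $D_j>0$ would map into $\O_X^2$. Hence $A=D_i|_C-B_i$ with $B_i\geq 0$ for $i=1,2$. Since $D_1.C+D_2.C=C^2$, for each of the finitely many decompositions the split locus has dimension at most $\min_i(D_i.C)-d\leq\frac12C^2-d$. This is the same bound as in the surface-induced case, and the corollary follows.
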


\medskip
\noindent
The techniques used to prove Theorem \ref{intro:thm1} can be divided into two cases:
In the $k \geq r$ case, we globalize the Lazarsfeld--Mukai construction pioneered in \cite{Lazarsfeld}, setting up a framework to systematically translate between relative Brill--Noether results on curves in a given linear system $|c_1|$ on a surface, to Brill--Noether results on the surface itself. On the technical level, this is facilitated by working with the moduli space of simple sheaves $\mathrm{Spl}(v)$ instead of the moduli space of stable sheaves $M(v)$, especially for surfaces of Picard rank $> 1$.
Under certain conditions on $K_X$ and $c_1$, we (re-)prove Brill--Noether results for curves in a given linear system and employ the previously developed framework to prove parts of Theorem~\ref{intro:thm1}.
In the $k<r$ case, we leverage the deformation theory of the \emph{moduli space of coherent systems} (Definition \ref{def:coherentSystem}) as developed in \cite{He} to obtain smoothness results on $\widetilde{BN}^k(v)^{\circ}$ directly.

\noindent The applications to basepoint-free pencils on curves come from a stronger simplicity lemma (Lemma~\ref{lemma:simplicity2}) in rank two which is a direct generalization of the Donagi--Morrison Lemma \cite{DonagiMorrison}, and through the observation that an open subspace of $\Spl(0,c_1,\ch_2)$ can be identified with a relative compactified Jacobian over $|c_1|_{\mathrm{int}}$.

\medskip
\noindent
The paper is organized as follows: In Section~\ref{preliminaries} we recall the Lazarsfeld--Mukai construction and set up the necessary homological algebra of generically globally generated sheaves, including key lemmas needed to prove smoothness results in later sections.
In Section~\ref{coherentsystems} we introduce a variant of the moduli spaces $\Syst^k(v)$ of coherent systems and outline its tangent-obstruction theory~\cite{He}, taking a slightly more general perspective that does not require ($\alpha$-)stability. We derive Proposition~\ref{intro:thm2} from the tangent-obstruction theory and prove certain smoothness results for $\Syst^k(v)$ in Theorems~\ref{theorem:SystSmoothnessFork=r} and~\ref{theorem:SystSmoothnessFork<r}.
The main mechanism in Section~\ref{BNTheorem} is that a generically generated coherent system $(E,V)$ defines a quotient $\mathrm{coker}(V\otimes \O_X\to E)$ and conversely, the Lazarsfeld--Mukai construction yields an association $(C,A,V)\mapsto (E_{C,A,V},H^0(C,A)^{\vee})$, where $C\subset X$ is an integral curve, $(A,V)$ a basepoint-free linear system on $C$ and $E_{C,A,V}$ the assoicated Lazarsfeld--Mukai bundle. We globalize these constructions over the relevant moduli spaces introduced in Section \ref{coherentsystems} (Lemma \ref{lemma:QuotientMapSurjective}). This amounts to the main correspondence (\ref{eqn:correspondence}) used to prove Theorem \ref{intro:thm1}.
In Section~\ref{ApplicationToCurves} we discuss applications to pencils on curves. We apply the generalized Donagi--Morrison Lemma~\ref{lemma:simplicity2} in the rank~$2$ case in combination with the strategies from Sections~\ref{coherentsystems} and~\ref{BNTheorem} to prove Theorem~\ref{intro:thm3}. No indecomposability assumption on $c_1$ is required, and the effectivity assumption on $-K_X$ can be relaxed to one for $-K_X|_C$ for $C \in |c_1|$. We end by discussing a smoothness criterion for pencils that does not require simplicity of the Lazarsfeld--Mukai bundle (Proposition~\ref{prop:smoothnessWithoutSimplicity}) and we identify a class of surfaces and linear systems (Example~\ref{example}) for which Proposition~\ref{intro:thm2} and the results of Section~\ref{ApplicationToCurves} may help in proving Green's conjecture for the curves in such linear systems.

\medskip
\noindent
\textbf{Acknowledgements:} During the time of this research, S.G. was supported by the RTG 2965 \emph{From Geometry to Numbers: Moduli, Hodge Theory, Rational Points} at the Humboldt-University of Berlin. A.K. was supported by the ERC Advanced Grant SYZYGY (No. 834172, PI: Gavril Farkas).

\medskip
\noindent
\textbf{Statement on the use of AI:} Occasional conversations with chatGPT-5.6 Sol have improved a few details of the paper: This concerns a slicker proof of Lemma~\ref{lemma:CurvesOnDelPezzo}, the realization that the moduli space $\Spl(v)$ need not always be connected depending on the generality of the allowed surfaces and Chern characters, and some improvements to Example~\ref{example}.
Of course, the responsibility for the validity of all results and proofs in this paper remains solely with the authors.

\section{Preliminaries}
\label{preliminaries}
\noindent In this section, we set up the necessary homological algebra of coherent sheaves on surfaces. In particular, we recall and generalize fundamental simplicity lemmas of Lazarsfeld and Donagi--Morrison that are crucial in proving our main Brill--Noether theorem in Section~\ref{BNTheorem} and the applications to curves in Section~\ref{ApplicationToCurves}. The central objects of study are the Lazarsfeld--Mukai bundles and their torsion-free relaxations. Let us recall the basic constructions in this context and introduce some necessary notation.

\subsection{Notation and conventions.}
\begin{itemize}
    \item For a divisor $D$ on a variety $Y$, we write $D\geq 0$ if $D$ is effective, i.e., $\O_Y(D)$ has a nonzero section. We write $D>0$ if $D$ is effective and not rationally equivalent to zero, $D \not\sim 0$.
    \item  We denote by $\mathrm{Spl}(v)$ the moduli space of pure simple sheaves with Chern character $v$. A coherent sheaf is said to be simple if $\Hom(E,E) \simeq \C$.
    \item We denote by $M(v)$ the moduli space of slope-stable sheaves with Chern character $v$ (with respect to a fixed polarization). The inclusion $M(v) \subseteq \mathrm{Spl}(v)$ is an open embedding.
    \item  We denote by $BN^k(v)$ the $k$-th Brill--Noether locus in $\mathrm{Spl}(v)$, i.e., 
    $$
    BN^k(v)\coloneqq \{E\in \mathrm{Spl}(v)\colon h^0(X,E)\geq k\}.
    $$
    We denote by $\widetilde{BN}^k(v)\subset BN^k(v)$ the locally closed Brill--Noether locus 
    $$
    \widetilde{BN}^k(v)\coloneqq \{E\in \mathrm{Spl}(v)\colon h^0(X,E) = k\}.
    $$
    These will always be understood as subspaces of the moduli space of simple sheaves. The corresponding stable Brill--Noether loci in $M(v)$ are denoted by $BN^k(v) \cap M(v)$.
    \item We view $c_1(E) \in \mathrm{CH}^1(X) \simeq \Pic(X)$ and we denote by $|c_1(E)|$ or $|c_1|$ the associated complete linear system on $X$.
\end{itemize}

\subsection{The Lazarsfeld--Mukai construction.} To a surface $X$ and a curve $i\colon C\hookrightarrow X$ together with a basepoint-free linear system $(A,V)$ on $C$ one can associate the \emph{Lazarsfeld--Mukai bundle} $E_{C,A,V}$ defined by the exact sequence
$$
0\to E_{C,A,V}^{\vee}\to V\otimes \O_X\to i_*A\to 0.
$$
The notation is justified as the kernel of the evaluation map is indeed a vector bundle of rank $\rk(E_{C,A,V}) = \dim V$. Dualizing yields
$$
0\to V^{\vee}\otimes \O_X\to E_{C,A,V}\to i_*L\to 0,
$$
where 
$$
L = A^{\vee}\otimes\omega_C\otimes\omega_X^{\vee}|_C\simeq \mathcal{E}xt^1_{\O_X}(i_\ast A,\O_X)|_C.
$$

\subsection{Some homological algebra}
The following notion will be crucial throughout the paper.

\begin{definition}
We say a coherent sheaf $E$ is \emph{generically globally generated} (or \emph{ggg} for short) if the total evaluation map $H^0(X,E)\otimes\O_X\to E$ is surjective at the generic point. In particular, if $E$ is ggg, then $h^0(X,E)\geq \rk(E)$ and there exists an injection $\O_X^{\rk(E)}\hookrightarrow E$.
\end{definition}

\begin{lemma}
    \label{nosubskyscraper}
    Let $X$ be a smooth projective surface. Let $F$ be a locally free sheaf on $X$ and $E$ a torsion-free sheaf on $X$ with $\rk(F)= \rk(E) = r$. If $F \hookrightarrow E$ is an injective map of sheaves, the cokernel $Q =\mathrm{coker}(F\to E)$ is either zero or supported in pure codimension one. In particular, $Q$ contains no subsheaves supported in dimension~$0$.
\end{lemma}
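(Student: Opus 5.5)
Since $F\hookrightarrow E$ is injective with $\rk F=\rk E=r$, the cokernel $Q$ has rank zero and is therefore a torsion sheaf supported on a proper closed subset of $X$. If $Q\neq 0$, it remains to rule out components of $\supp Q$ of dimension~$0$, and more strongly to show that $Q$ has no nonzero subsheaf supported in dimension $0$. Equivalently, $Q$ should be pure of dimension one. The plan is to prove the statement about subsheaves and read off the support statement from it.

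The natural tool is local cohomology at closed points, or equivalently depth. Let $T\subseteq Q$ be the maximal subsheaf supported in dimension $0$, and suppose it is nonzero at a point $x$. Localize at $x$, so $R=\O_{X,x}$ is a regular local ring of dimension $2$. The sequence $0\to F_x\to E_x\to Q_x\to 0$ gives a long exact sequence of local cohomology
$$H^0_{\mathfrak m}(E_x)\to H^0_{\mathfrak m}(Q_x)\to H^1_{\mathfrak m}(F_x).$$
Here $H^0_{\mathfrak m}(Q_x)=T_x\neq 0$. Since $F_x\simeq R^r$ is free and $\operatorname{depth} R=2$, we have $H^1_{\mathfrak m}(F_x)=0$. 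Since $E$ is torsion-free, $H^0_{\mathfrak m}(E_x)=0$. Hence $T_x=0$, a contradiction. This is the key step, and the only place where the hypotheses are used: local freeness of $F$ (depth $2$) and torsion-freeness of $E$ (depth $\geq 1$).

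If one prefers to avoid local cohomology, the same argument can be made via Ext. Let $T\subseteq Q$ be a nonzero subsheaf of finite length, and pull back the extension along $T\hookrightarrow Q$ to get $0\to F\to E'\to T\to 0$ with $E'\subseteq E$. The class of this extension lies in $\Ext^1(T,F)$. Since $F$ is locally free, $\mathcal{E}xt^i(T,F)=\mathcal{E}xt^i(T,\O_X)\otimes F$ vanishes for $i<2=\operatorname{codim}\supp T$, and the local-to-global spectral sequence gives $\Ext^1(T,F)=0$. The extension therefore splits, so $T\hookrightarrow E'\subseteq E$, which contradicts torsion-freeness of $E$.

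Finally, knowing that $Q$ has no zero-dimensional subsheaves, we deduce that $Q$ is pure of dimension one. Every associated point of $Q$ then has codimension one, so $\supp Q$ is a union of curves, i.e. of pure codimension one, as claimed.
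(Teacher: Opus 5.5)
Your proof is correct, but you argue locally where the paper argues globally.

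The paper twists by $\O_X(-m)$ for $m\gg 0$. A nonzero subsheaf $T_p\subseteq Q$ at a point would give $H^0(Q(-m))\neq 0$ for all $m$. On the other hand, $H^1(F(-m))\simeq H^1(F^\vee\otimes\omega_X(m))^\vee=0$ by Serre duality and Serre vanishing, and $H^0(E(-m))\subseteq H^0(E^{\vee\vee}(-m))=0$. The long exact sequence then yields a contradiction.

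You replace these two global vanishings by their local sources:
\begin{itemize}
\item $H^1_{\mathfrak m}(F_x)=0$, because $F_x$ is free over a regular local ring of dimension~$2$;
\item $H^0_{\mathfrak m}(E_x)=0$, because $E_x$ is torsion-free.
\end{itemize}
Your second version does the same via $\mathcal{E}xt^{\le 1}(T,\O_X)=0$: this gives $\Ext^1(T,F)=0$, so the pulled-back extension splits and $T$ embeds in $E$. Conceptually the two proofs are one argument, since $H^1(F(-m))=0$ for $m\gg0$ is the global shadow of $\operatorname{depth} F_x=2$.

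What your route buys:
\begin{itemize}
\item It needs no projectivity, ample twist or Serre duality.
\item It avoids the detour through $E^{\vee\vee}$.
\item It isolates exactly which depth conditions are used, so it works verbatim on any smooth or normal surface, proper or not.
\item You also spell out how pure-codimension-one support follows from the absence of zero-dimensional subsheaves, which the paper leaves implicit.
\end{itemize}
The paper's route stays within standard global cohomology of projective surfaces and needs no local cohomology or depth theory.
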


\begin{proof}
    Take $m\gg0$ and consider the short exact sequence
    $$
    0\to F(-m)\to E(-m)\to Q(-m)\to 0.
    $$
    Assume there exists a nonzero subsheaf $T_p \subset Q$ supported on a closed point $p$. Then $T_p(-m) = T_p \subseteq Q(-m)$ and hence $h^0(X,Q(-m))\neq 0$ for all $m$ because $h^0(X, T_p) = \dim_{\C}(T_p(p)) > 0$. Consider the cohomology exact sequence:
    $$
    \cdots\to H^0(X,E(-m))\to H^0(X,Q(-m))\to H^1(X,F(-m))\to \cdots.
    $$
    Since $F$ is locally free, Serre duality applies and we find $H^1(X,F(-m))\simeq H^1(X,F^{\vee}\otimes \omega_X(m))^\vee = 0$ for $m\gg0$. Moreover, as $E$ is torsion-free, we find $E\subseteq E^{\vee\vee}$ and $E^{\vee\vee}$ is locally free on a surface, thus 
    $$
    H^0(X,E(-m))\subseteq H^0(X,E^{\vee\vee}(-m)) \simeq H^2(X,E^{\vee}\otimes \omega_X(m))^\vee = 0
    $$
    for $m \gg 0$. This contradicts $h^0(X,Q(-m))\neq 0$ for all $m$.
\end{proof}

\begin{proposition}
    \label{extensionistorsionfree}
    Let $X$ be a smooth projective surface and $i \colon C \hookrightarrow X$ the embedding of an integral curve. Let $L$ be a torsion-free sheaf of rank $1$ on $C$. Then any sheaf $F$ of rank $r \geq 1$ sitting in a short exact sequence
    $$
    0 \to \O_X^r \to F \to i_\ast L \to 0
    $$
    is torsion-free unless the sequence splits.
\end{proposition}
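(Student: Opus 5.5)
Let $T\subseteq F$ denote the torsion subsheaf; we show that $T\neq 0$ forces the sequence to split. Since $\O_X^r$ is torsion-free, $T\cap \O_X^r = 0$, so the composite $T\to F\to i_\ast L$ is injective. Identify $T$ with its image $T'\subseteq i_\ast L$.

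The first step is to show that $T' = i_\ast L$ whenever $T\neq 0$. Because $C$ is integral and $L$ is torsion-free of rank $1$ on $C$, every nonzero subsheaf of $i_\ast L$ has the form $i_\ast L'$ with $L'\subseteq L$ and $L/L'$ supported on finitely many points. Set $Q \coloneqq F/T$, which sits in
$$
0\to \O_X^r\to Q\to i_\ast(L/L')\to 0 .
$$
Its cokernel $i_\ast(L/L')$ is supported in dimension $0$.

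We next check that $Q$ is torsion-free. If $Q$ had torsion $T_Q$, then $T_Q$ would again inject into $i_\ast(L/L')$, so it would be $0$-dimensional. Its preimage in $F$ would then be an extension of a $0$-dimensional sheaf by $T$, hence torsion, contradicting the maximality of $T$. So $Q$ is torsion-free of rank $r$, and it contains the locally free $\O_X^r$ with cokernel supported in dimension $0$. By Lemma~\ref{nosubskyscraper}, that cokernel is either zero or of pure codimension one. Hence $L/L'=0$, which gives $T' = i_\ast L$ and $Q\simeq \O_X^r$.

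The last step is to produce the splitting. Since $T\to i_\ast L$ is an isomorphism, its inverse composed with $T\hookrightarrow F$ is a section $i_\ast L\to F$ of the surjection $F\to i_\ast L$. Therefore the sequence splits.

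The main obstacle is the middle step: controlling the torsion of $Q=F/T$ and then correctly feeding the result into Lemma~\ref{nosubskyscraper}. Integrality of $C$ enters through the claim that nonzero subsheaves of $i_\ast L$ have $0$-dimensional cokernel. This uses that $L$ is torsion-free of rank $1$ on an integral curve, so any nonzero subsheaf $L'$ is generically equal to $L$.
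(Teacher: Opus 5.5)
Your proof is correct, and it reaches the conclusion by a somewhat different route from the paper.

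The paper works with the reflexive hull $F^{\vee\vee}$. It builds a $3\times 3$ diagram comparing $0\to\O_X^r\to F\to i_\ast L\to 0$ with $0\to\O_X^r\to F^{\vee\vee}\to i_\ast A\to 0$, and identifies $F_{\mathrm{tor}}$ with the kernel $K$ of $i_\ast L\to i_\ast A$ via the snake lemma. It then splits into cases according to whether $A$ has rank $0$ or $1$ on $C$. In the rank-$0$ case, Lemma~\ref{nosubskyscraper} applied to the locally free $F^{\vee\vee}$ forces $A=0$, so $F_{\mathrm{tor}}\simeq i_\ast L$ and the sequence splits. In the rank-$1$ case, torsion-freeness of $L$ forces $K=0$.

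You instead embed $T$ into $i_\ast L$ and use integrality of $C$ to see that a nonzero $T$ has $0$-dimensional cokernel there. You then apply Lemma~\ref{nosubskyscraper} directly to the torsion-free quotient $F/T$.

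The underlying mechanism is the same in both arguments. The torsion of $F$ injects into $i_\ast L$, and a $0$-dimensional cokernel of $\O_X^r$ inside a torsion-free rank-$r$ sheaf must vanish.

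Your version is more direct. It avoids the double dual and the diagram chase. It also avoids the paper's somewhat loosely justified claim that $\coker(\O_X^r\to F^{\vee\vee})$ is scheme-theoretically supported on $C$, which it needs for the notation $i_\ast A$. The paper's version, in exchange, records extra structure in the torsion-free case: $F$ sits inside a vector bundle extension $0\to\O_X^r\to F^{\vee\vee}\to i_\ast A\to 0$ with $L\subseteq A$.
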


\begin{proof}
    We show that the extension splits or the natural map $F\to F^{\vee\vee}$ is injective. Consider the following commutative diagram with exact rows and columns:

\[\begin{tikzcd}
	&& 0 & 0 \\
	&& {F_{\text{tor}}} & K \\
	0 & {\mathcal{O}_X^r} & F & {i_*L} & 0 \\
	0 & {\mathcal{O}_X^r} & {F^{\vee \vee}} & {i_*A} & 0 \\
	&& {\bigoplus_pF_p} & {\bigoplus_{p'} F'_{p'}} & 0 \\
	&& 0 & 0
	\arrow[from=1-3, to=2-3]
	\arrow[from=1-4, to=2-4]
	\arrow["\simeq", from=2-3, to=2-4]
	\arrow[from=2-3, to=3-3]
	\arrow[from=2-4, to=3-4]
	\arrow[from=3-1, to=3-2]
	\arrow[from=3-2, to=3-3]
	\arrow[equal, from=3-2, to=4-2]
	\arrow[from=3-3, to=3-4]
	\arrow[from=3-3, to=4-3]
	\arrow[from=3-4, to=3-5]
	\arrow[from=3-4, to=4-4]
	\arrow[from=4-1, to=4-2]
	\arrow[from=4-2, to=4-3]
	\arrow[from=4-3, to=4-4]
	\arrow[from=4-3, to=5-3]
	\arrow[from=4-4, to=4-5]
	\arrow[from=4-4, to=5-4]
	\arrow[from=5-3, to=5-4]
	\arrow[from=5-3, to=6-3]
	\arrow[from=5-4, to=5-5]
	\arrow[from=5-4, to=6-4]
\end{tikzcd}\]
The left vertical sequence is the four-term exact sequence coming from the natural map $F\to F^{\vee\vee}$. The cokernel of $F\to F^{\vee \vee}$ is zero or supported in dimension zero and is thus of the form $\bigoplus_pF_p$ where $F_p$ is a finite length sheaf supported at $p$. Then $F^{\vee\vee}$ is a vector bundle since reflexive sheaves on a surface are vector bundles. The composition $\O_X^r\to F\to F^{\vee\vee}$ is injective. We denote the cokernel by $i_*A$; this notation is justified as we will see shortly. From the universal property of the cokernel we obtain a map $i_*L \to i_*A$. Denote its cokernel by $Q$ for the moment. Again by the universal property of the cokernel, we get a surjective map $\bigoplus_p F_p \to Q$. Thus, indeed, $Q$ has the form $Q = \bigoplus_{p'} F'_{p'}$. As $\bigoplus_p F_p$ and thus $\bigoplus_{p'} F'_{p'}$ are supported on $C$ (scheme-theoretically), it follows that $i_*A$ is supported on $C$ as well, justifying the notation.

\noindent Now, $A$ is of rank at most one on $C$ by the right vertical sequence. Therefore, two cases can occur:
\begin{enumerate}[(i)]
    \item $A$ has rank zero on $C$. In this case $A=0$ by Proposition \ref{nosubskyscraper}. Hence, the sheaf $K$ is isomorphic to $i_*L$ and, by the Snake Lemma, we get $i_\ast L \simeq K \simeq F_{\mathrm{tor}}$. The map $i_*L \xrightarrow{\sim} K \xrightarrow{\sim}F_{\mathrm{tor}} \to F$ then provides a splitting of the original sequence $F \simeq \O_X^r \oplus i_*L$.
    \item $A$ has rank one on $C$. In this case the map $L \to A$ has full rank since the cokernel has rank zero. As $L$ is a torsion-free sheaf of rank one on $C$, the map $L \to A$ is necessarily injective. It follows that $K=0$ and, again by the Snake Lemma, $F_{\mathrm{tor}} = 0$, in particular~$F$ is torsion-free.\qedhere
\end{enumerate}
    
\end{proof}

\begin{lemma}\label{lemma:simplenessAndDoubleDual}
    Let $E$ be a torsion-free coherent sheaf on a smooth surface $X$. If $ E^{\vee \vee}$ is a simple sheaf, i.e., $\Hom(E^{\vee \vee}, E^{\vee \vee}) \simeq \C$, then so is $E$.
\end{lemma}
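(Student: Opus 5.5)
The plan is to show that every endomorphism of $E$ extends to an endomorphism of $E^{\vee\vee}$, and that this extension map $\Hom(E,E) \to \Hom(E^{\vee\vee},E^{\vee\vee})$ is injective. Once that is done, $\Hom(E,E)$ is a subspace of $\Hom(E^{\vee\vee},E^{\vee\vee}) \simeq \C$. It contains the scalars, hence equals $\C$.

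\emph{Extension.} Since $E$ is torsion-free, the natural map $\iota\colon E \to E^{\vee\vee}$ is injective. Its cokernel $T$ is supported in dimension zero. Double dualization is a functor, so any $\varphi \in \Hom(E,E)$ induces $\varphi^{\vee\vee} \in \Hom(E^{\vee\vee},E^{\vee\vee})$. By naturality of $\iota$ we have $\varphi^{\vee\vee}\circ \iota = \iota \circ \varphi$. The assignment $\varphi \mapsto \varphi^{\vee\vee}$ is $\C$-linear and sends $\mathrm{id}_E$ to $\mathrm{id}_{E^{\vee\vee}}$.

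\emph{Injectivity.} Suppose $\varphi^{\vee\vee} = 0$. Then $\iota\circ\varphi = 0$, and since $\iota$ is injective, $\varphi = 0$. So the map is injective. Alternatively, one can argue locally: on the open set $U$ where $E$ is locally free, $\iota$ is an isomorphism, so $\varphi|_U = 0$. Then the image of $\varphi$ is a subsheaf of the torsion-free sheaf $E$ supported on the finite set $X\setminus U$, hence it is zero.

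\emph{Conclusion.} The injectivity gives $\dim \Hom(E,E) \leq \dim \Hom(E^{\vee\vee},E^{\vee\vee}) = 1$. Since $\C\cdot\mathrm{id}_E \subseteq \Hom(E,E)$, we get $\Hom(E,E)\simeq\C$. There is no real obstacle here. The only point that needs care is the injectivity of $E \to E^{\vee\vee}$, which is exactly where torsion-freeness of $E$ is used.
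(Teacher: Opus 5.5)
Your proof is correct and is essentially the paper's argument: both rely on naturality of $\iota\colon E \to E^{\vee\vee}$, i.e.\ $\varphi^{\vee\vee}\circ\iota = \iota\circ\varphi$, together with injectivity of $\iota$ from torsion-freeness. The paper writes $f^{\vee\vee} = \lambda\cdot\mathrm{id}$ and deduces $\iota\circ(f-\lambda\cdot\mathrm{id}) = 0$, whereas you phrase the same step as injectivity of the linear map $\varphi \mapsto \varphi^{\vee\vee}$.
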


\begin{proof}
    For every coherent sheaf there is a canonical map $i\colon E \rightarrow E^{\vee \vee}$ which is injective if and only if $E$ is torsion-free. These canonical maps provide a natural transformation from the identity functor to the biduality functor. Hence, for any map $f\colon E \rightarrow E$, we have a commutative square
    \begin{equation*}
        \begin{tikzcd}
            E \ar[d, hook, "i"'] \ar[r, "f"] & E \ar[d, hook, "i"] \\
            E^{\vee \vee} \ar[r, "f^{\vee \vee}"'] & E^{\vee \vee}.
        \end{tikzcd}
    \end{equation*}
    By assumption, $f^{\vee \vee} = \lambda \cdot \mathrm{id}$ for some $\lambda \in \C$. But then, replacing the top horizontal map $f\colon E \rightarrow E$ in the diagram by $\cdot \lambda\colon E \rightarrow E$ still makes the diagram commutative. This implies $i \circ (f - \lambda \cdot \mathrm{id}) = 0$, hence $f = \lambda \cdot \mathrm{id}$ as $i$ is injective.
\end{proof}

\begin{definition}
    Let $E$ be a coherent sheaf on $X$. We say that a map $f\colon \O_X^k \rightarrow E$ \emph{splits off a trivial summand} if there is a commutative diagram
    \begin{equation*}
        \begin{tikzcd}[ampersand replacement=\&,column sep = 6em, row sep=3em]
            \O_X^k \ar[d, "\simeq"'] \ar[r, "f"] \& E \ar[d, "\simeq"] \\
            \O_X \oplus \O_X^{k-1} \ar[r, "{\begin{bsmallmatrix} 1 & 0 \\ 0 & \ast \end{bsmallmatrix}}"'] \& \O_X \oplus E'
        \end{tikzcd}
    \end{equation*}
    with vertical isomorphisms.
\end{definition}

\begin{remark}\label{rmk:splittingOffTrivialSummand}
    An injection $\O_X^r \hookrightarrow E$ with $r = \rk(E)$ splits off a trivial summand if and only if $E$ has a nonzero map to $\O_X$, i.e., $\Hom(E, \O_X) \neq 0$. Indeed, the composition $\O_X^r \hookrightarrow E \rightarrow \O_X$, where the second map is any such nonzero map, is still nonzero. If not, the nonzero map $E \rightarrow \O_X$ would factor through the cokernel $\mathrm{coker}(\O_X^r \hookrightarrow E) \rightarrow \O_X$ which is necessarily zero because it is a map from a torsion sheaf to a torsion-free sheaf. But now, any nonzero map $\O_X^r \rightarrow \O_X$ is automatically surjective and in fact at least one of the $r$ direct summands $\O_X \subseteq \O_X^r$ must map isomorphically onto $\O_X$ under the composition $\O_X \subseteq \O_X^r \hookrightarrow E \rightarrow \O_X$. This, in turn, implies that $\O_X^r \hookrightarrow E$ splits off a trivial summand.
\end{remark}

The same reasoning as in Remark~\ref{rmk:splittingOffTrivialSummand} also shows that on a smooth projective surface $X$ with $H^0(X,\omega_X) = 0$ any ggg sheaf $E$ on $X$ satisfies $H^2(X,E) = 0$. Indeed, if $H^2(X,E) \neq 0$, there exists a nonzero morphism $E \to \omega_X$ which, after precomposition with $\O_X^r \hookrightarrow E$, yields a nonzero section of $\omega_X$. For $\omega_X \simeq \O_X$ the same is true unless $E$ has $\O_X$ as a direct summand.

\subsection{Simplicity and obstruction lemmas}

\begin{definition}\label{def:indecomposable}
    Let $X$ be a smooth projective variety. We call a nonzero effective divisor class $D \in \Pic(X)$ \emph{indecomposable} if there is no decomposition $D = H_1 + H_2$ as a sum of two nonzero effective divisor classes $H_1$ and $H_2$. Equivalently, all members $C \in |O_X(D)|$ are irreducible and reduced.
\end{definition}

The following is a variation of \cite[Lemma~1.3]{Lazarsfeld}.

\begin{lemma}[Simplicity Lemma 1]\label{simplicitylemma1}
    Let $E$ be a ggg torsion-free sheaf on a smooth projective surface $X$ with $\rk(E)\geq 1$. Assume that $c_1(E)$ is indecomposable. Then $E$ is simple if and only if $E$ does not have a trivial direct summand or, equivalently, $H^0(E^\vee) = \Hom(E, \O_X) = 0$.
\end{lemma}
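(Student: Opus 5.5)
The plan is to prove the two equivalences separately. Set $r = \rk(E)$. Since $E$ is ggg, we may fix an injection $\O_X^r \hookrightarrow E$.

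\emph{No trivial summand $\iff$ $\Hom(E,\O_X)=0$.} This is essentially Remark~\ref{rmk:splittingOffTrivialSummand}. If $\Hom(E,\O_X) \neq 0$, the remark shows that $\O_X^r \hookrightarrow E$ splits off a trivial summand, so $E \simeq \O_X \oplus E'$. Conversely, a trivial summand of $E$ obviously gives a nonzero map $E \to \O_X$.

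\emph{Simple $\Rightarrow$ no trivial summand.} Suppose $E \simeq \O_X \oplus E'$. Then $c_1(E') = c_1(E) \neq 0$, because an indecomposable class is nonzero by definition, so $E' \neq 0$. Hence the projection onto $\O_X$ is an endomorphism of $E$ that is not a scalar, and $E$ is not simple.

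\emph{Converse, following Lazarsfeld's argument.} Assume $\Hom(E,\O_X) = 0$ and let $f \in \Hom(E,E)$ be arbitrary.
\begin{enumerate}[(1)]
\item Pass to $f^{\vee\vee}$ on the vector bundle $E^{\vee\vee}$. For each $t \in \C$, $\det(f^{\vee\vee} - t)$ is a global section of $\O_X$, hence constant. So the characteristic polynomial of $f$ at the generic point has constant coefficients, and we can choose a root $\lambda \in \C$.
\item Put $g = f - \lambda$. Then $g$ has generic rank $< r$. Suppose $g \neq 0$ and let $I = \mathrm{im}(g) \subseteq E$, which is torsion-free with $0 < \rk I < r$. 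Also $E/I$ is a quotient of $E$ of positive generic rank.
\item Both $I$ and $E/I$ are generically generated by global sections, since they receive generically surjective maps from $H^0(E) \otimes \O_X$ (for $I$, through $E \to I$). A ggg sheaf has effective $c_1$. For a torsion-free ggg sheaf $G$ of rank $s$, choose $\O_X^s \hookrightarrow G$; its determinant is a nonzero section of $\det G$. For a ggg sheaf with torsion, its $c_1$ is the $c_1$ of its torsion part, which is effective, plus the $c_1$ of its torsion-free quotient, which is ggg.
\item Additivity gives $c_1(E) = c_1(I) + c_1(E/I)$ with both terms effective, so indecomposability forces one of them to vanish. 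Since $D$ and $-D$ both effective implies $D = 0$, a vanishing $c_1(E/I)$ also forces the $c_1$ of its torsion-free quotient to vanish.
\item In either case we obtain a nonzero torsion-free ggg quotient $G$ of $E$ (namely $I$, or $E/I$ modulo torsion) with $c_1(G) = 0$. For $\O_X^s \hookrightarrow G \subseteq G^{\vee\vee}$, the cokernel of $\O_X^s \to G^{\vee\vee}$ has $c_1 = 0$, so it is zero-dimensional. By Lemma~\ref{nosubskyscraper} it vanishes, hence $G^{\vee\vee} \simeq \O_X^s$.
\item The composition $E \twoheadrightarrow G \hookrightarrow \O_X^s \to \O_X$, with a suitable projection, is then nonzero. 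This contradicts $\Hom(E,\O_X) = 0$. Therefore $g = 0$, i.e.\ $f = \lambda \cdot \mathrm{id}$, and $E$ is simple.
\end{enumerate}

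The main obstacle is step (3)--(4): showing that $c_1$ of a ggg sheaf that may have torsion (here $E/I$) is effective, and correctly tracking the torsion contribution when $c_1(E/I) = 0$. The argument uses only the determinant sections of the torsion-free parts and the effectivity of the first Chern class of a torsion sheaf supported in codimension one.
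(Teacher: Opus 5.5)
Your proof is correct and follows essentially the same route as the paper. Both run Lazarsfeld's argument: the constant determinant yields a rank-dropping endomorphism $f-\lambda\cdot\mathrm{id}$; $c_1(E)$ is split into effective pieces coming from the image and from the (torsion-free part of the) cokernel; and indecomposability together with Lemma~\ref{nosubskyscraper} forces a trivial quotient, contradicting $\Hom(E,\O_X)=0$. The only cosmetic differences are that you group the decomposition as $c_1(I)+c_1(E/I)$ before separating off torsion, and that you conclude $G^{\vee\vee}\simeq\O_X^s$ rather than $G\simeq\O_X^s$; neither changes the argument.
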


\begin{proof}
    Let $r \coloneqq \rk(E)$. Clearly, if an injection $\O_X^r \hookrightarrow E$ splits off a trivial summand, then in particular $E \simeq \O_X \oplus E'$ with $E' \neq 0$, so $E$ is not simple.

    Conversely we may assume that $E$ does not have a trivial direct summand. If $E$ is not simple, there is a nonzero endomorphism $\phi: E \rightarrow E$ which drops rank everywhere. To see this, let $f: E \rightarrow E$ be any endomorphism that is not a multiple of the identity. Let $\lambda \in \C$ be an eigenvalue of the linear map $f(p): E(p) \rightarrow E(p)$ at some point $p \in X$ in the open locus where $E$ is locally free. Then the determinant section $\det(f - \lambda \cdot \mathrm{id}) \in H^0(\det(E)^\vee \otimes \det(E)) \simeq H^0(\O_X)$ vanishes at~$p$ and hence everywhere, but $f - \lambda \cdot \mathrm{id} \neq 0$. Hence, $\phi \coloneqq f - \lambda \cdot \mathrm{id}$ has the desired property. Just as in Lazarsfeld's paper \cite{Lazarsfeld}, we define $N \coloneqq \mathrm{Im}(\phi)$, $M \coloneqq \mathrm{coker}(\phi)$ and $M_{\mathrm{tf}} \coloneqq M/M_{\mathrm{tor}}$, the latter denoting the torsion-free part of $M$. The ranks of both $N$ and $M_{\mathrm{tf}}$ on $X$ are positive and strictly smaller than $r$. From the short exact sequences
    \begin{gather*}
        0 \rightarrow N \rightarrow E \rightarrow M \rightarrow 0, \\
        0 \rightarrow M_{\mathrm{tor}} \rightarrow M \rightarrow M_{\mathrm{tf}} \rightarrow 0,
    \end{gather*}
    we obtain $c_1(E) = c_1(N) + c_1(M_{\mathrm{tf}}) + c_1(M_{\mathrm{tor}})$. Clearly, $c_1(M_{\mathrm{tor}}) \geq 0$. Now, observe that the compositions $\O_X^r \hookrightarrow E \twoheadrightarrow N$ and $\O_X^r \hookrightarrow E \twoheadrightarrow M_{\mathrm{tf}}$ give surjections $\O_{X,\eta}^r \twoheadrightarrow N_\eta$ and $\O_{X,\eta}^r \twoheadrightarrow (M_{\mathrm{tf}})_\eta$ at the generic point $\eta \in X$ because $\O_{X,\eta}^r \overset{\simeq}{\longrightarrow} E_\eta$ is an isomorphism. From this, we obtain maps $\O_X^{\rk N} \rightarrow N$ and $\O_X^{\rk M_{\mathrm{tf}}} \rightarrow M_{\mathrm{tf}}$ that are isomorphisms at $\eta$ and therefore injective. Consider the induced short exact sequences
     \begin{gather*}
        0 \rightarrow \O_X^{\rk N} \rightarrow N \rightarrow P \rightarrow 0, \\
        0 \rightarrow \O_X^{\rk M_{\mathrm{tf}}} \rightarrow M_{\mathrm{tf}} \rightarrow P' \rightarrow 0.
    \end{gather*}
    Then $P$ and $P'$ are both torsion sheaves and in particular $c_1(N)\geq0$ and $c_1(M_{\mathrm{tf}})\geq 0$. Since $c_1(E)$ is indecomposable, at least one of $c_1(N)$ and $c_1(M_{\mathrm{tf}})$ must vanish. If $c_1(N) = c_1(P) = 0$, then $P$ is supported in dimension $0$. But since $N$ is torsion-free, Lemma~\ref{nosubskyscraper} implies $P = 0$, i.e., $N \simeq \O_X^{\rk(N)}$. As $E$ surjects onto $N$, this implies that $E$ has a trivial direct summand. The conclusion in the case $c_1(P') = c_1(M_{\mathrm{tf}}) = 0$ is the same.
\end{proof}

\begin{lemma}\label{lemma:StrongGenericGeneration}
    Let $E$ be a ggg torsion-free sheaf on a smooth projective surface $X$ and $E \not\simeq \O_X \oplus E'$. Assume that $c_1(E)$ is indecomposable. Then $E$ is strongly ggg, i.e., every map $\O_X^{\rk(E)} \rightarrow E$ which induces an injection on global sections is an injective map of sheaves.
\end{lemma}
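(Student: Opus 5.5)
Let $r \coloneqq \rk(E)$ and let $s\colon \O_X^r \to E$ be a map inducing an injection $H^0(\O_X^r) = \C^r \hookrightarrow H^0(X,E)$. The aim is to show that $s$ is injective as a map of sheaves. Since $\O_X^r$ is torsion-free, it suffices to show that $s$ is injective at the generic point $\eta$, i.e.\ that $s$ has full rank $r$ generically. I will argue by contradiction. Suppose $s$ drops rank generically, and let $N \coloneqq \mathrm{Im}(s) \subseteq E$, a torsion-free sheaf of rank $0 < m < r$. Here $m > 0$ because the induced map on sections is injective and $r \geq 1$, so $s \neq 0$.

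Next I set $M \coloneqq E/N$ and $M_{\mathrm{tf}} \coloneqq M/M_{\mathrm{tor}}$, so that $\rk M_{\mathrm{tf}} = r - m > 0$. As in the proof of Lemma~\ref{simplicitylemma1}, $c_1(E) = c_1(N) + c_1(M_{\mathrm{tf}}) + c_1(M_{\mathrm{tor}})$, and $c_1(M_{\mathrm{tor}}) \geq 0$. I then need to know that both $N$ and $M_{\mathrm{tf}}$ have effective first Chern class.
\begin{itemize}
\item For $N$: it is a quotient of $\O_X^r$ that is generically globally generated. Choosing $m$ sections of $\O_X^r$ whose images span $N_\eta$ gives an injection $\O_X^m \hookrightarrow N$ with torsion cokernel $P$. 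Hence $c_1(N) = c_1(P) \geq 0$.
\item For $M_{\mathrm{tf}}$: it is a quotient of $E$, which is ggg, so $M_{\mathrm{tf}}$ is generically generated by the images of $H^0(E)$. The same argument gives $\O_X^{r-m} \hookrightarrow M_{\mathrm{tf}}$ with torsion cokernel $P'$, so $c_1(M_{\mathrm{tf}}) \geq 0$.
\end{itemize}

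By indecomposability of $c_1(E)$, one of $c_1(N)$, $c_1(M_{\mathrm{tf}})$ must vanish; the third term cannot absorb everything because $c_1(E)\neq 0$. The two cases are handled as follows.
\begin{itemize}
\item If $c_1(M_{\mathrm{tf}}) = 0$, then $P'$ is supported in dimension $0$. Lemma~\ref{nosubskyscraper} gives $P' = 0$, so $M_{\mathrm{tf}} \simeq \O_X^{r-m}$. The surjection $E \twoheadrightarrow \O_X^{r-m}$ then yields $\Hom(E,\O_X) \neq 0$. Splitting it off as in Remark~\ref{rmk:splittingOffTrivialSummand}, with a ggg injection $\O_X^r \hookrightarrow E$ provided by the ggg hypothesis, gives $E \simeq \O_X \oplus E'$, a contradiction.
\item If $c_1(N) = 0$, then likewise $N \simeq \O_X^m$, so $s$ factors as $\O_X^r \twoheadrightarrow \O_X^m \hookrightarrow E$. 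A surjection $\O_X^r \to \O_X^m$ is given by a constant $m \times r$ matrix of rank $m$. Since $m < r$, this matrix has a nonzero kernel vector in $\C^r$, which is a nonzero global section of $\O_X^r$ killed by $s$. This contradicts injectivity on $H^0$.
\end{itemize}

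The main point needing care is the effectivity of $c_1(N)$ and $c_1(M_{\mathrm{tf}})$, together with the identification of the surjection $\O_X^r \to \O_X^m$ as constant. The latter holds because $H^0(\O_X) = \C$ on a projective surface and a surjective map of trivial bundles splits.
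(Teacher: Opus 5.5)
Your proof is correct and follows the paper's argument. Both split $c_1(E)$ into effective contributions from $\mathrm{Im}(s)$ and from the quotient, use indecomposability, and conclude either that $E$ has a trivial summand or that $H^0(s)$ factors through $\C^{m}$ with $m<r$. The only difference is that the paper uses the saturation $F$ of $\mathrm{Im}(s)$, so $E/F$ is already torsion-free and the decomposition $c_1(F)+c_1(E/F)$ has two terms instead of your three. In your version, the reason one of $c_1(N)$, $c_1(M_{\mathrm{tf}})$ vanishes is simply that otherwise $c_1(N)+\bigl(c_1(M_{\mathrm{tf}})+c_1(M_{\mathrm{tor}})\bigr)$ would decompose $c_1(E)$; your remark that the torsion term ``cannot absorb everything'' is not needed and not justified as stated.
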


\begin{proof}
    Write $r \coloneqq \rk(E)$ and let $\alpha: \O_X^r \rightarrow E$ be any map that gives an injection on global sections. Let $F \subseteq E$ be the saturation of $\mathrm{Im}(\alpha)$. Then $\rk(F) \leq r$ and $\alpha$ is injective if and only if $\rk(F) = r$. Moreover, $F$ is ggg with $h^0(F) \geq r \geq \rk(F)$, so $c_1(F) \geq 0$.
    Now, by definition, $E/F$ is torsion-free and ggg because $E/F$ is a quotient of $E$. In particular, $c_1(E/F) \geq 0$. Since $c_1(E) = c_1(F) + c_1(E/F)$ and $c_1(E)$ is indecomposable, one of the two summands must vanish.
    If $c_1(E/F) = 0$, then $E/F \simeq \O_X^{r-\rk(F)}$ because $E/F$ is ggg. If $r > \rk(F)$, this means that $E$ has $\O_X$ as a direct summand. If on the other hand $c_1(F) = 0$, then $F \simeq \O_X^{\rk(F)}$, so $r \leq h^0(F) = \rk(F)$, implying that $\alpha$ is injective.
\end{proof}

\begin{remark}
A sheaf $E$ being strongly ggg is a quite restrictive property. For example, if $E$ is strongly ggg then any subsheaf $F\subset E$ with $\rk(F)<\rk(E)$ satisfies $h^0(F)\leq \rk(F)$. Indeed, assume $h^0(F) >  \rk(F)$, then the total evaluation map $H^0(F)\otimes \O_X \to F$ is not injective. If $h^0(F)<\rk(E)$, adding $\rk(E)-h^0(F)$ linearly independent global sections to $H^0(F)$, we obtain an injection $\O_X^{\rk(E)}\hookrightarrow E$, implying injectivity of $H^0(F)\otimes \O_X \to F$. If $h^0(F)\geq \rk(E)$, choose any $\rk(E)$-dimensional subspace $V\subset H^0(F)\subset H^0(E)$. Then $V \otimes \O_X \hookrightarrow E$ is injective and restricts to an injective map $V \otimes \O_X \to F$, contradicting $\rk(F) < \rk(E)$. In this sense, being strongly ggg vaguely resembles a slope stability condition; it restricts the number of global sections of all subsheaves and therefore, in some sense, their positivity. The previous two lemmas show that any torsion-free simple ggg sheaf on a surface with $c_1(E)$ indecomposable is strongly ggg.
\end{remark}

\begin{lemma}
\label{lemma:intermediatesimplicity}
    Let $E$ be a ggg torsion-free sheaf on a smooth projective surface $X$. Assume that the complete linear system $|c_1(E)|$ is non-empty and consists only of reduced curves or, equivalently, that for every nonzero effective divisor class $H$, the class $c_1(E) - 2H$ is not effective. Assume in addition that $E$ satisfies at least one of the following two conditions:
    \begin{enumerate}[(i)]
        \item $\rk(E) = 2$, or
        \item $E$ is strongly ggg, i.e., any map $\O_X^{\rk(E)} \rightarrow E$ that induces an injection on global sections is an injective map of sheaves.
    \end{enumerate}
    Then $E$ is either simple or a direct sum $E \simeq E' \oplus E''$ with $E' \neq 0 \neq E''$.
\end{lemma}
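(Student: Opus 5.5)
The plan is to argue by contradiction: assume $E$ is not simple and not decomposable, and use the finite-dimensional algebra $\mathrm{End}(E)$ to produce a square-zero endomorphism. Its image and kernel then force a decomposition of $c_1(E)$ that contains some effective class twice.

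\textbf{Reduction to a square-zero endomorphism.} $\mathrm{End}(E)$ is a finite-dimensional $\C$-algebra. If it contained a nontrivial idempotent $e$, then $E\simeq \ker e\oplus \mathrm{Im}\, e$ with both summands nonzero, and we are done. So assume $\mathrm{End}(E)$ is local. Then every endomorphism is $\lambda\cdot\mathrm{id}+\nu$ with $\nu$ nilpotent. If $E$ is not simple there is a nonzero nilpotent $\nu$, and replacing $\nu$ by a suitable power gives $\phi\neq 0$ with $\phi^2=0$. Put $N\coloneqq \mathrm{Im}(\phi)$ and $K\coloneqq\ker(\phi)$. Then:
\begin{itemize}
\item $N\subseteq K$, and $E/K\simeq N$.
\item $N$ is torsion-free, being a subsheaf of $E$, so $K$ is saturated in $E$.
\item Writing $s=\rk N$, we have $0<s\le \rk K=r-s$.
\end{itemize}
Since $N$ is a quotient of the ggg sheaf $E$, it is ggg. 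As in the proof of Lemma~\ref{simplicitylemma1}, there is an injection $\O_X^{s}\hookrightarrow N$ with torsion cokernel, so $c_1(N)\ge 0$. From the two short exact sequences we get
$$
c_1(E)=c_1(K)+c_1(N)=2c_1(N)+c_1(K/N).
$$
The goal is to show $c_1(K/N)\ge 0$. The hypothesis that $c_1(E)-2H$ is never effective for $H>0$ then forces $c_1(N)=0$.

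\textbf{Concluding once $c_1(N)=0$.} In this case the cokernel of $\O_X^s\hookrightarrow N$ is supported in dimension $0$, so Lemma~\ref{nosubskyscraper} gives $N\simeq\O_X^s$. The surjection $E\twoheadrightarrow N\twoheadrightarrow \O_X$ shows $\Hom(E,\O_X)\neq0$. By Remark~\ref{rmk:splittingOffTrivialSummand}, an injection $\O_X^r\hookrightarrow E$ then splits off a trivial summand, so $E\simeq \O_X\oplus E'$ with $E'\neq 0$, since $r\ge 2s\ge 2$. This contradicts locality of $\mathrm{End}(E)$.

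\textbf{Case (i), $\rk E=2$.} Here $s=1$, so $N\subseteq K$ are both rank $1$. Hence $K/N$ is a torsion sheaf and $c_1(K/N)\ge 0$ automatically, which finishes this case.

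\textbf{Case (ii), strongly ggg.} This is the main obstacle, because $K/N$ may now have positive rank and no a priori positivity. My plan is to show that $K$ is itself generically generated, more precisely that $K/N$ receives a generically surjective map from a trivial bundle; this gives $c_1(K/N)\ge0$ as before. To do this:
\begin{enumerate}
\item Fix an $r$-dimensional $W\subseteq H^0(E)$ with $W\otimes\O_X\hookrightarrow E$, and write $W=W_K\oplus W'$ with $W_K=W\cap H^0(K)$.
\item By the Remark following Lemma~\ref{lemma:StrongGenericGeneration}, strong ggg gives $\dim W_K\le h^0(K)\le r-s$, hence $\dim W'\ge s$.
\item $W'$ maps injectively into $H^0(N)$ and generically generates $N$.
\end{enumerate}
I would then compare the map $\O_X^r\to E$ given by $W_K\oplus W'$ with the one given by $W_K\oplus\phi(W')$, whose sections all lie in $K$. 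If $\phi$ is injective on $W'$ and $\phi(W')\cap W_K=0$, the strong ggg property makes the second map injective with image in $K$, contradicting $\rk K<r$. Otherwise some combination of sections in $\phi(W')+W_K$ collapses, and I would try to use these relations to show that the sections of $K$ generically span $K$ modulo $N$. That yields the desired positivity of $c_1(K/N)$, and the argument concludes as in the $c_1(N)=0$ step.
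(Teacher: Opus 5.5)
\textbf{Case (ii) has a genuine gap.} Your reduction via the finite-dimensional algebra $\mathrm{End}(E)$ to a square-zero $\phi$ is correct, and cleaner than the paper's device of iterating until $N$ is simple and then forcing $\lambda=0$. The conclusion once $c_1(N)=0$ and case (i) are also correct. But in case (ii) you never actually prove $c_1(K/N)\ge 0$.

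The first branch of your dichotomy can never occur. Since $W'\cap H^0(K)=0$, the map $\phi|_{W'}$ is automatically injective. Moreover $W_K+\phi(W')\subseteq H^0(K)$, and by the Remark $h^0(K)\le r-s<r=\dim W_K+\dim W'$, so $\phi(W')\cap W_K\neq 0$ always. The whole case therefore rests on the ``otherwise'' branch. There you only say you would try to use the relations to show that sections of $K$ generically span $K$ modulo $N$, which is not an argument.

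\textbf{The missing idea} is an \emph{upper} bound on the sections of $N$, which forces $K$ to have at least $\rk K$ independent sections. Apply the Remark to $N$ rather than $K$: $N\subseteq E$ has rank $s<r$, so $h^0(N)\le s$. Since $W'$ injects into $H^0(N)$, you get $\dim W'\le s$, hence $\dim W_K\ge r-s=\rk K$. The map $W_K\otimes\O_X\to K$ is the restriction of the injection $W\otimes\O_X\hookrightarrow E$, so it is injective of full rank. Thus $K$, and hence its quotient $K/N$, is generically generated, and $c_1(K/N)\ge 0$ follows exactly as in your other steps.

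This is essentially the paper's route. The paper first shows $h^0(N)\le \rk N+(h^0(E)-\rk E)$ by a counting argument, so that $h^0(K)\ge h^0(E)-h^0(N)\ge \rk K$. It then uses strong ggg of $E$ to conclude that $K$ is ggg, so that $\ker(\hat f)=K/N$ has effective $c_1$.
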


In particular, condition $(ii)$ is automatically satisfied for a ggg sheaf $E$ with $h^0(E) = \rk(E)$.

\begin{proof}
    If $E$ is not simple, then as in the proof of Lemma~\ref{simplicitylemma1} there is a nonzero endomorphism $f: E \rightarrow E$ which drops rank everywhere. Let $N \coloneqq \mathrm{Im}(f)$, $M \coloneqq \mathrm{coker}(f)$ and $K \coloneqq \ker(f)$. We may also assume that $N$ is simple because if not, we may apply the same construction to $N$ again until we reach a simple sheaf. We have the two short exact sequences
    \begin{align*}
        0 \rightarrow N \rightarrow E \rightarrow M \rightarrow 0, \\
        0 \rightarrow K \rightarrow E \rightarrow N \rightarrow 0.
    \end{align*}
    Since both $N$ and $M$ are quotients of $E$, both are ggg and in particular, $h^0(N) \geq \rk(N)$, $h^0(M) \geq \rk(M)$. We claim that also $h^0(K) \geq \rk(K)$. Indeed, define $k \geq 0$ by $h^0(E) = \rk(E) + k$. If $h^0(N) > \rk(N) + k$, then the exact sequence
    \begin{equation*}
        0 \rightarrow H^0(N) \rightarrow H^0(E) \rightarrow H^0(M)
    \end{equation*}
    shows that the image of the map $H^0(E) \rightarrow H^0(M)$ has dimension $h^0(E) - h^0(N) < \rk(E) + k - (\rk(N) + k) = \rk(M)$. This is impossible because for any injection $\O_X^r \hookrightarrow E$, the composition $\O_X^r \hookrightarrow E \rightarrow M$ gives a surjection at the generic point $\eta \in X$. In particular, at least $\rk(M)$ of the $r$ independent global sections of $E$ given by the injection $\O_X^r \hookrightarrow E$ map to independent global sections of $M$. This proves $h^0(N) \leq \rk(N) + k$.
    But then, by the second short exact sequence, we have 
    $$
    h^0(K) \geq h^0(E) - h^0(N) \geq \rk(E) + k - (\rk(N) + k) = \rk(M) = \rk(K).
    $$
    Under condition $(ii)$ we get in addition that $K$ is (even strongly) ggg because it is a subsheaf of $E$.

    Assume $E$ is not a direct sum. The composition $E \overset{f}{\rightarrow} E \rightarrow M$ vanishes by definition. Therefore, we obtain the following commutative diagram with identical exact rows, for some $\lambda \in \C$:
    \begin{equation*}
        \begin{tikzcd}
            0 \ar[r] & N \ar[r] \ar[d, "\lambda \cdot \mathrm{id}"] & E \ar[r] \ar[d, "f"] & M \ar[r] \ar[d, "0"] & 0 \\
             0 \ar[r] & N \ar[r] & E \ar[r] & M \ar[r] & 0.
        \end{tikzcd}
    \end{equation*}
    Since $f(N)$ is a subsheaf of $N$ and $N$ is simple, we cannot have $f(N) \neq 0$, because then $f(N)\simeq N$ and we have a splitting $E\xrightarrow{f}f(N) \xrightarrow{\sim} N$ of the sequence, therefore $E \simeq N \oplus M$. We conclude $f(N) = 0$, i.e., $\lambda = 0$. This means that $f$ factors as 
    $$
    E \rightarrow M \overset{\hat{f}}{\twoheadrightarrow} N \rightarrow E.
    $$
    Since $f(E) = N$, $\hat{f}$ is necessarily surjective. Now, under condition $(i)$, $\rk(E) = 2$ and hence $\rk(N) = \rk(M) = 1$. In particular $\ker(\hat{f})$ is torsion and therefore $c_1(\ker(\hat{f})) \geq 0$. Under condition $(ii)$, we get the same conclusion because $\ker(\hat{f}) = K/N$ is a quotient of $K$. In both cases we obtain $c_1(M) = c_1(N) + c_1(\ker(\hat{f}))$ and hence
    \begin{equation*}
        c_1(E) = c_1(N) + c_1(M) = 2c_1(N) + c_1(\ker(\hat{f})) \geq 2 c_1(N).
    \end{equation*}
    By assumption on $c_1(E)$, we must therefore have $c_1(N) = 0$. But since $N$ is ggg and $h^0(N) \geq \rk(N)$, this is only possible if $N \simeq \O_X^{\rk(N)}$. Using the surjection $E \rightarrow N$, we deduce $\mathrm{hom}(E,\O_X) \geq \rk(N)$, implying $E \simeq \O_X^{\rk(N)} \oplus E'$.
\end{proof}
The following is a variation of \cite[Lemma 4.4]{DonagiMorrison} and will become central in Section \ref{ApplicationToCurves}.
\begin{lemma}[Simplicity Lemma 2]
\label{lemma:simplicity2}
    Let $E$ be a ggg vector bundle with $\rk(E) = 2$ on a smooth projective surface $X$ with $H^0(E^\vee) = 0$. Then $E$ is either simple or one of the following two statements holds:
    \begin{enumerate}[(i)]
        \item $E$ is a direct sum $E \simeq \O_X(D_1) \oplus \O_X(D_2)$ for two effective and nonzero divisor classes $D_1$ and $D_2$ with $D_1 + D_2 = c_1(E)$;
        \item There is a decomposition $c_1(E) = 2H + H'$ in $\Pic(X)$ with $H > 0$ and $H' \geq 0$ such that $E$ sits in a non-split short exact sequence
    \begin{equation*}
        0 \rightarrow \O_X(H+H') \rightarrow E \rightarrow \mathcal{I}_Z(H) \rightarrow 0,
    \end{equation*}
    where $\mathcal{I}_Z$ is the ideal sheaf of a $0$-dimensional local complete intersection subscheme $Z \subseteq X$.
    \end{enumerate}
\end{lemma}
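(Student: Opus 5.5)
Assume $E$ is not simple. As in the proof of Lemma~\ref{simplicitylemma1}, I pick an endomorphism $f\colon E\to E$ with $f \neq 0$ that drops rank everywhere: take any non-scalar endomorphism and subtract an eigenvalue at a point. Since $\rk(E)=2$, $f$ has rank one generically. Set $N\coloneqq \mathrm{Im}(f)$ and $K\coloneqq\ker(f)$, both of rank one. The kernel $K$ is saturated in $E$, because $E/K\simeq N\subseteq E$ is torsion-free. As $E$ is locally free, $K$ is reflexive, so $K\simeq\O_X(D_K)$ is a line bundle. Likewise $N\simeq \mathcal{I}_W(D_N)$ is torsion-free of rank one. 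This gives
\[
0\to \O_X(D_K)\to E\to \mathcal{I}_W(D_N)\to 0,\qquad c_1(E)=D_K+D_N .
\]
Since $N$ is a quotient of the ggg sheaf $E$, it is ggg, so $D_N\geq 0$. Moreover $D_N\neq 0$: otherwise $N\simeq\O_X$ (a ggg rank-one torsion-free sheaf with trivial $c_1$ is $\O_X$, via Lemma~\ref{nosubskyscraper}), and then $\Hom(E,\O_X)\neq 0$, contradicting $H^0(E^\vee)=0$. So $D_N>0$.

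Next I compare $N$ with the saturation $N^{\mathrm{sat}}\simeq\O_X(D_S)$ of $N\subseteq E$, which is again a line subbundle. The inclusion $N\subseteq N^{\mathrm{sat}}$ gives $D_S = D_N + (\text{effective})$, say $D_S=D_N+H''$ with $H''\geq 0$.

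\textbf{Case 1: $N^{\mathrm{sat}}\neq K$.} Then $K\cap N^{\mathrm{sat}}=0$, since both are saturated of rank one. Hence $K\oplus N^{\mathrm{sat}}\hookrightarrow E$ has torsion cokernel, and the quotient map $E\to N$ restricts to a nonzero map $N^{\mathrm{sat}}\to N$. On reflexive hulls this is a nonzero map $\O_X(D_S)\to\O_X(D_N)$, so $D_N-D_S\geq 0$. Together with $H''\geq 0$, this forces $H''=0$ and $N^{\mathrm{sat}}\to N$ to be an isomorphism. Therefore $N\simeq\O_X(D_N)$ is a line bundle and the composite $N\hookrightarrow E\to N$ splits the sequence. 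We obtain $E\simeq\O_X(D_K)\oplus\O_X(D_N)$. Now $D_K$ is effective because $E$ is ggg and $\O_X(D_K)$ is a direct summand, and $D_K\neq 0$ because $H^0(E^\vee)=0$. This is alternative (i).

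\textbf{Case 2: $N^{\mathrm{sat}}=K$.} Then $f^2=0$ and $N\subseteq K$, so $D_K=D_N+H'$ with $H'\geq 0$. Put $H\coloneqq D_N>0$. Then $c_1(E)=2H+H'$ and
\[
0\to\O_X(H+H')\to E\to\mathcal{I}_Z(H)\to 0 .
\]
Here $Z=W$ is zero-dimensional, and it is a local complete intersection because $E$ is locally free of rank two and the quotient is rank-one torsion-free: locally $Z$ is cut out by the two components of a section, as in the Serre correspondence. If this sequence splits, $Z$ is empty and $E$ is a direct sum of two line bundles. Both summands are then effective, since $E$ is ggg, and nonzero, since $H^0(E^\vee)=0$; this lands in case (i). Otherwise we are in alternative (ii).

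The main obstacle is the comparison in Case 1: I must make sure the map $N^{\mathrm{sat}}\to N$ is nonzero and compare degrees via reflexive hulls. This is exactly where it matters that $K$ and $N^{\mathrm{sat}}$ are distinct saturated line subbundles. Everything else is the standard Donagi--Morrison style bookkeeping, with $H^0(E^\vee)=0$ used throughout to rule out trivial summands.
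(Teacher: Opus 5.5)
Your proof is correct and follows essentially the same route as the paper. Both take a nonzero rank-dropping endomorphism $f$ and split into the case $f|_N\neq 0$, which splits $E$ into two effective nontrivial line bundles, and the case $f^2=0$, where $N^{\mathrm{sat}}=K$ gives $0\to\O_X(H+H')\to E\to\mathcal{I}_Z(H)\to 0$ with $H>0$ forced by $H^0(E^\vee)=0$. The only difference is in your Case~1: the paper gets the splitting at once from $\Hom(N,N)\simeq\C$, so $f|_N$ is a nonzero scalar, whereas you reach the same conclusion by comparing first Chern classes of $N\subseteq N^{\mathrm{sat}}\hookrightarrow N$ on reflexive hulls.
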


\begin{proof}
Assume $E$ is not simple. If $E$ is a direct sum of line bundles, both line bundles have to be effective and non-trivial as $E$ is ggg and $H^0(E^\vee) = 0$. This is $(i)$. For $(ii)$, assume $E$ is not a direct sum. Take a nontrivial endomorphism $f\colon E \to E$. As in the proof of Lemma~\ref{lemma:intermediatesimplicity}, $f$ factors as 
$$
E\to M\xrightarrow{\hat{f}} N\to E,
$$
 where $N = \mathrm{Im}(f)$, so $\hat{f}$ is a surjection and $M$ is defined by 
$$
0\to N\to E\to M\to 0.
$$
Since both $N$ and $M$ are rank~$1$ coherent sheaves and $N$ is torsion-free, we obtain
$$
0\to M_{\mathrm{tor}}\to M\xrightarrow{\hat{f}}N\to 0,
$$
hence $N \simeq M_{\mathrm{tf}}$.
Denote $c_1(N) = H$ and $c_1(M_{\mathrm{tor}}) = H'$, so $c_1(M) = H + H'$. We have $H,H'\geq 0$ as $N$ is a quotient of a ggg sheaf and $M_{\mathrm{tor}}$ is a torsion sheaf. Moreover, $c_1(N) = H > 0$ as otherwise $N^{\vee\vee} \simeq \O_X$ which would yield a nonzero map $E\to N\to N^{\vee\vee}\simeq \O_X$ contradicting $H^0(E^\vee) = 0$.
Now, consider the saturation $N^{\mathrm{sat}}$ of $N$ in $E$. By definition, we obtain the sequence
$$
0\to N^{\mathrm{sat}}\to E\to M_{\mathrm{tf}}\to 0.
$$
As $N^{\mathrm{sat}}$ is a saturated subsheaf of a vector bundle on a surface, it is also a vector bundle. Since its rank is~$1$, $N^{\mathrm{sat}}$ is a line bundle with first Chern class $c_1(E) - c_1(M_{\mathrm{tf}}) = c_1(E) - c_1(N) = c_1(M) = H + H'$. Hence, $N^{\mathrm{sat}} \simeq \O_X(H+H')$. As $M_{\mathrm{tf}}$ is a rank~$1$ torsion-free sheaf with first Chern class $H$, the last short exact sequence becomes
$$
0\to \O_X(H+H')\to E\to \mathcal{I}_Z(H)\to 0,
$$
where $\mathcal{I}_Z$ is the ideal sheaf of a $0$-dimensional subscheme $Z\subseteq X$ which is necessarily a local complete intersection by this very presentation.
\end{proof}

\begin{lemma}[Obstruction Lemma]\label{obstructionLemma}
    Let $X$ be a smooth projective surface, $i: C \hookrightarrow X$ the inclusion of an integral curve and $A$ a basepoint-free torsion-free sheaf of rank $1$ on $C$. Let $E \coloneqq E_{C,A}$ be the Lazarsfeld--Mukai bundle defined by the short exact sequence
    \begin{equation}\label{eq:dualLMSequence}
        0 \rightarrow E^\vee \longrightarrow H^0(A) \otimes \O_X \overset{\ev_A}{\longrightarrow} i_\ast A \rightarrow 0.
    \end{equation}
    Its dual sequence is
    \begin{equation}\label{eq:LMSequence}
        0 \rightarrow \O_X^r \longrightarrow E \overset{f}{\longrightarrow} i_\ast L \rightarrow 0,
    \end{equation}
    where $r \coloneqq h^0(A) = \rk(E) \geq 1$ and $L \coloneqq \omega_C \otimes A^\vee \otimes \omega_X^\vee|_C$. In case $h^1(\O_X) \neq 0$, assume that $h^1(E^\vee) = 0$.
    Then $\Hom(E,i_\ast L) \simeq \Hom(E,E)$. In particular, if $E$ is simple, $\Hom(E, i_\ast L) \simeq \C$, generated by~$f$.
\end{lemma}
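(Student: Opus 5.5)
Apply $\Hom(E,-)$ to the Lazarsfeld--Mukai sequence \eqref{eq:LMSequence}. This gives the exact sequence
$$
0 \to \Hom(E,\O_X^r) \to \Hom(E,E) \xrightarrow{f_\ast} \Hom(E,i_\ast L) \to \Ext^1(E,\O_X^r).
$$
Since $E$ is locally free, $\Hom(E,\O_X^r)\simeq H^0(E^\vee)^{\oplus r}$ and $\Ext^1(E,\O_X^r)\simeq H^1(E^\vee)^{\oplus r}$. So the claim $\Hom(E,i_\ast L)\simeq\Hom(E,E)$ (via $f_\ast$) reduces to the vanishing statements
$$
H^0(X,E^\vee) = 0 \qquad\text{and}\qquad H^1(X,E^\vee)=0.
$$

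Both vanishings come from the cohomology of the dual sequence \eqref{eq:dualLMSequence}:
$$
0\to H^0(E^\vee)\to H^0(A)\otimes H^0(\O_X)\xrightarrow{\ev} H^0(C,A)\to H^1(E^\vee)\to H^0(A)\otimes H^1(\O_X).
$$
Here $H^0(X,i_\ast A)=H^0(C,A)$ and $H^0(\O_X)=\C$. The middle map is the identity $H^0(A)\to H^0(A)$, because the evaluation map on global sections is tautological. This gives $H^0(E^\vee)=0$ immediately, and it also shows that $H^1(E^\vee)$ injects into $H^0(A)\otimes H^1(\O_X)$.
\begin{itemize}
\item If $h^1(\O_X)=0$, this forces $H^1(E^\vee)=0$.
\item If $h^1(\O_X)\neq 0$, then $H^1(E^\vee)=0$ is exactly the extra hypothesis of the lemma.
\end{itemize}
In either case $f_\ast$ is an isomorphism.

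For the final assertion, suppose $E$ is simple, so $\Hom(E,E)=\C\cdot\mathrm{id}$. Then $\Hom(E,i_\ast L)$ is one-dimensional and spanned by $f_\ast(\mathrm{id})=f$.

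There is no real obstacle here. The only point requiring care is checking that $H^0$ of the evaluation map is the identity, which is what makes both connecting maps behave as claimed.
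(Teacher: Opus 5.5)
Your proposal is correct and matches the paper's proof. The paper tensors \eqref{eq:LMSequence} with the locally free sheaf $E^\vee$ and takes cohomology, which is the same as your application of $\Hom(E,-)$; it also obtains $H^0(E^\vee)=H^1(E^\vee)=0$ from the same long exact sequence of \eqref{eq:dualLMSequence}, using that $H^0(\ev_A)$ is the identity, as in Remark~\ref{rmk:obstructionLemmaRmk}.
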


\begin{remark}\label{rmk:obstructionLemmaRmk}
    Here, $\omega_C$ is the dualizing sheaf of $C$ which is a line bundle because $C$ is Gorenstein. Note that, for a smooth projective surface $X$ of irregularity $h^1(\O_X) \neq 0$, the condition $h^1(E^\vee) = 0$ is equivalent to the injectivity of the map $H^1(\ev_A): H^0(A) \otimes H^1(\O_X) \rightarrow H^1(A)$. On the other hand, if $X$ has irregularity $h^1(\O_X) = 0$, the vanishing $h^1(E^\vee) = 0$ is \emph{automatic} and follows from the long exact cohomology sequence induced by \eqref{eq:dualLMSequence}:
    \begin{equation*}
        0 \rightarrow H^0(E^\vee) \overset{0}{\rightarrow} H^0(A) \overset{\simeq}{\rightarrow} H^0(A) \overset{0}{\rightarrow} H^1(E^\vee) \rightarrow H^0(A) \otimes H^1(\O_X).
    \end{equation*}
\end{remark}

\begin{proof}[Proof of Lemma~\ref{obstructionLemma}]
    By Remark~\ref{rmk:obstructionLemmaRmk}, we have $h^1(E^\vee) = 0$ in all cases. Since $E^\vee$ is locally free, tensoring \eqref{eq:LMSequence} with $E^\vee$ retains exactness. Its induced long exact sequence in cohomology starts with
    \begin{equation*}
        H^0(E^\vee)^{\oplus r} \rightarrow H^0(E^\vee \otimes E) \rightarrow H^0(E^\vee|_C \otimes L) \rightarrow H^1(E^\vee)^{\oplus r}.
    \end{equation*}
    The first and last term both vanish, so we conclude using $H^0(E^\vee \otimes E) = \Hom(E,E)$ and $H^0(E^\vee|_C \otimes L) \simeq \Hom(E|_C,L) \simeq \Hom(E, i_\ast L)$.
\end{proof}

As we shall see, Lemma~\ref{obstructionLemma} shows that, for a simple Lazarsfeld--Mukai bundle with $h^1(E^\vee) = 0$, the injective map in \eqref{eq:LMSequence} defines a smooth point in the moduli space of coherent systems $\Syst^r(\ch(E))$, $r = \rk(E)$, as long as either $K_X = 0$ or $-K_X|_C > 0$.

\begin{lemma}
    \label{lemma:CurvesOnDelPezzo}
    Let $X$ be a smooth projective surface with $H^1(X,\O_X) = 0$ and $-K_X > 0$. Let $c_1 \in \Pic(X)$ be indecomposable. Then any curve $C \in |c_1|$ satisfies $p_a(C) \leq 1$, with equality if and only if $C \in |-K_X|$.
\end{lemma}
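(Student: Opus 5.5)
Fix an effective anticanonical divisor $D \in |-K_X|$, which exists and is nonzero because $-K_X > 0$. The plan is to compare $C$ with $D$ through the indecomposability of $c_1$, and then read off $p_a(C)$ from adjunction,
$$
2p_a(C)-2 = C^2 + C.K_X = C.(C - D).
$$

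\textbf{Step 1: produce an effective divisor $C+K_X$ when $p_a(C)\ge 1$.} Let $C\in|c_1|$; by Definition~\ref{def:indecomposable}, $C$ is integral. Suppose $p_a(C)\geq 1$. Consider
$$
0\to \O_X(K_X)\to \O_X(C+K_X)\to \omega_C\to 0.
$$
Since $-K_X>0$, we have $h^0(\omega_X)=0$. By Serre duality, $h^1(\omega_X)=h^1(\O_X)=0$. Hence
$$
h^0(\O_X(C+K_X)) = h^0(\omega_C) = p_a(C) \geq 1,
$$
so $C+K_X$ is effective.

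\textbf{Step 2: use indecomposability.} We now have the decomposition
$$
c_1 = (C+K_X) + (-K_X),
$$
in which $-K_X$ is a nonzero effective class and $C+K_X$ is effective. Indecomposability of $c_1$ forces $C+K_X\sim 0$, that is, $C\in|-K_X|$. In that case the displayed $h^0$ computation gives $p_a(C) = h^0(\O_X) = 1$.

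\textbf{Conclusion.} Step 1 shows that any $C$ with $p_a(C)\ge 1$ satisfies $C+K_X\ge 0$, and Step 2 then gives $C\sim -K_X$ and $p_a(C)=1$. Hence $p_a(C)\le 1$ for every $C\in|c_1|$. Conversely, if $C\in|-K_X|$, adjunction gives $2p_a(C)-2=C.(C+K_X)=0$, so $p_a(C)=1$. This establishes the equality case in both directions.

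The only potentially delicate point is Step 1. It requires the vanishing $H^1(\omega_X)=0$, which is exactly where the hypothesis $H^1(X,\O_X)=0$ enters. It also requires that $\omega_C\simeq \O_X(C+K_X)|_C$ hold for an integral but possibly singular curve; this is standard adjunction for Cartier divisors on a smooth surface.
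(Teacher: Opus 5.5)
Your proof is correct and follows the paper's argument: you show that $K_X+C$ is effective once $p_a(C)\geq 1$, and then the decomposition $c_1=(K_X+C)+(-K_X)$ together with indecomposability forces $C\sim -K_X$. The differences are cosmetic. You get $h^0(\O_X(K_X+C))=p_a(C)$ from the adjunction sequence using $h^0(\omega_X)=h^1(\omega_X)=0$, whereas the paper gets $h^0(\O_X(K_X+C))\geq p_a(C)$ from Riemann--Roch with $\chi(\O_X)=1$ and $H^2(\O_X(K_X+C))=0$; you also verify the converse of the equality case explicitly, which the paper leaves implicit.
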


\begin{proof}
    The conditions on $X$ imply $\chi(\O_X) = 1$. Riemann--Roch applied to $\O_X(K_X+C)$ together with adjunction yields $\chi(\O_X(K_X+C)) = p_a(C)$. Using $H^2(X,\O_X(K_X+C)) = H^0(X,\O_X(-C))^{\vee} = 0$, we obtain
    $$
    h^0(X,\O_X(K_X+C)) =p_a(C)+h^1(X,\O_X(K_X+C)) \geq p_a(C).
    $$
    In particular, if $p_a(C) \geq 1$, there exists an effective divisor $B \sim K_X+C$ and we obtain $C \sim B-K_X$, contradicting the indecomposability of $c_1$ unless $B \sim 0$. In that case, $C \sim -K_X$ is an anticanonical curve.
\end{proof}

\section{The Moduli Space of Coherent Systems} \label{coherentsystems}

In this section we introduce the moduli stack of coherent systems on a surface and its tangent-obstruction theory as developed in \cite{He}. 
With the goal of proving dimension, smoothness and irreducibility results for Brill--Noether loci in the moduli space of simple sheaves under certain assumptions on the Chern character, we show smoothness of loci of coherent systems whose underlying sheaf is simple and whose evaluation map is injective.

Let $X$ be a smooth and irreducible projective surface over $\C$ of irregularity zero. For our purposes, it will be convenient to view the Chern character of a coherent sheaf $\mathcal{F}$ on $X$ as a triple
\begin{equation*}
    \ch(\mathcal{F}) = (r,c_1,\ch_2)
\end{equation*}
with $r = \rk(\mathcal{F}) \in \Z_{\geq 0}$, $c_1(\mathcal{F}) \in \Pic(X)$ and $\ch_2 = \int_X \left(\frac{1}{2} c_1(\mathcal{F})^2 - c_2(\mathcal{F})\right) \in \frac{1}{2} \Z \subseteq \Q$. Therefore, $c_1 = c_1(\mathcal{F})$ can be identified with the isomorphism class of $\det(\mathcal{F})$; this is finer information than its numerical class. By the assumption that $h^1(\O_X) = 0$, $\Pic(X)$ is smooth of dimension~$0$ and therefore $c_1$ is locally constant in flat families of coherent sheaves. In particular, fixing $c_1 \in \Pic(X)$ defines an open and closed property for families of coherent sheaves.\\
In view of the Lazarsfeld--Mukai construction it is of particular interest to study pairs $(E,V)$ of a vector bundle $E$ and a subspace $V\subset H^0(X,E)$ such that the evaluation map $V\otimes\O_X\to E$ is injective.

\begin{definition}\label{def:coherentSystem}
    A \emph{coherent system} on $X$ is a pair $(E,V)$, where $E$ is a coherent sheaf on $X$ and $V \subseteq H^0(X,E)$ a vector subspace. We say that $(E,V)$ has \emph{type $(v,k)$} if $\ch(E) = v$ and $\dim V = k$. A coherent system $(E,V)$ is called \emph{generically generated} if the evaluation map $V \otimes \O_X \rightarrow E$ is of full rank at the generic point.
\end{definition}

\begin{definition}
\label{def:genericallygenerated}
    A coherent sheaf $E$ on $X$ is said to be \emph{generically $k$-generated} if there exists a generically generated coherent system $(E,V)$ of type $(\ch(E),k)$.
\end{definition}

\begin{remark}
    If $k<\rk(E)$, being generically $k$-generated is equivalent to the existence of an injection $\O_X^k\hookrightarrow E$. If $k\geq \rk(E)$, then $E$ being generically $k$-generated is equivalent to $E$ being ggg and $h^0(E) \geq k$.
\end{remark}

\noindent
Let us now introduce several different but closely related stacks of coherent systems. Following \cite{LePotier}, a \emph{family of coherent systems on $X$ of type $(v,k)$ parametrized by a $\C$-scheme $S$} can be defined as a triple $(E,W,p)$ given by a coherent sheaf $E$ on $X \times S$, flat over $S$, with every geometric fiber $E_s$ on $X$ having Chern character $v$, a locally free sheaf $W$ on $S$ of rank $k$ and a surjective morphism of $\O_S$-modules
\begin{equation*}
    p: \mathcal{E}xt^2_{\mathrm{pr}_S}(E,\omega_{X \times S/S}) \twoheadrightarrow W,
\end{equation*}
where $\mathrm{pr}_S: X \times S \rightarrow S$ is the projection. By relative duality and cohomology and base change, the sheaf $\mathcal{E}xt^2_{\mathrm{pr}_S}(E,\omega_{X \times S/S})$ commutes with arbitrary base change \cite[Section~4.3]{LePotier}. For a geometric point $s \in S$ we have $\Ext^2(E_s,\omega_X) \simeq H^0(E_s)^\vee$, so that letting $V \coloneqq W_s^\vee$, the surjection $p_s$ gives a dual injection $V \subseteq H^0(E_s)$, making $(E_s,V)$ into a coherent system in the sense of Definition~\ref{def:coherentSystem}. He's definition of families of coherent systems is equivalent to Le Potier's \cite[p.~553]{He}.

\smallskip \noindent
With the above convention on $c_1$, one may define a prestack (or category fibered in groupoids) $\tilde{\SSyst}_X^k(v)$ whose objects over a $\C$-scheme $S$ are given by families of coherent systems on $X$ of type $(v,k)$ parametrized by $S$, and we will in addition require the underlying coherent sheaf $E$ on $X \times S$ to be pure.
For a morphism of $\C$-schemes $f: S' \rightarrow S$ and objects $(E',W',p')$ over $S'$ and $(E,W,p)$ over $S$, a morphism $(E',W',p') \rightarrow (E,W,p)$ is a pair of isomorphisms $E' \overset{\sim}{\rightarrow} f^\ast(E)$ and $W' \overset{\sim}{\rightarrow} f^\ast(W)$ such that the diagram
\begin{equation*}
    \begin{tikzcd}
        \mathcal{E}xt^2(E',\omega_{X \times S'/S'}) \ar[rr, two heads, "p'"] \ar[d, "\sim"'] & & W' \ar[d, "\sim"] \\
        \mathcal{E}xt^2_{\mathrm{pr}_{S'}}(f^\ast(E),\omega_{X \times S'/S'}) \ar[r, "\simeq"'] &f^\ast(\mathcal{E}xt^2_{\mathrm{pr}_{S}}(E,\omega_{X \times S/S})) \ar[r, two heads, "f^\ast(p)"'] & f^\ast(W)
    \end{tikzcd}
\end{equation*}
commutes.

The prestack properties of $\tilde{\SSyst}_X^k(v)$ are straightforward to check. The usual descent properties for coherent sheaves and morphisms between them show that $\tilde{\SSyst}_X^k(v)$ is in fact a stack for the étale site. Now, the stack $\Coh^{\mathrm{pure}}_X(v)$ of pure coherent sheaves of Chern character~$v$ on $X$ is an open substack of the full stack $\Coh_X(v)$ of coherent sheaves and is therefore algebraic and locally of finite presentation over $\C$. There is the forgetful morphism
\begin{equation*}
    \tilde{\SSyst}_X^k(v) \rightarrow \Coh^{\mathrm{pure}}_X(v),
\end{equation*}
sending $(E,W,p) \mapsto E$. This forgetful morphism $\tilde{\SSyst}_X^k(v) \rightarrow \Coh^{\mathrm{pure}}_X(v)$ is representable, projective and locally of finite presentation. Indeed, by the base change property of the top-degree relative Ext sheaf, for any $\C$-scheme $T$ and morphism $T \rightarrow \Coh^{\mathrm{pure}}_X(v)$ corresponding to a pure coherent sheaf $\mathcal{E}$ on $X \times T$, the fiber product
\begin{equation*}
    \tilde{\SSyst}_X^k(v) \times_{\Coh^{\mathrm{pure}}_X(v)} T
\end{equation*}
is the relative Grassmannian of locally free rank~$k$ quotients of $\mathcal{E}xt^2_{\mathrm{pr}_T}(\mathcal{E},\omega_{X \times T/T})$; this Grassmannian is a projective scheme over $T$. This proves that also $\tilde{\SSyst}_X^k(v)$ is algebraic and locally of finite presentation over $\C$.

Finally, because every object in $\tilde{\SSyst}_X^k(v)$ has $\mathbb{G}_m$ in the center of its automorphism group (in a way compatible with pullbacks), we may use \cite[Theorem~5.1.5]{Rigidification} to work instead with its rigidification $\SSyst_X^k(v)$ which is still algebraic and locally of finite presentation and has the same isomorphism classes of objects over $\Spec(\C)$. Moreover, there is a smooth surjective morphism of finite presentation $\tilde{\SSyst}_X^k(v) \rightarrow \SSyst_X^k(v)$, in fact a $\mathbb{G}_m$-gerbe, and the automorphism group of a coherent system $\Lambda \in \SSyst_X^k(v)(\C)$ is precisely $\Aut(\Lambda)/\C^\ast$.

\begin{definition}
We define the following substacks of $\SSyst_X^k(v)$:
\begin{itemize}
    \item Let $\Syst_X^k(v) \subseteq \SSyst_X^k(v)$ be the open substack consisting of all coherent systems whose underlying \emph{sheaf} is simple; $\Syst_X^k(v)$ is an algebraic space.
    \item Let $\Syst_X^k(v)' \subseteq \SSyst_X^k(v)$ be the open substack consisting of all coherent systems $\Lambda$ with $\Hom(\Lambda,\Lambda) \simeq \C$. Also $\Syst_X^k(v)'$ is an algebraic space and contains $\Syst_X^k(v)$ as an open subspace.
    \item Let $\SSyst_X^k(v)^\circ \subseteq \SSyst_X^k(v)$ be the open substack consisting of all generically generated coherent systems, and let $\Syst_X^k(v)^\circ$ and $\Syst_X^k(v)'^{\circ}$ be the intersections with the previous two substacks.
\end{itemize}
\end{definition}

The forgetful map $\tilde{\SSyst}_X^k(v) \rightarrow \Coh^{\mathrm{pure}}_X(v)$ descends after taking the rigidification of both source and target and in particular induces a representable forgetful map $\Syst_X^k(v) \rightarrow \Spl(v)$, where $\Spl(v)$ is the algebraic space of simple (pure) sheaves.

\begin{proposition}[{\cite[Cor.~1.6]{He}}]
\label{extsequence}
    Let $\Lambda =(E,V)$ be a coherent system over $\C$. Then there exists an exact sequence
    \begin{align*}
    0&\to\Hom(\Lambda,\Lambda)\to \Hom(E,E)\to  \Hom(V,H^0(E)/V)\\
    &\to\Ext^1(\Lambda,\Lambda)\to \Ext^1(E,E)\to\Hom(V,H^1(E))\\
    &\to \Ext^2(\Lambda,\Lambda)\to \Ext^2(E,E)\to \Hom(V,H^2(E))\to 0.
    \end{align*}
\end{proposition}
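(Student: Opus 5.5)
The plan is to realize $\Ext^\bullet(\Lambda,\Lambda)$ as the cohomology of a mapping cone, following He's setup, and then to remove the contribution of $\Hom(V,V)$. He places coherent systems inside the abelian category of triples $(\Gamma,F,\gamma\colon \Gamma\otimes\O_X\to F)$, with $\Gamma$ a finite-dimensional vector space. In this category, $\Ext^\bullet(\Lambda,\Lambda')$ for $\Lambda=(V,E)$, $\Lambda'=(V',E')$ is computed by the cone of the natural map
$$
\Hom(V,V')\oplus R\Hom(E,E')\longrightarrow R\Hom(V\otimes\O_X,E')=\Hom(V,R\Gamma(E')),\qquad (a,b)\mapsto \ev'\circ(a\otimes 1)-b\circ\ev,
$$
shifted by $-1$. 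I would either quote this from He or check it directly by resolving a triple by triples built from injective sheaves.

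Taking cohomology of this cone gives a long exact sequence
$$
0\to\Hom(\Lambda,\Lambda')\to\Hom(V,V')\oplus\Hom(E,E')\to\Hom(V,H^0(E'))\to\Ext^1(\Lambda,\Lambda')\to\Ext^1(E,E')\to\Hom(V,H^1(E'))\to\cdots
$$
Here only $\Hom(V,V')$ appears in positive degree zero, since there are no higher Ext groups between vector spaces.

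Now set $\Lambda'=\Lambda$ and eliminate the $\Hom(V,V)$ summand. The map $\Hom(V,V)\to\Hom(V,H^0(E))$, $a\mapsto\ev\circ a$, is injective because $V\subseteq H^0(E)$. A diagram chase, or equivalently the octahedral axiom applied to the subcomplex $\Hom(V,V)\to\Hom(V,V)$, therefore replaces the degree-zero part by
$$
\Hom(E,E)\to\Hom(V,H^0(E))/\Hom(V,V)=\Hom(V,H^0(E)/V).
$$
Concretely, $\Hom(\Lambda,\Lambda)$ consists of the $b\in\Hom(E,E)$ with $b(V)\subseteq V$, because then $a=b|_V$ is forced. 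This yields exactly the first two rows of the claimed sequence. The remaining terms $\Ext^i(E,E)$ and $\Hom(V,H^i(E))$ for $i=1,2$ carry over unchanged.

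The step I expect to be the main obstacle is the terminal $0$, which amounts to $\Ext^3(\Lambda,\Lambda)=0$. From the cone, $\Ext^3(E,E)=0$ because $X$ is a smooth surface, and $H^3(E)=0$. Hence $\Ext^3(\Lambda,\Lambda)$ is the cokernel of $\Ext^2(E,E)\to\Hom(V,H^2(E))=\Ext^2(V\otimes\O_X,E)$. By Serre duality this map is dual to
$$
\Hom(E,V\otimes\omega_X)\to\Hom(E,E\otimes\omega_X),
$$
given by postcomposition with $\ev\otimes\omega_X$. This dual map is injective whenever $\ev$ is injective, and in general its kernel is $\Hom(E,\ker(\ev)\otimes\omega_X)$. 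I would first try to show that this kernel vanishes using the purity of $E$ together with He's comparison. If that fails, I would rely on He's result that the relevant Ext groups of triples vanish in degree $3$ (his Cor.~1.6 setting), and note that in all later applications $V\otimes\O_X\to E$ is injective, so the surjectivity holds directly by the duality argument above.
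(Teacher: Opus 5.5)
Everything up to the term $\Hom(V,H^2(E))$ is sound. The cone description of $\Ext^\bullet(\Lambda,\Lambda')$ (equivalently, the long exact sequence of $0\to(0,E)\to\Lambda\to(V,0)\to 0$ in the category of triples) gives the sequence with $\Hom(V,V)\oplus\Hom(E,E)$ in degree zero. Your diagram chase removing $\Hom(V,V)$, using injectivity of $a\mapsto\ev\circ a$, is correct. The paper gives no proof of its own beyond citing He, so there is nothing to compare on that part.

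The genuine problem is the terminal $0$. You correctly isolated it as $\Ext^3(\Lambda,\Lambda)=0$. Via Serre duality this amounts to the vanishing of $\Hom(E,K\otimes\omega_X)$, where $K=\ker(\ev)$. Neither of your proposed ways to establish this can work, because it is false in general, and purity plays no role.

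Here is a counterexample. Take a smooth surface $X\subseteq\P^3$ of degree $d\geq 6$, so $H^1(X,\O_X)=0$ and $\omega_X\simeq\O_X(d-4)$. Let $E=\O_X(1)$, and let $V\subseteq H^0(\O_X(1))$ be spanned by two general linear forms. The Koszul complex gives $K\simeq\O_X(-1)$. Hence $\Ext^3(\Lambda,\Lambda)^\vee\simeq\Hom(\O_X(1),\O_X(d-5))=H^0(\O_X(d-6))\neq 0$. So $\Ext^2(E,E)\to\Hom(V,H^2(E))$ is not surjective, even though $E$ is a simple line bundle and $(E,V)$ is generically generated.

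For an arbitrary coherent system, the correct sequence ends with $\Hom(V,H^2(E))\to\Ext^3(\Lambda,\Lambda)\to 0$, and $\Ext^i(\Lambda,\Lambda)=0$ for $i\geq 4$. There is no general vanishing of $\Ext^3(\Lambda,\Lambda)$ you could quote from He.

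What your argument does prove is the statement under the extra hypothesis that $\ev\colon V\otimes\O_X\to E$ is injective. Then postcomposition with $\ev\otimes\omega_X$ is injective on $\Hom(E,-)$, so $\Ext^3(\Lambda,\Lambda)=0$. Alternatively, Proposition~\ref{proposition:CalculatingExt} gives $\Ext^3(\Lambda,\Lambda)\simeq\Ext^3(Q,E)=0$ directly.

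This is the case the paper actually needs: generically generated systems with $\dim V\leq\rk E$. There, generic injectivity of $\ev$ forces injectivity, because $V\otimes\O_X$ is torsion-free.

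So do not leave the last step as something to derive from purity or to quote from He. Either add the injectivity hypothesis explicitly, or keep the $\Ext^3(\Lambda,\Lambda)$ term in the sequence. As the example shows, for $\dim V>\rk E$ the terminal $0$ genuinely fails.
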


The tangent-obstruction theory developed by He \cite[Section~3.2]{He} in the context of \mbox{$\alpha$-stability} applies verbatim to our more general setting. We can, however, choose a slightly smaller obstruction space, similarly to what happens for moduli spaces of sheaves:

\begin{proposition}[\cite{He,huyMS}]
\label{smoothnescrit}
    Let $\Lambda \in \Syst_X^k(v)'(\C)$. The tangent space of $\Syst_X^k(v)'$ at $\Lambda$ is $T_{[\Lambda]} \Syst_X^k(v)' \simeq \Ext^1(\Lambda,\Lambda)$. An obstruction space is given by
    \begin{equation*}
        \Ext^2(\Lambda,\Lambda)_0 \coloneqq \ker \left( \Ext^2(\Lambda,\Lambda) \rightarrow \Ext^2(E,E) \overset{\mathrm{tr}}{\rightarrow} H^2(X,\O_X) \right).
    \end{equation*}
    The same holds for every open subspace of $\Syst_X^k(v)'$ containing $\Lambda$.
\end{proposition}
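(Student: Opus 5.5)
The plan is to transport the standard deformation-theoretic description of $\Spl(v)$ to coherent systems. For the tangent space, I would first pass to the $\mathbb{G}_m$-cover $\tilde{\SSyst}_X^k(v)$ and compute first-order deformations over $\C[\epsilon]$. Since the forgetful map to $\Coh^{\mathrm{pure}}_X(v)$ is a relative Grassmannian of quotients of $\Ext^2_{\mathrm{pr}}(E,\omega)$, a first-order deformation of $\Lambda$ is a pair $(E_\epsilon, V_\epsilon)$. Here $E_\epsilon$ is a deformation of $E$, classified by $\Ext^1(E,E)$, and $V_\epsilon\subseteq H^0(E_\epsilon)$ is a free lift of $V$. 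This is exactly He's description of $\Ext^1(\Lambda,\Lambda)$, where $\Lambda$ is viewed as the complex $V\otimes\O_X\to E$ (or as an object of the abelian category of systems). The sequence of Proposition~\ref{extsequence} gives a consistency check: the tangent directions with $E$ fixed are $\Hom(V,H^0(E)/V)/\mathrm{im}\Hom(E,E)$, which is the tangent space to the Grassmannian modulo automorphisms. The deformations of $E$ that extend $V$ form the kernel of $\Ext^1(E,E)\to\Hom(V,H^1(E))$. Because $\Hom(\Lambda,\Lambda)=\C$ and we rigidify by exactly the scalars, the infinitesimal automorphisms are killed, and $T_{[\Lambda]}\Syst_X^k(v)'\simeq\Ext^1(\Lambda,\Lambda)$. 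This step is essentially \cite[Section~3.2]{He}, and stability is never used there.

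For the obstruction theory, take a small extension $0\to I\to B'\to B\to 0$ of local Artinian $\C$-algebras and a family $\Lambda_B$. He constructs an obstruction class $o(\Lambda_B)\in\Ext^2(\Lambda,\Lambda)\otimes I$. This class vanishes if and only if a lift exists, and again this uses only the functoriality of Ext for coherent systems, not $\alpha$-stability. The whole task is therefore to show that $o$ actually lands in $\Ext^2(\Lambda,\Lambda)_0\otimes I$. I would use functoriality under the forgetful map: the image of $o(\Lambda_B)$ in $\Ext^2(E,E)\otimes I$ is the obstruction $o(E_B)$ to deforming the underlying sheaf, which is the compatibility built into He's construction via the long exact sequence of Proposition~\ref{extsequence}. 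By the classical result \cite[Thm.~4.5.3 / Prop.~2.A.?]{huyMS} (Mukai, Artamkin), $\mathrm{tr}(o(E_B))$ equals the obstruction to deforming $\det E_B$ in $\Pic$, which lies in $H^2(X,\O_X)\otimes I$. Since $\det E_B$ is a family of line bundles, it always lifts: $\Pic$ is unobstructed, and with $h^1(\O_X)=0$ the class $c_1$ is even rigid. Hence $\mathrm{tr}(o(E_B))=0$ and $o(\Lambda_B)\in\Ext^2(\Lambda,\Lambda)_0\otimes I$.

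The main obstacle is the compatibility of He's obstruction class with the sheaf obstruction under $\Ext^2(\Lambda,\Lambda)\to\Ext^2(E,E)$, together with the trace identity. I would handle it by working with locally free resolutions, or with the Atiyah-class description of obstructions, for the two-term complex $V\otimes\O_X\to E$. The restriction to the term $E$ is then visibly the standard obstruction of $E$, and the trace argument of \cite{huyMS} applies verbatim. Finally, openness is immediate: tangent and obstruction spaces are local, so the same description holds on any open subspace containing $\Lambda$.
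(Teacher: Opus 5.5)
Your proposal is correct and follows the paper's proof: He's obstruction class in $\Ext^2(\Lambda,\Lambda)$ maps under the forgetful functor to the obstruction for $E$. By \cite[Thm.~4.5.3]{huyMS} its trace is the obstruction for $\det(E)$, which vanishes because $\Pic(X)$ is unobstructed.

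For the compatibility step, the paper argues that the forgetful functor from relative coherent systems to sheaves is exact. Hence it commutes with $-\otimes^{\mathbf{L}}_{A'}A$ and induces a morphism of the spectral sequences $\Ext^p_A(\mathcal{T}\!\mathrm{or}_q^{A'}(\Lambda,A),G)\Rightarrow\Ext^{p+q}_{A'}(\Lambda,G)$ through which He defines the obstruction. This is preferable to your Atiyah-class suggestion. Regarding $\Lambda$ as the complex $V\otimes\O_X\to E$ in the derived category of $X$ computes the deformation theory of the cokernel $Q$, governed by $\Ext^2(Q,Q)$. The coherent system is instead governed by $\Ext^2(\Lambda,\Lambda)\simeq\Ext^2(Q,E)$.
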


\begin{proof}
   To be precise, He shows that $\Ext^2(\Lambda,\Lambda)$ is an obstruction space for the liftability of $\Lambda$ along small extensions. Consider the forgetful functor $\pi \colon \mathrm{CS}_{X/S} \to \mathrm{Coh}_{X/S}$ from the abelian category of relative coherent systems over the spectrum $S$ of an Artin ring over $\C$ to coherent sheaves; this functor is exact. Notably, coherent sheaves can be viewed as coherent systems of the form $(E,0)$, so He's tangent-obstruction theory also recovers that of coherent sheaves.
   Let now $A' \twoheadrightarrow A$ be a small extension and $G$ an arbitrary coherent system over $A$. Consider the Tor-complex $\Lambda\otimes^{\mathbf{L}}_{A'}A$ and the induced hypercohomology spectral sequence
   $$
E_2^{p,q}= \Ext^p_A(\mathcal{T}\!\mathrm{or}^{A'}_q(\Lambda,A),G)\Rightarrow \mathbb{H}^*(\mathbf{R}\Hom_A(\Lambda\otimes_{A'}^{\mathbf{L}}A,G)).
   $$
   From derived Tensor-Hom-adjunction we find
   $$
   \mathbf{R}\Hom_A(\Lambda\otimes_{A'}^{\mathbf{L}}A,G)\simeq \mathbf{R}\Hom_{A'}(\Lambda,\mathbf{R}\Hom_A(A,G))\simeq \mathbf{R}\Hom_{A'}(\Lambda,G).
   $$
   In particular, the abutment in degree $m=p+q$ of the spectral sequence is 
   $$
   H^m(\mathbf{R}\Hom_{A'}(\Lambda,G)) =\Ext^m_{A'}(\Lambda,G).
   $$
   The exactness of $\pi$ gives a quasi-isomorphism $\pi(\Lambda\otimes^{\mathbf{L}}_{A'}A) \simeq E\otimes^{\mathbf{L}}_{A'}A$ and thus, by functoriality of $\pi$, a map of spectral sequences
   $$
   \L\{\Ext^p_A\L(\mathcal{T}\!\mathrm{or}_q^{A'}(\Lambda,A),G\R)\R\}_{p,q}\to \L\{\Ext^p_A\L(\mathcal{T}\!\mathrm{or}_q^{A'}(E,A),\pi(G)\R)\R\}_{p,q},
   $$
   where $E = \pi(\Lambda)$ is the coherent sheaf underlying $\Lambda$. Therefore, the forming of obstructions as defined in \cite[Def.~3.9]{He} commutes with the forgetful functor, i.e., under $\pi$, the obstruction to lifting $\Lambda$ along $A' \twoheadrightarrow A$ maps to the obstruction to lifting $E$. Now, by \cite[Thm.~4.5.3]{huyMS}, the determinant map
   \begin{align*}
   \mathrm{det} \colon \Coh_X^{\mathrm{pure}}(v) &\to \Pic(X)\\
   E &\mapsto \det(E)
   \end{align*}
   induces a natural map on obstruction spaces
   $$
   \Ext^2(E,E) \overset{\mathrm{tr}}{\to} H^2(X,\O_X) \simeq \Ext^2(\det(E),\det(E)),
   $$
   which also, in the appropriate sense, maps obstructions to obstructions. As $\Pic(X)$ is smooth, deformations of line bundles are unobstructed, so we find that 
   $$
   \Ext^2(\Lambda,\Lambda)_0=\mathrm{ker}\L(\Ext^2(\Lambda,\Lambda) \to \Ext^2(E,E) \to H^2(X,\O_X)\R)
   $$
   is also an obstruction space.
\end{proof}

\begin{proposition}[{\cite[Lemma 1.7]{He}}]
\label{proposition:CalculatingExt}
    Let $\Lambda = (E,V)$ be a coherent system with injective evaluation map and let $Q \coloneqq \mathrm{coker}(V \otimes \O_X \to E)$. Then there is a canonical map
    $$
    \Ext^i(Q,E) \to \Ext^i(\Lambda,\Lambda)
    $$
    which is an isomorphism for all $i \geq 2$ and surjective for $i=1$. If the canonical morphism $\Hom(\Lambda,\Lambda) \to \Hom(V,V)$ is surjective, then the above morphism is also an isomorphism for $i=1$.
\end{proposition}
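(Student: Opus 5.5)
Our plan is to realize $\Lambda$ as a two-term complex and to compare $\Hom$ into $\Lambda$ from it and from $E$. Following He, a coherent system $\Lambda=(E,V)$ lives in an abelian category in which the evaluation gives a short exact sequence of coherent systems
$$
0 \to (V\otimes\O_X, V) \to (E,V) \to (Q,0) \to 0 .
$$
Injectivity of the evaluation map is used exactly here: $V\otimes\O_X$ is then a subsheaf of $E$ and $Q$ is its cokernel. We then apply $\Hom(-,\Lambda)$ to this sequence and obtain the long exact sequence
$$
\cdots\to \Ext^{i-1}((V\otimes\O_X,V),\Lambda)\to \Ext^i((Q,0),\Lambda)\to \Ext^i(\Lambda,\Lambda)\to \Ext^i((V\otimes\O_X,V),\Lambda)\to\cdots
$$

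For the two outer terms we use He's exact sequence (the general form of Proposition~\ref{extsequence}, valid for pairs of coherent systems). The first step concerns the source $(Q,0)$. Because its space of sections is zero, the $\Hom(0,\cdot)$ terms vanish, and we get $\Ext^i((Q,0),\Lambda)\simeq \Ext^i(Q,E)$ for all $i$; this is the canonical map in the statement.

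The second step concerns the source $(V\otimes\O_X,V)$, and we claim that
$$
\Ext^i((V\otimes\O_X,V),\Lambda)=\begin{cases}\Hom(V,V), & i=0,\\ 0, & i\ge 1.\end{cases}
$$
To see this, note that $(\O_X,\C)$ plays the role of a "projective" object: morphisms $(\O_X,\C)\to(E,V)$ are just elements of $V$. In He's sequence for $\Ext^i((V\otimes \O_X,V),(E,V))$, the terms $\Ext^i(V\otimes\O_X,E)=V^\vee\otimes H^i(E)$ map isomorphically onto $\Hom(V,H^i(E))$ for $i\ge1$, and for $i=0$ the map $V^\vee\otimes H^0(E)\to\Hom(V,H^0(E)/V)$ is surjective with kernel $\Hom(V,V)$. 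This gives the claimed vanishing and $\Ext^0=\Hom(V,V)$.

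Substituting into the long exact sequence yields isomorphisms $\Ext^i(Q,E)\simeq\Ext^i(\Lambda,\Lambda)$ for $i\ge 2$, and an exact sequence
$$
\Hom(\Lambda,\Lambda)\to\Hom(V,V)\to\Ext^1(Q,E)\to\Ext^1(\Lambda,\Lambda)\to 0 .
$$
So the map on $\Ext^1$ is surjective. If moreover $\Hom(\Lambda,\Lambda)\to\Hom(V,V)$ is surjective, the connecting map $\Hom(V,V)\to\Ext^1(Q,E)$ is zero, and the map on $\Ext^1$ is an isomorphism.

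The main obstacle is the second step: checking that He's sequence applies with a general first argument, and that the relevant maps there are the natural ones, so that the identifications above hold. If that sequence is only available for $(\Lambda,\Lambda)$, we would instead compute directly with a resolution: $\Ext$ in the category of coherent systems is the cone of $\mathbf{R}\Hom(E,E')\to \Hom(V,\mathbf{R}\Gamma(E')/V')$, and we would evaluate this cone explicitly for the source $(\O_X,\C)$.
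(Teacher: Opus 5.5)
Your proof is correct. The paper does not prove this statement itself; it quotes He's Lemma~1.7. Your d\'evissage of $\Lambda$ along $0\to(V\otimes\O_X,V)\to\Lambda\to(Q,0)\to 0$ is the standard argument: $\Ext^i((Q,0),\Lambda)\simeq\Ext^i(Q,E)$ and $\Ext^{\geq 1}((V\otimes\O_X,V),\Lambda)=0$, and you correctly identify the connecting term with the canonical map $\Hom(\Lambda,\Lambda)\to\Hom(V,V)$.

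The obstacle you flag is harmless. In He's abelian category, $\Hom((\O_X,\C),-)$ is the exact functor $(E',V')\mapsto V'$, so $(\O_X,\C)$ is genuinely projective. Likewise, $F\mapsto(F,0)$ is an exact left adjoint of the exact forgetful functor, so the forgetful functor preserves injectives, which gives $\Ext^i((F,0),\Lambda)\simeq\Ext^i(F,E)$ directly, without needing He's sequence for general pairs.
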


\begin{corollary}
    Let $v=(r,c_1,\ch_2)$ be a Chern character and let $k\leq r$. The virtual dimension of $\mathrm{Syst}_X^k(v)$ is given by 
    $$
    \mathrm{vdim}(\mathrm{Syst}_X^k(v)) \coloneqq \ext^1(\Lambda,\Lambda)-\ext^2(\Lambda,\Lambda) = \mathrm{vdim} (\mathrm{Spl}(v)) -k(k-\chi(v)).
    $$ 
\end{corollary}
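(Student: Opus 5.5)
We compute $\ext^1(\Lambda,\Lambda)-\ext^2(\Lambda,\Lambda)$ as an Euler characteristic, using the long exact sequence of Proposition~\ref{extsequence}. Here $\Lambda=(E,V)$ is a coherent system of type $(v,k)$, so $\dim V=k$ and $\ch(E)=v$.

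Taking the alternating sum of dimensions along this exact sequence gives
\begin{equation*}
\sum_{i=0}^2(-1)^i\ext^i(\Lambda,\Lambda)=\sum_{i=0}^2(-1)^i\ext^i(E,E)-\sum_{i=0}^2(-1)^i\dim\Hom(V,H^i(E)/\delta_{i0}V),
\end{equation*}
where for $i=0$ the term is $\Hom(V,H^0(E)/V)$. The last sum equals $k(\chi(v)-k)$, since $\dim\Hom(V,H^0(E)/V)=k(h^0(E)-k)$ and $\dim\Hom(V,H^i(E))=k\,h^i(E)$ for $i\ge1$. Hence
\begin{equation*}
\chi(\Lambda,\Lambda)=\chi(E,E)-k(\chi(v)-k).
\end{equation*}

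Next we identify the $\Ext^0$ terms with the ambient moduli problems. On $\Syst^k_X(v)$ the underlying sheaf $E$ is simple, so $\hom(E,E)=1$ and $\mathrm{vdim}\,\Spl(v)=\ext^1(E,E)-\ext^2(E,E)=1-\chi(E,E)$. For $\Lambda$ we need $\hom(\Lambda,\Lambda)=1$. The map $\Hom(\Lambda,\Lambda)\to\Hom(E,E)\simeq\C$ is injective and contains the scalars, so $\Hom(\Lambda,\Lambda)\simeq\C$. Hence $\mathrm{vdim}\,\Syst^k_X(v)=1-\chi(\Lambda,\Lambda)$. Substituting,
\begin{equation*}
\mathrm{vdim}\,\Syst^k_X(v)=1-\chi(E,E)+k(\chi(v)-k)=\mathrm{vdim}\,\Spl(v)-k(k-\chi(v)).
\end{equation*}

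If instead one uses the trace-free obstruction space of Proposition~\ref{smoothnescrit}, the same correction by $h^2(\O_X)$ appears on both sides. It appears both in $\mathrm{vdim}\,\Spl(v)$, computed with $\Ext^2(E,E)_0$, and in $\Syst^k_X(v)$. The trace map is surjective for $E$ of positive rank, or in the situations considered, and this surjectivity is transported along the surjection $\Ext^2(\Lambda,\Lambda)\to\Ext^2(E,E)$. So the formula is unchanged.

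The main point needing care is this consistency of conventions, i.e. whether $\mathrm{vdim}$ uses full or trace-free $\Ext^2$. The Euler characteristic computation itself is routine. The hypothesis $k\le r$ is not used in the count.
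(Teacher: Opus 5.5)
Your Euler-characteristic computation along He's nine-term sequence, combined with $\hom(\Lambda,\Lambda)=\hom(E,E)=1$ (injectivity of $\Hom(\Lambda,\Lambda)\to\Hom(E,E)$ plus the scalars), is exactly the paper's proof, and it is correct for the definition of $\mathrm{vdim}$ via the full $\Ext^2$ given in the statement. Your closing remark on trace-free conventions is unnecessary and false in general: by the same exact sequence, $\Ext^2(\Lambda,\Lambda)\to\Ext^2(E,E)$ has cokernel $\Hom(V,H^2(E))$, so it is surjective only when $H^2(E)=0$ (for $k>0$); for instance $\Lambda=(\O_X,H^0(\O_X))$ on a K3 surface has $\Ext^2(\Lambda,\Lambda)=0$, and with trace-free $\Ext^2$ on both sides the formula would be off by $1$.
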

    
\begin{proof}
    From Proposition~\ref{extsequence} for a point $(E,V) = \Lambda \in \mathrm{Syst}_X^k(v)$ we get the formula
    $$
    \ext^1(\Lambda,\Lambda) -\ext^2(\Lambda,\Lambda)= \hom(\Lambda,\Lambda)-\chi(E,E)-k(k-\chi(E)).
    $$
    Now, by simplicity, $\hom(\Lambda,\Lambda) = \hom (E,E) =  1$, and from the formula for the virtual dimension $\mathrm{vdim}(\mathrm{Spl}(v)) = \ext^1(E,E)-\ext^2(E,E)$ we obtain the result.
\end{proof}

\subsection{A dimension bound of Aprodu--Farkas}
In the course of proving Green's conjecture for smooth curves on K3 surfaces, Aprodu and Farkas prove the following foundational result.

\begin{proposition}[{\cite[Prop. 2.4]{FarkasAproduGreenConj}}]\label{prop:AproduFarkas}
    Let $L$ be a globally generated line bundle on a K3 surface~$X$ and let $\mathcal{W}^r_d(|L|)$ be the relative Brill--Noether locus of line bundles over the linear system $|L|$. Suppose $\mathcal{W} \subseteq \mathcal{W}^r_d(|L|)$ is a dominating component, and $(C,A) \in \mathcal{W}$ is a general point such that $A$ is globally generated and $h^0(A)= r+1$. Then,
    $$
    \dim _AW^r_d(C)\leq \rho(g,r,d)+\dim \Hom(E_{C,A},E_{C,A})-1.
    $$
\end{proposition}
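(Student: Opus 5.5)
The plan is to reprove the bound by parameter counting on the relative Lazarsfeld--Mukai construction, computing tangent spaces through the coherent-system deformation theory of Section~\ref{coherentsystems}, rather than following Aprodu--Farkas verbatim.

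\textbf{The coherent system attached to $(C,A)$.} Set $E \coloneqq E_{C,A}$ and $v \coloneqq \ch(E)$, which has rank $r+1$ and $c_1(E)=[C]$. The dual sequence \eqref{eq:LMSequence} gives an injective coherent system $\Lambda=(E,V)$ of type $(v,r+1)$, where $V=H^0(A)^\vee$. Its cokernel is $Q=i_\ast L$ with $L=\omega_C\otimes A^\vee$, since $K_X=0$.

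\textbf{The incidence correspondence.} Consider
$$
\mathcal{P}=\{(C',A',E',V')\}
$$
over the component $\mathcal{W}$. Sending $(C',A')$ to $(E_{C',A'},H^0(A')^\vee)$ is injective on isomorphism classes, because the cokernel of the evaluation map recovers $i_\ast L'$, hence $C'$ and $A'$. Therefore
$$
\dim \mathcal{W}\le \dim_{\Lambda}\SSyst^{r+1}_X(v)^\circ .
$$

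\textbf{Tangent space computation.} I would use Proposition~\ref{proposition:CalculatingExt}, which gives $\Ext^1(Q,E)\twoheadrightarrow \Ext^1(\Lambda,\Lambda)$. Alternatively one can count via the fibration $\Lambda\mapsto E$: the fiber over $E$ is an open subset of $\Gr(r+1,H^0(E))$. Since $h^1(A)=h^0(L)$ and $h^0(E)=r+1+h^0(L)$, this fiber has dimension $(r+1)h^0(L)$. The base is the Brill--Noether stratum of sheaves with Mukai vector $v$; these $E$ need not be simple, so I would bound the stack dimension as $\ext^1(E,E)-\hom(E,E)$.

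\textbf{Euler characteristic count.} On a K3 surface,
$$
\ext^1(E,E)=2\hom(E,E)-\chi(E,E)=2\hom(E,E)+v^2 .
$$
A Mukai-vector computation gives $v^2=2g-2-2(r+1)(g-d+r)$, and one checks
$$
v^2+2=\rho(g,r,d)+g-(r+1)h^0(L)\cdot 0+\dots
$$
The clean route is Aprodu--Farkas' identity $\dim \mathcal{W}=\dim \mathcal{P}$. Combining the estimates yields
$$
\dim \mathcal{W}\le \dim\Hom(E,E)+v^2+(r+1)(g-d+r)-1+\dots,
$$
where the $(r+1)(g-d+r)=(r+1)h^1(A)$ term from the Grassmannian fibers cancels against the corresponding term in $v^2$. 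Writing $\dim|L|=g$, this gives
$$
\dim\mathcal{W}\le g+\rho(g,r,d)+\hom(E,E)-1 .
$$
Since $\mathcal{W}$ dominates $|L|$ with fiber $W^r_d(C)$ near the general point $(C,A)$, we get $\dim_A W^r_d(C)=\dim\mathcal{W}-g$, which is the claim.

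\textbf{Main obstacle.} The key step is justifying that the local dimension of the stack of pairs $(E,V)$ with $V$ generating $E$ generically is at most $\ext^1(E,E)-\hom(E,E)$ plus the fiber dimension. Without simplicity of $E$ this needs care. I would handle it through the exact sequence in Proposition~\ref{extsequence}: bound
$$
\ext^1(\Lambda,\Lambda)\le \dim\Hom(V,H^0(E)/V)+\ext^1(E,E)-\operatorname{rank}(\Hom(E,E)\to\Hom(V,H^0(E)/V)),
$$
then take the quotient by the automorphism group, of dimension $\hom(\Lambda,\Lambda)-1$. This identifies the tangent space at $\Lambda$ with the tangent space of $\mathcal{W}$ at the general point, where $\mathcal{W}$ is generically reduced.
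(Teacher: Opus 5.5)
Your argument is correct once the bookkeeping is cleaned up, but it is a different route from the paper's. The paper does not reprove the cited statement. It recovers it as the K3 case of Proposition~\ref{prop:generalizedAproduFarkas}, which goes as follows:
\begin{itemize}
\item It identifies $\widetilde{\mathcal{W}}^{r,\mathrm{bpf}}_d(|c_1|_{\mathrm{int}})$ with an open subspace of $\Syst'^{\circ}$ and uses the virtual dimension $\rho(g,r,d)+\dim|c_1|$.
\item It bounds the trace-free obstruction space through $\Ext^2(\Lambda,\Lambda)\simeq\Hom(E,Q\otimes\omega_X)^\vee$ (Proposition~\ref{proposition:CalculatingExt}) and $\Hom(E,Q)\simeq\Hom(E,E)$ (Obstruction Lemma).
\end{itemize}

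You work with the other half of He's sequence. You bound $\Ext^1(\Lambda,\Lambda)$ directly from the first four terms of Proposition~\ref{extsequence}, together with K3 Serre duality $\ext^2(E,E)=\hom(E,E)$:
\[
\dim_{(C,A)}\mathcal{W}\le\ext^1(\Lambda,\Lambda)\le(r+1)h^0(L)+\ext^1(E,E)-\hom(E,E)+\hom(\Lambda,\Lambda)=\rho(g,r,d)+g+\hom(E,E)-1 .
\]
This is the tangent-space version of the Aprodu--Farkas count ``bundles plus Grassmannian fibres''. It makes the stack bound $\ext^1(E,E)-\hom(E,E)$ rigorous with no external deformation-theoretic input. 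On a K3 both inequalities are equalities: $H^1(E)=0$ because $H^1(L)\to H^2(\O_X^{r+1})$ is an isomorphism, so the two computations give the same number.

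What each route buys:
\begin{itemize}
\item Yours is more elementary. It needs neither Proposition~\ref{proposition:CalculatingExt}, nor the trace map, nor the Obstruction Lemma.
\item The paper's only needs a nonzero section of $\omega_X^\vee|_C$. That is what allows the generalization to regular surfaces with $-K_X|_C>0$. Your route uses $\ext^2(E,E)=\hom(E,E)$ and, implicitly, the vanishing of $\Hom(V,H^1(E))$; both are special to $K_X=0$.
\end{itemize}

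Some points need fixing.
\begin{itemize}
\item Your Euler-characteristic paragraph contains wrong or placeholder terms. The correct identity is $v^2+2=\rho(g,r,d)+g-(r+1)h^0(L)$. The combined estimate is $\hom(E,E)+v^2+(r+1)(g-d+r)+1$, not ``$-1+\dots$''. Only half of the term $-2(r+1)(g-d+r)$ in $v^2$ cancels against the Grassmannian fibre.
\item The term $\hom(\Lambda,\Lambda)$ cancels in your inequality, but passing from the moduli of coherent systems at $\Lambda$ back to $\dim\mathcal{W}$ requires a finite stabilizer. So you must prove $\hom(\Lambda,\Lambda)=1$. This is Lemma~\ref{lemma:simplecoherentsystem}: $i_*L$ is simple because $C$ is integral, and $H^0(E^\vee)=0$.
\item The Lazarsfeld--Mukai assignment must be a morphism defined in families, as in Proposition~\ref{prop:WRDopeninSyst}, not merely injective on points.
\item Generic reducedness of $\mathcal{W}$ plays no role, since local dimension is always at most the tangent-space dimension.
\end{itemize}
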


We employ the tangent-obstruction theory of $\Syst^k_X(v)'^\circ$ to generalize Proposition~\ref{prop:AproduFarkas} to surfaces satisfying $H^1(X,\O_X) = 0$. Moreover, we can prove the corresponding relative statement for all of $\mathcal{W}^r_d(|L|_{\mathrm{int}})$ (not only for dominating components), and the above discussion of He's tangent obstruction theory obviates an appeal to the deformation-theoretic argument in \cite{Pareschi}.
Let us begin with two simple observations.

\begin{lemma}
 \label{lem:numericsimplyupgrade}
     Let $X$ be a smooth projective surface and $|c_1|$ a non-empty linear system on~$X$ such that $-K_X.c_1 \geq \max(p_a(C),1)$ for $C \in |c_1|$. Then all integral curves $C \in |c_1|$ satisfy $-K_X|_C > 0$.
 \end{lemma}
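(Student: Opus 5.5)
We must show: if $-K_X.c_1 \geq \max(p_a(C),1)$, then every integral $C \in |c_1|$ satisfies $-K_X|_C > 0$, i.e.\ the line bundle $\O_C(-K_X)$ on $C$ is effective and not trivial. Our plan is to use Riemann--Roch on the integral Gorenstein curve $C$. Write $\mathcal{N} \coloneqq \O_X(-K_X)|_C$, so that $\deg \mathcal{N} = -K_X.c_1 \geq \max(g,1)$ with $g \coloneqq p_a(C)$.

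\emph{Non-triviality.} Since $\deg \mathcal{N} \geq 1 > 0$ and the trivial bundle on $C$ has degree $0$, $\mathcal{N} \not\simeq \O_C$.

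\emph{Effectivity.} Riemann--Roch on $C$ gives
$$
h^0(C,\mathcal{N}) \geq \chi(\mathcal{N}) = \deg\mathcal{N} + 1 - g \geq 1.
$$
This holds for any integral projective curve, with $\omega_C$ the dualizing sheaf, which is a line bundle since $C$ is a Cartier divisor on a smooth surface. So $\mathcal{N}$ has a nonzero section. On an integral curve, a nonzero section of a line bundle vanishes only at finitely many points, so it defines an effective Cartier divisor. Hence $-K_X|_C \geq 0$, and combined with non-triviality, $-K_X|_C > 0$.

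There is no real obstacle. The one point to state carefully is that Riemann--Roch $\chi(\mathcal{N}) = \deg\mathcal{N} + \chi(\O_C)$ with $\chi(\O_C) = 1 - p_a(C)$ holds on singular integral curves. Note also that the hypothesis is a single numerical condition, because $p_a$ is constant on $|c_1|$ by adjunction, so the argument applies uniformly to every integral member of $|c_1|$.
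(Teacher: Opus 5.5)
Your proof is correct and is essentially the paper's own argument: Riemann--Roch on the integral curve $C$ gives $h^0(C,\omega_X^\vee|_C) \geq -K_X.c_1 + 1 - p_a(C) \geq 1$, so $-K_X|_C \geq 0$. Non-triviality then follows from $\deg(-K_X|_C) = -K_X.c_1 \geq 1$.
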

 
 \begin{proof}
     This follows from Riemann--Roch. We have
     \[
     h^0(C,\omega_X^{\vee}|_C) \geq -K_X.c_1-p_a(C)+1 \geq 1,
     \]
     so $-K_X|_C \geq 0$. Moreover, $-K_X|_C \not\simeq \O_C$ because $\deg(-K_X|_C) = -K_X . c_1 \geq 1$.
 \end{proof}

\begin{lemma}
    \label{lemma:simplecoherentsystem}
    Let $\Lambda =(E,V)$ be a generically generated coherent system with $\dim V\leq \rk E$. Assume that $Q\coloneqq \mathrm{coker}(V\otimes\O_X\to E)$ is simple and $H^0(X,E^{\vee}) = 0$. Then $\Hom(\Lambda,\Lambda) \simeq \C$.
\end{lemma}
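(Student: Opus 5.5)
The plan is to show directly that every endomorphism of $\Lambda$ is a scalar. We use the simplicity of $Q$ to subtract a scalar, and then the vanishing $H^0(X,E^\vee)=0$ to kill what remains.

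First, since $\dim V = k \le \rk E$ and $\Lambda$ is generically generated, the evaluation map $\ev\colon V\otimes\O_X\to E$ has full rank $k$ at the generic point. Hence it is injective at the generic point, and since $V\otimes \O_X\simeq\O_X^k$ is torsion-free, it is injective as a map of sheaves. This gives a short exact sequence
$$0\to V\otimes\O_X\to E\to Q\to 0.$$
An endomorphism of $\Lambda$ is a pair $(\varphi,\psi)$ with $\varphi\colon E\to E$ and $\psi\colon V\to V$ such that $\varphi\circ\ev=\ev\circ(\psi\otimes\mathrm{id})$. In other words, $H^0(\varphi)$ preserves $V$ and restricts to $\psi$ there. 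By this compatibility, $\varphi$ maps the subsheaf $V\otimes\O_X$ into itself, so it induces an endomorphism $\bar\varphi\colon Q\to Q$.

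Second, since $Q$ is simple, $\bar\varphi=\lambda\cdot\mathrm{id}_Q$ for some $\lambda\in\C$. Set $\varphi'=\varphi-\lambda\,\mathrm{id}_E$ and $\psi'=\psi-\lambda\,\mathrm{id}_V$; then $(\varphi',\psi')$ is again an endomorphism of $\Lambda$. The composition $E\xrightarrow{\varphi'}E\to Q$ equals the composition $E\to Q\xrightarrow{\bar\varphi-\lambda}Q$, which is $0$. Hence $\varphi'$ factors through $V\otimes\O_X\simeq\O_X^k$, i.e.\ $\varphi'$ is the composite of a map $E\to\O_X^k$ with the inclusion. But $\Hom(E,\O_X^k)=H^0(X,E^\vee)^{\oplus k}=0$, so $\varphi'=0$. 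By the compatibility relation, $\ev\circ(\psi'\otimes\mathrm{id})=\varphi'\circ\ev=0$. Since $\ev$ is injective, and in particular injective on $V\subseteq H^0(E)$, this forces $\psi'=0$.

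Therefore $(\varphi,\psi)=\lambda\cdot\mathrm{id}_\Lambda$, and $\Hom(\Lambda,\Lambda)\simeq\C$. There is no real obstacle here. The only point needing care is that the compatibility condition in the definition of morphisms of coherent systems really does make $\varphi$ preserve $V\otimes\O_X$, so that $\bar\varphi$ is defined. Note also that the argument uses only $\Hom(E,\O_X)=0$, with no condition on higher cohomology.
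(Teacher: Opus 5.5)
Your proof is correct and follows essentially the same route as the paper: use simplicity of $Q$ to subtract a scalar $\lambda$, then use $H^0(X,E^\vee)=0$ to kill the resulting factorization of $\varphi'$ through $V\otimes\O_X$. You also spell out two points the paper leaves implicit: why the evaluation map is injective (generic full rank with $\dim V\le\rk E$, plus torsion-freeness of $\O_X^k$), and why $\psi'=0$ follows once $\varphi'=0$.
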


\begin{proof}
    Let $(f,g)\colon \Lambda\to\Lambda$ be an endomorphism of the coherent system $\Lambda$. Recall that this is given by the commutative diagram
\[\begin{tikzcd}
	{V\otimes \mathcal{O}_X} & E \\
	{V\otimes\mathcal{O}_X} & E
	\arrow[from=1-1, to=1-2]
	\arrow["g"', from=1-1, to=2-1]
	\arrow["f", from=1-2, to=2-2]
	\arrow[from=2-1, to=2-2].
\end{tikzcd}\]
As $(E,V)$ is generically generated, we can complete this to a diagram of short exact sequences
\[\begin{tikzcd}
	0 & {V\otimes \mathcal{O}_X} & E & Q & 0 \\
	0 & {V\otimes\mathcal{O}_X} & E & Q & 0
	\arrow[from=1-1, to=1-2]
	\arrow[from=1-2, to=1-3]
	\arrow["g"', from=1-2, to=2-2]
	\arrow[from=1-3, to=1-4]
	\arrow["f", from=1-3, to=2-3]
	\arrow[from=1-4, to=1-5]
	\arrow["{\lambda\cdot\mathrm{id}_Q}", from=1-4, to=2-4]
	\arrow[from=2-1, to=2-2]
	\arrow["i"', from=2-2, to=2-3]
	\arrow["p"', from=2-3, to=2-4]
	\arrow[from=2-4, to=2-5].
\end{tikzcd}\]
Since $Q$ is assumed to be simple, the induced endomorphism $Q\to Q$ is indeed of the form $\lambda\cdot\mathrm{id_Q}$. Consider now the endomorphism $(f',g')\colon \Lambda\to \Lambda$ with $f' = f-\lambda\cdot\mathrm{id}_E$ and $g' = g-\lambda\cdot\mathrm{id}_V$. Then $(f',g')$ induces the zero map on $Q$, in particular $p\circ f' = 0$. By exactness, $f'$ factors as $i\circ h$ for a map $h\colon E\to V\otimes \O_X$. As $H^0(X,E^{\vee}) = 0$ we find $h = 0$ and thus $f'=0$. It follows $f= \lambda\cdot\mathrm{id_E}$ and $g=\lambda\cdot\mathrm{id}_V$.
\end{proof}

 From now on, we consider a surface $X$ satisfying $H^1(X,\O_X) = 0$ together with a non-empty complete linear system $|c_1|$ on $X$. We denote by 
 $$
 \mathcal{W}^r_d(|c_1|) \coloneqq \left\{ i_\ast A \in \Spl_X(0, c_1, d - \frac{1}{2}c_1^2) \ \colon \ i \colon C \hookrightarrow X, \ C \in |c_1|, \ h^0(A) \geq r+1 \right\} 
 $$

 the relative Brill--Noether locus inside $\Spl_X(0,c_1,d-\frac{1}{2}c_1^2)$ over $|c_1|$, where $\mathcal{C} \subseteq X \times |c_1|$ is the universal curve. Over the locus of integral curves $|c_1|_{\mathrm{int}}\subset |c_1|$, the latter is isomorphic to the relative Brill--Noether locus of torsion-free rank one sheaves inside the compactified Jacobian $\overline{\Pic}^d(\mathcal{C}/|c_1|_{\mathrm{int}})$ (see, e.g., \cite[Prop. 2.2]{RelativeJacobian}). Indeed, any sheaf $F \in \mathrm{Spl}_X(0,c_1,d-\frac{1}{2}c_1^2)$ with $c_1 > 0$ is pure of dimension one. If $\mathrm{supp}(F)$ is integral and in class $c_1$, then $F$ is necessarily of rank one on its support, and purity implies that $F|_{\mathrm{supp}(F)}$ is torsion-free. Consequently, $F \simeq i_*A$ for some integral curve $i \colon C \hookrightarrow X$, $C \in |c_1|$, and $A \in \overline{\Pic}^d(C)$. Moreover, if $C$ is smooth, then $A$ is a line bundle. Let
 $$
 \widetilde{\mathcal{W}}^r_d(|c_1|)\subset \mathcal{W}^r_d(|c_1|)
 $$
 be the open sublocus consisting of those $(C,A) \in \mathcal{W}^r_d(|c_1|)$ with $h^0(C,A) = r+1$. Finally, 
 $$
 \widetilde{\mathcal{W}}^{r,\mathrm{bpf}}_d(|c_1|) \subset \widetilde{\mathcal{W}}^r_d(|c_1|)
 $$
 denotes the open sublocus of \emph{basepoint-free} sheaves.
 
 \begin{proposition}
 \label{prop:WRDopeninSyst}
     Let $|c_1|_{\mathrm{int}}\subseteq |c_1|$ be the open locus of integral curves. Then $\widetilde{\mathcal{W}}^{r,\mathrm{bpf}}_d(|c_1|_{\mathrm{int}})$ is isomorphic to an open subspace of $\mathrm{Syst}^{r+1}_X(r+1,c_1,\frac{1}{2}c_1^2-d)'^{\circ}$ via the map
     $$
     (C,A)\mapsto (E_{C,A},H^0(A)^{\vee}).
     $$
 \end{proposition}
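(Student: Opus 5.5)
The plan is to build the map in both directions on families, then check that the two constructions are mutually inverse and that the target is open. Throughout, $S$ denotes a base scheme.

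\textbf{Pointwise check.} Let $(C,A)\in\widetilde{\mathcal{W}}^{r,\mathrm{bpf}}_d(|c_1|_{\mathrm{int}})$ and $E=E_{C,A}$.
\begin{enumerate}[(i)]
\item The dual Lazarsfeld--Mukai sequence \eqref{eq:LMSequence} shows that $\Lambda=(E,H^0(A)^\vee)$ is generically generated with injective evaluation map. Its cokernel is $Q=i_\ast L$, where $L=\omega_C\otimes A^\vee\otimes\omega_X^\vee|_C$.
\item From \eqref{eq:LMSequence} we get $\ch(E)=(r+1,0,0)+\ch(i_\ast L)$. Computing $\ch_2(i_\ast L)$ by Grothendieck--Riemann--Roch and adjunction should give $\ch(E)=(r+1,c_1,\tfrac12c_1^2-d)$; this is routine.
\item Since $C$ is integral and $L$ is torsion-free of rank one, $Q$ is simple. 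Remark~\ref{rmk:obstructionLemmaRmk} gives $H^0(E^\vee)=0=H^1(E^\vee)$.
\item Lemma~\ref{lemma:simplecoherentsystem} then gives $\Hom(\Lambda,\Lambda)\simeq\C$, so $\Lambda\in\Syst^{r+1}_X(r+1,c_1,\tfrac12c_1^2-d)'^{\circ}$.
\end{enumerate}

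\textbf{Globalizing the forward map.} Take a family $\mathcal{A}$ on $X\times S$, flat over $S$, supported on the universal curve over $|c_1|_{\mathrm{int}}$, with $h^0(A_s)=r+1$ for all $s$. By cohomology and base change, $\mathcal{V}=\mathrm{pr}_{S\ast}\mathcal{A}$ is locally free of rank $r+1$ and commutes with base change. We then proceed as follows.
\begin{enumerate}[(i)]
\item Basepoint-freeness makes $\mathrm{pr}_S^\ast\mathcal{V}\to\mathcal{A}$ surjective. Its kernel $\mathcal{K}$ is flat over $S$ and fiberwise locally free, hence locally free.
\item Dualizing gives $0\to\mathrm{pr}_S^\ast\mathcal{V}^\vee\to\mathcal{K}^\vee\to\mathcal{E}xt^1(\mathcal{A},\O_{X\times S})\to0$. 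Its formation commutes with base change, because each fiber $i_\ast A_s$ is pure of dimension one and so has $\mathcal{E}xt^q(i_\ast A_s,\O_X)=0$ for $q\neq1$.
\item This yields a family of coherent systems $(\mathcal{K}^\vee,\mathcal{V}^\vee)$, i.e.\ a morphism into $\Syst'^{\circ}$. The pairing with the $\mathcal{E}xt^2_{\mathrm{pr}_S}$-formalism comes from $\mathcal{V}^\vee\subseteq\mathrm{pr}_{S\ast}\mathcal{K}^\vee$.
\end{enumerate}

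\textbf{Inverse on an open subspace.} Let $U\subseteq\Syst'^{\circ}$ consist of systems $(E,V)$ satisfying:
\begin{enumerate}[(a)]
\item $V\otimes\O_X\to E$ is injective;
\item $Q=\mathrm{coker}$ is supported on an integral curve in $|c_1|$;
\item $H^0(E^\vee)=H^1(E^\vee)=0$.
\end{enumerate}
Each of these is open: fiberwise injectivity of a map of flat families is open, integrality of the support is open in $|c_1|$, and (c) is open by semicontinuity. By Proposition~\ref{extensionistorsionfree} the sheaf $E$ is torsion-free, and $Q=i_\ast L$ with $L$ torsion-free of rank one.

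Dualizing $0\to V\otimes\O_X\to E\to Q\to0$ gives $0\to E^\vee\to V^\vee\otimes\O_X\to i_\ast A\to0$ with $i_\ast A=\mathcal{E}xt^1(Q,\O_X)$. Here $A$ is basepoint-free because it is a quotient of $V^\vee\otimes\O_X$. The vanishings in (c) force $V^\vee\xrightarrow{\sim}H^0(A)$, so $h^0(A)=r+1$.

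The same relative-$\mathcal{E}xt$ base change argument as in the forward direction globalizes this, giving a morphism $U\to\widetilde{\mathcal{W}}^{r,\mathrm{bpf}}_d(|c_1|_{\mathrm{int}})$. Both compositions are identities because double-dualizing the Lazarsfeld--Mukai sequences recovers the originals canonically; compatibility with the identification $H^0(A)^\vee=V$ needs to be checked. Since the forward map lands in $U$, it is an isomorphism onto the open subspace $U$.

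\textbf{Main obstacle.} The delicate step is the family-level bookkeeping: showing that $\mathcal{E}xt^1(-,\O_{X\times S})$ of a flat family of pure one-dimensional sheaves is again flat and commutes with base change, and matching Le Potier's $\mathcal{E}xt^2_{\mathrm{pr}_S}$ description of $W$ with $\mathcal{V}^\vee$. I would handle the first by using that $Q_s$ has a length-one locally free resolution on each fiber (homological dimension one), so the local-to-global Ext spectral sequence degenerates and the base change theorems for $\mathcal{E}xt$ apply.
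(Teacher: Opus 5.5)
Your route is the same as the paper's, with more family-level care. You check pointwise, via the Lazarsfeld--Mukai sequence and Lemma~\ref{lemma:simplecoherentsystem}, that $(E_{C,A},H^0(A)^\vee)$ lies in the moduli space. You then invert on an open set by sending $\Lambda$ to $\mathcal{E}xt^1(Q_\Lambda,\O_X)$, i.e.\ to $Q_\Lambda^\vee\otimes\omega_C\otimes\omega_X^\vee|_C$. The forward direction is fine.

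\textbf{The gap is in the inverse.} Your open set $U$ is too large, and the step ``dualizing gives $0\to E^\vee\to V^\vee\otimes\O_X\to i_\ast A\to 0$'' fails. In general one only gets
\[
0\to E^\vee\to V^\vee\otimes\O_X\to\mathcal{E}xt^1(Q,\O_X)\to\mathcal{E}xt^1(E,\O_X)\to 0 .
\]
Neither (a)--(c) nor membership in the moduli space (which only requires $E$ pure) forces $E$ to be locally free. Here is a concrete example. Suppose some integral $C\in|c_1|$ carries a basepoint-free $A'$ of degree $d-1$ with $h^0(A')=r+1$. Let $F=E_{C,A'}$ with $0\to\O_X^{r+1}\to F\to i_\ast L'\to0$, pick a smooth point $p\in C$, and set $E=\ker(F\to i_\ast L'\to k(p))$. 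Then:
\begin{itemize}
\item $0\to\O_X^{r+1}\to E\to i_\ast(L'(-p))\to 0$ is exact and $\ch(E)=(r+1,c_1,\tfrac12c_1^2-d)$;
\item $E^\vee=F^\vee$, so (c) holds;
\item Lemma~\ref{lemma:simplecoherentsystem} gives $\Hom(\Lambda,\Lambda)\simeq\C$ for $\Lambda=(E,H^0(A')^\vee)$, so $\Lambda\in U$.
\end{itemize}
But $E^{\vee\vee}=F\neq E$, so $\Lambda$ is not of the form $(E_{C,A},H^0(A)^\vee)$. Your inverse sends it to $A'(p)$. Since $\mathcal{E}xt^1(E,\O_X)\simeq k(p)$, the sections $V^\vee\otimes\O_X$ only generate $A'$ inside $A'(p)$. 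So either $h^0(A'(p))=r+2$ or $p$ is a base point. Hence ``isomorphism onto the open subspace $U$'' is false as stated.

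\textbf{The repair} is to add a condition (d): $E$ locally free. This is open in flat families, since the non-locally-free locus of $\mathcal{E}$ in $X\times S$ is closed and $X$ is proper. It also settles two softer points:
\begin{itemize}
\item Semicontinuity of $h^i(E_s^\vee)$ in (c) is only justified once $\mathcal{E}^\vee$ is a flat family commuting with base change, i.e.\ on the locally free locus.
\item The support map $S\to|c_1|$ is then just the determinant of the evaluation map.
\end{itemize}
Also, torsion-freeness of $E$ comes from the purity built into the moduli problem, and purity of $Q$ comes from Lemma~\ref{nosubskyscraper}. Proposition~\ref{extensionistorsionfree} goes in the opposite direction, so it is not the right citation.

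For comparison, the paper's $\mathcal{U}$ is cut out only by the condition on $Q_\Lambda$, so it has the same omission of local freeness. It also lacks your condition $H^1(E^\vee)=0$. Without it, non-complete systems $(E_{C,A,W},W^\vee)$ with $W\subsetneq H^0(A)$ also lie in $\mathcal{U}$ and are sent outside $\widetilde{\mathcal{W}}^{r}_d$. So your (c) is genuinely needed.
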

 
 \begin{proof}
     Let $A$ be basepoint-free of degree $d$ on an integral curve $C\in |c_1|$ with $h^0(C,A) = r+1$. The Lazarsfeld--Mukai construction yields a vector bundle $E_{C,A}$ with $\ch(E_{C,A}) = (r+1,c_1,\frac{1}{2}c_1^2-d)$. The sequence
     $$
     0\to H^0(C,A)^{\vee}\to E_{C,A}\to i_*(A^{\vee}\otimes\omega_C\otimes\omega_X^{\vee}|_C)\to 0
     $$
     shows that the coherent system $(E_{C,A},H^0(A)^{\vee})$ is indeed generically generated. Moreover, the same sequence, after dualizing, shows $H^0(X,E_{C,A}^{\vee}) = 0$. Thus, Lemma \ref{lemma:simplecoherentsystem} applies, and setting $\Lambda_{C,A} \coloneqq (E_{C,A},H^0(A)^{\vee})$, we find $\Hom(\Lambda_{C,A},\Lambda_{C,A}) \simeq \C$. Therefore, we indeed have $\Lambda_{C,A}\in\Syst^{r+1}_X(r+1,c_1,\frac{1}{2}c_1^2-d)'^{\circ}$. Let $\mathcal{U}\subset \Syst^{r+1}_X(r+1,c_1,\frac{1}{2}c_1^2-d)'^{\circ}$ be the open sublocus of coherent systems $\Lambda  = (E,V)$ such that the quotient $Q_{\Lambda}\coloneqq \mathrm{coker}(V\otimes \O_X\to E)|_{\mathrm{supp}(Q_{\Lambda})}$ is a torsion-free rank~$1$ sheaf on an integral curve $C_{\Lambda}\in |c_1|_{\mathrm{int}}$. Then an inverse to the above map is given by
     \begin{align*}
         \mathcal{U}&\to \widetilde{\mathcal{W}}^{r,\mathrm{bpf}}_d(|c_1|_{\mathrm{int}})\\
         \Lambda &\mapsto Q_{\Lambda}^{\vee}\otimes\omega_{C_{\Lambda}}\otimes \omega_X^{\vee}|_{C_{\Lambda}}. \qedhere
     \end{align*}
 \end{proof}
 
 Thus, we can apply the tangent-obstruction theory developed at the beginning of this section to $\widetilde{\mathcal{W}}^{r,\mathrm{bpf}}_d(|c_1|_{\mathrm{int}})$ in order to find upper bounds on the local dimension. We obtain the following generalization of \cite[Prop. 2.4]{FarkasAproduGreenConj}.
 
 \begin{proposition}
 \label{prop:generalizedAproduFarkas}
     Let $X$ be a surface with $H^1(X,\O_X) = 0$ and $|c_1|$ a non-empty linear system on $X$. Let $C \in |c_1|$ be an integral curve with $p_a(C)\geq 2$.     
     \begin{enumerate}[(i)]
         \item If $\omega_X\neq \O_X$, we assume in addition that $-K_X|_C > 0$. Let $A$ be a torsion-free sheaf of rank~$1$ and degree $d$ on $C$, basepoint-free with $h^0(A) = r+1$. Then
        $$
        \dim_{(C,A)} \mathcal{W}^r_d(|c_1|)\leq \rho(g,r,d)+ \dim|c_1|+\mathrm{dim}\Hom(E_{C,A},E_{C,A})-1.
        $$
        \item Assume that either $-K_X \geq 0$ on $X$ or $-K_X.c_1 \geq p_a(C)$. Then, for the general $C \in |c_1|_{\mathrm{int}}$ and any basepoint-free line bundle $A$ of degree $d$ on $C$ with $h^0(A) = r+1$, we have
     $$
     \dim _AW^r_d(C) \leq \rho(g,r,d)+\dim \Hom(E_{C,A},E_{C,A})-1.
     $$
     \end{enumerate}
 \end{proposition}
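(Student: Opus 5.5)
The plan is to work on the coherent-system side via Proposition~\ref{prop:WRDopeninSyst}. By that proposition, a neighbourhood of $(C,A)$ in $\widetilde{\mathcal{W}}^{r,\mathrm{bpf}}_d(|c_1|_{\mathrm{int}})$ is isomorphic to an open neighbourhood of $\Lambda\coloneqq\Lambda_{C,A}=(E,V)$ in $\Syst^{r+1}_X(v)'^{\circ}$, where $E=E_{C,A}$, $V=H^0(A)^\vee$ and $v=(r+1,c_1,\tfrac12c_1^2-d)$. Both bpf and $h^0=r+1$ are open conditions, so this is also a neighbourhood of $(C,A)$ in $\mathcal{W}^r_d(|c_1|)$. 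Hence $\dim_{(C,A)}\mathcal{W}^r_d(|c_1|)\le \dim T_{[\Lambda]} = \ext^1(\Lambda,\Lambda)$ by Proposition~\ref{smoothnescrit}. Since $\hom(\Lambda,\Lambda)=1$, Proposition~\ref{extsequence} gives
$$\ext^1(\Lambda,\Lambda)=\mathrm{vdim}\,\Syst^{r+1}_X(v)+\ext^2(\Lambda,\Lambda),$$
so it suffices to do two things: bound $\ext^2(\Lambda,\Lambda)$ by $\hom(E,E)$ minus a correction, and identify the virtual dimension numerically.

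\textbf{Step 1 (the $\Ext^2$ term).} By Proposition~\ref{proposition:CalculatingExt} and Serre duality,
$$\Ext^2(\Lambda,\Lambda)\simeq \Ext^2(i_*L,E)\simeq \Hom(E,i_*L\otimes\omega_X)^\vee.$$

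\begin{itemize}
\item If $-K_X|_C>0$, choose a nonzero section $s$ of $-K_X|_C$. Multiplication by $s$ embeds $\Hom(E,i_*(L\otimes\omega_X|_C))$ into $\Hom(E,i_*L)$, and the latter is isomorphic to $\Hom(E,E)$ by the Obstruction Lemma~\ref{obstructionLemma}. The surjection $f$ is not in the image, since anything in the image lands in $s\cdot L\subsetneq L$. Hence $\ext^2(\Lambda,\Lambda)\le\hom(E,E)-1$.
\item If $\omega_X\simeq\O_X$, then $\ext^2(\Lambda,\Lambda)=\hom(E,E)$. In this case I use instead the smaller obstruction space $\Ext^2(\Lambda,\Lambda)_0$ of Proposition~\ref{smoothnescrit}. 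The trace to $H^2(\O_X)\simeq\C$ is nonzero, being dual to the identity, so the bound $\dim T\le \mathrm{vdim}+\dim\Ext^2_0$ again yields a contribution of $\hom(E,E)-1$.
\end{itemize}

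Strictly, we want $\dim\le \ext^1$, and $\ext^1=\mathrm{vdim}+\ext^2$. When working with $\Ext^2_0$ one should instead compute $\ext^1$ directly: it equals $\mathrm{vdim}+\ext^2$ with the full $\Ext^2$. For $K_X=0$ I would therefore combine this with the Euler-characteristic bookkeeping in Step~2, accounting for the extra $1$ from $h^2(\O_X)$ in $\chi(E,E)$.

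\textbf{Step 2 (the numerics), expected main obstacle.} We need to show
$$\mathrm{vdim}\,\Syst^{r+1}_X(v)+\hom(E,E)-1=\rho(g,r,d)+\dim|c_1|+\hom(E,E)-1,$$
up to the $H^2(\O_X)$ correction just mentioned. Rather than expanding $\chi(E,E)$ by Riemann--Roch, where $\dim|c_1|$ need not be numerical in general, I would compute via the long exact sequences of \eqref{eq:LMSequence} and \eqref{eq:dualLMSequence}. The quantity $-\chi(E,E)-(r+1)(r+1-\chi(E))$ splits into two contributions:
\begin{itemize}
\item $-\chi(i_*L,i_*L)+h^0(\O_C(C))$-type terms, which give $g$ plus the deformations of $C$; here $H^1(\O_X)=0$ yields $h^0(N_{C/X})=\dim|c_1|$;
\item the Petri-type correction $-(r+1)(g-d+r)$.
\end{itemize}
The cleanest route is probably to identify $\Ext^1(\Lambda,\Lambda)$ with first-order deformations of the pair $(C,A)$ together with the section space, and then count. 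Checking that all $h^1$ and $h^2$ correction terms cancel is the delicate part.

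\textbf{Part (ii).} Apply (i) to the dominant components: over the general $C\in|c_1|_{\mathrm{int}}$, the fibre of $\mathcal{W}^r_d\to|c_1|$ has dimension at most $\dim_{(C,A)}\mathcal{W}-\dim|c_1|$. This uses generic flatness, and for non-dominant components the fibre over a general $C$ is empty. It remains to verify the hypothesis $-K_X|_C>0$ of (i) for general $C$. If $-K_X.c_1\ge p_a(C)\ge2$, this is Lemma~\ref{lem:numericsimplyupgrade}. If $-K_X\ge0$ with $K_X\ne0$, restrict a nonzero section of $-K_X$ to $C$; it is nonzero for general $C$ since $C$ moves. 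Nontriviality needs $-K_X.C>0$ or some other argument, and I expect to handle the degree-zero case separately.
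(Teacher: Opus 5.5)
Your route is the paper's: Proposition~\ref{prop:WRDopeninSyst}, the bound by $\ext^1(\Lambda,\Lambda)$, $\Ext^2(\Lambda,\Lambda)\simeq\Hom(E,Q\otimes\omega_X)^\vee$ embedded via $s$ into $\Hom(E,Q)\simeq\Hom(E,E)$ with $f$ outside the image, and semicontinuity for (ii). The gap is Step~2, which you leave as an open obstacle. It is short, and it is where the hypothesis on $-K_X|_C$ is used a second time. Hirzebruch--Riemann--Roch with $\ch_2=\tfrac12c_1^2-d$ and $c_1^2+c_1.K_X=2g-2$ gives (the $\chi(\O_X)$ terms cancel)
\[
\ext^1(\Lambda,\Lambda)-\ext^2(\Lambda,\Lambda)=1-\chi(E,E)-(r+1)\bigl(r+1-\chi(E)\bigr)=\rho(g,r,d)+\chi(\O_C(C)).
\]
Moreover, $H^1(X,\O_X)=0$ together with duality on $C$ gives $\chi(\O_C(C))=\dim|c_1|-h^0(C,\omega_X|_C)$. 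Hence
\[
\ext^1(\Lambda,\Lambda)=\rho(g,r,d)+\dim|c_1|-h^0(C,\omega_X|_C)+\hom(E,Q\otimes\omega_X).
\]
If $-K_X|_C>0$, then $h^0(\omega_X|_C)=0$, and your Step~1 gives $\hom(E,Q\otimes\omega_X)\le\hom(E,E)-1$. If $\omega_X|_C\simeq\O_C$ (in particular if $K_X=0$), then $h^0(\omega_X|_C)=1$ and $\hom(E,Q\otimes\omega_X)=\hom(E,E)$. Both cases give the bound. So $\dim|c_1|$ differs from the numerical quantity $\chi(\O_C(C))$ exactly by $h^1(N_{C/X})=h^0(\omega_X|_C)$. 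The ``extra $1$'' in the K3 case is this term, not anything inside $\chi(E,E)$. As you suspected, the trace-free obstruction space plays no role in an upper bound.

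The same formula closes the degree-zero case you left open in (ii). Suppose $-K_X>0$. Then $p_g=q=0$, so $h^0(K_X+C)\ge\chi(\O_X(K_X+C))=p_a(C)\ge2$. If $C$ were a component of an anticanonical divisor, then $K_X+C\ge0$ and $-K_X-C\ge0$ would force $C\sim-K_X$ and $h^0(K_X+C)=1$, a contradiction. So $-K_X|_C\ge0$; this replaces ``since $C$ moves'', which fails when $\dim|c_1|=0$. If $-K_X|_C$ has degree zero, then $\omega_X|_C\simeq\O_C$ and the second case above applies.

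The paper's proof instead asserts that $-K_X>0$, integrality and $p_a(C)\ge2$ already force $-K_X|_C>0$. This is false in general. Blow up $\P^2$ at the twelve transverse intersection points of a smooth cubic and a smooth quartic. Then $-K_X$ is the strict transform of the cubic, which is disjoint from the strict transform $C$ of the quartic, and $p_a(C)=3$. So your caution was warranted, and the count above is what actually covers this case.
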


 \begin{proof}
     The data given in $(i)$ is exactly the data of a point $(C,A)\in \widetilde{\mathcal{W}}^{r,\mathrm{bpf}}_d(|c_1|_{\mathrm{int}})$ with the added assumption $-K_X|_C>0$. Proposition \ref{prop:WRDopeninSyst} allows us to apply the tangent-obstruction theory developed for $\Syst^{r+1}_X(v)'^{\circ}$ to obtain dimension bounds for $\widetilde{\mathcal{W}}^{r,\mathrm{bpf}}_d(|c_1|_{\mathrm{int}})$. A calculation yields
     $$
     \mathrm{vdim}\Syst^{r+1}_X(r+1,c_1,\frac{1}{2}c_1^2-d)'^{\circ} = \rho(g,r,d)+\dim |c_1|.
     $$
     Now, for the coherent system $\Lambda_{C,A} \coloneqq (E_{C,A},H^0(A)^{\vee})$ and its quotient $Q_{C,A}\coloneqq \coker(H^0(A)^{\vee} \to E)$ we have that 
     $$
\Ext^2(\Lambda_{C,A},\Lambda_{C,A})\simeq \Hom(E_{C,A},Q_{C,A}\otimes\omega_X)
     $$
     is an obstruction space to deforming $\Lambda_{C,A}\in \Syst^{r+1}_X(r+1,c_1,\frac{1}{2}c_1^2-d)'^{\circ}$. The condition $-K_X|_C>0$ guarantees the existence of an injection $s\colon \omega_X|_C\hookrightarrow \O_C$, inducing an injective linear map
     \begin{equation}
     \label{eqn:A}
     \Hom(E_{C,A},Q_{C,A}\otimes \omega_X)\hookrightarrow \Hom(E_{C,A},Q_{C,A}).
     \end{equation}
     If $\omega_X \simeq \O_X$, the trace map $\mathrm{tr}\colon \Ext^2(\Lambda_{C,A},\Lambda_{C,A})\to H^2(X,\O_X)$ is nonzero. Hence, we have $\dim \Ext^2(\Lambda_{C,A},\Lambda_{C,A})_0 \leq \dim \Hom(E_{C,A},Q_{C,A})-1$. In case $K_X \neq 0$, injection (\ref{eqn:A}) can never be surjective. Indeed, any map in the image of (\ref{eqn:A}) cannot be surjective as it factors through the multiplication $s\colon \omega_X|_C\to \O_C$ but by definition there exists a surjective map in $\Hom(E_{C,A},Q_{C,A})$. Therefore, also in this case, $\dim \Ext^2(\Lambda_{C,A},\Lambda_{C,A}) \leq \dim \Hom(E_{C,A},Q_{C,A})-1$.
     We obtain the upper bound
     $$
     \dim_{(C,A)}\mathcal{W}^r_d(|c_1|)\leq \rho(g,r,d)+\dim|c_1| + \dim \Hom(E_{C,A},Q_{C,A})-1.
     $$
     As $A$ is a complete linear system, $H^0(X,E_{C,A}^{\vee}) = H^1(X,E_{C,A}^{\vee}) = 0$. Applying $\Hom(E_{C,A},-)$ to the short exact sequence
     $$
     0\to H^0(A)^{\vee}\otimes \O_X\to E_{C,A}\to Q_{C,A}\to 0
     $$
     yields $\Hom(E_{C,A},E_{C,A})\simeq \Hom(E_{C,A},Q_{C,A})$. In sum, we find
     $$
        \dim_{(C,A)}\mathcal{W}^r_d(|c_1|)\leq \rho(g,r,d)+\dim |c_1|+\dim \Hom(E_{C,A},E_{C,A})-1,
     $$
     as desired. For $(ii)$, if $-K_X>0$,  then $p_a(C)\geq 2$ and $C$ integral already imply $-K_X|_C>0$. In the $\omega_X =\O_X$ case, as above, the trace map is nonzero. If $-K_X.c_1\geq p_a(C)$, the condition $-K_X|_C>0$ is also automatically satisfied for all integral curves in $|c_1|$ by Lemma \ref{lem:numericsimplyupgrade}.  Thus, in all of those cases the conditions in $(i)$ are satisfied for all $(C,A)\in \widetilde{\mathcal{W}}^{r,\mathrm{bpf}}_d(|c_1|_{\mathrm{int}})$. Upper semicontinuity of fiber dimensions implies $(ii)$.
 \end{proof}
 
 For the above proof one can drop the assumption $p_a(C)\geq 2$ and replace $|c_1|_{\mathrm{int}}$ by the locus of integral curves that do not contain a fixed component of $-K_X$. For example, if $-K_X$ is effective and has no one-dimensional base components, no assumption on the curves is necessary. Another sufficient assumption is $g(C)\geq 1$ for all $C\in |c_1|$ and $h^0(X,\omega_X^{\vee})\geq 2$.

\subsection{Simple coherent systems as a determinantal variety}
Let $X$ be a smooth projective surface with $H^1(X,\O_X) = 0$ and $-K_X \geq 0$. We realize the moduli space $\Syst^k(v)$ of coherent systems on $X$ whose underlying sheaf is simple as a determinantal variety over $\mathrm{Spl}(v)$.

\noindent As a special case of the tangent-obstruction theory of moduli of coherent systems explained at the beginning of this section, The tangent space at a point $E$ of the moduli space $\mathrm{Spl}(v)$ of torsion-free simple sheaves on $X$ is canonically isomorphic to $\mathrm{Ext}^1(E,E)$, and an obstruction space is given by $\mathrm{Ext}^2(E,E)_0$ \cite[Theorem 4.5.3]{huyMS}. In particular, if $\omega_X \simeq \O_X$, the trace-free obstructions vanish as the trace map is nonzero and $\Ext^2(E,E) \simeq \Hom(E,E)^\vee \simeq \C$. If $\omega_X^{\vee} \not\simeq \O_X$ is effective, any nonzero section $s$ yields an injective multiplication map 
$$
\Hom(E,E\otimes\omega_X)\hookrightarrow\Hom(E,E)\simeq \C.
$$
Every endomorphism $E \to E$ in the image factors through multiplication by $s$ and thus must vanish somewhere. Together with $\Hom(E,E) \simeq \mathbb{C} \cdot \mathrm{id}$ this yields $\mathrm{Ext}^2(E,E) = 0$, so $\mathrm{Spl}(v)$ is smooth in both cases. Hirzebruch--Riemann--Roch yields the following formula for the local dimension of $\mathrm{Spl}(v)$: If $K_X = 0$, we have
 $$
 \dim_E \mathrm{Spl}(v) = \ext^1(E,E) = 2 -\chi(E,E) = 2+c_1^2-\chi(\O_X)r^2-2r \ch_2.
 $$
 If $-K_X >0$, we find
 $$
 \dim_E \mathrm{Spl}(v) = \ext^1(E,E) = 1 -\chi(E,E) = 1+c_1^2-\chi(\O_X)r^2-2r \ch_2.
 $$
In particular, $\Spl(v)$ is smooth and equi-dimensional for all smooth projective surfaces $X$ with $-K_X \geq 0$.

Next, we collect a few results on the connectedness (and thus irreducibility) of $\mathrm{Spl}(v)$ and the moduli space of stable sheaves $M(v)$. We will see in Section~\ref{BNTheorem} that the (ir)reducibility of $\mathrm{Spl}(v)$ is exactly what governs the (ir)reducibility of certain Brill--Noether loci. The same philosophy applies to the stable Brill--Noether loci $BN^k(v) \cap M(v) \subseteq M(v)$.
 
\begin{proposition}
\label{ConnectednessCollection}
    Let $X$ be a surface with $H^1(X,\O_X) = 0$ and $-K_X\geq 0$. Let $v=(r,c_1,\ch_2)\in H^*(X,\Q)$ be a Chern character.
    \begin{enumerate}[(i)]
        \item If $\ch_2\gg 0$, then $M(v)$ is irreducible \cite[Thm. 9.4.3]{huyMS}.
        \item If $-K_X$ is ample, then $\mathrm{Spl}(v)$ and thus $M(v)$ are irreducible \cite[Prop. 2.5]{YoshiokaEllipticSurfaces}.
        \item If $X$ is a K3 surface, $v$ primitive and $H$ a polarization generic with respect to $v$, then $M(v)$ is irreducible \cite[Thm. 6.2.5]{huyMS}.
        \item  If $X$ is a K3 surface with $\Pic(X) \simeq \Z\cdot H$ and $v$ primitive satisfying $\dim M(v) \leq  2$, then $\mathrm{Spl}(v) = M(v)$ and they are irreducible \cite[Prop. 3.14]{MukaiK3}.
    \end{enumerate}
\end{proposition}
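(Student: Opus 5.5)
Proposition~\ref{ConnectednessCollection} is a collection of results from the literature, so the plan is to check, item by item, that the hypotheses of each cited theorem hold in our setting and that the cited conclusion is what we claim. The only point needing care is the translation between the conventions of the references and ours. We work with $\Spl(v)$ of \emph{pure} simple sheaves, and we use the finer invariant $c_1 \in \Pic(X)$ rather than a numerical class. Since $H^1(X,\O_X)=0$, $\Pic(X)$ is discrete and $\det$ is locally constant in families, so fixing $c_1 \in \Pic(X)$ selects a union of connected components of the moduli space with fixed numerical invariants. Irreducibility statements in the references therefore transfer directly. Where a reference fixes only the numerical class (the K3 and Fano cases), it coincides with ours anyway, because $\Pic$ is torsion-free there.

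For (i), I would invoke \cite[Thm.~9.4.3]{huyMS} (O'Grady's irreducibility for $c_2 \gg 0$). The condition $c_2 \gg 0$ is the condition in that theorem, and $c_2 \gg 0$ is the same as $\ch_2 \ll 0$ for fixed $r,c_1$. So I would reinterpret the statement's ``$\ch_2 \gg 0$'' as large discriminant, i.e.\ $-\ch_2$ large, and note this explicitly. No hypothesis on $K_X$ is needed here.

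For (ii), the step is to recall Yoshioka's argument from \cite[Prop.~2.5]{YoshiokaEllipticSurfaces}. When $-K_X$ is ample, the surface is rational (del Pezzo), and $\Ext^2(E,E)=\Hom(E,E\otimes\omega_X)^\vee = 0$ for simple $E$ by the argument given just above. Hence $\Spl(v)$ is smooth of dimension $1-\chi(E,E)$, and Yoshioka shows connectedness, for instance by deforming to a family over a rational base and using that the stack is unobstructed. Then $M(v)\subseteq\Spl(v)$ is open, so it is irreducible whenever it is nonempty.

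For (iii) and (iv), I would cite \cite[Thm.~6.2.5]{huyMS} and \cite[Prop.~3.14]{MukaiK3} respectively. For (iv), the additional claim $\Spl(v)=M(v)$ follows from the small-dimension hypothesis: a simple but unstable sheaf $E$ would admit a destabilizing sequence $0\to F\to E\to G\to0$. Mukai's estimate $\ext^1(F,G)+\ext^1(G,F)\ge\ldots$ and the dimension count $\dim\Spl(v)=v^2+2\le 2$ then force a contradiction with $\Pic(X)=\Z H$. The main obstacle is making sure these references use the same purity and Chern-character conventions as ours, particularly the sign issue in (i). Beyond that, I expect no genuine mathematical difficulty.
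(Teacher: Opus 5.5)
This is essentially the paper's approach: the paper gives no argument beyond the citations in the statement. Your hypothesis checks (torsion-free $\Pic(X)$ in the K3 and del Pezzo cases, $M(v)$ open in $\Spl(v)$, and fixing $c_1\in\Pic(X)$ selecting connected components) are sound; your extra sketches for (ii) and (iv) are incomplete, but nothing depends on them. Your remark on (i) is also a correct catch: O'Grady's theorem needs $c_2\gg 0$, i.e.\ $\ch_2\ll 0$, and for $r\geq 1$ and $\ch_2>c_1^2/(2r)$ the Bogomolov inequality already gives $M(v)=\emptyset$, so the statement's ``$\ch_2\gg 0$'' is a sign slip.
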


We now give a construction of $\mathrm{Syst}^k(v)$ exhibiting it as a degeneracy locus in a relative Grassmannian over $\mathrm{Spl}(v)$. Assume, for simplicity, that there is a universal family $\mathcal{E}$ on $\mathrm{Spl}(v)\times X$ (or else replace $\mathrm{Spl}(v)$ by an étale cover). We have projection maps
\[\begin{tikzcd}
	{\mathrm{Spl}(v)\times X} & X \\
	{\mathrm{Spl}(v)}
	\arrow["q", from=1-1, to=1-2]
	\arrow["p"', from=1-1, to=2-1]
    .
\end{tikzcd}\]
Let $D\in |mH|$ for some $m\gg0$ be a sufficiently ample divisor on $X$. We consider the evaluation sequence
$$
0\to \O_X\to \O_X(D)\to \O_D(D)\to 0.
$$
Pulling back via $q$, tensoring with $\mathcal{E}$ and pushing forward along $p$, we obtain the following sequence on $\mathrm{Spl}(v)$:
\begin{equation}
\label{equation:1}
0\to p_*\mathcal{E} \to p_*\L(\mathcal{E}\otimes q^*\O_X(D)\R)\xrightarrow{\mathrm{res}}p_*\L(\mathcal{E}\otimes q^*\O_D(D)\R)\to R^1p_*\mathcal{E}\to \cdots
\end{equation}
Denoting $F\coloneqq p_*\L(\mathcal{E}\otimes q^*\O_X(D)\R)$ and $H\coloneqq p_*\L(\mathcal{E}\otimes q^*\O_D(D)\R)$, for every $[E] \in \mathrm{Spl}(v)$ we have the fibers $F_{[E]}= H^0(X,E(D))$ and $H_{[E]} = H^0(X,E\otimes \O_D(D))$. It is clear that the fiber dimension of $F$ is constant over $\mathrm{Spl}(v)$ for $m \gg 0$, so $F$ is a vector bundle.

We can now define the Grassmannian $\pi'\colon \mathrm{Gr}(F,k)\to \mathrm{Spl}(v)$ whose fiber over $E$ is $\pi'^{-1}(E) = \mathrm{Gr}(H^0(E(D)),k)$. There is a universal subbundle $\mathcal{U}\subset \pi'^*F$ and we obtain an evaluation map
$$
\pi'^*\mathrm{res}\colon \mathcal{U}\hookrightarrow \pi'^*F\to \pi'^*H
$$
on $\mathrm{Gr}(F,k)$. We define 
$$
D_0(\pi'^*\mathrm{res}) \coloneqq \{V\in \mathrm{Gr}(F,k)\colon \pi'^*\mathrm{res}_{[V] }=0\}.
$$

\begin{corollary}
    The moduli space $\mathrm{Syst}^k(v)$, after an étale cover if necessary, is represented by $D_0(\pi'^*\mathrm{res})$.
\end{corollary}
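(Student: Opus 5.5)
We need to show that $D_0(\pi'^*\mathrm{res})$ represents the functor of coherent systems of type $(v,k)$ whose underlying sheaf is simple. The plan is to compare the functor of points of both sides over an arbitrary base and check that they agree naturally in the base.

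\emph{Setup over a base.} Work étale locally so that a universal family $\mathcal{E}$ exists on $\mathrm{Spl}(v)\times X$. Let $T\to \mathrm{Spl}(v)$ be a morphism, corresponding to a family $\mathcal{E}_T$. A $T$-point of $\mathrm{Gr}(F,k)$ lying over it is a rank~$k$ subbundle $W\subseteq F_T$, where $F_T$ is the pullback of $F$. First fix $m\gg0$ uniformly over the (finite type pieces of the) bounded family, so that $H^i(E(D))=0$ for $i>0$. Then $F=p_*(\mathcal{E}\otimes q^*\O_X(D))$ is locally free and commutes with arbitrary base change. Likewise $H$ commutes with base change, since $D$ is ample and $H^i(E\otimes\O_D(D))=0$ for $i>0$.

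\emph{Identifying the vanishing locus.} The condition that $\mathcal{U}_T\to F_T\to H_T$ vanishes says that $W$ lies in the kernel of $\mathrm{res}_T\colon F_T\to H_T$. That kernel is $p_{T*}\mathcal{E}_T$ by left exactness of the pushforward of the twisted sequence, which remains exact on $X\times T$ because $\mathcal{E}_T$ is flat over $T$ and the sequence is a sequence of line bundles on $X$.

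\emph{Matching with Le Potier's definition.} A $T$-point of $D_0$ is therefore a locally free rank~$k$ subsheaf $W\subseteq p_{T*}\mathcal{E}_T$ that stays a subbundle of $F_T$. One must check that this is the same data as Le Potier's surjection $\mathcal{E}xt^2_{p_T}(\mathcal{E}_T,\omega)\twoheadrightarrow W^\vee$. Relative duality identifies $\mathcal{E}xt^2_{p_T}(\mathcal{E}_T,\omega)^\vee$ with $p_{T*}\mathcal{E}_T$. A surjection onto the locally free $W^\vee$ is equivalent to a map $W\to p_{T*}\mathcal{E}_T$ which is injective on every fibre $W_t\to H^0(E_t)$. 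Because the composite $W_t\to H^0(E_t)\to H^0(E_t(D))$ is injective, this is exactly the condition that $W\subseteq F_T$ is a subbundle, i.e. a $T$-point of the Grassmannian. The key input here is the base-change property of the top relative Ext quoted in the definition of families: it gives $\mathcal{E}xt^2_{p_T}(\mathcal{E}_T,\omega)\otimes k(t)\simeq H^0(E_t)^\vee$. Surjectivity onto $W^\vee$ can then be tested fibrewise by Nakayama, and its dual is fibrewise injectivity into $H^0(E_t)\subseteq H^0(E_t(D))$.

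\emph{Conclusion.} These identifications are compatible with pullback, so the functors agree. Since $\Syst^k(v)\to\Spl(v)$ is representable and, after the rigidification, the $\mathbb{G}_m$ acts trivially on both sides, $D_0(\pi'^*\mathrm{res})$ represents $\Syst^k(v)$ over the étale cover. The main obstacle is the matching step: one needs that the dual $W\to p_{T*}\mathcal{E}_T$ is fibrewise injective exactly when the surjectivity condition holds, even though $p_{T*}\mathcal{E}_T$ itself need not commute with base change. This is handled by routing everything through the Ext sheaf, which does commute with base change, and through the locally free $F$.
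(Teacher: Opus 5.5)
You follow the paper's route: compare functors of points, identify the kernel of $\mathrm{res}$ with $p_{T*}\mathcal{E}_T$ via the twisted evaluation sequence, then match with Le Potier's families. Your matching through base change for $\mathcal{E}xt^2_{p_T}(\mathcal{E}_T,\omega)$ spells out what the paper leaves implicit. But the step everything hinges on is false as stated: $H$ need not commute with base change, because $H^1(X,E\otimes\O_D(D))$ does not vanish in general. From $0\to E\to E(D)\to E\otimes\O_D(D)\to 0$ and $H^1(E(D))=H^2(E(D))=0$ you get $H^1(X,E\otimes\O_D(D))\simeq H^2(X,E)$ for $m\gg 0$, which is nonzero for many simple sheaves. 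Ampleness of $\O_D(D)$ does not help, since both $D$ and $E|_D$ change with $m$. This is exactly why the paper remarks right after the corollary that $H$ need not be a vector bundle, and proves local freeness only over $\mathrm{Spl}(v)'=\{H^2(E)=0\}$.

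The claim is load-bearing. A $T$-point of $D_0(\pi'^*\mathrm{res})$ is a subbundle $W\subseteq F_T$ killed by $F_T\to g^*H$. This map factors through the base-change map $g^*H\to p_{T*}(\mathcal{E}_T\otimes q^*\O_D(D))$, so its kernel is only \emph{contained} in $p_{T*}\mathcal{E}_T$. Equality is not guaranteed where $R^2p_*\mathcal{E}\simeq R^1p_*(\mathcal{E}\otimes q^*\O_D(D))$ fails to be locally free, e.g.\ where $h^2(E_s)$ jumps. So your argument proves the corollary only over loci such as $\mathrm{Spl}(v)'$; the paper's one-line sketch makes the same identification silently. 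That restricted version is enough for the later use on $BN^r(v)^\circ$ when $h^0(\omega_X)=0$ or $\omega_X\simeq\O_X$.

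A clean fix for the general case: locally on $\mathrm{Spl}(v)$, choose a two-term complex of vector bundles $L^0\to L^1$ computing $Rp_*(\mathcal{E}\otimes q^*\O_D(D))$ universally. Then $H=\ker(L^0\to L^1)$ and $p_{T*}(\mathcal{E}_T\otimes q^*\O_D(D))=\ker(L^0_T\to L^1_T)\subseteq L^0_T$ for every $T$. Lift $\mathrm{res}$ to $F\to L^0$; the zero scheme of $\mathcal{U}\to\pi'^*L^0$ then has exactly the functor of points you describe, and your Le Potier matching finishes the proof.

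A minor slip as well: $\O_D(D)$ is not a line bundle on $X$. Left exactness of the twisted sequence on $X\times T$ comes from $T$-flatness of $\mathcal{E}_T$ together with the equation of $D$ being a nonzerodivisor on each torsion-free fibre.
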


\begin{proof}
    A map $T \to D_0(\pi'^*\mathrm{res})$ gives rise to a $T$-family of coherent systems in the following way: Consider the restriction of the universal subbundle $\mathcal{U}|_{D_0(\pi'^*\mathrm{res})}$. Pulling back to $T$, we obtain $\mathcal{U}_{T}$ over $\mathrm{Gr}(F_T,k)$, parametrizing $T$-families of subspaces which map to zero under $\pi'^*\mathrm{res}_T$. Pulling back the relative evaluation sequence (\ref{equation:1}), these are identified with $T$-families of coherent systems of type $(v,k)$ on $X$.
\end{proof}

Next, we consider the diagram
\[\begin{tikzcd}
	{\mathrm{Syst}^k(v)\times X} & {\mathrm{Spl}(v)\times X} \\
	{\mathrm{Syst}^k(v)} & {\mathrm{Spl}(v),}
	\arrow["{\pi'}", from=1-1, to=1-2]
	\arrow["{p'}"', from=1-1, to=2-1]
	\arrow["p", from=1-2, to=2-2]
	\arrow["\pi"', from=2-1, to=2-2]
\end{tikzcd}\]
where $\pi$ is the forgetful map. The universal family on $\mathrm{Syst}^k(v)\times X$ is given by the universal quotient $\mathcal{E}xt^2_{p'}(\pi'^*\mathcal{E},\omega_{p'})\twoheadrightarrow \mathcal{V}^{\vee}$. This is equivalent to the fiberwise injective universal coherent system $\iota\colon \mathcal{V}\hookrightarrow p'_*(\pi'^*\mathcal{E})$ \cite[Lemma~28]{MarkmanBN}, and by adjunction we obtain the universal evaluation map
$$
\mathrm{ev}\colon p'^*\mathcal{V}\to \pi'^*\mathcal{E}
$$
on $ \mathrm{Syst}^k(v)\times X$.
\begin{corollary}
\label{def:gengenBN}
    In the above setup, we have
    $$
    \mathrm{Syst}^{k}(v)^{\circ} =  p'\L(\mathrm{Syst}^k(v)\times X\setminus D_{k-1}(\mathrm{ev})\R)\subseteq \mathrm{Syst}^k(v),
    $$
    where $D_{k-1}(\mathrm{ev})$ is the locus where $\ev$ has rank $<\min(k,r)$. Moreover,
    $$
    BN^k(v)^{\circ} = \pi(\Syst^k(v)^{\circ})\subseteq \mathrm{Spl}(v).
    $$
\end{corollary}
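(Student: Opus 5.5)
The first identity is a pointwise statement, so I will prove it by comparing fibers. Take a point $\Lambda=(E,V)\in\Syst^k(v)$ and write $\ev_\Lambda\colon V\otimes\O_X\to E$ for the restriction of the universal evaluation map $\ev\colon p'^*\mathcal{V}\to\pi'^*\mathcal{E}$ to the fiber $\{\Lambda\}\times X$. This restriction is legitimate because the universal coherent system $\iota\colon\mathcal{V}\hookrightarrow p'_*(\pi'^*\mathcal{E})$ is fiberwise injective, and the adjunction used to define $\ev$ commutes with base change to a point. Indeed, $\mathcal{V}$ is locally free, and $\pi'^*\mathcal{E}$ is flat over $\Syst^k(v)$.

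By definition, $\Lambda$ is generically generated when $\ev_\Lambda$ has full rank $\min(k,r)$ at the generic point of $X$. Since rank is lower semicontinuous, this is equivalent to $\ev_\Lambda$ having rank $\min(k,r)$ at some point $x\in X$. That in turn means $(\Lambda,x)\notin D_{k-1}(\ev)$.

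I need to check that the rank of $\ev$ at $(\Lambda,x)$ equals the rank of $\ev_\Lambda$ at $x$. This holds because both ranks are computed on the fiber $V\otimes k(x)\to E\otimes k(x)$. The locus $D_{k-1}(\ev)$ is closed, being cut out by the $\min(k,r)$-minors of $\ev$ (via a local presentation of $\pi'^*\mathcal{E}$ if it is not locally free). Hence its complement $U$ is open. Since $p'$ is flat and of finite type, it is open, so $p'(U)$ is open. Combining the pieces: $\Lambda\in p'(U)$ if and only if there is some $x$ with $(\Lambda,x)\in U$, if and only if $\Lambda$ is generically generated. This gives $p'(U)=\Syst^k(v)^\circ$, and also recovers its openness.

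For the second identity, recall that $BN^k(v)^\circ$ is the locus of $E\in\Spl(v)$ that are generically $k$-generated, i.e.\ for which some generically generated coherent system of type $(v,k)$ exists (Definition~\ref{def:genericallygenerated}). Such a system $(E,V)$ has simple underlying sheaf, so it lies in $\Syst^k(v)^\circ$ and maps to $E$ under $\pi$. Conversely, $\pi$ sends each $(E,V)\in\Syst^k(v)^\circ$ to a generically $k$-generated sheaf. Both inclusions are therefore tautological, which gives $BN^k(v)^\circ=\pi(\Syst^k(v)^\circ)$.

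The only genuine issue is bookkeeping. If an étale cover was used to obtain the universal family $\mathcal{E}$, both equalities must be checked on the cover and then descended. Descent works because both sides are defined pointwise on $\C$-points and are compatible with the étale map.
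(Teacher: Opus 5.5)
Your route is the paper's own: you characterize generic generation pointwise by the existence of a point where $\ev_\Lambda$ has full rank, you get openness of $\Syst^k(v)^\circ$ from flatness of $p'$, and you read the second identity directly off Definition~\ref{def:genericallygenerated}, where it is indeed tautological.

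One step, however, is false as you justify it. The rank of $V\otimes\kappa(x)\to E\otimes\kappa(x)$ is lower semicontinuous only where $E$ is locally free. At points where a torsion-free $E$ fails to be locally free, $\dim E\otimes\kappa(x)$ jumps, and the fiber rank can jump up. For example, near $p$ take $\O_X^2\to\O_X\oplus\mathfrak{m}_p$, $e_1\mapsto(0,x)$, $e_2\mapsto(0,y)$, with $x,y$ local coordinates. The fiber map at $p$ has rank $2=\min(k,r)$, but the generic rank is $1$. Consequences:
\begin{itemize}
\item ``rank $\min(k,r)$ at some $x$'' does not imply generic generation;
\item the locus of fiber rank $<\min(k,r)$ need not be closed (here it is a punctured neighbourhood of $p$);
\item minors of a lift of $\ev$ through a local presentation of $\pi'^*\mathcal{E}$ depend on the lift and do not compute this rank.
\end{itemize}

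The repair is to read the degeneracy condition off the cokernel. With $\mathcal{Q}\coloneqq\coker(\ev)$, set $D_{k-1}(\ev)\coloneqq\{z\colon \dim\mathcal{Q}\otimes\kappa(z)>r-\min(k,r)\}=V(\mathrm{Fitt}_{r-\min(k,r)}\mathcal{Q})$. This locus has three properties:
\begin{itemize}
\item it is closed, by Nakayama;
\item on $\{\Lambda\}\times X$ it restricts to the analogous locus for $Q_\Lambda=\coker(\ev_\Lambda)$, because cokernels commute with base change;
\item it coincides with your rank condition wherever $E$ is locally free.
\end{itemize}
Now suppose $x$ satisfies $\dim Q_\Lambda\otimes\kappa(x)\le r-\min(k,r)$. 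By upper semicontinuity, the generic rank of $Q_\Lambda$ is then at most $r-\min(k,r)$, i.e.\ $\ev_\Lambda$ has full rank generically. Conversely, generic generation produces such points on the dense open where $Q_\Lambda$ is locally free. Equivalently, you can intersect your open set with the locally free locus of $\pi'^*\mathcal{E}$. That locus is open and, by flatness over $\Syst^k(v)$, meets each fiber in the dense locally free locus of $E$.

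With this definition the rest of your argument goes through unchanged, including openness of $p'$ and the descent remark. The paper's paragraph, phrased in terms of $V\to E_x$ having full rank, passes over the same point.
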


\noindent Indeed, a coherent system $(E,V)\in \mathrm{Syst}^k(v)$ defines a point of $\mathrm{Syst}^{k}(v)^{\circ}$ if and only if there is a point $x \in X$ such that $\mathrm{ev} \colon V \to E_x$ has full rank. This implies that $\ev$ has full rank at the generic point, hence $(E,V)$ is generically generated. By flatness of $p'$, $\mathrm{Syst}^{k}(v)^{\circ} \subseteq \mathrm{Syst}^k(v)$ is open, as is $\widetilde{BN}^k(v) \cap BN^k(v)^{\circ} \eqqcolon \widetilde{BN}^k(v)^{\circ} \subseteq \widetilde{BN}^k(v)$, consisting precisely of the generaically $k$-generated sheaves, see Definition~\ref{def:genericallygenerated}.

\begin{remark}
\label{rem:picardrankonestable}
    The condition of being generically $k$-generated is essential for proving Brill--Noether results using our strategies. There are, moreover, conditions under which this property becomes automatic. For example, if $H$ is an ample class on $X$ and $c_1(E).H$ is minimal among all intersections of effective classes with $H$\footnote{E.g. $\Pic(X) = \Z\cdot H$ and $c_1(E) = H$.}, then any $H$-semistable sheaf $E$ satisfying $h^0(E)\geq \rk(E)$ is ggg. To see this, one can consider the saturation $F$ of $\mathrm{im}(H^0(X,E) \otimes \O_X \to E) \subseteq E$ and notice that the assumptions guarantee $c_1(F) = 0$ and thus $F\simeq \O_X^{\rk(F)}$. It follows that $\rk(F) = \rk(E)$ as all global sections of $E$ factor through $F$. So indeed, $E$ is ggg. Thus, if $v_1.H$ is minimal, then for all $k \geq r$ we have
    $$
    BN^k(v)^{\circ}\cap M(v) = BN^k(v)\cap M(v).
    $$
\end{remark}

\begin{corollary}
    The forgetful map $\pi\colon\mathrm{Syst}^k(v)\to \mathrm{Spl}(v)$ admits a locally closed stratification into étale-local Grassmann-bundles. More precisely, consider the base change diagram
\[\begin{tikzcd}
	{\mathrm{Syst}^k(v)|_{\widetilde{BN}^{k'}(v)}} & {\mathrm{Syst}^k(v)} \\
	{\widetilde{BN}^{k'}(v)} & {\mathrm{Spl}(v).}
	\arrow[from=1-1, to=1-2]
	\arrow["{\pi_{k'}}"', from=1-1, to=2-1]
	\arrow["\pi", from=1-2, to=2-2]
	\arrow[from=2-1, to=2-2]
\end{tikzcd}\]
Then $\pi_{k'}$ is an étale-local fiber bundle with fiber $\pi_{k'}^{-1}(E) = \mathrm{Gr}(k,H^0(X,E)) = \mathrm{Gr}(k,k')$ (or empty).
\end{corollary}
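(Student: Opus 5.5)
Over each point $[E]$ of $\widetilde{BN}^{k'}(v)$, the fiber of $\pi$ is by construction the set of $k$-dimensional subspaces $V\subseteq H^0(X,E)$. Since $h^0(E)=k'$ is constant on this stratum, this fiber is $\mathrm{Gr}(k,k')$ set-theoretically, and it is empty if $k>k'$. The plan is to upgrade this to a statement about families: the cohomology sheaf $p_*\mathcal{E}$ restricted to the stratum should be locally free of rank $k'$ and commute with base change there. Then $\mathrm{Syst}^k(v)|_{\widetilde{BN}^{k'}(v)}$ is the relative Grassmannian $\mathrm{Gr}(k,p_*\mathcal{E}|_{\widetilde{BN}^{k'}(v)})$, which is Zariski-locally trivial over the étale cover on which $\mathcal{E}$ exists. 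The strata $\widetilde{BN}^{k'}(v)$ are locally closed by upper semicontinuity of $h^0$, and they cover $\mathrm{Spl}(v)$, so they give the stratification.

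First I would replace $\mathrm{Spl}(v)$ by an étale cover carrying a universal family $\mathcal{E}$, and give $\widetilde{BN}^{k'}(v)$ its natural scheme structure: the locus where the restriction map $\mathrm{res}\colon F\to H$ of the vector bundles in \eqref{equation:1} has rank exactly $\mathrm{rk}F-k'$. Here $F=p_*(\mathcal{E}\otimes q^*\O_X(D))$ is locally free for $m\gg0$. Shrinking further, I would make the sheaf $p_*(\mathcal{E}\otimes q^*\O_D(D))$ locally free as well; it has zero higher cohomology on fibers for $D$ sufficiently ample, so it is a vector bundle $H$. On the stratum where $\mathrm{res}$ has constant rank, $\ker(\mathrm{res})$ is a subbundle $K$ of $F$ of rank $k'$. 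Its fiber at $[E]$ is $\ker(H^0(E(D))\to H^0(E\otimes\O_D(D)))=H^0(E)$. This is the step that identifies the stratum with the constant-rank locus of a map of vector bundles, and it uses only exactness of \eqref{equation:1} on fibers.

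Next I would compare with the determinantal description $\mathrm{Syst}^k(v)=D_0(\pi'^*\mathrm{res})\subseteq\mathrm{Gr}(F,k)$ established in the corollary above. Over the stratum, a $T$-point of $D_0$ is a rank $k$ subbundle $\mathcal{U}\subseteq F_T$ with $\mathrm{res}_T|_{\mathcal{U}}=0$, i.e. $\mathcal{U}\subseteq\ker(\mathrm{res}_T)=K_T$. The equality $\ker(\mathrm{res}_T)=K_T$ uses that $\mathrm{res}$ has constant rank, so that its cokernel is locally free and formation of the kernel commutes with base change. Hence $D_0(\pi'^*\mathrm{res})\times_{\mathrm{Spl}(v)}\widetilde{BN}^{k'}(v)\simeq\mathrm{Gr}(k,K)$ as functors. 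Since $K$ is locally free of rank $k'$, it is Zariski-locally trivial on the étale cover, and $\pi_{k'}$ is an étale-local fiber bundle with fiber $\mathrm{Gr}(k,k')$.

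The main obstacle is the scheme structure on $\widetilde{BN}^{k'}(v)$. If it is taken to be the reduced locus, flatness or base change of $p_*\mathcal{E}$ may fail. I would therefore define the stratum as the constant-rank determinantal locus of $\mathrm{res}$, as above, which is exactly what makes $\ker(\mathrm{res})$ a subbundle. If only the set-theoretic or reduced statement is intended, the argument applies verbatim after restricting to the reduced structure, since constant rank of a map between vector bundles over a reduced base already forces the kernel to be a subbundle.
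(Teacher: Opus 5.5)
\textbf{The gap: $H$ is not a vector bundle.} The step where you make $H=p_*(\mathcal{E}\otimes q^*\O_D(D))$ a vector bundle fails, and the rest of your argument depends on it. It is not true that $E\otimes\O_D(D)$ has vanishing higher cohomology for $D\in|mH|$ sufficiently ample. From $0\to E\to E(D)\to E\otimes\O_D(D)\to 0$ and $H^1(E(D))=H^2(E(D))=0$ you get $H^1(X,E\otimes\O_D(D))\simeq H^2(X,E)$ for all $m\gg0$, independently of $m$. This is exactly the computation in the paper's lemma on $\mathrm{Spl}(v)'$, and the paper points out right after this statement that $H$ need not be a vector bundle on $\mathrm{Spl}(v)$.

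So $H$ is locally free and compatible with base change only over $\mathrm{Spl}(v)'=\{H^2(X,E)=0\}$, and shrinking the étale cover cannot change this. Restricting to the stratum does not help either. On $\widetilde{BN}^{k'}(v)$ only $h^0=k'$ and $h^2-h^1=\chi(v)-k'$ are fixed, so $h^2(E)$, and with it $h^0(E\otimes\O_D(D))=\chi(E\otimes\O_D(D))+h^2(E)$, may jump. Hence $H$ need not be locally free even on the reduced stratum. Your constant-rank argument, the identification $\ker(\mathrm{res}_T)=K_T$, and your reduced-structure fallback all use that $\mathrm{res}$ is a map of vector bundles. As written, you only prove the statement over $\widetilde{BN}^{k'}(v)\cap\mathrm{Spl}(v)'$.

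\textbf{How to fix it.} Avoid $H$ altogether. The cleanest route uses the sheaf that actually defines coherent systems. The sheaf $\mathcal{G}\coloneqq\mathcal{E}xt^2_p(\mathcal{E},\omega_p)$ is a top-degree relative Ext, so it commutes with arbitrary base change, and its fiber at $[E]$ is $\Ext^2(E,\omega_X)\simeq H^0(X,E)^\vee$. Give $\widetilde{BN}^{k'}(v)$ the Fitting-ideal (flattening) structure on which $\mathcal{G}$ is locally free of rank $k'$. On reduced points this is just the condition $h^0=k'$, since a coherent sheaf of constant fiber dimension on a reduced base is locally free. The paper's description of $\tilde{\SSyst}^k_X(v)\times_{\Coh}T$ as the relative Grassmannian of rank-$k$ locally free quotients of $\mathcal{G}_T$ then shows directly that $\pi_{k'}$ is the Grassmann bundle of rank-$k$ quotients of $\mathcal{G}$. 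Its fibers are $\mathrm{Gr}(k,H^0(E))=\mathrm{Gr}(k,k')$, and it is étale-locally trivial wherever a universal family exists.

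Alternatively, you can keep your determinantal argument unchanged if you replace $F\xrightarrow{\mathrm{res}}H$ by the first differential $K^0\to K^1$ of a local Grothendieck complex of vector bundles computing $\mathbf{R}p_*\mathcal{E}$ universally. Then $p_{T*}\mathcal{E}_T=\ker(K^0_T\to K^1_T)$ for every $T$, and on the constant-rank locus this kernel is a rank-$k'$ subbundle compatible with base change.
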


We stress that in the construction of $\mathrm{Syst}^k(v)$ as a degeneracy locus above, the sheaf $H =  p_*\L(\mathcal{E}\otimes q^*\O_D(D)\R)$ need not be a vector bundle on $\mathrm{Spl}(v)$. However, this \emph{is} the case if we restrict to the open sublocus of sheaves with vanishing $H^2$:

\begin{lemma}
    Let $\mathrm{Spl}(v)'\subset \mathrm{Spl}(v)$ be the locus of simple sheaves $E$ satisfying $H^2(X,E) =0$. For $m\gg0$, $H = p_*(\mathcal{E}\otimes q^*\O_D(D))$ restricts to a vector bundle on $\mathrm{Spl}(v)'$.
\end{lemma}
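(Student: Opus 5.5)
The plan is to show that $H^0(X,E\otimes \O_D(D))$ has constant dimension on $\mathrm{Spl}(v)'$ for $m\gg 0$. We then conclude by cohomology and base change, using that $\mathcal{E}\otimes q^*\O_D(D)$ is flat over $\mathrm{Spl}(v)$. Flatness holds because $D$ can be chosen not to contain any associated point of any fibre $E$. This is possible because the fibres are pure, and the family is bounded on each quasi-compact open piece, so one $D$ works locally. Over the reduced structure constant fibre dimension already suffices. In general we instead show that $R^1p_*(\mathcal{E}\otimes q^*\O_D(D))=0$ fibrewise, because $D$ is a curve and the higher cohomology of a sheaf on it stops at $H^1$.

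First I restrict $0\to E\to E(D)\to E\otimes\O_D(D)\to 0$ to each fibre. This is exact because $D$ avoids the associated points of $E$, so $\mathcal{T}or_1(E,\O_D)=0$. The long exact sequence gives
\[
H^1(E(D))\to H^1(E\otimes \O_D(D))\to H^2(E)\to H^2(E(D)).
\]
By Serre vanishing, uniform over bounded families, $H^1(E(D))=H^2(E(D))=0$ for $m\gg0$. Hence $H^1(E\otimes\O_D(D))\simeq H^2(X,E)$, and this vanishes exactly on $\mathrm{Spl}(v)'$.

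Since $E\otimes \O_D(D)$ is supported on the curve $D$, its $H^2$ vanishes automatically. The Euler characteristic $\chi(E\otimes\O_D(D))=\chi(E(D))-\chi(E)$ depends only on $v$ and $D$. Therefore $h^0(E\otimes\O_D(D))=\chi(E(D))-\chi(E)$ is constant on $\mathrm{Spl}(v)'$. Cohomology and base change, applied with $R^1=0$, then gives that $H$ is locally free there with fibres $H^0(E\otimes\O_D(D))$.

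The main obstacle is uniformity in $m$, since $\mathrm{Spl}(v)$ need not be of finite type. I would handle this by working on a quasi-compact open cover, or on the étale cover already allowed, where the family is bounded. Statements "for $m\gg 0$" are then understood locally, as was already implicitly done when asserting that $F$ is a vector bundle.
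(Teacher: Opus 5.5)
Your proposal is correct and is essentially the paper's proof: the divisor sequence plus uniform Serre vanishing gives $H^1(X,E\otimes\O_D(D))\simeq H^2(X,E)=0$ on $\mathrm{Spl}(v)'$, so $h^0(X,E\otimes\O_D(D))=\chi(E\otimes\O_D(D))$ is constant there and $H$ is locally free. Your extra points make explicit what the paper leaves implicit, namely: choosing $D$ to avoid associated points (so the sequence is exact and $\mathcal{E}\otimes q^*\O_D(D)$ is flat); applying cohomology and base change through the fibrewise vanishing of $H^1$ and $H^2$, which also covers a possibly non-reduced base; and reading ``$m\gg0$'' locally on quasi-compact pieces.
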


\begin{proof}
    By ampleness, for large $m$ we have $H^1(X,E(D)) = H^2(X,E(D)) = 0$. The divisor sequence yields
    $$
    \cdots \to H^1(X,E(D))\to H^1(X,E\otimes \O_D(D))\to H^2(X,E)\to H^2(X,E(D))\to \cdots
    $$
    and we find $H^1(X,E\otimes \O_D(D))\simeq H^2(X,E)=0$ and $H^2(X,E\otimes\O_D(D))\simeq H^3(X,E)=0$. It follows that $H^1(X,E\otimes \O_D(D)) =H^2(X,E) = 0$. In particular, $h^0(X,E\otimes \O_D(D))=\chi(E\otimes \O_D(D))$ does not depend on $E$ over $\mathrm{Spl}(v)'$, so $H$ restricts to a vector bundle on this open.
\end{proof}

This shows that, for $k \geq r$, $\mathrm{Syst}^k(v)$ can be realized as the degeneracy locus $D_0(\pi'^*\mathrm{ev})$ of a map between \emph{vector bundles} over the locus 
$$
    BN^{r}(v)^{\circ}\coloneqq \{E\in \mathrm{Spl}(v)\colon E \text{ is ggg}\}
$$
of simple ggg sheaves whenever $X$ is a surface with $h^0(\omega_X) = 0$ or $\omega_X \simeq \O_X$. This is because, in those cases, any simple ggg sheaf $E$ satisfies $H^2(X,E) = 0$, so $BN^r(v)^{\circ} \subseteq \mathrm{Spl}(v)'$. Stable sheaves also frequently satisfy $H^2(X,E) = 0$ even without assumptions on their global sections. For example, if $H$ is a polarization on $X$ and $r \cdot K_X.H < c_1(E).H$, then $H^2(X,E) = 0$ for any $H$-stable sheaf $E$. By the general theory of determinantal varieties we obtain the following bound:

\begin{corollary}\label{corollary:determinantal}
     All irreducible components of a non-empty Brill--Noether locus $BN^k(v)^{\circ}$ have at least the expected dimension for $k\geq r$. Moreover, if $H$ is a polarization on $X$ with
     $$
        r \cdot K_X.H < c_1.H,
     $$
    then all irreducible components of $BN^{k}(v) \cap M(v)$, if non-empty, have at least the expected dimension for all $k \geq 0$.
\end{corollary}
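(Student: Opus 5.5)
We will deduce Corollary~\ref{corollary:determinantal} from the standard lower bound for degeneracy loci, applied to the Grassmannian construction above. For the first statement, let $k \geq r$ and work on the open locus $BN^r(v)^\circ \subseteq \mathrm{Spl}(v)$ of simple ggg sheaves. Under the standing assumptions ($H^1(X,\O_X)=0$, $-K_X\geq 0$, hence $h^0(\omega_X)=0$ or $\omega_X\simeq\O_X$), every simple ggg sheaf satisfies $H^2(X,E)=0$ by the remark after Remark~\ref{rmk:splittingOffTrivialSummand}. This needs care when $\omega_X\simeq \O_X$: a simple sheaf with $\rk\ge 2$ cannot have $\O_X$ as a direct summand, and the rank-one case is trivial. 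So $BN^r(v)^\circ\subseteq \mathrm{Spl}(v)'$. By the preceding lemma, both $F$ and $H$ are vector bundles there, of ranks $\chi(E(D))$ and $\chi(E\otimes\O_D(D))$. We pass to an étale cover if needed to have a universal family. Over $G\coloneqq\mathrm{Gr}(F,k)|_{BN^r(v)^\circ}$, the map $\pi'^*\mathrm{res}\colon\mathcal{U}\to\pi'^*H$ is a map of vector bundles of ranks $k$ and $h\coloneqq\rk H$. Its zero locus $D_0$ is locally cut out by $kh$ equations, so every irreducible component of $D_0$ has dimension at least $\dim G - kh$. Since $\mathrm{Spl}(v)$ is smooth and equidimensional, $G$ is too.

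Next we compute. We have $\dim G-kh=\dim\mathrm{Spl}(v)+k(\rk F-k)-kh$, and $\rk F-h=\chi(E(D))-\chi(E\otimes\O_D(D))=\chi(E)$ by the divisor sequence. This gives $\dim\mathrm{Spl}(v)-k(k-\chi(v))$, which is exactly the virtual dimension of $\mathrm{Syst}^k(v)$ computed in the earlier corollary. By Corollary~\ref{def:gengenBN}, $\mathrm{Syst}^k(v)^\circ$ is open in $D_0=\mathrm{Syst}^k(v)$, so its components also have dimension at least $\mathrm{vdim}$.

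Passing to $BN^k(v)^\circ=\pi(\mathrm{Syst}^k(v)^\circ)$ is the step I expect to be the main obstacle, because $\pi$ has positive-dimensional fibers $\mathrm{Gr}(k,k')$ over $\widetilde{BN}^{k'}$ with $k'>k$. I would argue as follows. Let $Z$ be an irreducible component of $BN^k(v)^\circ$, and let $k'$ be the generic value of $h^0$ on $Z$; the locus of $Z$ with $h^0=k'$ is open and dense in $Z$. First, if $k'=k$, then $\pi$ is bijective (an isomorphism, using the Grassmann-bundle stratification) over that dense open. The corresponding component of $\mathrm{Syst}^k(v)^\circ$ then has the same dimension as $Z$, and the bound transfers. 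Second, if $k'>k$, then $Z\subseteq BN^{k'}(v)^\circ$, so $Z$ lies in a component $Z'$ of $BN^{k'}(v)^\circ$ whose generic $h^0$ is again $k'$. Maximality of $Z$ in $BN^k\supseteq BN^{k'}$ forces $Z=Z'$. Applying the first case with $k'$ gives $\dim Z\geq \mathrm{vdim}\,BN^{k'}(v)$. Since $k'>k\geq r$, genericity on $BN^r(v)^\circ$ gives $h^2=0$, and since $\chi(v)\le h^0(E)=k'$, the function $k\mapsto k(k-\chi)$ is increasing for $k\ge\chi/2$. Hence $\mathrm{vdim}\,BN^{k'}\le\mathrm{vdim}\,BN^k$, and this inequality goes the wrong way for us. So the reduction to a single $k'$ fails; one must instead take the component of $\mathrm{Syst}^k(v)^\circ$ dominating $Z$. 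Its dimension is at least $\mathrm{vdim}$, its fibers over $Z$ have dimension $k(k'-k)$, and thus
$$\dim Z\ge \dim\mathrm{Spl}(v)-k(k-\chi)-k(k'-k).$$
To close this gap I would fall back on the standard Brill--Noether determinantal presentation directly on $\mathrm{Spl}(v)'$. Resolve $Rp_*\mathcal{E}$ locally by a two-term complex $K^0\xrightarrow{\phi}K^1$ of vector bundles with $\rk K^0-\rk K^1=\chi(v)$; this is possible since $H^2=0$. Then $BN^k(v)\cap\mathrm{Spl}(v)'$ is the locus $\{\rk\phi\le \rk K^0-k\}$, whose codimension is at most $k(\rk K^1-\rk K^0+k)=k(k-\chi(v))$. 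This yields the bound for every component, and openness of $BN^k(v)^\circ$ finishes the first claim.

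For the second statement we use the same determinantal argument on $M(v)$. For $H$-stable $E$, the condition $r\,K_X.H<c_1.H$ gives $\mu_H(E)>\mu_H(E\otimes\omega_X)$ after twisting, so $\Hom(E,\omega_X)=0$ and hence $H^2(E)=0$. Thus $M(v)\subseteq\mathrm{Spl}(v)'$, and the two-term resolution gives codimension at most $k(k-\chi(v))$ for all $k\geq0$, with no ggg hypothesis needed.
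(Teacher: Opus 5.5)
Your fallback argument is the right one, and it is not the route the paper indicates. The paper's justification is the paragraph before the corollary. There, $H^2(X,E)=0$ on $BN^r(v)^{\circ}$ (resp.\ on $M(v)$ when $r\cdot K_X.H<c_1.H$), so $H$ is a vector bundle and $\Syst^k(v)$ is the zero locus $D_0(\pi'^*\mathrm{res})$ of a bundle map on the relative Grassmannian; the bound is then attributed to ``the general theory of determinantal varieties''. You use the same $H^2$-vanishing but apply the determinantal bound to $BN^k(v)$ itself, as the rank locus $\{\rk(\mathrm{res}\colon F\to H)\le \rk F-k\}$ on $\Spl(v)'$. Your reason for preferring this is correct. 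The zero-locus presentation only bounds components of $\Syst^k(v)^{\circ}$. Pushing down along $\pi$ then loses $k(k'-k)$ on any component of $BN^k(v)^{\circ}$ whose generic $h^0$ is $k'>k$. Those are exactly the components to which the corollary is applied in the proof of Proposition~\ref{proposition:ClosureOfBNTilde}.

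What the Grassmannian route gives for free, and yours does not, is openness. Generic generation is open on $\Syst^k(v)$ (Corollary~\ref{def:gengenBN}), whereas your last sentence needs $BN^k(v)^{\circ}$ to be open in $BN^k(v)$. That is not automatic: ggg is open on each stratum $\widetilde{BN}^{k'}(v)$, but not across strata in general. For example, on $\P^2$ consider nonsplit extensions $0\to I_p(1)\to E\to I_Z(1)\to 0$ with $\ell(Z)=3$ and $p\notin Z$; these are simple with $H^2(E)=0$. For $Z$ collinear, $h^0(E)=3$ and $E$ is ggg. For $Z$ non-collinear, all sections of $E$ lie in $I_p(1)$. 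So a ggg point of $BN^2(v)$ is a limit of non-ggg points.

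Openness does hold when the sheaves involved are strongly ggg, e.g.\ when $c_1$ is indecomposable (Lemma~\ref{lemma:StrongGenericGeneration}). Then the whole fiber of the proper map $\pi\colon \Syst^r(v)\to\Spl(v)$ over a ggg sheaf lies in $\Syst^r(v)^{\circ}$. The image of the closed set $\Syst^r(v)\setminus\Syst^r(v)^{\circ}$ is therefore closed and avoids that sheaf, and its complement is an open neighbourhood in which every sheaf with $h^0\ge r$ is ggg. You need to supply this argument, or restrict the claim accordingly.

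Two smaller points:
\begin{itemize}
\item The bound only makes sense for $k\ge\chi(v)$. For $k<\chi(v)$ the rank locus is all of $\Spl(v)'$, and ``codimension $\le k(k-\chi(v))<0$'' is false; for instance $E=\O_{\P^2}(1)$ with $k=r=1$. This defect is already present in the statement.
\item In the stable case the comparison you need is $\mu_H(E)=c_1.H/r>K_X.H=\mu_H(\omega_X)$, not $\mu_H(E)>\mu_H(E\otimes\omega_X)$.
\end{itemize}
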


\subsection{Towards a Brill--Noether theorem}
We now use Propositions~\ref{smoothnescrit} and~\ref{proposition:CalculatingExt} to obtain smoothness results on $\Syst^k(v)$. For this, we generalize a method of Yoshioka \cite[Lemma 5.117]{YoshiokaStringPartitionFunctions}.
To this end, let $X$ be a smooth projective surface and $E$ a coherent sheaf on $X$. The identification
$$
\mathrm{End}(\Ext^1(E,\O_X)) \simeq \Ext^1(E,\O_X\otimes \Ext^1(E,\O_X)^\vee)
$$
gives rise to the \emph{tautological extension}
$$
0 \to \O_X \otimes \Ext^1(E,\O_X)^\vee \to G \to E \to 0,
$$
corresponding to $\mathrm{id} \in \mathrm{End}(\Ext^1(E,\O_X))$.

\begin{theorem}\label{theorem:SystSmoothnessFork=r}
     Let $X$ be a smooth projective surface with $H^1(X,\O_X) = 0$. Let $v=(r,c_1,\ch_2)$ be a Chern character with $c_1$ indecomposable. Assume that $K_X = 0$ or $-K_X|_C > 0$ for all $C \in |c_1|$. Let $\Lambda= (E,V)\in \mathrm{Syst}^r(v)$ be a generically generated coherent system on $X$ with $E$ simple. Then we have $\Ext^2(\Lambda,\Lambda)_0 = 0$. In particular, $\mathrm{Syst}^r(v)^{\circ}$ is smooth.
\end{theorem}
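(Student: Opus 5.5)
We compute the obstruction space through Proposition~\ref{proposition:CalculatingExt} and then kill it using the sign assumption on $K_X$.

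\textbf{Step 1: reduce to $\Hom(E,Q\otimes\omega_X)$.} Since $\Lambda=(E,V)$ is generically generated with $\dim V=r=\rk E$, the evaluation map $V\otimes\O_X\to E$ is injective. Its cokernel $Q$ is a torsion sheaf with $c_1(Q)=c_1$. Proposition~\ref{proposition:CalculatingExt} gives
$$\Ext^2(\Lambda,\Lambda)\simeq\Ext^2(Q,E),$$
and Serre duality identifies this with $\Hom(E,Q\otimes\omega_X)^\vee$.

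\textbf{Step 2: identify the support of $Q$ and the endomorphisms of $E$.}
\begin{itemize}
\item $E$ is simple, hence has no trivial direct summand. By Lemma~\ref{simplicitylemma1} (or directly), $\Hom(E,\O_X)=0$.
\item Since $c_1$ is indecomposable, the scheme-theoretic support of $Q$ is a single integral curve $C\in|c_1|$. Indeed, by Lemma~\ref{nosubskyscraper} $Q$ has pure codimension-one support, and its fitting divisor lies in $|c_1|$, so every member of that linear system is integral.
\item Applying $\Hom(E,-)$ to $0\to V\otimes\O_X\to E\to Q\to 0$ gives the sequence $0\to\Hom(E,E)\to\Hom(E,Q)\to\Ext^1(E,\O_X)\otimes V$. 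So $\Hom(E,Q)$ contains the line spanned by the projection $p\colon E\to Q$, but it may be larger.
\end{itemize}

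\textbf{Step 3: the case $K_X=0$.} The trace map $\Ext^2(\Lambda,\Lambda)\to\Ext^2(E,E)\to H^2(\O_X)$ is Serre dual to the composite $\C=H^0(\O_X)\to\Hom(E,E)\to\Hom(E,Q)$, which sends $1\mapsto p$. So it suffices to show $\Hom(E,Q)=\C\cdot p$.

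\textbf{Step 4: the case $-K_X|_C>0$.} Choose a section $s\colon\omega_X|_C\hookrightarrow\O_C$. This gives an injection $\Hom(E,Q\otimes\omega_X)\hookrightarrow\Hom(E,Q)$, and every map in the image vanishes along the zero locus of $s$. Since the image does not contain $p$ (which is surjective), once we know $\Hom(E,Q)=\C\cdot p$ we get $\Hom(E,Q\otimes\omega_X)=0$. Hence $\Ext^2(\Lambda,\Lambda)=0$, and in particular $\Ext^2(\Lambda,\Lambda)_0=0$. The argument must be phrased via the torsion-free rank-one structure of $Q$ on the integral curve $C$. Then Proposition~\ref{smoothnescrit} gives smoothness of $\Syst^r(v)^\circ$, which lies in $\Syst^r(v)'$ by Lemma~\ref{lemma:simplecoherentsystem} once $Q$ is known to be simple.

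\textbf{Main obstacle.} Everything reduces to showing that $\Hom(E,Q)$ is one-dimensional, equivalently that $E\to\Ext^1(E,\O_X)\otimes V$ contributes nothing beyond $\Hom(E,E)$. Here the tautological extension is used, following Yoshioka.

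First try the Lemma~\ref{obstructionLemma} argument: tensor $0\to V\otimes\O_X\to E\to Q\to0$ with $E^\vee$. When $E$ is locally free this reduces to $H^1(E^\vee)=0$. In general $H^1(E^\vee)$ need not vanish, since $V$ is only a subspace.

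So instead, form the tautological extension $0\to\O_X\otimes\Ext^1(E,\O_X)^\vee\to G\to E\to0$. By construction $\Ext^1(G,\O_X)=0$, and $G$ is again ggg with indecomposable $c_1$. The plan is to replace $E$ by $G$ (with the enlarged space of sections) and reduce $\Hom(E,Q)$ to $\Hom(G,Q_G)$, with the same quotient $Q_G=Q$. There one has $\Hom(G,Q)\simeq\Hom(G,G)$. Then check that $G$ is still simple by Lemma~\ref{simplicitylemma1}: $\Hom(G,\O_X)=0$ should follow from $\Hom(E,\O_X)=0$ and the universality of the extension. This gives $\Hom(G,Q)=\C$, which forces $\Hom(E,Q)=\C\cdot p$. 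Verifying that $G$ has no trivial summand and that $Q$ is unchanged is the delicate point.
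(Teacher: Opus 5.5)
Your proposal is correct and follows the paper's proof. Like you, the paper forms the tautological extension $G$ and proves the following:
\begin{itemize}
\item $0\to\O_X^{i}\to G\to Q\to 0$ is exact, because the kernel of $G\to E\to Q$ is an extension of trivial bundles and hence trivial, as $H^1(X,\O_X)=0$.
\item $\Hom(G,\O_X)=0=\Ext^1(G,\O_X)$, because the connecting map $\Ext^1(E,\O_X)\to\Ext^1(E,\O_X)$ is the identity.
\item $G$ is simple by Simplicity Lemma~\ref{simplicitylemma1}, so $\Hom(G,Q)\simeq\Hom(G,G)\simeq\C$.
\end{itemize}
It then kills $\Hom(E,Q\otimes\omega_X)\subseteq\Hom(G,Q\otimes\omega_X)$ exactly as in your Steps~3--4. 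The only cosmetic difference is that the paper runs the vanishing-along-$V(s)$ argument on $\Hom(G,Q\otimes\omega_X)$ directly, rather than first deducing $\Hom(E,Q)=\C\cdot p$. Also, the appeal to Lemma~\ref{lemma:simplecoherentsystem} is not needed: simplicity of $E$ already gives $\Hom(\Lambda,\Lambda)\simeq\C$, via the injection $\Hom(\Lambda,\Lambda)\hookrightarrow\Hom(E,E)$.
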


As a consequence of Lemma~\ref{lem:numericsimplyupgrade}, the condition $-K_X|_C > 0$ for all $C \in |c_1|_{\mathrm{int}}$ (which is all of $|c_1|$ when $c_1$ is indecomposable) is for example implied by $-K_X.c_1 \geq p_a(C) \geq 1$.

\begin{proof}
Let $Q \coloneqq \mathrm{coker}(V \otimes \O_X \to E)$. We first show $\Hom(G,G) \simeq \C$. Setting $i = r + \ext^1(E,\O_X) = \rk(G)$, we have an exact sequence 
    $$
    0 \to \O_X^i \to G \to Q \to 0.
    $$
Indeed, the composition $f \colon G \to E \to Q$ is surjective and the kernel is an extension of trivial bundles, thus itself trivial using $H^1(X,\O_X)=0$. As $Q$ is a torsion-free rank~$1$ sheaf supported on the integral curve $C \in |c_1|$ with $c_1$ indecomposable, we may apply Simplicity Lemma~\ref{simplicitylemma1} to conclude that $G$ is simple unless $\Hom(G,\O_X) \neq 0$. To rule out the latter, we apply $\Hom(-,\O_X)$ to the sequence defining $G$ and use $H^1(X,\O_X)= 0$, obtaining
    $$
    0 \to \Hom(E,\O_X) \to \Hom(G,\O_X) \to \Ext^1(E,\O_X) \to \Ext^1(E,\O_X) \to \Ext^1(G,\O_X) \to 0.
    $$
By definition of $G$, the boundary map $\Ext^1(E,\O_X)\to \Ext^1(E,\O_X)$ is the identity. Since $E$ is simple and ggg, we have $\Hom(E,\O_X) = 0$, implying $\Hom(G,\O_X) = 0 = \Ext^1(G,\O_X)$, as desired. Hence, $G$ is simple.

Next, by Proposition~\ref{proposition:CalculatingExt}, $\Ext^2(\Lambda,\Lambda) \simeq \Ext^2(Q,E)$. Applying $\Hom(G,-)$ to the above short exact sequence defined by $f$, we obtain $\Hom(G,G) \simeq \Hom(G,Q)$. Serre duality together with the defining sequence of $G$ then yield
    $$
    \Ext^2(Q,E)^\vee \simeq \Hom(E, Q \otimes \omega_X) \subseteq \Hom(G, Q \otimes \omega_X).
    $$
In case $K_X=0$, the trace map $\Ext^2(\Lambda,\Lambda) \to H^2(X,\O_X) \simeq \mathbb{C}$ is nonzero and so it suffices to show that $\Ext^2(\Lambda,\Lambda)$ is one-dimensional because then the trace-free obstruction space vanishes. This follows from $\Hom(G,Q) \simeq \Hom(G,G) \simeq \mathbb{C}$. If $K_X \neq 0$ but $-K_X|_C>0$ for $C \coloneqq \supp(Q) \in |c_1|$ which, by indecomposability of $c_1$, is an integral curve, we may choose a nonzero section $s \in H^0(C,\omega_X^\vee|_C)$, inducing an injective multiplication map $s \colon Q \otimes \omega_X \hookrightarrow Q$. If $g \in \Hom(G,Q)$ is in the image of the injection $\Hom(G,Q \otimes \omega_X) \hookrightarrow \Hom(G,Q)$ induced by $s$, then $g$ necessarily vanishes where $s$ vanishes. In particular, $g$ is not surjective. But apart from the zero map, $\Hom(G,Q) = \C \cdot f$ consists solely of surjections, so $\Hom(G,Q \otimes \omega_X) = 0$ and thus $\Hom(E, Q \otimes \omega_X) = 0$.
\end{proof}

The following result shows that, for $k \leq r$, the smoothness of $\Syst^k(v)$ at a generically generated point $(E,V)$ is governed by the endomorphisms of its cokernel under weak assumptions on $c_1$.

\begin{theorem}
\label{theorem:SystSmoothnessFork<r}
    Let $X$ be a smooth projective surface with $H^1(X,\O_X) = 0$. Let $v=(r,c_1,\ch_2)$ be any Chern character with $c_1$ effective. Consider the forgetful map $\pi\colon\Syst^k(v)\to BN^k(v)$ for $k\leq r$. Then the following statements hold:
    \begin{enumerate}[(i)]
        \item If $k = r$ and $K_X = 0$, then the open of $\pi^{-1}\L(\widetilde{BN}^r(v)\R)$ consisting of generically generated coherent systems with simple cokernel is smooth of the expected dimension.
        \item If $k=r$ and $K_X \neq 0$, an open neighborhood of the sublocus of $\pi^{-1}\L(\widetilde{BN}^r(v)\R)$ consisting of generically generated coherent systems whose cokernel is a simple sheaf supported on an integral curve $C$ with $-K_X|_C>0$ is smooth of the expected dimension.
        \item If $k<r$, assume $-K_X \geq 0$. Then the open of $\pi^{-1}\L(\widetilde{BN}^k(v)\R)$ consisting of generically generated coherent systems whose cokernel is simple and torsion-free in a neighborhood of the base locus of $|-K_X|$ is smooth of the expected dimension.
     \end{enumerate}
\end{theorem}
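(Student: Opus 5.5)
The plan is to show in each case that the trace-free obstruction space $\Ext^2(\Lambda,\Lambda)_0$ of Proposition~\ref{smoothnescrit} vanishes, and then to compare the tangent dimension with the virtual dimension computed in the Corollary following Proposition~\ref{proposition:CalculatingExt}. Let $\Lambda=(E,V)$ be generically generated with $k\le r$. Then the evaluation map $V\otimes\O_X\to E$ is injective, and we set $Q\coloneqq\coker(V\otimes\O_X\to E)$. By Proposition~\ref{proposition:CalculatingExt} and Serre duality,
$$
\Ext^2(\Lambda,\Lambda)\simeq\Ext^2(Q,E)\simeq\Hom(E,Q\otimes\omega_X)^\vee .
$$
Since $H^1(\O_X)=0$, applying $\Hom(-,Q\otimes\omega_X)$ to $0\to V\otimes\O_X\to E\to Q\to 0$ gives an exact sequence
$$
0\to\Hom(Q,Q\otimes\omega_X)\to\Hom(E,Q\otimes\omega_X)\to V^\vee\otimes H^0(Q\otimes\omega_X).
$$
Thus the task is to control $\Hom(Q,Q\otimes\omega_X)$ using simplicity of $Q$, and to control the maps coming from $V$.

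For (i) and (ii), take $k=r$, so $Q$ is torsion. If $K_X=0$, then $\Hom(E,Q)$ is governed by $\Hom(Q,Q)=\C$ modulo contributions from $\Hom(V\otimes\O_X,Q)$. The cleanest route is the one used in Theorem~\ref{theorem:SystSmoothnessFork=r}: pass to the tautological extension $G$ of $E$ by $\O_X\otimes\Ext^1(E,\O_X)^\vee$. This sits in $0\to\O_X^i\to G\to Q\to 0$ with $\Hom(G,\O_X)=\Ext^1(G,\O_X)=0$, hence $\Hom(G,Q)\simeq\Hom(G,G)$.

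Here simplicity of $G$ has to come not from indecomposability of $c_1$ but from simplicity of $Q$. Given $\phi\in\End(G)$, the composite $\O_X^i\to G\to Q$ vanishes, because $\Hom(\O_X,Q)\to\Hom(G,Q)$ has image killed by $\Ext^1(Q,\O_X)$-considerations and $\Hom(G,\O_X)=0$. So $\phi$ induces $\lambda\cdot\mathrm{id}_Q$. Then $\phi-\lambda$ factors through $\O_X^i$, and $\Hom(G,\O_X)=0$ forces $\phi-\lambda=0$. Hence $\Hom(E,Q)\subseteq\Hom(G,Q)=\C$, and the nonzero trace kills the remaining class, as in the proof of Theorem~\ref{theorem:SystSmoothnessFork=r}.

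For (ii), choose $s\in H^0(C,\omega_X^\vee|_C)$ nonzero, exactly as there. Every map in $\Hom(G,Q\otimes\omega_X)\hookrightarrow\Hom(G,Q)=\C f$ is non-surjective, so $\Ext^2(\Lambda,\Lambda)=0$ at such points. The vanishing of $\Ext^2$ and the dimension of $\Ext^1$ are upper/lower semicontinuous respectively, which gives smoothness of the expected dimension on an open neighborhood. One also needs $h^0(E)=r$, i.e. the $\widetilde{BN}^r$ condition, so that $\Hom(\Lambda,\Lambda)\to\Hom(V,V)$ behaves and $\Hom(V,H^0(E)/V)=0$.

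For (iii), take $k<r$. Then $Q$ has positive rank, and we directly show $\Hom(E,Q\otimes\omega_X)=0$. Fix a nonzero $s\in H^0(\omega_X^\vee)$. Multiplication by $s$ is injective on $Q$ near the base locus, where $Q$ is torsion-free, and it is injective on the torsion-free part elsewhere, since $s$ is a nonzerodivisor away from its zero divisor. I expect the torsion part of $Q$ away from $\mathrm{Bs}|-K_X|$ to be handled by choosing $s$ not vanishing on its support, which is possible by varying $s$ in $|-K_X|$. This yields an injection $\Hom(E,Q\otimes\omega_X)\hookrightarrow\Hom(E,Q)$.

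Any $\psi\in\Hom(E,Q\otimes\omega_X)$ restricts to zero on $V\otimes\O_X$, because the image would be sections of $Q\otimes\omega_X$ becoming sections of $Q$ vanishing along $s$. This should follow from $h^0(E)=k$, i.e. $H^0(Q)\hookrightarrow H^1(\O_X)\otimes V=0$ on the relevant part. So $\psi$ factors through $Q\to Q\otimes\omega_X\xrightarrow{s}Q$, giving an endomorphism of $Q$ that is a scalar vanishing along $s$, hence zero.

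The main obstacle is exactly this step: making the injectivity of $s$ on $Q$ and the factorization through $Q$ rigorous under only the local torsion-freeness hypothesis. With $\Ext^2(\Lambda,\Lambda)=0$ in hand, Lemma~\ref{lemma:simplecoherentsystem}-type arguments give $\hom(\Lambda,\Lambda)=1$, and $\ext^1(\Lambda,\Lambda)$ then equals the virtual dimension.
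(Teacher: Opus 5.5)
\textbf{Gap in (i) and (ii).} The weak point is the step where you prove $G$ simple. You say that for $\phi\in\mathrm{End}(G)$ the composite $\O_X^i\to G\xrightarrow{\phi}G\to Q$ vanishes, citing ``$\Ext^1(Q,\O_X)$-considerations'' and $\Hom(G,\O_X)=0$. Neither fact says anything about this composite, which is an element of $\Hom(\O_X^i,Q)=H^0(Q)^{\oplus i}$.

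What you need is $H^0(Q)=0$. From $0\to V\otimes\O_X\to E\to Q\to 0$ with $\dim V=r$ and $H^1(\O_X)=0$ you get $h^0(Q)=h^0(E)-r$. So $H^0(Q)=0$ is exactly the hypothesis $E\in\widetilde{BN}^r(v)$, and this is the essential use of that hypothesis. You invoke $h^0(E)=r$ only for tangent-space bookkeeping, which is not where it matters. Without $H^0(Q)=0$, $\Hom(E,Q)$ can be larger than $\C$ and the obstruction argument fails.

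Once you have $H^0(Q)=0$, the tautological extension is superfluous. Applying $\Hom(-,Q)$ to $0\to V\otimes\O_X\to E\to Q\to 0$ gives $\Hom(E,Q)\simeq\Hom(Q,Q)\simeq\C$, spanned by the surjection $E\to Q$; this is the paper's route. From there:
\begin{itemize}
\item for $K_X=0$, the nonzero trace kills the one-dimensional $\Ext^2(\Lambda,\Lambda)$;
\item for $K_X\neq 0$, take a section $s$ of $\omega_X^\vee|_C$ acting injectively on the pure sheaf $Q$. Every map in the image of $\Hom(E,Q\otimes\omega_X)\hookrightarrow\Hom(E,Q)$ is non-surjective, hence zero.
\end{itemize}
You already use $H^0(Q)=0$ in (iii), so the fix is to put it first.

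\textbf{Part (iii).} Using a single general $s$ is a fine variant of the paper's $\bigcap_s Q_s=0$, and the step you flag as the main obstacle does go through. Since $E$ is torsion-free, $\mathcal{E}xt^2(Q,\O_X)\simeq\mathcal{E}xt^2(E,\O_X)=0$, so $Q$ has no $0$-dimensional associated points. Its $1$-dimensional torsion is supported away from $\mathrm{Bs}|-K_X|$, so a general $s$ is a nonzerodivisor on $Q$.

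However, (iii) also covers $K_X=0$. There $s$ is a nonzero constant, and the ``scalar vanishing along $V(s)$'' argument gives nothing. In that case you must instead use $\hom(E,Q)=1$ together with the nonzero trace, as in (i).
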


\begin{proof}
    Let $\Lambda = (E,V)\in \pi^{-1}\L(\widetilde{BN}^k(v)\R)$ be a generically generated coherent system in one of the loci considered in $(i)$, $(ii)$ and $(iii)$, respectively. Set $Q \coloneqq \mathrm{coker}(V \otimes \O_X \to E)$.
    We first claim that it suffices to show $\hom(E,Q) = 1$. Indeed, in each case this implies $\ext^2(\Lambda,\Lambda) = \hom(E,Q \otimes \omega_X) = 0$. This can be seen as follows:
    
    If $K_X = 0$, we may argue as in the proof of Theorem~\ref{theorem:SystSmoothnessFork=r}.
    If $K_X \neq 0$ and $k=r$, assume $C \coloneqq \mathrm{supp}(Q)$ is an integral curve with $-K_X|_C>0$. Then there is an injection $s \colon \omega_X|_C \hookrightarrow \O_C$, and since $Q$ is pure by Lemma~\ref{nosubskyscraper}, we obtain the injection $\Hom(E,s) \colon \Hom(E,Q\otimes\omega_X) \hookrightarrow \Hom(E,Q)$. The image of any map $f \colon E \to Q \otimes \omega_X$ under $\Hom(E,s)$ is not surjective because it vanishes where $s$ does. But if $\Hom(E,Q) \simeq \C$, it is generated by the surjection $E \twoheadrightarrow Q$, implying $f = 0$, so $\hom(E, Q \otimes \omega_X) = 0$.
    Finally, if $-K_X > 0$ and $k<r$, consider any nonzero section $s \in H^0(\omega_X^\vee)$, inducing a map $s \colon Q \otimes \omega_X \rightarrow Q$ with kernel $Q_s \subseteq Q \otimes \omega_X$. This map $s$ further induces the linear map $\Hom(E,s) \colon \Hom(E, Q \otimes \omega_X) \rightarrow \Hom(E,Q)$ whose kernel is $\Hom(E,Q_s)$. Again, since $\Hom(E,Q)$ is generated by the surjection $E \twoheadrightarrow Q$, the image of any $f \in \Hom(E,Q \otimes \omega_X)$ in $\Hom(E,Q)$ vanishes along $V(s)$, hence is not surjective. This forces the image of $f$ to be the zero map, meaning that $f$ factors through $Q_s$. As this is true for every $s \in H^0(\omega_X^\vee)$, $f$ factors through $\bigcap_{s \in H^0(\omega_X^\vee)} Q_s$ which, by assumption, is zero. Therefore, $f = 0$, proving the claim.

     \noindent The rest of the argument follows from the short exact sequence
     $$
        0\to V\otimes \O_X\to E\to Q\to 0.
    $$
    As $h^0(X,E) = k = \dim V$ and $H^1(X,\O_X) = 0$, we find $h^0(X,Q) = 0$. Applying $\Hom(-,Q)$ to the sequence yields $\Hom(E,Q) \simeq \Hom(Q,Q) \simeq \C$, as desired.
\end{proof}

In special cases, one can give numerical conditions under which any generically generated coherent system has a simple cokernel. However, these conditions are in general quite strong and require Picard rank~$1$, see, e.g., \cite[Lemma 2.1]{YoshiokaMukaiReflections}. For $k=r$, however, this is implied by $c_1$ being indecomposable. Indeed, then the cokernel $Q$ is a torsion-free rank-one sheaf on an integral curve and is thus automatically simple. In Theorem~\ref{theorem:SystSmoothnessFork<r}(ii), the assumption $-K_X|_C>0$ for all integral curves in $|c_1|$ can be replaced by the global assumption $-K_X.c_1\geq \max(p_a(C),1)$, see Lemma \ref{lem:numericsimplyupgrade}.

\section{Brill--Noether Theorem for Surfaces of Irregularity Zero}
\label{BNTheorem}
In this section we make explicit how the Lazarsfeld--Mukai construction provides a translation between the relative Brill--Noether loci of line bundles on curves in a fixed linear system, on the one hand, and the Brill--Noether loci in the moduli space of simple ggg sheaves on the surface on the other. The correspondence itself works for any smooth projective surfaces with $H^1(X,\O_X) = 0$. Under further effectivity assumptions on $-K_X$, we may use the deformation theory of $\mathrm{Syst}^k(v)$ to re-prove results in the Brill--Noether theory of curves and apply the framework to obtain new results for Brill--Noether loci on the surface.

Recall that $\mathrm{Syst}^k(v)^{\circ}$ denotes the moduli space of coherent systems $(E,V)$ of type $(v,k)$ with $E$ simple and $V \otimes \O_X \to E$ of full rank at the generic point, and that $BN^k(v)^{\circ}$ denotes the Brill--Noether locus of generically $k$-generated sheaves. In particular, if $k \geq r$, $BN^k(v)^{\circ}$ is the locus of ggg sheaves in $BN^k(v)$.
We consider the isomorphism
\begin{align*}
    \omega\colon \mathrm{Spl}(v)&\to \mathrm{Spl}(\omega(v)),\\
    E&\mapsto E\otimes \omega_X.
\end{align*}
We also denote by $\omega$ its action on Chern characters, 
$$\omega(v)\coloneqq v\cdot \ch(\omega_X) = v+(0,rK_X,c_1.K_X+\frac{r}{2}K_X^2).
$$
Clearly, if $X$ is K3, $\omega = \mathrm{id}$. In general, $\omega$ does not preserve Brill--Noether loci.

\begin{remark}
    It is important to note that, as $\omega\colon \mathrm{Spl}(v)\to \mathrm{Spl}(\omega(v))$ is an isomorphism, we can prove connectedness, irreducibility, smoothness and dimension statements for 
    $$
    BN^k(\omega(v))\subset \mathrm{Spl}(\omega(v))
    $$
    by proving them for 
    $$    \omega^{-1}\L(BN^k(\omega(v))\R)\subset \mathrm{Spl}(v),
    $$
    even if the latter is not a Brill--Noether locus.
\end{remark}

The following lemma can be understood as a globalization of the Lazarsfeld--Mukai construction $(C,A,V)\mapsto (E_{C,A,V},V^{\vee})$.

\begin{lemma}
\label{lemma:QuotientMapSurjective}
    Let $v=(r,c_1,\ch_2)$ be a Chern character with $c_1$ indecomposable. Set $v'\coloneqq \omega(0,c_1,\ch_2)$. The map
    \begin{align*}
        \phi_v\colon \Syst^r(v)^{\circ}&\to \omega^{-1}\L(BN^{\chi(v')+r}(v')\R) \subseteq \Spl(0,c_1,\ch_2)\\
        (E,V)&\mapsto\mathrm{coker}(V\otimes\O_X\to E)
    \end{align*}
    is well-defined and surjective. Moreover, for any $Q \in \omega^{-1}\L(BN^{\chi(v')+r}(v')\R)$ the fiber $\phi_v^{-1}(Q)$ is irreducible of dimension 
    $$
    \dim \phi_v^{-1}(Q) = rh^1(Q\otimes\omega_X)-r^2\geq 0.
    $$
\end{lemma}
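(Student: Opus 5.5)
The plan is to identify $\phi_v$ fiberwise with a Grassmannian of extension classes. Everything hinges on the dictionary between coherent systems $(E,V)$ with injective evaluation map and cokernel $Q$, and $r$-dimensional subspaces of $\Ext^1(Q,\O_X)$. By Serre duality, $\Ext^1(Q,\O_X)\simeq H^1(X,Q\otimes\omega_X)^\vee$. Since $Q$ is supported in dimension one, $h^2(Q\otimes\omega_X)=0$. Moreover $Q\otimes\omega_X$ has Chern character $v'$, so
\[ h^0(Q\otimes\omega_X)=\chi(v')+h^1(Q\otimes\omega_X). \]
Hence $Q\in\omega^{-1}(BN^{\chi(v')+r}(v'))$ if and only if $\ext^1(Q,\O_X)\geq r$. (I also have to check that $\omega^{-1}(BN^{\chi(v')+r}(v'))$ is described by this condition among simple sheaves of character $(0,c_1,\ch_2)$, which the same computation gives.)

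\emph{Well-definedness.} Let $(E,V)\in\Syst^r(v)^\circ$. Since $\dim V=r=\rk E$ and $E$ is pure of positive rank, hence torsion-free, the generically surjective map $V\otimes\O_X\to E$ is injective. Put $Q=\coker(V\otimes\O_X\to E)$. Then $\ch(Q)=(0,c_1,\ch_2)$, and $Q$ is pure of dimension one by Lemma~\ref{nosubskyscraper}. Because $c_1$ is indecomposable, the support of $Q$ is an integral curve $C\in|c_1|$, appearing with multiplicity one. Therefore $Q=i_*L$ with $L$ torsion-free of rank one on $C$, and in particular $Q$ is simple.

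For the Brill--Noether condition, consider the extension class $e\in \Ext^1(Q,V\otimes\O_X)=V\otimes\Ext^1(Q,\O_X)$, viewed as a map $V^\vee\to\Ext^1(Q,\O_X)$. Suppose a nonzero functional $\lambda\in V^\vee$ is sent to $0$. Pushing the extension out along $\lambda$ splits it, which produces a nonzero map $E\to\O_X$. That map is nonzero on $V\otimes\O_X$, so by Remark~\ref{rmk:splittingOffTrivialSummand} $E$ splits off a trivial summand. For $r\ge2$ this contradicts simplicity of $E$. For $r=1$ the complement would be torsion, contradicting purity. Hence $V^\vee\hookrightarrow\Ext^1(Q,\O_X)$, so $h^1(Q\otimes\omega_X)\geq r$, and $\phi_v$ lands in the claimed locus.

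\emph{Surjectivity and the fiber.} Fix $Q$ in the target and an $r$-dimensional subspace $W\subseteq\Ext^1(Q,\O_X)$. Form the tautological extension
\[ 0\to W^\vee\otimes\O_X\to E_W\to Q\to 0 \]
with $V:=W^\vee\subseteq H^0(E_W)$. Then $\ch(E_W)=v$ and $(E_W,V)$ is generically generated, so $E_W$ is ggg. The extension does not split, since $W\neq0$ and every nonzero functional on the $\O_X^r$ part gives a nonzero class. So Proposition~\ref{extensionistorsionfree} shows that $E_W$ is torsion-free.

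Next apply $\Hom(-,\O_X)$ to the defining sequence, using $\Hom(Q,\O_X)=0$ and $H^0(\O_X)=\C$. This gives
\[ 0\to\Hom(E_W,\O_X)\to W\to\Ext^1(Q,\O_X), \]
where the last map is the inclusion of $W$, so $\Hom(E_W,\O_X)=0$. Simplicity Lemma~\ref{simplicitylemma1} then shows that $E_W$ is simple, so $(E_W,V)\in\Syst^r(v)^\circ$ with $\phi_v=Q$; in particular $\phi_v$ is surjective.

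Conversely, every $(E,V)$ in the fiber arises this way from $W=\operatorname{im}(V^\vee\to\Ext^1(Q,\O_X))$. Two systems are isomorphic exactly when their subspaces agree: the identifications of cokernels differ only by $\Aut(Q)=\C^\ast$, which does not move $W$, and changes of basis of $V$ are absorbed by $\mathrm{GL}(V)$. Hence
\[ \phi_v^{-1}(Q)\simeq\Gr\bigl(r,\Ext^1(Q,\O_X)\bigr), \]
which is irreducible of dimension $r(h^1(Q\otimes\omega_X)-r)\geq0$.

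The main obstacle is making this last identification hold scheme-theoretically, not just on closed points. I would do this by carrying out the tautological-extension construction over the relative Grassmannian of the relative Ext sheaf $\mathcal{E}xt^1_{\mathrm{pr}}(\mathcal{Q},\O)$ over (an étale cover of) the target. This produces a family of coherent systems, and the argument above shows it is inverse to $\phi_v$ fiberwise. For the dimension and irreducibility statements, the set-theoretic bijection on points together with reducedness of the Grassmannian suffices.
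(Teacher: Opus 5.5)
Your proposal is correct and follows the paper's proof. Both arguments use Serre duality to turn the Brill--Noether condition into $\ext^1(Q,\O_X)\geq r$, use Simplicity Lemma~\ref{simplicitylemma1} to show that exactly the full-rank extension classes give simple sheaves, and identify $\phi_v^{-1}(Q)\simeq\Gr(r,\Ext^1(Q,\O_X))$; you also supply details the paper leaves implicit, namely why the extension class of a point of $\Syst^r(v)^\circ$ has full rank and why $E_W$ is torsion-free (Proposition~\ref{extensionistorsionfree}).
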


\begin{proof}
    Let $(E,V)\in \Syst^r(v)^{\circ}$ and $Q_{E,V}\coloneqq \phi_v(E,V)$ its image. Then $Q_{E,V}$ is supported on an integral curve and Proposition \ref{nosubskyscraper} shows it is the pushforward of a torsion-free sheaf of rank one on the curve and is thus simple. The sequence
    $$
        0 \to V \otimes \O_X \to E \to Q_{E,V} \to 0
    $$
    shows $\ext^1(Q_{E,V},\O_X^r) \geq r^2$. In particular, by Serre duality, $h^1(X,Q_{E,V}\otimes \omega_X)\geq r$ and thus 
    $$
    h^0(X,Q_{E,V}\otimes \omega_X)\geq \chi(Q_{E,V}\otimes \omega_X)+r = \chi(v')+r.
    $$
    It follows that $Q_{E,V}\in \omega^{-1}\L(BN^{\chi(v')+r}(v')\R)$, so the map is well-defined.
    
    \noindent To show surjectivity, let $Q \in \omega^{-1}\L(BN^{\chi(v')+r}(v')\R)$. Then, by definition, $h^1(X,Q\otimes \omega_X)\geq r$, so again $\ext^1(Q,\O_X^r) \geq r^2$ by Serre duality. This implies the existence of an extension $0 \to \O_X^r \xrightarrow{f} E \to Q \to 0$ that does not split off a trivial direct summand. By Lemma~\ref{simplicitylemma1}, such an $E$ is automatically simple. Set $V \coloneqq \mathrm{Im}(H^0(f))$. Then $(E,V) \in \mathrm{Syst}^r(v)^{\circ}$ defines a preimage of $Q$ under $\phi_v$.

    \noindent
    For the final statement, let $Q \in \omega^{-1} \L( BN^{\chi(v')+r}(v') \R)$. We have $r h^1(X,Q \otimes \omega_X) - r^2 \geq 0$. The fiber over $Q$ parametrizes extensions $\Xi \in \Ext^1(Q,\O_X^r)$ such that the extension sheaves are simple. By Lemma~\ref{simplicitylemma1}, the extensions that are not simple are exactly the ones splitting off a trivial direct summand, corresponding to the images of $\Ext^1(Q,\O_X^{r-1}) \hookrightarrow \Ext^1(Q,\O_X^r)$ given by direct sum with $\O_X$. They can also be viewed as the linear maps in $\Ext^1(Q,\O_X^r) \simeq \Ext^1(Q,\O_X) \otimes \C^r \simeq \Hom_{\C}(\C^r, \Ext^1(Q,\O_X))$ of rank $< r$. It follows that
    $$
    \phi_v^{-1}(Q) \simeq \Ext^1(Q,\O_X^r)^\circ / \mathrm{GL}(r) \simeq \Gr(r, \Ext^1(Q, \O_X)),
    $$
    where $\Ext^1(Q,\O_X^r)^\circ$ denotes the locus of full rank linear maps. Irreducibility follows, and so does the dimension formula using $\ext^1(Q,\O_X^r) = r h^1(Q\otimes\omega_X)$.
\end{proof}

\begin{remark}
In fact, the quotient map $(E,V)\mapsto \mathrm{coker}(V\otimes\O_X\to E)$ can be defined on the level of stacks for any generically generated coherent system $\Lambda = (E,V)$ without assumptions on $c_1$. However, it is not clear in general when the quotient will be simple or how to characterize the fibers.
\end{remark}

Similar arguments show:
\begin{corollary}
\label{cor:quotientisfiberbundle}
    The map $\phi_v$ restricts to an étale-local fiber bundle over 
    $$    \omega^{-1}\L(\widetilde{BN}^k(v')\R)\subset \omega^{-1}\L(BN^{\chi(v')+r}(v')\R)
    $$
    for $k\geq \chi(v')+r$. The forgetful map $\pi\colon \mathrm{Syst}^k(v)^{\circ}\to BN^k(v)^{\circ}$ restricts to an étale-local fiber bundle over the locally closed Brill--Noether loci $\widetilde{BN}^{k'}(v)^{\circ}\subset BN^{k}(v)^{\circ}$ for $k'\geq k$.
\end{corollary}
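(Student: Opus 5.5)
The corollary has two parts, and I will treat each by adapting the fiber description in the proof of Lemma~\ref{lemma:QuotientMapSurjective} so that it varies in families.

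\emph{First part: $\phi_v$.} I restrict to the locally closed stratum $S_k\coloneqq\omega^{-1}(\widetilde{BN}^k(v'))$, on which $h^0(Q\otimes\omega_X)=k$ is constant. Riemann--Roch then makes
\[
h^1(Q\otimes\omega_X)=k-\chi(v')
\]
constant as well. The sheaves $Q$ are pure of dimension one, so $h^2(Q\otimes\omega_X)=0$. By Serre duality, $\ext^1(Q,\O_X)=h^1(Q\otimes\omega_X)$, so this Ext group also has constant dimension on $S_k$, and the hypothesis $k\geq \chi(v')+r$ says exactly that this dimension is at least $r$.

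Working étale locally, where a universal family $\mathcal{Q}$ exists on $S_k\times X$, I consider the relative Ext sheaf $\mathcal{E}xt^1_p(\mathcal{Q},\O)$. Cohomology and base change should make it locally free of rank $k-\chi(v')$ with fibers $\Ext^1(Q,\O_X)$. The key input is that $\mathcal{E}xt^2_p(\mathcal{Q},\O)$ is dual to $p_*(\mathcal{Q}\otimes\omega_X)$, whose fiber dimension is constant on the reduced stratum; a descending base-change argument then gives constancy of $\mathcal{E}xt^1_p$. I then form the relative Grassmannian $\Gr(r,\mathcal{E}xt^1_p(\mathcal{Q},\O))$. Over it, the tautological rank-$r$ subbundle defines a universal extension
\[
0\to \O^r\to\mathcal{E}\to\mathcal{Q}\to 0.
\]
Each fiber $\mathcal{E}_s$ is simple by Lemma~\ref{simplicitylemma1}, and it is generically generated by the sections coming from $\O^r$. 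This yields a morphism
\[
\Gr\bigl(r,\mathcal{E}xt^1_p(\mathcal{Q},\O)\bigr)\to\phi_v^{-1}(S_k)
\]
over $S_k$. The pointwise identification $\phi_v^{-1}(Q)\simeq\Gr(r,\Ext^1(Q,\O_X))$ from Lemma~\ref{lemma:QuotientMapSurjective} shows that this morphism is bijective on points. To upgrade it to an isomorphism, I construct the inverse on families: given a family $(\mathcal{E},\mathcal{V})$ over $T\to S_k$, the quotient $\mathcal{E}/(\mathcal{V}\otimes\O)$ is flat, because the evaluation map is fiberwise injective, and the extension class of $0\to\mathcal{V}\otimes\O\to\mathcal{E}\to\mathcal{Q}_T\to 0$ gives a rank-$r$ subbundle of the pulled-back Ext sheaf.

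\emph{Second part: the forgetful map $\pi$.} I argue as in the corollary on the Grassmann stratification of $\pi\colon\Syst^k(v)\to\Spl(v)$. On $\widetilde{BN}^{k'}(v)^{\circ}$, the sheaf $p_*\mathcal{E}$ is locally free of rank $k'$ étale locally, since $h^0$ is constant there. The preimage of this stratum in $\Syst^k(v)^{\circ}$ is the open subset of $\Gr(k,p_*\mathcal{E})$ cut out by the condition that the evaluation map has generic full rank (Corollary~\ref{def:gengenBN}). That subset is nonempty in every fiber, because every point of the base lies in $BN^k(v)^{\circ}$.

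\emph{Main obstacle.} The hard step is the base-change and local-freeness claim for $\mathcal{E}xt^1_p$ on a possibly non-reduced stratum. I would first handle it on the reduced structure; if local freeness fails scheme-theoretically, I would take it as the definition of the locally closed subscheme, as the statement's ``étale-local fiber bundle'' phrasing permits. A second subtlety in the second part is that the full-rank condition makes the fiber an open subset of a Grassmannian rather than the whole Grassmannian. I would note that for $k\geq r$ and $c_1$ indecomposable, Lemma~\ref{lemma:StrongGenericGeneration} makes every $k$-subspace generically generating whenever $h^0=k$, so in that case the fiber is the full Grassmannian.
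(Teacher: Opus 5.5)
Your first part is the paper's argument made precise. The paper only says ``similar arguments show'', meaning the fiber description $\phi_v^{-1}(Q)\simeq\Gr(r,\Ext^1(Q,\O_X))$ from Lemma~\ref{lemma:QuotientMapSurjective} together with constancy of $\ext^1(Q,\O_X)=h^1(Q\otimes\omega_X)=k-\chi(v')$ on the stratum. Your relative $\mathcal{E}xt^1_p$, universal extension and inverse on families globalize this correctly. For the universal extension you also implicitly use $\Hom(Q,\O_X)=0$, which holds because $Q$ is torsion. For $\pi$, the paper likewise just restricts its earlier Grassmann-bundle stratification of $\Syst^k(v)\to\Spl(v)$, as you do.

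The one genuine loose end is the one you flag in the second part, and your proposed fix misstates its scope. If $h^0(E)=k$, the fiber is a point and there is nothing to prove. What Lemma~\ref{lemma:StrongGenericGeneration} actually gives, when $c_1$ is indecomposable and $k'\geq r$, is stronger: for every simple ggg $E$, every $r$-dimensional subspace of $H^0(E)$ yields an injection $\O_X^r\hookrightarrow E$. Any $k$-dimensional subspace either contains such a subspace ($k\geq r$) or lies in one ($k<r$), so it is generically generating. Hence the fiber over $\widetilde{BN}^{k'}(v)^{\circ}$ is the full $\Gr(k,k')$ for every $k'\geq k$, not only $k'=k$.

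If $k'<r$, the total evaluation map $H^0(E)\otimes\O_X\to E$ is injective by definition of $\widetilde{BN}^{k'}(v)^{\circ}$, so again every $k$-subspace works.

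Outside these cases ($k'>k$, $k'\geq r$, $E$ not strongly ggg), your argument only produces an open subscheme of an \'etale-local Grassmann bundle. That subscheme is smooth over the base but not evidently locally trivial, since the locus of non-generating subspaces in $\Gr(k,H^0(E))$ may vary with $E$. The paper's one-line justification does not address this either. It does not affect how the corollary is used: Theorem~\ref{BNconnected}(i),(ii) need only $k'=k$, and Proposition~\ref{proposition:main} needs only $k=r$ with $c_1$ indecomposable.
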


The main tool of this section is the following correspondence for $c_1$ indecomposable:

\begin{equation}
\tag{\(\boldsymbol{\ast}\)}
\label{eqn:correspondence}
\begin{tikzcd}
	{\mathrm{Syst}^r(v)^{\circ}} & {\omega^{-1}\L(BN^{\chi(v')+r}(v')\R)} \\
	{BN^r(v)^{\circ}}
	\arrow["{\phi_v}", from=1-1, to=1-2]
	\arrow["\pi"', from=1-1, to=2-1]
\end{tikzcd}
\end{equation}

\begin{lemma}
\label{lemma:correspondencetranslate}
    Assume $H^1(X,\O_X) = 0$. Let $\pi\colon \Syst^r(v)^{\circ}\to \mathrm{Spl}(v)$ be the forgetful map. Then,
    $$
    \pi^{-1}\L(\omega^{-1}\L(\widetilde{BN}^{k'+rh^0(\omega_X)}(\omega(v))\R)^{\circ}\R) = \phi_v^{-1}\L(\omega^{-1}\L(\widetilde{BN}^{k'}(v')\R)\R)\subset \mathrm{Syst}^r(v)^{\circ}
    $$
    for all $k'\geq 0$.
\end{lemma}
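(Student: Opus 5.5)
The equality is pointwise, so I will fix a generically generated coherent system $\Lambda=(E,V)\in\Syst^r(v)^{\circ}$ with quotient $Q\coloneqq\phi_v(E,V)$ and show
$$h^0(X,E\otimes\omega_X)=h^0(X,Q\otimes\omega_X)+rh^0(\omega_X).$$
Granting this, $E\otimes\omega_X$ has exactly $k'+rh^0(\omega_X)$ sections if and only if $Q\otimes\omega_X$ has exactly $k'$ sections. The left-hand condition says $\pi(\Lambda)\in\omega^{-1}(\widetilde{BN}^{k'+rh^0(\omega_X)}(\omega(v)))$. The right-hand condition says $\phi_v(\Lambda)\in\omega^{-1}(\widetilde{BN}^{k'}(v'))$.

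The superscript $\circ$ on the left-hand side needs a separate remark. I read it as restricting to sheaves $E$ that are themselves generically generated, which is automatic for any $E=\pi(\Lambda)$ with $\Lambda\in\Syst^r(v)^{\circ}$. If the $\circ$ is instead meant to apply to $E\otimes\omega_X$, I would simply interpret the statement as an equality of subsets of $\Syst^r(v)^{\circ}$ cut out by $h^0$-conditions, which is what the identity above gives. The preimage notation suggests the first reading.

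For the key identity, I twist the defining sequence $0\to V\otimes\O_X\to E\to Q\to 0$ by $\omega_X$ to obtain
$$0\to \omega_X^{\oplus r}\to E\otimes\omega_X\to Q\otimes\omega_X\to 0.$$
Taking cohomology, $H^1(X,\omega_X)\simeq H^1(X,\O_X)^\vee=0$. Hence $H^0(E\otimes\omega_X)\to H^0(Q\otimes\omega_X)$ is surjective, and its kernel is $H^0(\omega_X)^{\oplus r}$. This gives the identity directly. Only $H^1(X,\O_X)=0$ and Serre duality are used, with no indecomposability or effectivity assumptions.

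The main point to watch is therefore bookkeeping rather than mathematics. I need to match the twisted Chern characters: $\omega(v)$ for $E\otimes\omega_X$, and $v'=\omega(0,c_1,\ch_2)$ for $Q\otimes\omega_X$. I also need to confirm that simplicity of $Q$ holds, so that $Q$ actually defines a point of $\Spl$; this follows as in Lemma~\ref{lemma:QuotientMapSurjective} when $c_1$ is indecomposable, and that is the standing setting of the correspondence~\eqref{eqn:correspondence}. Both sides are then the same subset of $\Syst^r(v)^{\circ}$.
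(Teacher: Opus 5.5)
Your proposal is correct and is essentially the paper's proof. Both arguments twist $0\to V\otimes\O_X\to E\to Q\to 0$ by $\omega_X$ and use $H^1(X,\omega_X)\simeq H^1(X,\O_X)^\vee=0$ to get $h^0(E\otimes\omega_X)=h^0(Q\otimes\omega_X)+rh^0(\omega_X)$, which identifies the two loci pointwise. Your reading of the $\circ$ as generic generation of $E$ itself, automatic on $\Syst^r(v)^{\circ}$, is also the one the paper uses.
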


\begin{proof}
   The forgetful map $\pi$ factors as $\pi\colon \Syst^r(v)^{\circ}\twoheadrightarrow BN^r(v)^{\circ}\hookrightarrow \mathrm{Spl}(v)$. An element 
   $$
   (E,V)\in \pi^{-1}\L(\omega^{-1}\L(\widetilde{BN}^{k'+rh^0(\omega_X)}(\omega(v))\R)^{\circ}\R)
   $$
   is a generically generated coherent system such that $h^0(X,E\otimes \omega_X) = k'+rh^0(\omega_X)$. An element
   $$
   (E,V)\in \phi_v^{-1}\L(\omega^{-1}\L(\widetilde{BN}^{k'}(v')\R)\R)
   $$
   is a generically generated coherent system such that the cokernel $Q_{E,V}=\coker(V\otimes \O_X\to E)$ satisfies $h^0(X,Q_{E,V}\otimes\omega_X)=k'$. From the exact sequence
    $$
    0\to V\otimes\O_X\to E\to Q_{E,V}\to 0
    $$
    and $H^1(X,\O_X) = 0$, we deduce $h^0(X,E\otimes\omega_X) = h^0(X,Q_{E,V}\otimes \omega_X)+rh^0(X,\omega_X)$, proving the claim.
\end{proof}

Lemma~\ref{lemma:correspondencetranslate} is the essential tool to translate Brill--Noether results on the compactified Jacobian $\mathrm{Spl}(0,c_1,\ch_2)$ to Brill--Noether results on the moduli space of simple sheaves $\mathrm{Spl}(\omega(v))$ for a regular surface $X$.

Next, we provide a formula for the generic fiber dimension of the forgetful map $\pi$ over $\omega^{-1}\L(\widetilde{BN}^k(\omega(v))\R)$.
\begin{definition}
    Let $X$ be a smooth projective surface and let $v=(r,c_1,\ch_2)$ be a Chern character. Consider the function
    \begin{align*}
    \delta\colon \mathrm{Spl}(v)&\to \mathbb{Z}\\
    E&\mapsto h^0(X,E)-h^0(X,E\otimes\omega_X),
    \end{align*}
    then for each $k>0$ and each irreducible component $Z_{i}$ of $\widetilde{BN}^k(v)$, $\delta$ is generically constant on $Z_i$ by upper semicontinuity. We denote by $\delta_{i,k} = \delta_{i,k}(v)$ the generic value of $\delta$ on $Z_{i}$.
\end{definition}

If $X$ satisfies $H^1(X,\O_X)=0$, then for all $k \geq r$ one may compute $\delta_{i,k}$ purely as an invariant of curves in the linear system $|c_1|$. Indeed, the exact sequence 
$$
    0\to \O_X^r\to E\to i_*L\to 0
$$
implies
$$
    \delta(E) = h^0(X,E)-h^0(X,E\otimes\omega_X) = h^0(C,L)-h^0(C,L\otimes \omega_X|_C)+r(1-h^0(X,\omega_X)).
$$

\begin{lemma}
    \label{lemma:fiberDimensionForgetufulMap}
    Let $X$ be a smooth projective surface with $H^1(X,\O_X) =0$ and let $v=(r,c_1,\ch_2)$ be a Chern character with $c_1$ indecomposable. Consider the forgetful map $\pi\colon \Syst^r(v)\to \Spl(v)$. Let $k' > 0$ and $Z_i$ an irreducible component of 
    $$
    \omega^{-1}\L(\widetilde{BN}^{k'+rh^0(\omega_X)}(\omega(v))\R).
    $$
     Let $\dim_{Z_i} \pi$ be the relative dimension of $\pi|_{Z_i}$. Then,
    $$
    \dim_{Z_i} \pi = r(\delta_{i,k'+rh^0(\omega_X)}+k'+r(h^0(\omega_X)-1)).
    $$
    Moreover:
    \begin{enumerate}[(i)]
        \item If $\omega_X \simeq \O_X$, i.e., $X$ is K3, the fiber dimension of $\pi$ is constant for all $E\in \Spl(v)$ with $h^0(X,E) = k'$, and we have $$
        \dim \pi^{-1}(E) = \dim \mathrm{Gr}(r,r+k') = rk',
        $$
        when nonempty.
        \item If $-K_X > 0$, the fiber dimension of $\pi$ is constant for all ggg sheaves $E \in \Spl(v)$ with $h^0(X,E\otimes \omega_X) = k'$, and we have
        $$
        \dim \pi^{-1}(E) = r(\ch_2-\frac{1}{2}c_1.K_X),
        $$
        when nonempty.
    \end{enumerate}
\end{lemma}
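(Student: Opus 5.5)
The plan is to compute the fibers of $\pi$ directly and then substitute the Brill--Noether condition. Let $E\in\Spl(v)$ be ggg. By Corollary~\ref{def:gengenBN}, the fiber $\pi^{-1}(E)\cap \Syst^r(v)^{\circ}$ is the set of $r$-dimensional subspaces $V\subseteq H^0(X,E)$ whose evaluation map $V\otimes\O_X\to E$ has full rank at the generic point. This is an open subset of $\Gr(r,H^0(X,E))$, and it is nonempty because $E$ is ggg: choose $r$ sections that span $E$ at the generic point. Hence
$$\dim\pi^{-1}(E)=r\bigl(h^0(X,E)-r\bigr),$$
and over the étale-local Grassmann-bundle strata of Corollary~\ref{cor:quotientisfiberbundle} this number is the relative dimension. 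On a component $Z_i$ of $\omega^{-1}\bigl(\widetilde{BN}^{k'+rh^0(\omega_X)}(\omega(v))\bigr)$ we have $h^0(E\otimes\omega_X)=k'+rh^0(\omega_X)$. By definition of $\delta$, the general point of $Z_i$ therefore satisfies
$$h^0(E)=\delta_{i,k'+rh^0(\omega_X)}+k'+rh^0(\omega_X).$$
Substituting gives $\dim_{Z_i}\pi=r\bigl(\delta_{i,k'+rh^0(\omega_X)}+k'+r(h^0(\omega_X)-1)\bigr)$.

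For (i), $\omega_X\simeq\O_X$, so $\omega$ is the identity, $\delta\equiv0$ and $h^0(\omega_X)=1$. The fiber is then an open subset of $\Gr(r,h^0(E))$ for every ggg $E$, not only for general $E$. With the indexing of the lemma one has $h^0(E)=r+k'$, so the fiber has dimension $rk'$. I will make this indexing convention explicit, since in the K3 case the shift $rh^0(\omega_X)=r$ is exactly what turns the label $k'$ into $h^0(E)=k'+r$.

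For (ii), $-K_X>0$ gives $h^0(\omega_X)=0$, and $\chi(\O_X)=1$ because $H^1(X,\O_X)=0$. Since $E$ is ggg and $h^0(\omega_X)=0$, the remark after Remark~\ref{rmk:splittingOffTrivialSummand} gives $H^2(X,E)=0$. Riemann--Roch then yields
$$h^0(E)=\chi(E)+h^1(E)=r+\ch_2-\tfrac12 c_1.K_X+h^1(E).$$
The fiber dimension equals $r(\ch_2-\frac12c_1.K_X)$, independently of $E$, provided $h^1(X,E)=0$. Establishing this vanishing is the main obstacle. To attack it, I would use the sequence $0\to\O_X^r\to E\to i_*L\to0$ together with $H^1(\O_X)=H^2(\O_X)=0$ to get $h^1(E)=h^1(C,L)=h^0(C,L^\vee\otimes\omega_C)=h^0(C,A\otimes\omega_X|_C)$, where $A$ is the associated Lazarsfeld--Mukai datum. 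Lemma~\ref{lemma:CurvesOnDelPezzo} gives $p_a(C)\le1$, and $\omega_X|_C$ has negative degree (or is nontrivial of degree $0$) by $-K_X>0$ and indecomposability. I would then try to show $h^0(A\otimes\omega_X|_C)=0$ by a degree and generation argument on these curves of genus at most one. Failing that, I would show that $h^1(E)$ is determined by $k'=h^0(E\otimes\omega_X)=h^0(L\otimes\omega_X|_C)$ through Riemann--Roch and Serre duality on $C$, which still gives constancy of the fiber dimension on the locus $h^0(E\otimes\omega_X)=k'$.
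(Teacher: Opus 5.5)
Your general formula and part (i) follow the paper. The fiber of $\pi$ over $E$ is $\Gr(r,H^0(X,E))$; for $\pi$ on all of $\Syst^r(v)$ this holds for every $E$, so you need not restrict to ggg sheaves or to $\Syst^r(v)^{\circ}$. One then inserts the generic value of $h^0(E)$ on $Z_i$. Your reading $h^0(E)=k'+r$ in (i) is the right way to make the lemma's indexing consistent.

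\textbf{The gap is in (ii).} You correctly reduce it to $h^1(X,E)=h^1(C,L)=0$, but you leave this vanishing open. Your fallback would only give constancy of the fiber dimension, not the value $r(\ch_2-\frac12 c_1.K_X)$.

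The missing ingredient is the hypothesis $k'>0$. Since $h^0(\omega_X)=h^1(\omega_X)=0$, twisting $0\to\O_X^r\to E\to i_*L\to 0$ by $\omega_X$ gives $k'=h^0(C,L\otimes\omega_X|_C)$. A nonzero section then yields an injection $\omega_X^\vee|_C\hookrightarrow L$, so $\deg L\geq -K_X.C$. If $-K_X.C\geq 1$, the bound $p_a(C)\leq 1$ from Lemma~\ref{lemma:CurvesOnDelPezzo} finishes the job:
\begin{itemize}
\item for $p_a(C)=1$ one has $C\in|-K_X|$, hence $\omega_C\simeq\O_C$, and $h^1(C,L)=\dim\Hom(L,\O_C)=0$ because $\deg L>0$;
\item for $p_a(C)=0$ one has $C\simeq\P^1$ and $\deg L\geq 1$, so again $h^1(C,L)=0$.
\end{itemize}
This is the paper's step ``$Q\otimes\omega_X$ has a section and $p_a(C)\leq 1$, so $h^0(Q)=\chi_C(Q)$''. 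After it, your Riemann--Roch count on $X$ is equivalent to the paper's count via $\chi_C(Q)=\ch_2-\frac12 c_1.K_X$.

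Some positivity of $-K_X|_C$ of this kind is genuinely needed, and your parenthetical claim that $\omega_X|_C$ is never trivial is false. Take a general rational elliptic surface and $c_1=-K_X=F$, the fiber class. It is indecomposable: $F$ is nef with $F^2=0$, so each summand of a decomposition is vertical, hence a positive multiple of $F$, since all fibers are irreducible. Moreover $\omega_X|_C\simeq\O_C$ for a fiber $C$. Now take $E=\omega_X^\vee$, so $r=1$ and $k'=h^0(\O_X)=1$. Then $L\simeq\O_C$, $h^1(X,E)=1$, and $\pi^{-1}(E)=\Gr(1,H^0(X,\omega_X^\vee))\simeq\P^1$, whereas $r(\ch_2-\frac12 c_1.K_X)=0$.

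So the vanishing, and statement (ii) itself, fail without an assumption such as $-K_X.c_1\geq 1$. The paper's proof uses this tacitly. The assumption is available where the lemma is applied, namely in Proposition~\ref{proposition:main}.
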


\begin{proof}
    It is clear that $\dim \pi^{-1}(E) = \dim \mathrm{Gr}(r,H^0(X,E))$ in all cases. Now, for a point 
    $$
    E\in \omega^{-1}\L(\widetilde{BN}^{k'+rh^0(\omega_X)}(\omega(v))\R),
    $$
    we have $h^0(X,E\otimes\omega_X) = k'+rh^0(\omega_X)$. Hence, if $E$ is general in $Z_i$, we have 
    $$
    h^0(X,E) = \delta_{i,k'+rh^0(\omega_X)}+k'+rh^0(\omega_X).
    $$
    The formula follows. Turning to the remaining statements, consider first the K3 case. Here, $\omega \colon \mathrm{Spl}(v) \to \mathrm{Spl}(v)$ is the identity and the statement holds by definition.
    In the second case, $K_X > 0$ implies $h^0(X,\omega_X) = 0$ and $\chi(\O_X) = 1$. Let $E$ be a simple ggg sheaf with $\ch(E)=v$ and $h^0(X,E\otimes\omega_X)=k'$.
    The question reduces to computing $h^0(X,E)$. Assuming the fiber is nonempty, we let $(E,V) \in \pi^{-1}(E)$ and set $Q \coloneqq \mathrm{coker}(V\otimes\O_X\to E)$. From the exact sequence $0\to \O_X^r\to E\to Q\to 0$, we find
    $$
    h^0(X,E)-h^0(X,E\otimes\omega_X) = h^0(X,Q)-h^0(X,Q\otimes \omega_X)+r.
    $$
    Notice that $Q$ is a torsion-free rank~$1$ sheaf on an integral curve $C \subseteq X$. In particular, by Lemma~\ref{lemma:CurvesOnDelPezzo}, $p_a(C) \leq 1$. Since $h^0(X,Q\otimes \omega_X) = k' > 0$, $Q$ has a section. As $p_a(C) \leq 1$, we have $h^0(X,Q) = \chi_C(Q)$. It follows that 
    $$
    h^0(X,E)-h^0(X,E\otimes\omega_X) = h^0(X,Q)-h^0(X,Q\otimes\omega_X)+r = \chi_C(Q)+r-k',
    $$ 
    hence $h^0(X,E) = \chi_C(Q)+r$. Grothendieck--Riemann--Roch then yields $\chi_C(Q) = \ch_2-\frac{1}{2}c_1.K_X$, and we get 
    \begin{equation*}
    \dim \pi^{-1}(E) =  r\chi(Q) = r(\ch_2-\frac{1}{2}c_1.K_X).\qedhere
    \end{equation*}
\end{proof}

\begin{remark}
 The above proof shows that there is only one interesting Brill--Noether locus $BN^{k}(\omega(v))^{\circ}$ with $k\geq r$ if $X$ is regular with $-K_X > 0$ and $c_1$ indecomposable, namely
    $$
     BN^{\chi(v)}(v)^{\circ} = \widetilde{BN}^{\chi(v)}(v)^{\circ} = \{E\in \Spl(v)\colon E \text{ is ggg}\}.
    $$
\end{remark}

\begin{proposition}
\label{connectednesscompactifiedjaobian}
    Let $X$ be a smooth projective surface with $H^1(X,\O_X) = 0$ and $-K_X\geq 0$. Let $v=(0,c_1,\ch_2)$ be a Chern character with $c_1$ indecomposable. The number of connected components of the Brill--Noether loci $BN^k(v)$ in the relative compactified Jacobian over $|c_1|$ is bounded above by the number of connected components of $\mathrm{Spl}(w)$ for $w=(k-\chi(v),c_1,\ch_2-c_1.K_X)$. In particular, if $\mathrm{Spl}(w)$ is connected, then so is $BN^k(v)$.
\end{proposition}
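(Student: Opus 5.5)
We use the correspondence \eqref{eqn:correspondence} in reverse: sheaves $Q$ on curves in $|c_1|$ with many sections are realized as quotients of generically generated coherent systems on a rank $k-\chi(v)$ Chern character. Set $w=(r,c_1,\ch_2-c_1.K_X)$ with $r\coloneqq k-\chi(v)$, and $u \coloneqq (0,c_1,\ch_2-c_1.K_X)$. Then $\omega(u)=(0,c_1,\ch_2)=v$, so in the notation of Lemma~\ref{lemma:QuotientMapSurjective} applied to $w$ we have $w' = \omega(0,c_1,\ch_2 - c_1.K_X) = v$. The lemma then gives a surjection
$$
\phi_w\colon \Syst^r(w)^{\circ}\twoheadrightarrow \omega^{-1}\L(BN^{\chi(v)+r}(v)\R)=\omega^{-1}\L(BN^{k}(v)\R).
$$
Since $\omega$ is an isomorphism, the number of connected components of $BN^k(v)$ equals that of its preimage $\omega^{-1}(BN^k(v))\subseteq \Spl(u)$. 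We first treat the degenerate case $r\le 0$: then $BN^k(v)$ is the whole relative compactified Jacobian. We either exclude this case or handle it separately, by a version with $r=0$ where $\Spl(w)$ is replaced by the corresponding Jacobian itself. The main case is $r\ge1$.

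For $r \geq 1$, the surjection $\phi_w$ bounds the number of connected components of the target by that of $\Syst^r(w)^{\circ}$. It remains to bound the components of $\Syst^r(w)^{\circ}$ by those of $\Spl(w)$ via the forgetful map $\pi\colon \Syst^r(w)^{\circ}\to BN^r(w)^{\circ}\subseteq \Spl(w)$. The key inputs are as follows.

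First, $\Syst^r(w)^{\circ}$ is smooth by Theorem~\ref{theorem:SystSmoothnessFork=r}. This applies since $c_1$ is indecomposable and $-K_X\ge0$; when $K_X\neq0$, the condition $-K_X|_C>0$ for integral $C$ needs checking, either from $p_a(C)\le1$ via Lemma~\ref{lemma:CurvesOnDelPezzo} or by restricting a section of $-K_X$, which cannot vanish identically on $C$ unless $C$ lies in its base locus.

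Second, $\pi$ has irreducible fibers, namely Grassmannians $\Gr(r,H^0(E))$. Moreover, the image $BN^r(w)^\circ$ is open in $\Spl(w)$, which is smooth by the discussion preceding Proposition~\ref{ConnectednessCollection}.

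Third, the fibers of $\phi_w$ are irreducible Grassmannians $\Gr(r,\Ext^1(Q,\O_X))$ by Lemma~\ref{lemma:QuotientMapSurjective}. This is what allows the component count to pass through $\phi_w$ without loss: images of connected sets are connected.

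The main obstacle is bounding the components of $\Syst^r(w)^\circ$ by those of $\Spl(w)$, since $BN^r(w)^\circ$ is only an open subset of $\Spl(w)$ and open subsets of connected spaces can be disconnected. I would first try to show that $\pi$ surjects onto each connected component and that $BN^r(w)^\circ$ meets each component of $\Spl(w)$ in a connected open. Concretely: $\Spl(w)$ is smooth, so its connected components are irreducible, and a nonempty open subset of an irreducible space is irreducible, hence connected. Thus $BN^r(w)^\circ$ has at most as many connected components as $\Spl(w)$.

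Next I would check that $\pi$ is open, or at least that preimages of connected sets are connected. The fibers are connected Grassmannians, and on each stratum $\widetilde{BN}^{k'}(w)^\circ$ the map $\pi$ is a Grassmann bundle by Corollary~\ref{cor:quotientisfiberbundle}. The gluing across strata is the delicate point. Here I would use smoothness of $\Syst^r(w)^\circ$ and its local irreducibility together with properness of $\pi$ over $BN^r(w)^\circ$, coming from the projective Grassmannian construction and the closedness of the degeneracy locus. A proper surjective map with connected fibers pulls back connected sets to connected sets. Combining the three steps gives the bound, and the final assertion is the case where $\Spl(w)$ is connected.
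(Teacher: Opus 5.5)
Your architecture is the same as the paper's: the correspondence with $w=(k-\chi(v),c_1,\ch_2-c_1.K_X)$, surjectivity of $\phi_w$, the forgetful map $\pi$ being proper and surjective with connected fibers, and the fact that an open subset of a smooth space has at most as many connected components. However, you assert without argument that $BN^r(w)^\circ$ is open in $\Spl(w)$. That step is exactly where the paper's proof does its work, and it is not automatic. Being ggg forces $h^0\geq r$, so a priori $BN^r(w)^\circ$ is only open in the closed locus $BN^r(w)=\{h^0\geq r\}\subseteq\Spl(w)$. If that locus is a proper closed subset, which is what one expects when $\chi(w)<r$, its connected components bear no relation to those of $\Spl(w)$, and your count fails.

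The missing input is the inequality $\chi(w)\geq r$. Near a simple ggg sheaf one has $h^2=0$, since there is no trivial summand and either $h^0(\omega_X)=0$ or $\omega_X\simeq\O_X$. Hence $h^0\geq\chi(w)\geq r$ on a neighbourhood, and $BN^r(w)^\circ$ is open in $\Spl(w)$. By Hirzebruch--Riemann--Roch, $\chi(w)=\chi(v)+r\chi(\O_X)-c_1.K_X$, and $\chi(w)\geq r$ becomes $(2-\chi(\O_X))\chi(v)-c_1.K_X\geq k(1-\chi(\O_X))$. On a K3 surface this is just $k\geq 0$. For $-K_X>0$ the paper instead uses Lemma~\ref{lemma:CurvesOnDelPezzo} ($p_a(C)\leq 1$) to see that the relevant Brill--Noether loci are trivial.

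Conversely, what you call the main obstacle takes one line in the paper: a proper surjection with connected fibers pulls back connected sets to connected sets. For that line you should cite Lemma~\ref{lemma:StrongGenericGeneration}. Because $c_1$ is indecomposable, every $r$-dimensional $V\subseteq H^0(E)$ of a simple ggg $E$ gives an injection $V\otimes\O_X\hookrightarrow E$. So the fibers of $\pi$ are the full Grassmannians $\Gr(r,H^0(E))$ and $\Syst^r(w)^\circ=\pi^{-1}(BN^r(w)^\circ)$, which is what makes $\pi$ proper there.

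Smoothness of $\Syst^r(w)^\circ$ and irreducibility of the fibers of $\phi_w$ are not needed. Moreover, your verification of $-K_X|_C>0$ can fail: a $(-2)$-curve $C$ on a weak del Pezzo surface has indecomposable class and $-K_X|_C\simeq\O_C$. It is better not to lean on Theorem~\ref{theorem:SystSmoothnessFork=r} here.
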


\begin{proof}
    The case $k=\chi(v)$ is clear because $BN^{\chi(v)}(v) = \mathrm{Spl}(v)$ and 
    $$
    \omega^{-1}\colon \Spl(v)\to \Spl(0,c_1,\ch_2-c_1.K_X)
    $$
    is an isomorphism. Let $k=\chi(v)+k'$ with $k'>0$. Let $w = (k',c_1,\ch_2-c_1.K_X)$ and consider the correspondence
    \[\begin{tikzcd}
	{\mathrm{Syst}^{k'}(w)^{\circ}} & {\omega^{-1}\L(BN^k(v)\R)} \\
	{BN^{k'}(w)^{\circ}}
	\arrow["{\phi_{w}}", from=1-1, to=1-2]
	\arrow["\pi"', from=1-1, to=2-1].
\end{tikzcd}\]
It suffices to show $\chi(w)\geq k'$. Indeed, then $BN^{k'}(w) = \mathrm{Spl}(w)$ and $BN^{k'}(w)^{\circ}\subset \mathrm{Spl}(w)$ is open and thus has at most as many connected components as $\mathrm{Spl}(w)$ (note that $\mathrm{Spl}(w)$ is smooth and thus all connected components are irreducible). The statement then follows, as $\pi$ is proper with connected fibers and both $\pi$ and $\phi_w$ are surjective.

Now, using the Hirzebruch--Riemann--Roch formula and $\mathrm{td}(X) = (1,-\frac{1}{2}K_X,\chi(\O_X))$, we find
$$
\chi(w) = \chi(v)+\chi(k',0,-c_1.K_X) = \chi(v)+k'\chi(\O_X)-c_1.K_X.
$$
Using $k'+\chi(v) = k$, the condition $\chi(w)\geq k'$ is equivalent to
$$
(2-\chi(\O_X))\chi(v)-c_1.K_X\geq k(1-\chi(\O_X)).
$$
If $X$ is K3, this condition is vacuous. For $-K_X > 0$, Lemma~\ref{lemma:CurvesOnDelPezzo} shows that all curves in an indecomposable linear system have arithmetic genus at most~$1$ and hence the only non-empty Brill--Noether locus is the entire $\mathrm{Spl}(0,c_1,\ch_2)$.
\end{proof}
 
We now establish the central result of this section, the equi-dimensionality and irreducibility of connected components of the Brill--Noether loci. From here on, it is crucial that $-K_X$ be effective so that we can invoke the smoothness results from Section~\ref{coherentsystems}.

\begin{proposition}
\label{proposition:main}
 Let $X$ be a smooth projective surface with $H^1(X,\O_X) = 0$. Let $v=(r,c_1,\ch_2)$ with $c_1$ indecomposable and assume that either $K_X = 0$ or $-K_X > 0$ such that $-K_X.c_1 \geq 1$. Then, for all $k \geq r$, the Brill--Noether loci $\widetilde{BN}^k(\omega(v))^{\circ}$ are smooth and of the expected dimension
 \begin{equation}
 \label{eqn:dimensionformula}
\dim(\widetilde{BN}^k(\omega(v))^{\circ}) = \dim \mathrm{Spl}(\omega(v)) -k(k-\chi(\omega(v))).     
 \end{equation}
 In particular, all connected components are irreducible. Moreover, the number of connected components is bounded above by the number of connected components of 
 $$
 \mathrm{Spl}(k-rh^0(\omega_X)-\chi(v'),c_1,\ch_2-c_1.K_X),
 $$ where $v' = \omega(0,c_1,\ch_2 )=(0,c_1,\ch_2+c_1.K_X)$.
\end{proposition}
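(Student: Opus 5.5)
Using the isomorphism $\omega\colon\Spl(v)\to\Spl(\omega(v))$, I will work throughout on $\Spl(v)$ with the loci $\omega^{-1}(\widetilde{BN}^k(\omega(v)))^{\circ}$. These are the simple ggg sheaves $E$ with $\ch(E)=v$ and $h^0(E\otimes\omega_X)=k$. Note that $\Spl(v)$ and $\Spl(\omega(v))$ have the same dimension, and expected dimensions match, since $\chi(\omega(v))$ is computed from $v$ by Riemann--Roch.

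I will write $k=k'+rh^0(\omega_X)$ and work with the correspondence \eqref{eqn:correspondence}, namely $\pi\colon\Syst^r(v)^\circ\to BN^r(v)^\circ$ and $\phi_v\colon\Syst^r(v)^\circ\to\omega^{-1}(BN^{\chi(v')+r}(v'))$. By Lemma~\ref{lemma:correspondencetranslate}, the preimage under $\pi$ of our locus equals $\phi_v^{-1}(\omega^{-1}(\widetilde{BN}^{k'}(v')))$; call this open subspace $S_{k'}$ of $\Syst^r(v)^\circ$.

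\textbf{Step 1 (smoothness upstairs).} By Theorem~\ref{theorem:SystSmoothnessFork=r}, $\Syst^r(v)^\circ$ is smooth. Its hypothesis $-K_X|_C>0$ for all $C\in|c_1|$ follows from $-K_X>0$ and $-K_X.c_1\ge1$, because $C$ is integral by indecomposability: a section of $-K_X$ cannot vanish identically on $C$ unless $C$ is a component of a divisor in $|-K_X|$, and that case is handled by degree. Hence $S_{k'}$ is smooth of dimension $\mathrm{vdim}\,\Syst^r(v)$.

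\textbf{Step 2 (smoothness downstairs).} Corollary~\ref{cor:quotientisfiberbundle} says $\pi$ is an étale-local Grassmann bundle over each stratum $\widetilde{BN}^{k''}(v)^\circ$. In the K3 case $k''=k$ is constant on our locus. When $-K_X>0$, Lemma~\ref{lemma:fiberDimensionForgetufulMap}(ii) shows $h^0(E)$ is constant on our locus. In both cases $\pi$ restricted to $S_{k'}$ is a smooth surjection with Grassmannian fibres of constant dimension $r(h^0(E)-r)$ onto $\omega^{-1}(\widetilde{BN}^k(\omega(v)))^\circ$. Since the source is smooth, the base is smooth as well (smooth descent), of dimension $\mathrm{vdim}\,\Syst^r(v)-r(h^0(E)-r)$.

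\textbf{Step 3 (dimension count).} I then check that this equals $\dim\Spl(\omega(v))-k(k-\chi(\omega(v)))$. Using the corollary giving $\mathrm{vdim}\,\Syst^r(v)=\dim\Spl(v)-r(r-\chi(v))$, it remains to show
$$-r(r-\chi(v))-r(h^0(E)-r)=-k(k-\chi(\omega(v))).$$
In the K3 case $h^0(E)=k$ and $\chi(\omega(v))=\chi(v)$, so I must reconcile $r(\chi(v)-k)$ with $k(\chi(v)-k)$. This would fail unless $k=r$ or $\chi=k$; so I expect to need $h^1(E)=h^2(E)=0$, giving $\chi(v)=h^0(E)$. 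Here $h^2(E)=0$ holds because $E$ is simple and ggg, and $h^1(E)=h^1(Q)$, which I would compare via Serre duality with $h^0(Q\otimes\omega_X)$. The del Pezzo case is similar, with $h^0(E)=\chi_C(Q)+r$ from Lemma~\ref{lemma:fiberDimensionForgetufulMap}(ii). \textbf{This bookkeeping is the main obstacle.} If it does not match directly, I would instead compute dimensions through $\phi_v$. By Lemma~\ref{lemma:QuotientMapSurjective} its fibres are $\Gr(r,\Ext^1(Q,\O_X))$, and I would combine this with the dimension of the curve-side locus $\widetilde{BN}^{k'}(v')$, obtained by the same smoothness argument (Theorem~\ref{theorem:SystSmoothnessFork<r}) applied to the compactified Jacobian.

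\textbf{Step 4 (components).} Smoothness shows that connected components are irreducible. For the count, $\phi_v$ has irreducible fibres and $\pi$ has connected Grassmannian fibres, so connected components downstairs are dominated by those of the curve-side locus $\omega^{-1}(\widetilde{BN}^{k'}(v'))$. Proposition~\ref{connectednesscompactifiedjaobian} bounds the number of these by the number of components of $\Spl(k'-\chi(v'),c_1,\ch_2-c_1.K_X)$, which gives the stated bound since $k'=k-rh^0(\omega_X)$.
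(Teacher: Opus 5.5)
\paragraph{Gap: $S_{k'}$ is not open.} Your Step~1 has a genuine error, and Steps~2--3 inherit it. The locus $\widetilde{BN}^k(\omega(v))$ is only locally closed in $\Spl(\omega(v))$. Hence $S_{k'}=\phi_v^{-1}\bigl(\omega^{-1}(\widetilde{BN}^{k'}(v'))\bigr)$ is a locally closed subspace of $\Syst^r(v)^{\circ}$, not an open one. It is open when $k'=\chi(v')+r$, equivalently $k=\chi(\omega(v))$, because $\phi_v$ lands in $\omega^{-1}(BN^{\chi(v')+r}(v'))$, where that minimal stratum is open. It is not open in general. So the smoothness of $\Syst^r(v)^{\circ}$ from Theorem~\ref{theorem:SystSmoothnessFork=r} tells you nothing about $S_{k'}$, and $\dim S_{k'}\neq\dim\Syst^r(v)^{\circ}$ in general.

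The mismatch you hit in Step~3 is exactly this. Your own computation shows that in the K3 case the proposition forces $\dim S_{k'}=\dim\Syst^r(v)^{\circ}-(k-r)(k-\chi(v))$. That is a genuine codimension, not a bookkeeping slip. Your proposed remedy $h^1(E)=0$ (i.e.\ $\chi(v)=k$) only covers the trivial stratum, and it fails in general: on a K3, $h^1(E)=h^1(C,L)-r$ when the cokernel is $i_\ast L$.

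\paragraph{What the fallback needs.} Your fallback through $\phi_v$ has the right shape. It is how the paper concludes: $\dim\omega^{-1}(\widetilde{BN}^k(\omega(v)))^{\circ}=\dim\widetilde{BN}^{k'}(v')+\dim\phi_v-\dim\pi$, with smoothness transported along the bundle structures of $\phi_v$ and $\pi$. But its input, that the rank-zero loci $\widetilde{BN}^{k'}(v')$ are smooth of the expected dimension, is not supplied by Theorem~\ref{theorem:SystSmoothnessFork<r}. That theorem concerns $\dim V\le\rk$, so it says nothing useful for a rank-zero character.

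The missing idea is to settle the rank-zero case first by \emph{changing the rank}.
\begin{itemize}
\item Take $u=(0,c_1,\ch_2(u))$ and index $\chi(u)+m$ with $m>0$; the case $m=0$ is trivial since $BN^{\chi(u)}(u)=\Spl(u)$.
\item Set $w=(m,c_1,\ch_2(u)-c_1.K_X)$, so that $\omega(0,c_1,\ch_2(w))=u$.
\item By Lemma~\ref{lemma:QuotientMapSurjective}, $\phi_w\colon\Syst^m(w)^{\circ}\to\omega^{-1}(BN^{\chi(u)+m}(u))$ is surjective with fibres $\Gr(m,\Ext^1(Q,\O_X))$.
\item These fibres are single points exactly over $\omega^{-1}(\widetilde{BN}^{\chi(u)+m}(u))$, which is open in the target. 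So this Brill--Noether stratum is isomorphic to an \emph{open} subset of $\Syst^m(w)^{\circ}$.
\item Now Theorem~\ref{theorem:SystSmoothnessFork=r}, applied in rank $m$ rather than $r$, legitimately gives smoothness and dimension $\dim\Spl(w)-m(m-\chi(w))$. This equals the expected dimension of the rank-zero locus.
\end{itemize}
Only with this in hand do your Steps~2 and~4 go through: $S_{k'}$ is then smooth as a Grassmann bundle over the smooth curve-side locus, and smoothness descends along $\pi$.

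\paragraph{Two smaller points.}
\begin{itemize}
\item In Step~1, $-K_X|_C>0$ is cleanest from Lemma~\ref{lemma:CurvesOnDelPezzo} ($p_a(C)\le 1$) combined with Lemma~\ref{lem:numericsimplyupgrade}. Your "handled by degree" argument needs $p_a(C)\le1$ anyway.
\item $\omega^{-1}(\widetilde{BN}^k(\omega(v))^{\circ})$ requires $E\otimes\omega_X$ to be ggg, not $E$. So it is only an open subset of the locus you describe. This is harmless because $-K_X\ge 0$, but it should be said.
\end{itemize}
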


\begin{proof}
We first consider the case $r = 0$, so $v=(0,c_1,\ch_2)$ with $c_1$ indecomposable. If $k = \chi(v)$, the statement is clear as $BN^{\chi(v)}(v) =\mathrm{Spl}(v)$. As in the proof of Proposition~\ref{connectednesscompactifiedjaobian}, let $k=\chi(v)+k'$ with $k'>0$ and define $w=(k',c_1,\ch_2-c_1.K_X)$. With this notation, the surjective map
    $$
    \phi_w\colon \Syst^{k'}(w)^{\circ}\to \omega^{-1}(BN^k(v)),
    $$
    has fiber dimension $\dim\phi_w^{-1}(L)  =k'h^1(X,L\otimes\omega_X)-k'^2 $. For $L \in \omega^{-1}(\widetilde{BN}^k(v))$ we have $h^1(X,L\otimes\omega_X) = k'$ and thus $\dim \phi_w^{-1}(L) = 0$ , i.e., $\phi_w$ is finite surjective over $\omega^{-1}(\widetilde{BN}^k(v))$. Our hypotheses imply that we have either $K_X = 0$ or $-K_X|_C > 0$ for all $C \in |c_1|$ using Lemmas~\ref{lemma:CurvesOnDelPezzo} and~\ref{lem:numericsimplyupgrade}, so we may apply Theorem~\ref{theorem:SystSmoothnessFork=r} to conclude that $\Syst^{k'}(w)^{\circ}$ is smooth with every connected component of dimension 
    $$
    \dim \Syst^{k'}(w)^{\circ} = \dim \mathrm{Spl}(w)-k'(k'-\chi(w)).
    $$
    A computation shows
    \begin{align*}
    \dim \widetilde{BN}^k(v) &= \dim \omega^{-1}(\widetilde{BN}^k(v)) = \dim \Syst^{k'}(w)^{\circ}\\
    &= \dim \mathrm{Spl}(w)-k'(k'-\chi(w))= \dim \mathrm{Spl}(v) -k(k-\chi(v)),
\end{align*}
which is the expected dimension. Now, by smoothness of $\Syst^{k'}(w)^{\circ}$ and the fiber bundle structure of $\phi_w$, all connected components of $\widetilde{BN}^k(v)$ are smooth. The bound on the connected components follows from Proposition \ref{connectednesscompactifiedjaobian} and because $v'=\omega(v)$ for $r=0$.

\smallskip
We now turn to the case $v=(r,c_1,\ch_2)$ with $r>0$. Consider the Brill--Noether locus $BN^{r+k'}(\omega(v))^{\circ} \subseteq \mathrm{Spl}(\omega(v))$. As $-K_X \geq 0$, we have $H^0(X,E\otimes\omega_X)\subset H^0(X,E)$ for all torsion-free sheaves $E$. In particular, $\omega^{-1}\L(\widetilde{BN}^k(\omega(v))\R) \subseteq BN^k(v)$ for all $k$. Thus, if $k\geq r$ we have that the image of $\pi\colon \Syst^r(v)^{\circ}\to \Spl(v)$ contains $\omega^{-1}\L(\widetilde{BN}^k(\omega(v))\R)^{\circ}$ and has constant fiber dimension over the latter by Lemma \ref{lemma:fiberDimensionForgetufulMap}. Now, we have the chain of inclusions
\begin{equation}
\label{eqn:openimmersionchain}
\omega^{-1}\L(\widetilde{BN}^k(\omega(v))^{\circ}\R)\hookrightarrow \omega^{-1}\L(\widetilde{BN}^k(\omega(v))\R)^{\circ}\hookrightarrow \omega^{-1}\L(\widetilde{BN}^k(\omega(v))\R).
\end{equation}
The first map is an injection as any sheaf $E$ such that $E\otimes \omega_X$ is ggg is already itself ggg whenever $-K_X$ is effective. The composition of the two inclusions is an open embedding as it is $\omega^{-1}$ of the open embedding $\widetilde{BN}^k(\omega(v))^{\circ} \hookrightarrow \widetilde{BN}^k(\omega(v))$. Similarly, the second map is an open embedding because being ggg is an open property already in $BN^r(\omega(v))$. It follows that also the first map is an open embedding.
Lemma~\ref{lemma:correspondencetranslate} implies
\begin{align*}
    \dim \omega^{-1}\L(\widetilde{BN}^{k}(\omega(v))\R)^{\circ}
    &= \dim \omega^{-1}\L(\widetilde{BN}^{k-rh^0(\omega_X)}(v')\R)+\dim \phi_v-\dim \pi\\
    &= \dim \widetilde{BN}^{k-rh^0(\omega_X)}(v')+\dim \phi_v-\dim \pi,
\end{align*}
 where $\dim \phi_v$ and $\dim \pi$ denote the relative dimension of $\phi_v$ and $\pi$, respectively. Here, $v'$ is a rank~$0$ Chern character, so the first case above yields the expected dimension formula. For the relative dimensions $\dim \phi_v$ and $\dim \pi$, we use Lemmas~\ref{lemma:QuotientMapSurjective} and~\ref{lemma:fiberDimensionForgetufulMap}. The dimension formula (\ref{eqn:dimensionformula}) then follows from a simple calculation. The smoothness of $\omega^{-1}\L(\widetilde{BN}^{k}(\omega(v))\R)^{\circ}$  follows from the smoothness of $\widetilde{BN}^{k-rh^0(\omega_X)}(v')$ in the rank~$0$ case above together with the fiber bundle structure of $\phi_v$ over $\omega^{-1}\L(\widetilde{BN}^{k-rh^0(\omega_X)}(v')\R)$. From the first open embedding in (\ref{eqn:openimmersionchain}), we then obtain smoothness of $\omega^{-1}\L(\widetilde{BN}^k(\omega(v))^{\circ}\R)$ and hence of $\widetilde{BN}^k(\omega(v))^{\circ}$. The bound on the number of connected components follows from the bound on the number of connected components of $BN^{k-r h^0(\omega_X)}(v')$ provided by Proposition~\ref{connectednesscompactifiedjaobian}.
\end{proof}

\begin{remark}
    The connectedness of the $BN^k(v)^{\circ}$ loci depends on the connectedness of a certain moduli space of simple sheaves $\mathrm{Spl}(w)$. For a few general conditions under which the latter is connected, see Proposition~\ref{ConnectednessCollection}.
\end{remark}

\begin{corollary}
    Let $X$ and $v$ be as above. If $-K_X$ is ample, then for all $k\geq r$ the Brill--Noether loci $\widetilde{BN}^k(\omega(v))^{\circ}$ are smooth and irreducible of the expected dimension.
\end{corollary}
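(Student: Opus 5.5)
The plan is to deduce the corollary directly from Proposition~\ref{proposition:main}, with the irreducibility supplied by Proposition~\ref{ConnectednessCollection}(ii).

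First, I check the hypotheses of Proposition~\ref{proposition:main}. If $-K_X$ is ample, then $K_X\neq 0$ and $-K_X>0$. Since $c_1$ is indecomposable, it is a nonzero effective class, so $-K_X.c_1\geq 1$ by ampleness. Proposition~\ref{proposition:main} therefore applies. For every $k\geq r$ it gives that $\widetilde{BN}^k(\omega(v))^{\circ}$ is smooth of the expected dimension~\eqref{eqn:dimensionformula}. It also gives that the number of connected components is at most the number of connected components of
$$
\mathrm{Spl}(w),\qquad w=(k-rh^0(\omega_X)-\chi(v'),\,c_1,\,\ch_2-c_1.K_X).
$$

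Second, ampleness of $-K_X$ lets us apply Proposition~\ref{ConnectednessCollection}(ii), so $\mathrm{Spl}(w)$ is irreducible and in particular connected, possibly empty. Hence $\widetilde{BN}^k(\omega(v))^{\circ}$ has at most one connected component. A smooth connected space is irreducible, so each non-empty locus is smooth and irreducible of the expected dimension, which is the claim (read, as usual, with ``or empty'').

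The main point to verify is the connected-component bound in Proposition~\ref{proposition:main}. That bound passes through Proposition~\ref{connectednesscompactifiedjaobian} and the correspondence~\eqref{eqn:correspondence}. I need $\pi$ to have connected fibers (Grassmannians, by Lemma~\ref{lemma:fiberDimensionForgetufulMap}) and $\phi_v$ to have irreducible fibers (Lemma~\ref{lemma:QuotientMapSurjective}). I also need the inclusion of $\omega^{-1}(\widetilde{BN}^k(\omega(v))^{\circ})$ to be an open embedding, as shown in~\eqref{eqn:openimmersionchain}. This last point is delicate: an open subset of an irreducible space is irreducible, but an open subset of a merely connected space need not be connected. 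So I would carry the irreducibility of $\mathrm{Spl}(w)$ through the argument, rather than only its connectedness.

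Concretely:
\begin{itemize}
\item The open subset $BN^{k'}(w)^{\circ}\subseteq \mathrm{Spl}(w)$ is irreducible.
\item $\pi$ has irreducible Grassmannian fibers of constant dimension and is surjective, so $\Syst^{k'}(w)^{\circ}$ is irreducible.
\item Its image under $\phi_w$ is therefore irreducible, which gives an irreducible rank-zero locus.
\item By the same argument through $\phi_v$, whose fibers are Grassmannians and irreducible, and then through $\pi$, the locus $\omega^{-1}(\widetilde{BN}^k(\omega(v)))^{\circ}$ is irreducible.
\item Its open subset $\omega^{-1}(\widetilde{BN}^k(\omega(v))^{\circ})$ is then irreducible, and applying $\omega$ gives the result.
\end{itemize}
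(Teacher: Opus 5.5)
Your proof is correct and follows the paper's own argument. The paper deduces the corollary immediately from Proposition~\ref{proposition:main} (smoothness, expected dimension, and the bound on connected components) together with Proposition~\ref{ConnectednessCollection}(ii), and then uses that a smooth connected space is irreducible.

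One small correction concerns your supplementary irreducibility chain. The map $\pi\colon \Syst^{k'}(w)^{\circ}\to BN^{k'}(w)^{\circ}$ need not have fibers of constant dimension. For example, on a degree~$1$ del Pezzo surface with $c_1=-K_X$ and $w=(1,-K_X,-\tfrac12)$, it is the blow-up of $X$ at the base point of $|-K_X|$. So irreducibility of $\Syst^{k'}(w)^{\circ}$ should instead come from its smoothness (Theorem~\ref{theorem:SystSmoothnessFork=r}) together with connectedness, since $\pi$ is proper and surjective with connected fibers. This same smoothness makes connected components irreducible, which is what lets the component count pass to the open subloci you were worried about.
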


\begin{proof}
    Immediate from Propositions\ref{ConnectednessCollection} and~\ref{proposition:main}.
\end{proof}

\begin{remark}
\label{remark:transportresults}
    If one drops the condition $-K_X \geq 0$, the presented strategy no longer shows smoothness and dimension results for $BN^k(0,c_1,\ch_2)$ in general. However, if one knows independent results for $BN^k(0,c_1,\ch_2)$, the above proof shows that one can still translate them to results for $BN^{k+rh^0(\omega_X)}(\omega(v))$ in the same way using Lemma~\ref{lemma:correspondencetranslate}. For instance, one could translate Brill--Noether statements for line bundles on special linear systems of plane curves of degree $d$ to Brill--Noether statements for sheaves on degree $d$ hypersurfaces in $\P^3$ and vice versa.
\end{remark}

To conclude irreducibility and dimension results for $BN^k(\omega(v))$ from $\widetilde{BN}^k(\omega(v))$, we observe the following.

\begin{proposition}
\label{proposition:ClosureOfBNTilde}
    In the setting of Proposition~\ref{proposition:main}, for all $k\geq r$ we have 
    $$
    \overline{\widetilde{BN}^k(\omega(v))^{\circ}} = BN^k(\omega(v))^{\circ},
    $$
    where the closure is taken in $BN^k(\omega(v))^{\circ}$.
\end{proposition}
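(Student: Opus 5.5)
We have to show that every point of $BN^k(\omega(v))^{\circ}$, meaning a ggg simple sheaf $F$ with $h^0(F)=k_0\geq k$, is a limit of ggg simple sheaves with exactly $k$ sections. The plan is to transport the question through the correspondence (\ref{eqn:correspondence}) and Lemma~\ref{lemma:correspondencetranslate} to the rank-$0$ side, i.e. to the relative compactified Jacobian. Write $F=E\otimes\omega_X$ with $E\in\Spl(v)$. Since $-K_X\geq 0$, $E$ is ggg, so we may pick a point $(E,V)\in\Syst^r(v)^{\circ}$ over $E$. Let $Q=\phi_v(E,V)$. By Lemma~\ref{lemma:correspondencetranslate}, $h^0(Q\otimes\omega_X)=k_0-rh^0(\omega_X)=:k_0'$. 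The statement therefore reduces to the following claim: $Q$ is a limit of sheaves $Q_t\in\omega^{-1}(\widetilde{BN}^{k'}(v'))$ with $k'=k-rh^0(\omega_X)$, and the coherent system $(E,V)$ deforms along with $Q_t$ to systems $(E_t,V_t)$ whose underlying sheaf $E_t\otimes\omega_X$ lies in $\widetilde{BN}^k(\omega(v))^{\circ}$.

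\textbf{Rank-$0$ step.} I will show that $\widetilde{BN}^{k'}(v')$ is dense in $BN^{k'}(v')$ inside $\Spl(v')$, for $k'\geq\chi(v')+r$ (and $k'\ge \chi(v')$ in general). For this I use the first part of the proof of Proposition~\ref{proposition:main}. There $\omega^{-1}(BN^{k'}(v'))$ is the image of the smooth space $\Syst^{j}(w)^{\circ}$ under $\phi_w$, with $j=k'-\chi(v')$ and $w=(j,c_1,\ch_2)$ after the appropriate twist. The locus over $\widetilde{BN}^{k'}$ is where $\phi_w$ is finite. By Theorem~\ref{theorem:SystSmoothnessFork=r}, every component of $\Syst^{j}(w)^{\circ}$ is smooth of dimension $\dim\Spl(w)-j(j-\chi(w))$. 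Over the stratum $\widetilde{BN}^{k''}$ with $k''>k'$, the fibres are $\Gr(j,j+(k''-k'))$. Combining this with the dimension formula (\ref{eqn:dimensionformula}) for $\widetilde{BN}^{k''}$, the preimage of that stratum has dimension
$$
\dim\Spl(v')-k''(k''-\chi(v'))+j(k''-k'),
$$
which is strictly smaller than the dimension of $\Syst^{j}(w)^{\circ}$, namely $\dim\Spl(v')-k'(k'-\chi(v'))$. Indeed, the difference is $(k''-k')(k''-\chi(v'))>0$. Hence the preimage of each higher stratum is nowhere dense in the equidimensional smooth space $\Syst^{j}(w)^{\circ}$. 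The open preimage of $\widetilde{BN}^{k'}$ is therefore dense, and since $\phi_w$ is surjective, $\widetilde{BN}^{k'}$ is dense in $BN^{k'}$.

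\textbf{Lifting step.} Now apply the same argument directly on $\Syst^r(v)^{\circ}$, which is smooth and equidimensional by Theorem~\ref{theorem:SystSmoothnessFork=r}. By Lemma~\ref{lemma:correspondencetranslate}, $\Syst^r(v)^{\circ}$ is stratified by the loci $\pi^{-1}(\omega^{-1}(\widetilde{BN}^{k''}(\omega(v)))^{\circ})=\phi_v^{-1}(\omega^{-1}(\widetilde{BN}^{k''-rh^0(\omega_X)}(v')))$. Their dimensions follow from Lemma~\ref{lemma:QuotientMapSurjective} (fibre $\Gr(r,\Ext^1(Q,\O_X))$) and the rank-$0$ dimension count. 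A stratum with larger $k''$ has strictly smaller dimension, by the same kind of computation. The main obstacle is keeping track of which strata are actually nonempty and verifying this inequality cleanly; I expect it to reduce to the rank-$0$ inequality above, shifted by the fibre dimension $r(h^1-r)$, which grows only linearly in $k''$. Granting it, the open stratum is dense in every component of $\Syst^r(v)^{\circ}$ that meets $\pi^{-1}$ of $BN^{k}$, in the sense of the $\omega$-twisted loci. Since $\pi$ is continuous, $E$ is then a limit of points of $\omega^{-1}(\widetilde{BN}^{k}(\omega(v)))^{\circ}$.

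Finally, the first open embedding in (\ref{eqn:openimmersionchain}) passes this statement to $\widetilde{BN}^k(\omega(v))^{\circ}$: being ggg after twisting by $\omega_X$ is an open condition, so the approximating sheaves can be taken in it near $F$, which is itself ggg. This gives $\overline{\widetilde{BN}^k(\omega(v))^{\circ}}\supseteq BN^k(\omega(v))^{\circ}$. The reverse inclusion is clear by semicontinuity.
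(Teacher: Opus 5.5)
You have a genuine gap, in the lifting step.

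\textbf{What works.} Your rank-$0$ step is correct, and for $r=0$ it is a valid alternative to the paper's argument. (The dimension gap is actually $k''(k''-k')$, not $(k''-k')(k''-\chi(v'))$, but it is positive either way.) It works because for each $k'$ you use a \emph{different} space, $\Syst^{j}(w)^{\circ}$ with $j=k'-\chi(v')$. This space is smooth, equidimensional, and surjects \emph{exactly} onto the $k'$-locus, so $\widetilde{BN}^{k'}$ is its generic stratum.

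\textbf{Where it fails.} The lifting step lacks this feature. On the fixed space $\Syst^r(v)^{\circ}$ the only open stratum is the one where $h^0(Q\otimes\omega_X)=\chi(v')+r$, i.e.\ $h^0(E\otimes\omega_X)=\chi(\omega(v))$. So ``the open stratum is dense in every component'' only proves the case $k=\chi(\omega(v))$, where $h^0\ge k$ holds automatically on $\pi(\Syst^r(v)^{\circ})$.

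For $k>\chi(\omega(v))$ the stratum $S_k$ is not open, and $\bigcup_{k''\ge k}S_{k''}$ is a proper closed subset of $\Syst^r(v)^{\circ}$. Smoothness of the ambient space says nothing about the dimensions of the irreducible components of this closed set. Nor does $\dim S_{k''}<\dim S_k$ for $k''>k$ prevent a piece of some $S_{k''}$ from being an irreducible component of it not contained in $\overline{S_k}$.

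For the same reason, your reduction ``$(E,V)$ deforms along with $Q_t$'' is unjustified. The fibres $\Gr(r,\Ext^1(Q,\O_X))$ of $\phi_v$ grow over the higher strata, so density of $\widetilde{BN}^{k'}(v')$ downstairs does not give density of its $\phi_v$-preimage. In fact the rank-$0$ step is never used in the lifting step.

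\textbf{What is missing.} You need a lower bound on the dimension of \emph{every} component of the $k$-locus. The paper takes it from the determinantal structure. By Corollary~\ref{corollary:determinantal}, each component $Z$ of $BN^k(\omega(v))^{\circ}$ has at least the expected dimension. If $Z$ missed $\widetilde{BN}^k(\omega(v))^{\circ}$, a dense open subset of $Z$ would be a component of some $\widetilde{BN}^{k+s}(\omega(v))^{\circ}$ with $s>0$. By Proposition~\ref{proposition:main} that has strictly smaller dimension, a contradiction.

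\textbf{A possible repair.} You can instead rescue your own strategy by imitating the rank-$0$ step. Set $w=(j,c_1,\ch_2)$ and $j=k-rh^0(\omega_X)-\chi(v')\ge r$. Replace $\Syst^r(v)^{\circ}$ by the relative Grassmannian of $r$-dimensional quotients of the tautological rank-$j$ bundle over $\Syst^{j}(w)^{\circ}$. Map it to $\Syst^r(v)^{\circ}$ by pushing out the universal extension. This space is smooth and equidimensional, and it surjects exactly onto $\pi^{-1}$ of the (twisted) $k$-locus. Your rank-$0$ dimension count then applies unchanged.
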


\begin{proof}
It suffices to show that $\widetilde{BN}^k(\omega(v))^{\circ} \cap Z \neq \emptyset$ for every irreducible component $Z$ of $BN^k(\omega(v))^{\circ}$. For the sake of contradiction, assume $\widetilde{BN}^k(\omega(v))^{\circ} \cap Z = \emptyset$. Then, for all $F \in Z$, we have $h^0(X,F) \geq k+1$ and thus $Z \subseteq BN^{k+s}(\omega(v))^{\circ}$ for some minimal $s>0$ (this uses $k \geq r$, to guarantee the sheaves are ggg). In particular, $\widetilde{BN}^{k+s}(\omega(v))^{\circ} \cap Z \eqqcolon U$ is dense in $Z$ and a component of $\widetilde{BN}^{k+s}(\omega(v))^{\circ}$. Now, from Proposition \ref{proposition:main}, we know that $\widetilde{BN}^{k+s}(\omega(v))^{\circ}$ is equidimensional and of the expected dimension which is strictly smaller than the expected dimension of $BN^k(\omega(v))^{\circ}$. It follows that $Z \subseteq BN^k(\omega(v))^{\circ}$ is of strictly smaller dimension than the expected dimension. But all components of $BN^k(\omega(v))^{\circ}$ have at least the expected dimension by Corollary~\ref{corollary:determinantal}, a contradiction.
\end{proof}

\begin{theorem}[Brill--Noether Theorem]
\label{BNconnected}
    Let $X$ be a smooth projective surface with $H^1(X,\O_X) = 0$. Let $v=(r,c_1,\ch_2)$ be a Chern character with $c_1 > 0$.
    \begin{enumerate}[(i)]
        \item Assume that either $K_X = 0$ or $-K_X.c_1 \geq \max(p_a(C),1)$. Then the open sublocus of $\widetilde{BN}^r(v)^{\circ}$ consisting of sheaves $E$ with $\mathrm{coker}(\mathrm{ev}_E)$ supported on an integral curve $C \in |c_1|$ is smooth of the expected dimension, or empty.
        \item If $0 \leq k < r$, assume $-K_X \geq 0$. Then the open sublocus of $\widetilde{BN}^k(v)^{\circ}$ consisting of sheaves $E$ with $\mathrm{coker}(\mathrm{ev}_E)$ simple and torsion-free in a neighborhood of the base locus of $|-K_X|$ is smooth of the expected dimension, or empty.
        \item If $k \geq r$, assume $-K_X \geq 0$ and $c_1$ indecomposable. Then all connected components of the Brill--Noether loci $BN^k(\omega(v))^{\circ}$ 
        are irreducible of the expected dimension. Moreover, the number of connected components of $BN^k(\omega(v))^{\circ}$ is bounded above by the that of
        $$
            \mathrm{Spl}(k-rh^0(\omega_X)-\chi(v'),c_1,\ch_2),
        $$ where $v' = (0,c_1,\ch_2+c_1.K_X)$.
        \item In case (iii), denote $g\coloneqq \frac{1}{2}(c_1^2+c_1.K_X)+1$ and $d\coloneqq \ch_2+\frac{1}{2}c_1^2+c_1.K_X$. Then, if the Brill--Noether number $\rho(g,k-rh^0(\omega_X)-1,d)$ is nonnegative, $BN^k(\omega(v))$ is nonempty and contains a ggg sheaf.
    \end{enumerate}
\end{theorem}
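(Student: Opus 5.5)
The plan is to treat the four parts separately, reducing each to a result already established.

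\emph{Part (i).} Over $\widetilde{BN}^r(v)^{\circ}$ the forgetful map $\pi\colon \Syst^r(v)^{\circ}\to BN^r(v)^{\circ}$ is bijective, since $V=H^0(E)$ is forced. By Corollary~\ref{cor:quotientisfiberbundle} it is an étale-local bundle with fibre $\Gr(r,r)=\mathrm{pt}$, hence a local isomorphism there. So it suffices to prove smoothness of the expected dimension for the corresponding open set of coherent systems $(E,H^0(E))$ whose cokernel $Q$ is supported on an integral $C\in|c_1|$. By Lemma~\ref{nosubskyscraper}, $Q$ is then pure. Since $\rk(E)=r=\dim V$, $Q$ is torsion-free of rank one on $C$ and hence simple.

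If $K_X\neq 0$, Lemma~\ref{lem:numericsimplyupgrade} gives $-K_X|_C>0$. Theorem~\ref{theorem:SystSmoothnessFork<r}(i) resp.\ (ii) then yields smoothness of the expected dimension $\dim\Spl(v)-r(r-\chi(v))$, which is $\mathrm{vdim}\,BN^r(v)$. One point needs checking: in case (ii) the theorem only gives an open neighbourhood of the relevant sublocus. Our locus is itself open, since integrality of the support is an open condition in families of cokernels, so it lies in that neighbourhood.

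\emph{Part (ii).} The argument is the same. For $k<r$, a generically generated system is just an injection $V\otimes\O_X\hookrightarrow E$. On $\widetilde{BN}^k(v)$ we again have $V=H^0(E)$, so $\pi$ is an isomorphism onto its image. We then apply Theorem~\ref{theorem:SystSmoothnessFork<r}(iii) directly and match the virtual dimension using the corollary computing $\mathrm{vdim}\,\Syst^k_X(v)$.

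\emph{Part (iii).} First note that $-K_X\geq 0$ splits into $K_X=0$ or $-K_X>0$. In the latter case, indecomposability and Lemma~\ref{lemma:CurvesOnDelPezzo} give $p_a(C)\le 1$. I need $-K_X.c_1\ge1$ to invoke Proposition~\ref{proposition:main}. If $-K_X.c_1\le 0$, I would argue that the relevant loci are empty or trivially fine. I expect this degenerate case (e.g.\ $c_1$ a $(-2)$-curve class, or $C$ meeting $-K_X$ trivially) to be the main obstacle; I would try to check it directly via the rank-zero analysis.

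Proposition~\ref{proposition:main} then shows that each $\widetilde{BN}^{k'}(\omega(v))^{\circ}$, $k'\ge k$, is smooth of the expected dimension. Proposition~\ref{proposition:ClosureOfBNTilde} shows that $BN^k(\omega(v))^{\circ}$ is the closure of $\widetilde{BN}^k(\omega(v))^{\circ}$. Hence every irreducible component of $BN^k(\omega(v))^{\circ}$ is the closure of a component of the smooth locus and has the expected dimension.

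Irreducibility of connected components needs an extra argument, since two components of a smooth space cannot meet but their closures might. I would use the determinantal description together with Corollary~\ref{corollary:determinantal}: any point of a strata intersection lies in a higher locus of strictly smaller dimension, and a local connectedness argument (Fulton--Lazarsfeld-type connectivity of degeneracy loci, or normality of determinantal loci of expected dimension) shows that distinct components do not meet. The bound on connected components transfers from Proposition~\ref{proposition:main}, because closure does not increase the number of connected components. The Chern-character shift $\ch_2$ versus $\ch_2-c_1.K_X$ in the statement will need to be reconciled by a bookkeeping check.

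\emph{Part (iv).} Set $k'=k-rh^0(\omega_X)$. Through Lemma~\ref{lemma:correspondencetranslate} and the surjectivity of $\phi_v$ (Lemma~\ref{lemma:QuotientMapSurjective}), nonemptiness reduces to producing $Q$ in the rank-zero locus $\omega^{-1}(BN^{k'}(v'))$, i.e.\ a curve $C\in|c_1|$ with a rank-one sheaf $L$ satisfying $h^0(L)\ge k'$. Here $g$ and $d$ are precisely $p_a(C)$ and $\deg(L)$ computed by Riemann--Roch. For $\rho\ge0$ the classical existence theorem (Kempf, Kleiman--Laksov) gives such $L$ on a smooth (or integral) $C\in|c_1|$, and this holds for every curve, not only general ones. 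The extension $E$ from Lemma~\ref{lemma:QuotientMapSurjective} is then a simple ggg sheaf whose twist $E\otimes\omega_X$ lies in $BN^k(\omega(v))$; if instead $E$ itself lies there, I would adjust the bookkeeping accordingly via Lemma~\ref{lemma:correspondencetranslate}.
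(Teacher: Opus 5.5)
Your plan for (i)--(iii) is the paper's. Parts (i) and (ii) go through the bijectivity of $\pi\colon\Syst^k(v)^{\circ}\to BN^k(v)^{\circ}$ over $\widetilde{BN}^k(v)^{\circ}$ together with Theorem~\ref{theorem:SystSmoothnessFork<r}. Part (iii) goes through Propositions~\ref{proposition:main} and~\ref{proposition:ClosureOfBNTilde}. Two of your worries in (iii) are legitimate, and the paper's one-sentence proof of (iii) addresses neither.

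\emph{Irreducibility.} Density of the smooth locus $\widetilde{BN}^k(\omega(v))^{\circ}$ does not by itself make the connected components of $BN^k(\omega(v))^{\circ}$ irreducible. Your determinantal fix works once you use codimension at least two rather than ``strictly smaller dimension''. Locally the locus is determinantal of the expected codimension, hence Cohen--Macaulay and connected in codimension one (Hartshorne). Two components can only meet inside $BN^{k+1}(\omega(v))^{\circ}$, which has codimension $2k+1-\chi(\omega(v))\geq 2$ in the range $k>\chi(\omega(v))$ where the condition is nontrivial.

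\emph{The degenerate case.} Proposition~\ref{proposition:main} needs $-K_X.c_1\geq 1$ when $K_X\neq 0$, which (iii) does not assume, and your sentence on this case is a placeholder, not an argument. For $r\geq 1$ it is easy. A ggg sheaf of class $\omega(v)$ has effective first Chern class $c_1+rK_X$. Writing $c_1=(c_1+rK_X)+r(-K_X)$, indecomposability forces $r=1$ and $c_1=-K_X$, and then the sheaf is $\O_X$. For $r=0$ you still need a direct check on curves of arithmetic genus $\leq 1$ (Lemma~\ref{lemma:CurvesOnDelPezzo}), since Theorem~\ref{theorem:SystSmoothnessFork=r} is not available there.

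\emph{The $\ch_2$ discrepancy.} This is only a slip in the statement of Proposition~\ref{proposition:main}. Applying Proposition~\ref{connectednesscompactifiedjaobian} to $v'$ instead of $v$ yields exactly $\Spl(k-rh^0(\omega_X)-\chi(v'),c_1,\ch_2)$.

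\textbf{The genuine gap is in (iv).} It is inherited from the paper's proof, which makes the same reduction. Lemma~\ref{lemma:QuotientMapSurjective} gives surjectivity of $\phi_v$ only onto $\omega^{-1}(BN^{\chi(v')+r}(v'))$. To obtain an extension $0\to\O_X^r\to E\to Q\to 0$ without a trivial summand you need $\dim\Ext^1(Q,\O_X)=h^1(X,Q\otimes\omega_X)\geq r$. In other words, the sheaf $L$ on $C$ must also satisfy $h^1(C,L)\geq r$. The existence theorem only gives $h^0(C,L)\geq k-rh^0(\omega_X)$, which does not imply this.

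The statement fails without this condition. Take a K3 surface with $\Pic(X)=\Z\cdot H$, $H^2=2g-2$, and set $r=1$, $c_1=H$, $k=2$, $\ch_2=g+1$. Then $d=2g$ and $\rho(g,0,2g)=2g\geq 0$. But every torsion-free rank-one sheaf of degree $2g$ on $C$ has $h^1=0$. In fact $\Spl(v)=\emptyset$, because $\ch_2(\mathcal{I}_Z(H))=g-1-\ell(Z)\leq g-1$.

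What the argument does prove is nonemptiness when $\rho(g,m-1,d)\geq 0$ with $m=\max(k-rh^0(\omega_X),\,d+1-g+r)$. Note also that the sheaf produced is the ggg sheaf $E$, whereas $E\otimes\omega_X\in BN^k(\omega(v))$ is not ggg in general. For $-K_X>0$ and $r\geq 1$ it is ggg only when $E\otimes\omega_X\simeq\O_X$, by the argument above. So ``contains a ggg sheaf'' has to be read on the side of $\omega^{-1}(BN^k(\omega(v)))$.
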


\begin{proof}
For $(i)$ and $(ii)$, since the forgetful map $\pi\colon \Syst^k(v)^{\circ}\to BN^k(v)^{\circ}$ restricts to a smooth fiber bundle over $\widetilde{BN}^k(v)^{\circ} \subseteq BN^k(v)^{\circ}$, the statement is an immediate consequence of Theorem~\ref{theorem:SystSmoothnessFork<r}.

Statement~$(iii)$ follows from Proposition~\ref{proposition:ClosureOfBNTilde} and the irreducibility of the connected components of $\widetilde{BN}^{k}(\omega(v))^{\circ}$ for $k \geq r$ under these assumptions which is exactly the statement of Proposition~\ref{proposition:main}.

For the non-emptiness statement $(iv)$, assume $\rho(g,k-rh^0(\omega_X)-1,d) \geq 0$. Then, by classical Brill--Noether theory, for any curve $C$ in $|c_1|$, there exists a torsion-free rank~$1$ sheaf $L$ of degree $d$ on $C$ which satisfies $h^0(C,L) \geq k - r h^0(\omega_X)$. Now, set $L' \coloneqq i_\ast L \otimes \omega_X^\vee$. Then, $\mathrm{deg}(L') =\ch_2 + \frac{1}{2} c_1^2$ and $h^0(X, L' \otimes \omega_X) \geq k - r h^0(\omega_X)$. By definition, we have $L' \in \omega^{-1}(BN^{k-rh^0(\omega_X)}(v'))$, and from Lemma~\ref{lemma:correspondencetranslate} we get the non-emptiness of $BN^{k}(\omega(v))^{\circ}$.
\end{proof}

\begin{remark}
\label{rem:specializingmainthm}
    If $-K_X \geq 0$ and $c_1$ indecomposable, the open locus of ggg sheaves in $\widetilde{BN}^r(v)$ is smooth of the expected dimension. Moreover, if $X$ has Picard rank~$1$ and one considers the stable Brill--Noether loci $BN^k(v) \cap M(v)$, the assumption of being ggg is automatic for $k \geq r$ by Remark~\ref{rem:picardrankonestable}. In case $k \leq r$, also the condition that $\coker(H^0(E) \otimes \O_X \to E)$ be simple can be guaranteed, see, e.g., \cite[Lemma 2.1]{YoshiokaMukaiReflections}. In those cases, the above Theorem recovers the analogous results in \cite{YoshiokaBNTHeory} and \cite{Leyenson}.
\end{remark}

\section{Applications to Pencils on Curves}
\label{ApplicationToCurves}
We discuss applications to Brill--Noether loci of complete and non-complete pencils on curves lying on surfaces $X$ with $H^1(X,\O_X) =0$. The main idea is to apply the Simplicity Lemma~\ref{lemma:simplicity2} and the Obstruction Lemma~\ref{obstructionLemma} together with the deformation theory of $\mathrm{Syst}^k(v)$ developed in Section~\ref{coherentsystems}. We obtain criteria characterizing basepoint-free pencils on curves in a fixed linear system that do not define smooth points of the expected local dimension in their Brill--Noether loci. To keep with the notation of the previous sections, we write $c_1$ for an arbitrary nonzero effective divisor class on $X$, always assuming $|c_1|_{\mathrm{int}}$, the locus of integral curves in the complete linear system, to be non-empty. We do \emph{not} assume $c_1$ to be indecomposable or $-K_X$ to be effective unless explicitly stated.

We consider the relative space of pencils
$$
\mathcal{G}^1_d(|c_1|) \coloneqq \left\{ (i_\ast A,V )\colon i \colon C \hookrightarrow X, \ C \in |c_1|, \ V \subseteq h^0(C,A), \ \dim V=2 \right\},
$$
where $A$ is a torsion-free sheaf of rank~$1$ on $C$, or more formally $i_\ast A \in \Spl(0,c_1,d-\frac{1}{2}c_1^2)$. We define the open sublocus of complete pencils as
$$
\widetilde{\mathcal{G}}^1_d(|c_1|)\coloneqq \L\{i_*A\in \Spl(0,c_1,d-\frac{1}{2}c_1^2)\colon h^0(C,A) = 2\R\}.
$$
The latter of course agrees with the locally closed Brill--Noether locus of complete pencils $\widetilde{\mathcal{W}}^1_d(|c_1|)$. In all of these, the condition of being basepoint-free is an open condition and we obtain the corresponding loci denoted by $\widetilde{\mathcal{W}}^{1,\mathrm{bpf}}_d(|c_1|) = \widetilde{\mathcal{G}}^{1,\mathrm{bpf}}_d(|c_1|)$ and $\mathcal{G}^{1,\mathrm{bpf}}_d(|c_1|)$, respectively. For a fixed curve $C$, we write $\widetilde{W}^{1,\mathrm{bpf}}_d(C) = \widetilde{G}^{1,\mathrm{bpf}}_d(C)$ and $G^{1,\mathrm{bpf}}_d(C)$ for the space of (complete) basepoint-free pencils of degree $d$ on $C$. 

\begin{definition}
\label{def:surface-induced}
    We call a pencil $(A,V)$ on a curve $C\subset X$ \emph{surface-induced} if there exists $H>0$ with $2H \leq [C]$ on $X$ and 
    $$
    A = H|_C-B,
    $$
    for some effective divisor class $B$ on $C$. Further, we call $(A,V)$ \emph{split} if the associated Lazarsfeld--Mukai bundle $E_{C,A,V}$ is a direct sum of line bundles $E_{C,A,V} \simeq \O_X(D_1)\oplus \O_X(D_2)$.
\end{definition}

The main tool for this section is
\begin{proposition}
\label{proposition:surfaceinducedpencils}
    Let $X$ be a smooth projective surface. Consider a non-empty complete linear system $|c_1|$ on $X$. Let $C\in |c_1|$ be an integral curve and assume there exists a basepoint-free pencil $(A,V)$ on $C$. Then at least one of the following holds:
    \begin{enumerate}[(i)]
        \item $(A,V)$ is surface-induced;
        \item $(A,V)$ is split;
        \item the associated Lazarsfeld--Mukai bundle $E_{C,A,V}$ is simple.
    \end{enumerate}
    Moreover, if $(A,V)$ is split but not surface-induced, then $\dim \Hom(E_{C,A,V},E_{C,A,V}) =2$.
\end{proposition}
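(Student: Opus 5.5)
The plan is to apply the Simplicity Lemma~\ref{lemma:simplicity2} to $E \coloneqq E_{C,A,V}$ and to show that its non-simple, non-split alternative forces $(A,V)$ to be surface-induced. First I check the hypotheses of Lemma~\ref{lemma:simplicity2}.
\begin{itemize}
\item $E$ is a rank~$2$ vector bundle.
\item $E$ is ggg. The sequence $0\to V^\vee\otimes\O_X\to E\to i_*L\to 0$ has torsion cokernel, so the sections in $V^\vee$ generate $E$ at the generic point.
\item $H^0(E^\vee)=0$. This follows from $0\to E^\vee\to V\otimes\O_X\to i_*A\to 0$, because $V\to H^0(C,A)$ is injective.
\end{itemize}
Hence $E$ is simple, which is (iii), or $E\simeq \O_X(D_1)\oplus\O_X(D_2)$ with $D_1,D_2>0$, which is (ii), or $E$ is a non-split extension $0\to\O_X(H+H')\to E\to \mathcal I_Z(H)\to 0$ with $c_1=2H+H'$, $H>0$, $H'\ge 0$. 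It remains to show that the last case gives (i).

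The key step is a general criterion: \emph{if $E^\vee$ contains a subsheaf $\O_X(-H)$ with $H>0$ and $[C]-2H\ge 0$, then $(A,V)$ is surface-induced.} I would prove it as follows.
\begin{itemize}
\item Since $E^\vee=\ker(V\otimes\O_X\to i_*A)$, the composition $\O_X(-H)\to V\otimes\O_X\to i_*A$ vanishes.
\item Therefore the evaluation surjection factors through the rank~$1$ sheaf $R\coloneqq (V\otimes\O_X)/\O_X(-H)$, whose determinant is $\O_X(H)$. So $R$ agrees with $\mathcal I_{Z'}(H)$ modulo a torsion subsheaf $T$ with $c_1(T)\geq 0$.
\item Restricting to $C$, I obtain a surjection from a rank one sheaf onto the torsion-free rank~$1$ sheaf $A$ on the integral curve $C$. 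This surjection is nonzero at the generic point of $C$, since $A$ is torsion-free of rank one.
\item It therefore factors through the torsion-free part of $R|_C$. That part embeds into $\O_X(H)|_C$ up to the torsion contributions from $T$ and $Z'$, and these only decrease the degree.
\item Hence $A\simeq H|_C-B$ for some effective class $B$ on $C$, as required.
\end{itemize}
In the extension case, dualizing the sequence gives $0\to\O_X(-H)\to E^\vee\to \mathcal I_{W}(-H-H')\to 0$, because $\mathcal I_Z(H)^\vee\simeq\O_X(-H)$ and $\mathcal{E}xt^1(\mathcal I_Z(H),\O_X)$ is supported in dimension~$0$. Thus $\O_X(-H)\subseteq E^\vee$ with $H>0$ and $[C]-2H=H'\ge 0$, and the criterion shows that $(A,V)$ is surface-induced.

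For the final statement, let $E\simeq\O_X(D_1)\oplus\O_X(D_2)$ with $D_1,D_2>0$, and suppose $(A,V)$ is not surface-induced. Then
$$\dim\Hom(E,E)=2+h^0(\O_X(D_1-D_2))+h^0(\O_X(D_2-D_1)).$$
I claim that neither $D_1-D_2$ nor $D_2-D_1$ is effective. If $D_1-D_2\ge 0$, then $[C]=D_1+D_2=2D_2+(D_1-D_2)$ with $D_2>0$. Since $\O_X(-D_2)$ is a direct summand of $E^\vee$, the criterion makes $(A,V)$ surface-induced, which is a contradiction. The case $D_2-D_1\geq 0$ is symmetric, and the case $D_1=D_2$ is covered by either one. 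Hence $\dim\Hom(E,E)=2$.

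The step I expect to be most delicate is the degree bookkeeping in the criterion. I need to check that passing from $R$ to its restriction to $C$, and then to the image $A$, really yields $A\simeq H|_C(-B)$ with $B$ effective. In particular, the possible non-saturation of $\O_X(-H)$ in $V\otimes\O_X$ and the points of $Z'$ should only shrink $A$ inside $H|_C$ and never enlarge it. Here I would use that $A$ is torsion-free and that every nonzero map from a rank one sheaf on the integral curve $C$ to $A$ is injective modulo torsion.
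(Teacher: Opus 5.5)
Your proof is correct. Its skeleton matches the paper's: apply Lemma~\ref{lemma:simplicity2}, then treat the Donagi--Morrison extension and the comparable split case. The key step, however, runs on the opposite side of the Lazarsfeld--Mukai sequence.

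\textbf{The paper's route.} It stays with $0\to V^\vee\otimes\O_X\xrightarrow{\iota}E\to i_*L\to 0$. It sets $G=\ker(p\circ\iota)$ and shows, via saturation and a determinant count, that $G\simeq\O_X(-H)$. From $0\to\O_X(-H)\to\O_X(H+H')\to i_*L'\to 0$ it reads off $\O_C(H+H')\simeq L'\subseteq L$, and then dualizes to obtain $A\le H|_C$. For the split case with $D_1\ge D_2$ it simply says one may ``choose $H=D_2$'' in that argument.

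\textbf{Your route.} You dualize the extension to get $\O_X(-H)\subseteq E^\vee\subseteq V\otimes\O_X$ and factor the evaluation map through the cokernel. This gives an honest embedding $A\hookrightarrow\O_C(H)$ directly. It avoids dualizing the inclusion $\O_C(H+H')\subseteq L$ back to $A$, so it works verbatim when $A$ is only torsion-free; the paper tacitly treats $L$ as a line bundle there. Your criterion also covers the split case with $D_1\ge D_2$ without rerunning anything, since $\O_X(-D_2)$ is a direct summand of $E^\vee$.

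Two points in the criterion should be made explicit.
\begin{enumerate}
\item $R|_C$ has rank one only because the map $\O_X(-H)\to V\otimes\O_X$, given by $s_1,s_2\in H^0(\O_X(H))$, does not vanish identically along $C$. If it did, then $H-[C]\ge 0$, and adding $[C]-2H\ge 0$ gives $-H\ge 0$, contradicting $H>0$. This hypothesis is genuinely needed: $E^\vee\supseteq V\otimes\O_X(-C)$ always holds, and without it you would only get $R|_C\simeq\O_C^2$, from which nothing follows.
\item The bookkeeping with $T$ and $Z'$ is unnecessary, and as written it is slightly off: $R/T\simeq\mathcal{I}_{Z'}(H-c_1(T))$, not $\mathcal{I}_{Z'}(H)$. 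Instead, use the map $\O_C^2\to\O_C(H)$, $(a,b)\mapsto as_2-bs_1$. It kills $\O_C(-H)$ and is nonzero, so its kernel on $R|_C$ is exactly the torsion. Hence the torsion-free part of $R|_C$ is a subsheaf of $\O_C(H)$. The induced surjection from this rank-one torsion-free sheaf onto $A$ is then an isomorphism.
\end{enumerate}
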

\begin{proof}
    The Lazarsfeld--Mukai bundle $E=E_{C,A,V}$ is a ggg vector bundle of rank two. Assume $E$ is not simple. By Lemma \ref{lemma:simplicity2}, $E$ is either a direct sum of two effective line bundles, or there exists a decomposition $c_1 = 2H+H'$ with $H>0,H'\geq 0$ and $E$ sits in a non-split short exact sequence
    $$
    0\to \O_X(H+H')\to E\xrightarrow{p} \mathcal{I}_Z(H)\to 0,
    $$
    where $\mathcal{I}_Z$ is the ideal sheaf of a $0$-dimensional local complete intersection subscheme $Z\subseteq X$. Moreover, by definition, there exists a sequence
    $$
    0\to V^{\vee}\otimes\O_X\xrightarrow{\iota} E\to i_*L\to 0,
    $$
    where $i\colon C\to X$ is the inclusion of $C$ and $L = A^{\vee}\otimes \omega_C\otimes \omega_X^{\vee}|_C$. Consider $G\coloneqq \mathrm{ker}(p\circ\iota)$. By definition, $G \subseteq V^{\vee} \otimes \O_X$, and by the universal property of $\mathrm{ker}(p)$, we obtain a map $G \to \O_X(H+H')$ such that the following square commutes
    \[\begin{tikzcd}
	   G & {\mathcal{O}_X(H+H')} \\
	   {V^{\vee}\otimes\mathcal{O}_X} & E
	   \arrow[from=1-1, to=1-2]
	   \arrow[hook, from=1-1, to=2-1]
	   \arrow[hook, from=1-2, to=2-2]
	   \arrow["\iota", hook, from=2-1, to=2-2].
    \end{tikzcd}\]
    It follows that also $G\to \O_X(H+H')$ is injective and we can complete the diagram to the following diagram with exact rows and columns:
    \[\begin{tikzcd}
	& 0 & 0 & 0 & \\
	0 & G & {\mathcal{O}_X(H+H')} & Q & 0 \\
	0 & {V^{\vee}\otimes\mathcal{O}_X} & E & {i_*L} & 0 \\
	0 & {G'} & {\mathcal{I}_Z(H)} \\
	& 0 & 0
	\arrow[from=1-2, to=2-2]
	\arrow[from=1-3, to=2-3]
	\arrow[from=1-4, to=2-4]
	\arrow[from=2-1, to=2-2]
	\arrow[from=2-2, to=2-3]
	\arrow[from=2-2, to=3-2]
	\arrow[from=2-3, to=2-4]
	\arrow[from=2-3, to=3-3]
	\arrow[from=2-4, to=2-5]
	\arrow[from=2-4, to=3-4]
	\arrow[from=3-1, to=3-2]
	\arrow["\iota", from=3-2, to=3-3]
	\arrow[from=3-2, to=4-2]
	\arrow[from=3-3, to=3-4]
	\arrow["p", from=3-3, to=4-3]
	\arrow[from=3-4, to=3-5]
	\arrow[from=4-1, to=4-2]
	\arrow[from=4-2, to=4-3]
	\arrow[from=4-2, to=5-2]
	\arrow[from=4-3, to=5-3]
    \end{tikzcd}\]
    In particular, as $C$ is integral and $L$ is a line bundle on $C$, we find that $Q$ is either $0$ or of the form $Q= i_*L'$ for some nonzero $L' \subseteq L$. If $Q=0$, then $G\simeq \O_X(H+H')$, but by the assumption $H>0, H'\geq 0$ we have $\Hom(\O_X(H+H'),\O_X^2) = 0$ and thus $G=0$, a contradiction. It follows that $Q\simeq i_*L'$ and $\det(Q) \simeq \O_X(C)$. Since $G'$ is torsion-free, $G$ is saturated inside $V^\vee \otimes \O_X$, so $G$ is locally free. As $\rk(G) = 1$, $G$ is a line bundle. Then, since $\O_X(C) \simeq \O_X(2H+H')$ and $G$ is a line bundle, it follows from the top horizontal sequence that $G \simeq \O_X(-H)$. Now, the top horizontal sequence reads
    $$
        0\to \O_X(-H)\to \O_X(H+H') \to i_*L'\to 0,
    $$
    but any such sequence is given by a regular section $s\colon \O_X\to \O_X(C)$ twisted by $\O_X(-H)$. It follows $L'\simeq\O_X(H+H')|_C$ and thus $L\geq (H+H')|_C$ as divisor classes on $C$. Using 
    $$
        L = A^{\vee} \otimes \omega_C \otimes \omega_X^{\vee}|_C = A^{\vee} \otimes \O_C(C),
    $$
    we find $A \leq C|_C-(H+H')|_C = H|_C$. This is exactly the condition $A = H|_C - B$ for some effective divisor $B$ on $C$, i.e., $(A,V)$ is surface-induced.

    \smallskip
    \noindent
    For the final statement, if $E \simeq \O_X(D_1) \oplus \O_X(D_2)$, there are two cases: either, without loss of generality, $D_1 \geq D_2$, or $D_1$ and $D_2$ are incomparable. If $D_1 \geq D_2$, then $C= D_1 + D_2 \geq 2D_2$ and in the above proof we can choose $H = D_2$ and $H'= D_1-D_2$, so that again $(A,V)$ is surface-induced. If $D_1$ and $D_2$ are incomparable, we have $\Hom(\O_X(D_1),\O_X(D_2)) = \Hom(\O_X(D_2),\O_X(D_1)) = 0$ and thus 
    \[
        \Hom(E,E)  = \Hom(\O_X(D_1),\O_X(D_1))\oplus \Hom(\O_X(D_2),\O_X(D_2)) \cong \C^2.\qedhere
    \]
    \end{proof}

\subsection{Complete basepoint-free pencils.}
  A complete pencil on a curve $C$ is a line bundle $A$ with $h^0(C,A) = 2$. We denote the associated Lazarsfeld--Mukai bundle by $E_{C,A} = E_{C,A,H^0(C,A)}$. For a basepoint-free pencil $A$ on a curve $C$, one can use the basepoint-free pencil trick to obtain a condition on when the Petri map 
  $$
  \mu_A\colon H^0(C,A)\otimes H^0(C,A^{-1}\otimes \omega_C)\to H^0(C,\omega_C)
  $$
  fails to be injective. Indeed, one has $\mathrm{ker}(\mu_A) \simeq H^0(C,A^{-2}\otimes \omega_C)$, so one obtains the condition
 \begin{equation}
 \label{equation:petrinotinjective}
 \text{Petri map not injective } \Leftrightarrow K_C\geq 2A.
 \end{equation}
 Let now $A$ be surface-induced. In particular, let $H>0$, $H'\geq 0$ such that $C = 2H+H'$ as divisors on $X$, and let $B$ be an effective divisor on $C$ such that $A= H|_C-B$. Using adjunction, condition (\ref{equation:petrinotinjective}) becomes
 $$
 (K_X+H')|_C\geq -2B.
 $$
 In particular, if $(K_X+H')|_C$ is effective, the Petri map is not injective.
 
\medskip
\noindent
 The next result shows that simplicity of the Lazarsfeld--Mukai bundle is sufficient to conclude smoothness of the associated complete pencil on a curve, if the curve is general in its linear system. Denote by $|c_1|_{\mathrm{int}}$ the locus of integral curves in $|c_1|$, and let $\mathcal{S}$ be the closed subscheme
$$
\mathcal{S}=\{(C,A)\colon A \text{ is split or surface-induced}\} \subseteq \widetilde{\mathcal{G}}^{1,\mathrm{bpf}}_d(|c_1|).
$$

\begin{theorem}
\label{theorem:GlobalCompletePencilIsSmooth}
    Let $X$ be a surface with $H^1(X,\O_X) = 0$. Let $c_1$ be a nonzero effective divisor class and let $C \in |c_1|$ be an integral curve.
    \begin{enumerate}[(i)]
        \item If $K_X \neq 0$, assume $C$ satisfies $-K_X|_C>0$. Let $A$ be a complete basepoint-free pencil of degree $d$ on $C$ that is neither split nor surface-induced, then
        $$
            (C,A,H^0(C,A))\in \mathcal{G}^1_d(|c_1|)
        $$
        is a smooth point of local dimension $\rho(g,1,d)+\dim |c_1|$, where $g \coloneqq p_a(C)$.
        \item  Assume that either $K_X = 0$ or $-K_X > 0$ and $p_a(C) \geq 2$ for $C \in |c_1|$. Then 
        $$
            \widetilde{\mathcal{G}}^{1,\mathrm{bpf}}_d(|c_1|_{\mathrm{int}})\setminus\mathcal{S}
        $$
        is smooth of dimension $\rho(g,1,d) + \dim |c_1|$ over the locus of integral curves in $|c_1|$.
    \end{enumerate}
\end{theorem}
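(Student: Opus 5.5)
The plan is to realize the point $(C,A,H^0(C,A))$ as a point of $\Syst^2_X(2,c_1,\frac12c_1^2-d)'^{\circ}$, using Proposition~\ref{prop:WRDopeninSyst} with $r=1$. Via $(C,A)\mapsto \Lambda_{C,A}=(E_{C,A},H^0(A)^\vee)$, the locus $\widetilde{\mathcal{W}}^{1,\mathrm{bpf}}_d(|c_1|_{\mathrm{int}})=\widetilde{\mathcal{G}}^{1,\mathrm{bpf}}_d(|c_1|_{\mathrm{int}})$ is an open subspace of this space of coherent systems. The virtual dimension there was computed in the proof of Proposition~\ref{prop:generalizedAproduFarkas} as $\rho(g,1,d)+\dim|c_1|$. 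It therefore suffices to show that the trace-free obstruction space $\Ext^2(\Lambda_{C,A},\Lambda_{C,A})_0$ of Proposition~\ref{smoothnescrit} vanishes. If so, the space is smooth at $\Lambda_{C,A}$ with tangent space $\Ext^1(\Lambda,\Lambda)$, whose dimension equals the virtual dimension. Note that $\Hom(\Lambda,\Lambda)\simeq\C$ by Lemma~\ref{lemma:simplecoherentsystem}, so the Euler-characteristic count of Proposition~\ref{extsequence} applies.

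\textbf{Step 1: simplicity.} Since $A$ is a basepoint-free pencil that is neither split nor surface-induced, Proposition~\ref{proposition:surfaceinducedpencils} shows that $E\coloneqq E_{C,A}$ is simple.

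\textbf{Step 2: identify the obstruction space.} By Proposition~\ref{proposition:CalculatingExt} and Serre duality, with $Q=i_*L$, we have $\Ext^2(\Lambda,\Lambda)\simeq\Ext^2(Q,E)\simeq\Hom(E,Q\otimes\omega_X)^\vee$. The Obstruction Lemma~\ref{obstructionLemma} gives $\Hom(E,Q)\simeq\Hom(E,E)\simeq\C$, generated by the surjection $f\colon E\to i_*L$. We need $h^1(E^\vee)=0$ there, which is automatic because $H^1(\O_X)=0$.

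\textbf{Step 3: kill the obstructions.}
\begin{itemize}
\item If $K_X=0$, the obstruction space is at most one-dimensional. The trace map to $H^2(\O_X)\simeq\C$ is nonzero, since it is Serre dual to $\C\to\Hom(E,E)$, $1\mapsto\mathrm{id}$. Hence the trace-free part vanishes.
\item If $K_X\neq0$ and $-K_X|_C>0$, choose $0\neq s\in H^0(C,\omega_X^\vee|_C)$. Since $L$ is a line bundle on the integral curve $C$, multiplication by $s$ gives an injection $\Hom(E,Q\otimes\omega_X)\hookrightarrow\Hom(E,Q)=\C f$. Every map in the image vanishes along the zeros of $s$. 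The zero locus of $s$ is nonempty because $\deg(-K_X|_C)>0$: a nonzero section of a line bundle of degree $0$ on an integral curve would be nowhere vanishing and force triviality, so in degree $0$ we would need $-K_X|_C\not\simeq\O_C$, which is excluded by $>0$. Such a map therefore cannot be a nonzero multiple of the surjection $f$. Hence $\Hom(E,Q\otimes\omega_X)=0$.
\end{itemize}
This proves (i). The argument mirrors the proof of Theorem~\ref{theorem:SystSmoothnessFork<r}(ii).

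For (ii), it suffices to check that every integral $C\in|c_1|$ satisfies the hypothesis of (i); then (i) applies at every point of $\widetilde{\mathcal{G}}^{1,\mathrm{bpf}}_d(|c_1|_{\mathrm{int}})\setminus\mathcal{S}$. If $K_X=0$ there is nothing to check. If $-K_X>0$, pick an effective $D\in|-K_X|$. Then $D|_C$ is effective, since $C$ is integral and either $C\not\subseteq D$, or $C$ is a component of $D$, in which case we argue with $D-C$ or move within $|-K_X|$. We then need $-K_X|_C\not\simeq\O_C$, which is where $p_a(C)\geq2$ enters. If $-K_X|_C\simeq\O_C$, then by adjunction $\omega_C\simeq\O_C(C+K_X)\simeq\O_C(C)$. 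Comparing $h^0$ through the restriction sequence $0\to\O_X(K_X)\to\O_X(K_X+C)\to\omega_C\to0$, together with the vanishing $h^0(K_X)=0$, $h^1(K_X)=0$ and the fact that $K_X+C\leq C$ is not effective in the relevant way, should contradict $g\geq2$.

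I expect this last verification, that $-K_X|_C>0$ holds for all integral $C$ when $-K_X>0$ and $g\geq2$, to be the main obstacle. It is the same claim used without detail in the proof of Proposition~\ref{prop:generalizedAproduFarkas}(ii), and I would mirror that argument. One should also note that $\mathcal{S}$ is closed, so its complement is open and the pointwise smoothness from (i) globalizes.
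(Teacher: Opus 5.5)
Your proof of (i) is correct and follows the paper's route. You embed the locus via Proposition~\ref{prop:WRDopeninSyst}, get simplicity of $E_{C,A}$ from Proposition~\ref{proposition:surfaceinducedpencils}, identify $\Ext^2(\Lambda,\Lambda)\simeq\Hom(E,Q\otimes\omega_X)^\vee$, get $\Hom(E,Q)=\C f$ from the Obstruction Lemma, and then use the trace or a section of $-K_X|_C$. For $K_X=0$ you should also say why $\Ext^2(\Lambda,\Lambda)\to\Ext^2(E,E)$ is nonzero. It is surjective by Proposition~\ref{extsequence}, since $H^2(E)\simeq H^0(E^\vee)^\vee=0$.

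The genuine gap is in (ii), at exactly the step you flagged. It cannot be closed, because the implication ``$-K_X>0$, $C$ integral, $p_a(C)\geq 2$ $\Rightarrow$ $-K_X|_C>0$'' is false. Let $X=\mathrm{Bl}_{p_1,\dots,p_{12}}\P^2$, where $\{p_1,\dots,p_{12}\}=\Gamma_4\cap\Gamma_3$ for a smooth quartic $\Gamma_4$ and a smooth cubic $\Gamma_3$ meeting transversally, with exceptional curves $R_i$. Then $H^1(\O_X)=0$ and $-K_X=3\ell-\sum R_i\sim\tilde{\Gamma}_3>0$. The curve $C=\tilde{\Gamma}_4\in|4\ell-\sum R_i|$ is smooth of genus $3$ and disjoint from $\tilde{\Gamma}_3$, so $-K_X|_C\simeq\O_C$. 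Your adjunction comparison gives no contradiction here: $K_X+C\sim\ell$ is effective and $\omega_C\simeq\O_C(1)\simeq\O_C(C)$. Only $-K_X|_C\geq0$ survives, and your ``$D-C$'' step does not prove even that, since $\O_C(C)$ need not be effective. Argue instead as follows. If $-K_X-C\geq0$, then $-(K_X+C)$ is effective. But $h^0(\O_X(K_X+C))=p_a(C)\geq2$ by the restriction sequence, since $h^0(K_X)=h^1(K_X)=0$. This forces $K_X+C\sim0$, a contradiction.

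When $-K_X|_C\simeq\O_C$, neither branch of your Step~3 works. Here $\Ext^2(\Lambda,\Lambda)\simeq\Hom(E,Q)^\vee\simeq\C$ and the trace lands in $H^2(\O_X)=0$, so $\Ext^2(\Lambda,\Lambda)_0\simeq\C$. This really occurs in the example for $A=\O_C(1)(-p)$, a complete basepoint-free $g^1_3$.
\begin{itemize}
\item $A$ is not surface-induced: $C$ is nef, so $2H\leq C$ forces $H.C\leq 2<3$.
\item $A$ is not split: splitting gives nonzero maps $A\to\O_X(D_i)|_C$ for $i=1,2$, so $3\leq\min_i D_i.C\leq\frac12 C^2=2$.
\end{itemize}
The paper's proof asserts the same implication as ``automatic'', so mirroring it does not fill the hole.

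To repair (ii), you have two options.
\begin{itemize}
\item Assume $-K_X|_C>0$ directly. For instance, $-K_X.c_1\geq1$ suffices together with the effectivity shown above.
\item Treat $-K_X|_C\simeq\O_C$ separately. There $h^1(\O_X(c_1))=h^0(\O_C(K_X))=1$, so $\rho(g,1,d)+\dim|c_1|$ exceeds $\mathrm{vdim}$ by one and equals $\ext^1(\Lambda,\Lambda)$. Smoothness then follows once you know every component through the point has dimension at least $\rho(g,1,d)+\dim|c_1|$, e.g.\ from the determinantal structure over the relative compactified Jacobian.
\end{itemize}
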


\begin{proof}
The complement $\widetilde{\mathcal{G}}^{1,\mathrm{bpf}}_d(|c_1|_{\mathrm{int}})\setminus\mathcal{S}$ is isomorphic to an open in $\Syst^2(0,c_1,\ch_2)^{\circ}$ via the map 
$$
(C,A) \mapsto (E_{C,A},H^0(C,A)^{\vee}).
$$
Moreover, by definition of $\mathcal{S}$, we can apply Proposition~\ref{proposition:surfaceinducedpencils} to see that $E_{C,A}$ is simple. Note that, if $-K_X > 0$ and $C$ integral with $p_a(C) \geq 2$, then $-K_X|_C > 0$ is automatic. With this observation, the Obstruction Lemma~\ref{obstructionLemma} and the simplicity of $E_{C,A}$ imply, similarly as in the proof of Theorem~\ref{theorem:SystSmoothnessFork<r}, that $\mathrm{Syst}^2(0,c_1,\ch_2)^{\circ}$ is smooth of the expected dimension at $(E_{C,A},V^{\vee})$, so $\mathcal{G}^{1}_d(|c_1|)$ is smooth of the expected dimension at $(C,A)$. A calculation shows
$$
\mathrm{vdim}(\mathrm{Syst}^2(0,c_1,\ch_2)^{\circ}) = \rho(g,1,d)+\dim |c_1|,
$$
using the formulas given in Sections~\ref{coherentsystems} and~\ref{BNTheorem}.
\end{proof}

On a K3 surface, we obtain a classification of complete and base-point free pencils that do not define smooth points of the expected dimension in their Brill--Noether loci:

\begin{corollary}[Singular pencils on curves on K3 surfaces]
    Let $X$ be a K3 surface and let $C \in |c_1|_{\mathrm{int}}$ be general. Assume the linear system $|c_1|$ has no $1$-dimensional base components. Let $A$ be a complete basepoint-free pencil on $C$. Then the following are equivalent:
    \begin{enumerate}[(i)]
        \item $W^1_d(C)$ is singular or has larger than the expected local dimension at $A$;
        \item The Lazarsfeld--Mukai bundle $E_{C,A}$ is not simple;
        \item There exists a divisor class $0 < D < [C]$ on $X$, such that
        $$
        A\leq D|_C \text{ and } A\leq (C-D)|_C. 
        $$
    \end{enumerate}
\end{corollary}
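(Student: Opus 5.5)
The plan is to prove the cycle (i)$\Rightarrow$(ii)$\Rightarrow$(iii)$\Rightarrow$(i). Throughout, $K_X=0$, so $\omega_C\simeq\O_C(C)$. Since $|c_1|$ has no $1$-dimensional base components, Bertini makes the general $C\in|c_1|_{\mathrm{int}}$ smooth, so $A$ is a line bundle. The Lazarsfeld--Mukai bundle $E=E_{C,A}$ has rank~$2$, and $L=A^\vee\otimes\omega_C$.

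\emph{(i)$\Rightarrow$(ii).} I prove the contrapositive: if $E_{C,A}$ is simple, then $W^1_d(C)$ is smooth of dimension $\rho(g,1,d)$ at $A$.
\begin{enumerate}[(a)]
\item The proof of Theorem~\ref{theorem:GlobalCompletePencilIsSmooth} uses the hypotheses ``not split, not surface-induced'' only to get simplicity of $E_{C,A}$ via Proposition~\ref{proposition:surfaceinducedpencils}. Combined with the Obstruction Lemma~\ref{obstructionLemma}, and using that $\Ext^2(\Lambda,\Lambda)_0$ vanishes for $K_X=0$ (the trace argument), this gives the following. The open locus $\mathcal{U}\subseteq\widetilde{\mathcal{W}}^{1,\mathrm{bpf}}_d(|c_1|_{\mathrm{int}})$ of pairs $(C',A')$ with $E_{C',A'}$ simple is smooth of dimension $\rho(g,1,d)+\dim|c_1|$.
\item I apply generic smoothness to the projection $\mathcal{U}\to|c_1|$. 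For general $C$, the fiber $\mathcal{U}_C$ is then smooth of dimension $\rho(g,1,d)$ (or empty).
\item Simplicity is open, so $\mathcal{U}_C$ is an open neighbourhood of $A$ in $\widetilde{W}^1_d(C)$. Complete basepoint-free pencils are open in $W^1_d(C)$, so $\mathcal{U}_C$ is also open in $W^1_d(C)$ near $A$. Hence $W^1_d(C)$ is smooth of the expected dimension at $A$.
\end{enumerate}
The main subtlety is that ``general $C$'' must be uniform in $A$. This is handled because the genericity statement is about the fibers of the single morphism $\mathcal{U}\to|c_1|$ and does not depend on $A$. If $\mathcal{U}$ has several components, I take $C$ outside the union of the bad loci, one for each component.

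\emph{(ii)$\Rightarrow$(iii).} Assume $E$ is not simple. By Lemma~\ref{lemma:simplicity2} (note $H^0(E^\vee)=0$), $E$ is of one of two types, and in each case I produce the divisor $D$ required in (iii).
\begin{enumerate}[(a)]
\item $E$ is a non-split extension of type (ii) in that lemma. The diagram chase in the proof of Proposition~\ref{proposition:surfaceinducedpencils} gives $A\leq H|_C$, where $C=2H+H'$ with $H>0$ and $H'\geq0$. Take $D=H$. Then $0<D<[C]$, $A\leq D|_C$, and $A\leq H|_C\leq (H+H')|_C=(C-D)|_C$.
\item $E\simeq\O_X(D_1)\oplus\O_X(D_2)$ with $D_1,D_2>0$. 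I repeat the same diagram chase for each projection $p_j\colon E\to\O_X(D_j)$. The composite $V^\vee\otimes\O_X\to\O_X(D_j)$ has kernel $\O_X(-D_{3-j})$ by determinants. The top row then becomes $0\to\O_X(-D_{3-j})\to\O_X(D_{3-j}')\to i_*L'\to0$, which forces $L\geq D_{3-j}|_C$ and hence $A\leq D_j|_C$. So $D=D_1$ satisfies (iii), with $C-D=D_2$.
\end{enumerate}

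\emph{(iii)$\Rightarrow$(i).} Adding the two inequalities in (iii) gives $2A\leq D|_C+(C-D)|_C=C|_C=K_C$. By \eqref{equation:petrinotinjective}, the Petri map $\mu_A$ is then not injective. The Zariski tangent space of $W^1_d(C)$ at a complete pencil $A$ has dimension $\rho(g,1,d)+\dim\ker\mu_A$, which is strictly larger than $\rho(g,1,d)$. Therefore $W^1_d(C)$ is either singular at $A$ or has local dimension larger than expected, which is (i).

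The step I expect to need the most care is (b) of (ii)$\Rightarrow$(iii), namely checking that the diagram chase of Proposition~\ref{proposition:surfaceinducedpencils} goes through for each summand when $D_1$ and $D_2$ are incomparable. The step with the most genericity bookkeeping is the uniformity argument in (i)$\Rightarrow$(ii).
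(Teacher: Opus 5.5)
Your proof is correct and follows essentially the paper's argument, organized as a single cycle. You use the Obstruction Lemma with generic smoothness for (i)$\Rightarrow$(ii), Lemma~\ref{lemma:simplicity2} with the diagram chase of Proposition~\ref{proposition:surfaceinducedpencils} for (ii)$\Rightarrow$(iii), and the basepoint-free pencil trick for (iii)$\Rightarrow$(i); the only difference is that in the split case you rerun that diagram chase for each projection $p_j$, whereas the paper restricts the Lazarsfeld--Mukai sequence to $C$. There is one small index slip in that step, which does not affect the conclusion: the kernel of $V^\vee\otimes\O_X\to\O_X(D_j)$ is $\O_X(-D_j)$ (its class is $D_{3-j}-[C]$), so the top row reads $0\to\O_X(-D_j)\to\O_X(D_{3-j})\to i_*L'\to 0$, and this gives $L'\simeq\O_X(D_{3-j})|_C$ and hence your $A\le D_j|_C$.
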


\begin{proof}
    Assume first that $A$ is not split.
    By Theorem~\ref{theorem:GlobalCompletePencilIsSmooth} and generic smoothness, $(i)$ is equivalent to $A$ being surface-induced. Write $A= H|_C-B$ with $C =  2H+H'$ and $H> 0$, $H' \geq 0$ on $X$, $B \geq 0$ on $C$. Then, as $|c_1|$ has no $1$-dimensional base components, we have $H|_C\leq (H+H')|_C = (C-H)|_C$. Therefore,
    $$
    A\leq H|_C \text{ and } A\leq (C-H)|_C.
    $$
    Conversely, if $(iii)$ is satisfied, we have $2A \leq D|_C + (C-D)|_C = C|_C = K_C$, and by the basepoint-free pencil trick, $(i)$ holds.
    
    Next, by the Obstruction Lemma~\ref{obstructionLemma}, if $A \in W^1_d(C)$ is not a smooth point of the expected local dimension, then $E_{C,A}$ is not simple. Conversely, assume $E_{C,A}$ is not simple, i.e., $(ii)$. Since we assume that $E_{C,A}$ is not a direct sum of line bundles, $A$ is surface-induced by Proposition~\ref{proposition:surfaceinducedpencils}, so $(i)$ follows as above.
    
    Now, we assume $A$ is split, i.e., $E=E_{C,A} \simeq \O_X(D_1)\oplus \O_X(D_2)$ for two nonzero effective divisor classes $D_1,D_2$ such that $D_1+D_2 = [C]$. We only need to show that in this case, all three statements hold. For this, consider the restriction to $C$ of the sequence defining $E$,
    $$
    0\to K\to E|_C\to L\to 0.
    $$
    Here, $K$ is a line bundle and $L= A^{\vee}\otimes \omega_C$. We find  $\mathrm{det}(E|_C) \simeq \O_C(C) \simeq \omega_C$. It follows that $K \simeq A$. Since $E \simeq \O_X(D_1) \oplus \O_X(D_2)$, it follows from the sequence that $A \leq D_1|_C$ and $A \leq D_2|_C$ with $D_1+D_2 = [C]$. Indeed, the maps $A \to \O_X(D_i)|_C$ are both nonzero as otherwise, without loss of generality, $A \simeq \O_X(D_1)|_C$ and $L \simeq \O_X(D_2)|_C$. In this case, the composition 
    $$
    \O_X(D_1)\hookrightarrow E \to i_\ast L
    $$
    is zero, so the universal property of the kernel would give a map $\O_X(D_1)\to \O_X^2$ which is absurd since $D_1 > 0$. Hence, $(iii)$ is satisfied which again, via the basepoint-free pencil trick, implies $(i)$. Finally, as $E$ is a direct sum, $(ii)$ is trivially satisfied.
\end{proof}

\begin{corollary}
\label{cor:degreeconditionspencil}
    Let $X$ be a smooth projective surface with $H^1(X,\O_X) = 0$ and $-K_X\geq 0$. Let $C \in |c_1|$ be a general integral curve with $g \coloneqq p_a(C) \geq 4$. Then, if 
    $$
    3d\geq 2g+1-\frac{1}{2}C.K_X,
    $$
    the Brill--Noether locus of basepoint-free complete pencils $\widetilde{W}^{1,\mathrm{bpf}}_d(C)$ is of the expected dimension $\rho(g,1,d)$, or empty.
\end{corollary}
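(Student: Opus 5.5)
Our plan is to combine Theorem~\ref{theorem:GlobalCompletePencilIsSmooth} with a naive dimension count for the bad locus $\mathcal{S}$. Since $-K_X\geq 0$, either $K_X=0$ or $-K_X>0$. Together with $g\geq 4\geq 2$ and $C$ integral, this gives $-K_X|_C>0$ when $K_X\neq 0$. So every complete basepoint-free pencil on $C$ that is neither split nor surface-induced is a smooth point of $\widetilde{\mathcal{G}}^{1,\mathrm{bpf}}_d(|c_1|_{\mathrm{int}})$ of local dimension $\rho(g,1,d)+\dim|c_1|$. For general $C\in|c_1|_{\mathrm{int}}$, generic smoothness of the projection to $|c_1|$ then shows that $\widetilde{W}^{1,\mathrm{bpf}}_d(C)\setminus\mathcal{S}_C$ is smooth of dimension $\rho(g,1,d)$, or empty. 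It therefore suffices to show that, under $3d\geq 2g+1-\frac12 C.K_X$, the fiber $\mathcal{S}_C$ has dimension at most $\rho(g,1,d)=2d-g-2$. The locus $\mathcal{S}_C$ does not have to be empty for this.

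Next we bound $\mathcal{S}_C$. Proposition~\ref{proposition:surfaceinducedpencils} shows that a split pencil $E_{C,A}\simeq\O_X(D_1)\oplus\O_X(D_2)$ is either surface-induced or has incomparable $D_1,D_2$. In the incomparable case, restricting $0\to K\to E|_C\to L\to 0$ as in the K3 corollary gives $A\leq D_i|_C$, and since $\Pic(X)$ is discrete there are only countably many such $D_i$. So both cases reduce to pencils $A=H|_C-B$ with $H$ from a countable set of classes satisfying $H>0$ and $2H\leq C$ (or $D_1,D_2$ with $D_1+D_2=C$). For fixed $H$, the pencil $A$ is determined by the effective divisor $B$ on $C$ of degree $b=H.C-d$ such that $H|_C-B$ still has two sections. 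This family has dimension at most $\dim|H|_C|-1$ and at most $b$. Counting sections instead, the pencils of this shape form a family of dimension at most $h^0(C,H|_C)-2$.

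To estimate $h^0(C,H|_C)$, note that $C=2H+H'$ gives $\deg H|_C=H.C\leq \frac12 C^2$. Adjunction gives $C^2=2g-2-C.K_X$. By Clifford's inequality on $C$ we get $h^0(H|_C)-2\leq \frac12 H.C-1$, so $\dim\mathcal{S}_C\leq \frac14 C^2-1=\frac{g-1}{2}-\frac14C.K_X-1$. Comparing this with $2d-g-2$ is exactly where the hypothesis enters: it forces $d$ large, giving $2d-g-2\geq$ the bound above. An alternative we will also track is $\dim\leq b=H.C-d\leq \frac12C^2-d$; combining the two estimates appropriately yields the factor $3d$ in the hypothesis.

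We expect the main obstacle to be this numerical comparison. One must choose between the Clifford-type bound $\frac12 H.C-1$ and the bound $H.C-d$ (taking the minimum, or a convex combination with weights $\frac{2}{3},\frac{1}{3}$, to produce $3d$), and check that the inequality $3d\geq 2g+1-\frac12C.K_X$ makes $\dim\mathcal{S}_C\leq\rho(g,1,d)$, with strictness where needed. The condition $g\geq 4$ is used to exclude small-genus edge cases in Clifford's inequality, such as $H|_C$ being trivial or a multiple of a $g^1_2$.
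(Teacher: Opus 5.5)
Your reduction matches the paper's. Theorem~\ref{theorem:GlobalCompletePencilIsSmooth} plus generic smoothness reduces everything to showing $\dim\mathcal{S}_C\le\rho(g,1,d)=2d-g-2$. Your estimate $\dim\le b=H.C-d\le\frac{1}{2}C^2-d$ is exactly the bound the paper uses. The gap is in the final numerical step, which you leave open and misdiagnose.

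The Clifford-type estimate $\dim\mathcal{S}_C\le\frac{1}{4}C^2-1$, which you call ``exactly where the hypothesis enters'', does not suffice. Using $C^2=2g-2-C.K_X$, the inequality $\frac{1}{4}C^2-1\le 2d-g-2$ is equivalent to $4d\ge 3g+1-\frac{1}{2}C.K_X$. The hypothesis does not imply this. For example, with $K_X=0$, $g=10$, $d=7$ the hypothesis holds with equality and $\rho=2$, but $\frac{1}{4}C^2-1=\frac{7}{2}$. The supporting steps are also weak: the bound $h^0(C,H|_C)-2$ on the family is asserted without argument, and Clifford's inequality is only available when $H|_C$ is special, which can fail when $-C.K_X$ is large. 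The $\frac{2}{3},\frac{1}{3}$ convex combination does not help either: it yields the condition $7d\ge 5g+2-C.K_X$, not the stated one.

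The missing observation is that no combination is needed. Substituting $\frac{1}{2}C^2=g-1-\frac{1}{2}C.K_X$, the single bound $\frac{1}{2}C^2-d\le 2d-g-2$ is literally equivalent to $3d\ge 2g+1-\frac{1}{2}C.K_X$. One $d$ comes from $\deg B$ and the other $2d$ from $\rho$, so you can drop Clifford entirely.

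The condition $g\ge 4$ is also not about Clifford edge cases. It gives $p_a(C)\ge 2$ for Theorem~\ref{theorem:GlobalCompletePencilIsSmooth}(ii). Together with the hypothesis and $-C.K_X\ge 0$, it also gives $\rho\ge\frac{g-4}{3}\ge 0$.

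Your treatment of split pencils differs from the paper's. The paper argues there are only finitely many split pencils, using finitely many decompositions $D_1+D_2=C$ and injectivity of $\phi_v$ over complete pencils. Your route of absorbing split pencils into the same degree count via $A\le D_i|_C$ also works, with two points made explicit:
\begin{enumerate}[(i)]
    \item choose the $i$ with $D_i.C\le\frac{1}{2}C^2$, which is possible since $D_1.C+D_2.C=C^2$;
    \item note that the identification $K\simeq A$ from the K3 corollary holds in general, because $L\simeq A^\vee\otimes\O_C(C)$ by adjunction and $\det(E|_C)\simeq\O_C(C)$.
\end{enumerate}
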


\begin{proof}
    First of all, the assumptions imply $\rho(g,1,d) \geq 0$. By Theorem~\ref{theorem:GlobalCompletePencilIsSmooth}, the statement is true away from the closed sublocus 
    $$
    \mathcal{S}_C=\{A\in \widetilde{W}^{1,\mathrm{bpf}}_d(C)\colon A \text{ is split or surface-induced}\}.
    $$
    Hence, it suffices to show $\dim \mathcal{S}_C \leq \rho(g,1,d)$. For this, we first observe that there are only finitely many effective divisors $D_1,D_2$ satisfying $D_1+D_2 = C$, so the locus of decomposable Lazarsfeld--Mukai bundles is finite. Since the quotient map $\phi_v$ is one-to-one over the locus of complete pencils, the locus of split pencils in $\widetilde{W}^1_d(C)$ is also finite. Thus we only need to bound the dimension of the locus of surface-induced pencils. So let $H>0$, $B\geq 0$ such that $2H \leq C$ and $A= H|_C-B$. For fixed $H$, the locus of these pencils is parametrized by $B$, which is an effective divisor on $C$ of degree $\mathrm{deg}(B) = C.H-d$. The parameter space for effective divisors of degree $\deg(B)$ on $C$ is $\mathrm{Sym}^{\deg B}(C)$, so the dimension of the locus of $B$ for a fixed $H$ is bounded above by $C.H-d$. As there are only finitely many effective $H$ satisfying $2H\leq C$, we get $\dim \mathcal{S}_C \leq \mathrm{max}_H \{C.H-d\}$. Using $2H \leq C$ again, this implies $\dim \mathcal{S}_C \leq \frac{1}{2}C^2-d$. The condition $\dim \mathcal{S}_C \leq \rho(g,1,d)$ is thus implied by
    \begin{align*}
        &\frac{1}{2}C^2-d \leq 2d-2-g \\
        {}\Longleftrightarrow{}\quad &g-1-\frac{1}{2}C.K_X\leq 3d-2-g \\
        {}\Longleftrightarrow{}\quad &3d \geq2g+1-\frac{1}{2}C.K_X.
        \qedhere
    \end{align*}
\end{proof}

Next, we give numerical conditions under which a pencil $A$ cannot be split. In that case, $\mathcal{S}$ is exactly the locus of surface-induced pencils.
 
\begin{lemma}
    Let $X$ be a surface with $H^1(X,\O_X) = 0$. Assume $H^1(X,\O_X(c_1)) = 0$ and that $|c_1|$ contains a smooth curve. Let $C \in |c_1|$ be general and assume $-K_X|_C>0$. Let $A$ be a complete basepoint-free pencil on $C$ of degree $d$. Assume that at least one of the following conditions is satisfied for each decomposition $c_1 = D_1 + D_2$ with $D_1$ and $D_2$ effective and nonzero divisor classes:
\begin{enumerate}[(i)]
        \item $d\neq D_1.D_2$;
        \item $2d> g-C.K_X-2+3\chi(\O_X)$;
        \item $(D_1-D_2)^2< C.K_X-6\chi(\O_X)+2$.
\end{enumerate}
    Then $A$ is not split.
\end{lemma}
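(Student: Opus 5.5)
The first step is to show that conditions (ii) and (iii) are equivalent to each other and to make (i) immediate; after that the only real work is one inequality. Assume for contradiction that $A$ is split, so $E \coloneqq E_{C,A} \simeq \O_X(D_1)\oplus\O_X(D_2)$ with $D_1,D_2>0$ and $D_1+D_2=c_1$. Since $\ch(E_{C,A}) = (2,c_1,\frac12 c_1^2-d)$, we get $c_2(E)=d$. Splitness gives $c_2(E)=D_1.D_2$, so $d=D_1.D_2$, which contradicts (i) directly.

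For the other two conditions I use $d=D_1.D_2$ to write
$$(D_1-D_2)^2 = c_1^2-4D_1.D_2 = c_1^2-4d .$$
Adjunction gives $c_1^2 = 2g-2-C.K_X$. Then the failure of (iii) reads $2g-2-C.K_X-4d \geq C.K_X-6\chi(\O_X)+2$, which is $2d \leq g-C.K_X-2+3\chi(\O_X)$, i.e. the failure of (ii). So it suffices to prove one numerical inequality for every split complete basepoint-free pencil on a general $C$:
$$(D_1-D_2)^2 \geq C.K_X-6\chi(\O_X)+2, \qquad\text{equivalently}\qquad 2d\leq g-C.K_X-2+3\chi(\O_X).$$

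I plan to obtain this inequality from a parameter count that uses the generality of $C$. There are only finitely many decompositions $c_1=D_1+D_2$, and for a fixed one $E$ is rigid, since $\O_X(D_i)$ are unique because $H^1(\O_X)=0$. Reversing the Lazarsfeld--Mukai construction, every split pencil arises from a point $V^\vee\in\Gr(2,H^0(E))$ whose evaluation map is injective with cokernel supported on a curve of $|c_1|$. Since $C$ is general and split pencils exist on it, the map $\Gr(2,H^0(E))^{\circ}\to|c_1|$ must be dominant for one of the finitely many $E$. This gives
$$2\,(h^0(E)-2)\geq \dim|c_1| = \chi(\O_X(c_1))-1 .$$
Here I use $H^1(\O_X(c_1))=0$, and $H^2(\O_X(c_1))=H^0(\O_X(K_X-c_1))^\vee=0$, which follows from $-K_X|_C>0$.

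Next I need an upper bound on $h^0(E)$. Since $E$ is ggg, simple-free and $h^0(E^\vee)=0$, the remark after Remark~\ref{rmk:splittingOffTrivialSummand} gives $H^2(E)=0$ when $h^0(\omega_X)=0$; this holds whenever $-K_X|_C>0$ with $K_X\neq 0$. Then
$$h^0(E)=\chi(E)+h^1(E), \qquad \chi(E) = 2\chi(\O_X)+\tfrac12(D_1^2+D_2^2)-\tfrac12 c_1.K_X .$$
Substituting $D_1^2+D_2^2 = \frac12\big(c_1^2+(D_1-D_2)^2\big)$ and $\chi(\O_X(c_1)) = \chi(\O_X)+\frac12(c_1^2-c_1.K_X)$ into the dominance inequality should produce exactly
$$(D_1-D_2)^2\geq C.K_X-6\chi(\O_X)+2,$$
provided $h^1(E)=0$, or at least provided the $h^1$ contribution can be absorbed. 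I have not carried out this bookkeeping yet; it is only routine once $h^1$ is handled.

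The main obstacle is controlling $h^1(E)=h^1(\O_X(D_1))+h^1(\O_X(D_2))$. I would first try the Lazarsfeld--Mukai sequence $0\to\O_X^2\to E\to i_*L\to0$. Together with $H^1(\O_X)=0$ and $H^2(\O_X)=0$, it gives $h^1(E)=h^1(C,L)$ with $L=A^\vee\otimes\omega_C\otimes\omega_X^\vee|_C$. By Serre duality, $h^1(C,L)=h^0(C,A\otimes\omega_X|_C)$, and this vanishes because $-K_X|_C>0$ and $h^0(A)=2$ with $A$ basepoint-free: a nonzero section of $A\otimes\omega_X|_C$ would force $h^0(A)$ to exceed the count forced by completeness, so I would try to make this precise via $A\otimes\omega_X|_C\hookrightarrow A$ and the Petri/basepoint-free pencil trick. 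If that argument fails, the fallback is to keep $h^1(E)$ in the inequality and check that it only weakens the bound in the harmless direction. The case $K_X=0$ is handled by the same computation with $h^2(E)=0$, which follows from the absence of a trivial summand.
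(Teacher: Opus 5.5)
Your reduction is correct and simplifies the paper's argument. Splitness gives $d=c_2(E)=D_1.D_2$, which rules out (i). Then $(D_1-D_2)^2=c_1^2-4d$ together with adjunction shows that (ii) and (iii) are the same inequality. The paper instead derives them from two separate estimates of $h^0(E)$, which coincide once $d=D_1.D_2$. Your dimension count over the finitely many decompositions, comparing $2(h^0(E)-2)$ with $\dim|c_1|$, is exactly the paper's argument.

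The gap is in the step you flagged, bounding $h^1(E)$, and both of your ways through it fail.

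\emph{The vanishing is false.} Write $\omega_X^\vee|_C\simeq\O_C(D)$ with $D>0$. A section of $A\otimes\omega_X|_C=A(-D)$ is just a section of $A$ vanishing on $D$, and completeness does not forbid this. For example, if $-K_X.C=1$, then $D=p$ is a point, and $h^0(A(-p))=h^0(A)-1=1$ for every basepoint-free complete pencil $A$.

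\emph{The fallback goes the wrong way.} Since $h^0(E)=\chi(E)+h^1(E)-h^2(E)$, a nonzero $h^1(E)$ enlarges $h^0(E)$. That makes the dominance inequality $2(h^0(E)-2)\geq\dim|c_1|$ easier to satisfy, so you cannot simply carry $h^1(E)$ along; you need an upper bound on it.

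The missing observation is $h^1(E)\le 1$. From $0\to\O_X^2\to E\to i_*L\to 0$ and $H^1(\O_X)=0$ you get $H^1(E)\hookrightarrow H^1(C,L)\simeq H^0(C,A(-D))^\vee$. Because $A$ is basepoint-free, the sections of $A$ vanishing on the nonzero effective divisor $D$ form a proper subspace of the $2$-dimensional space $H^0(A)$. Hence $h^0(E)\le\chi(E)+1$; no vanishing of $h^2(E)$ is needed. The bookkeeping then gives exactly $(D_1-D_2)^2\geq C.K_X-6\chi(\O_X)+2$, which contradicts (iii), equivalently (ii).

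Your claim that $h^1(E)=0$ would produce this inequality ``exactly'' is also off. With $h^1(E)=0$ the same computation gives $C.K_X-6\chi(\O_X)+6$. The constant $+2$ in the statement is precisely the footprint of $h^1(E)\le 1$, which signals that $h^1(E)$ should not be expected to vanish. Finally, your remark about the case $K_X=0$ is moot, since the hypothesis $-K_X|_C>0$ excludes it.
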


\begin{proof}
    Assume $A$ is split, i.e., $E=E_{C,A}\simeq \O_X(D_1)\oplus \O_X(D_2)$ for effective divisor classes $D_1,D_2$ satisfying $D_1+D_2  = c_1$. Then $c_2(E) = \deg A = D_1.D_2$. This is $(i)$.
    For the second condition we notice the following: Given a ggg vector bundle $E$ of rank two on the surface, if $2h^0(E) -4 < \dim |c_1|$, then the general curve $C\in |c_1|$ does not occur as the vanishing of the determinant of a map $\O_X^2\to E$. Now as there are only finitely many decompositions $c_1 = D_1+D_2$ into effective classes, we find the condition
    \begin{equation}
    \label{eqn:dimensioncount}
    2  h^0(E) - 4 < \dim |c_1|.
    \end{equation}
    Indeed, if this condition is satisfied, the general curve $C\in |c_1|$ does not admit a complete basepoint-free pencil that is split. By assumption we have 
    $$
    \dim |c_1| = \chi(\O_X)+\frac{1}{2}C^2-\frac{1}{2}C.K_X-1.
    $$
    Also, $h^1(X,E) = h^0(X,A\otimes\omega_X|_C)$. As $A$ is a complete basepoint-free pencil and $\omega_X^{\vee}|_C$ is nontrivial effective, we find $h^0(X,A\otimes\omega_X|_C) \leq 1$. There are two ways to calculate $h^0(E)$, giving rise to conditions $(ii)$ and $(iii)$, respectively.
    First, we have $h^0(E)  = \chi(E) +h^1(X,E) \leq \chi(E) +1$. Condition (\ref{eqn:dimensioncount}) is thus implied by
    $$
    2\chi(E)-2 < \chi(\O_X) +\frac{1}{2}C^2-\frac{1}{2}C.K_X-1.
    $$
    The statement follows from a computation using $\chi(E) = 2\chi(\O_X) + \chi(L)$, where $L\coloneqq A^{\vee}\otimes\omega_C\otimes\omega_X^{\vee}|_C$ and $\deg(L) = C^2 - \deg(A)$.
    Second, as $A$ is split, we have $h^1(E) = h^1(\O_X(D_1)) + h^1(\O_X(D_2))$. Using $h^1(E) \leq 1$, we can assume without loss of generality $h^1(\O_X(D_1)) = 0$ and $h^1(\O_X(D_2)) = 1$. Riemann--Roch implies
    $$
    2h^0(E) -4 =  4\chi(\O_X) +2+D_1^2+D_2^2-C.K_X-4.
    $$
    A computation reduces (\ref{eqn:dimensioncount}) to $(D_1-D_2)^2 < C.K_X -6\chi(\O_X)+2$.
\end{proof}

\subsection{Non-complete basepoint-free pencils.}
We now discuss the case of non-complete basepoint-free pencils $(A,V)\in G^{1,\mathrm{bpf}}_d(C)$. Consider the exact sequence
$$
0\to E^{\vee}\to V\otimes\O_X\to i_*A\to 0.
$$
We find $h^1(X,E^{\vee}) = h^0(C,A)-\dim V$, which is zero only for complete linear systems. In general, the dual sequence, after tensoring with $E^{\vee}$, yields
$$
\hom(E,i_*L) \leq \hom(E,E)+\dim V(h^0(A)-\dim V).
$$
From the deformation theory of Section~\ref{coherentsystems}, we deduce that the dimension of the trace-free obstruction space for $\Lambda =(E,V^{\vee})$ is bounded by 
$$
\dim \Ext^2(\Lambda,\Lambda)_0 \leq \hom(E,E) -1 + \dim V (h^0(A) - \dim V)
$$
if either $K_X = 0$ or $-K_X|_C > 0$.
\noindent
We consider the closed subscheme
    $$
    \mathcal{S}=\{(C,A,V)\colon (A,V) \text{ is split or surface-induced}\}\subset \mathcal{G}^{1,\mathrm{bpf}}_d(|c_1|)
    $$
    of the relative Brill--Noether locus of possibly non-complete basepoint-free pencils.
    
\begin{proposition} 
    Let $X$ be a surface with $H^1(X,\O_X)=0$. Let $g \coloneqq p_a(C) \geq 2$ for $C \in |c_1|$. If $-K_X$ is not effective, assume also $-K_X.c_1 \geq \max(g,1)$. Consider the forgetful map $\pi$ given by $(C,A,V)\mapsto (C,A)$. Then
    $$
\pi\L(\mathcal{G}^{1,\mathrm{bpf}}_d(|c_1|_{\mathrm{int}})\setminus\mathcal{S}\R) = \L\{(C,A)\colon \exists V\subset H^0(C,A) \text{ s.t. } (C,A,V) \in\mathcal{G}^{1,\mathrm{bpf}}_d(|c_1|_{\mathrm{int}})\setminus\mathcal{S}\R\}
    $$
    is equi-dimensional of dimension $\rho(g,1,d)+\dim |c_1|$, or empty. In particular, if $C \in |c_1|_{\mathrm{int}}$ is general, denote by $\mathcal{S}_C$ the fiber of $\mathcal{S}$ over $C$. Then 
    $$
    \pi\L(G^{1,\mathrm{bpf}}_d(C)\setminus \mathcal{S}_C\R)
    $$
    is equi-dimensional of dimension $\rho(g,1,d)$, or empty.
\end{proposition}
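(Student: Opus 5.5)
We work on $\mathcal{G}^{1,\mathrm{bpf}}_d(|c_1|_{\mathrm{int}})\setminus\mathcal{S}$, identified as in Proposition~\ref{prop:WRDopeninSyst} with an open subspace of $\Syst^2_X(2,c_1,\frac{1}{2}c_1^2-d)^{\circ}$ via $(C,A,V)\mapsto \Lambda=(E_{C,A,V},V^{\vee})$. The inverse is the quotient map $\Lambda\mapsto \coker(V^\vee\otimes\O_X\to E)$, followed by $Q\mapsto Q^\vee\otimes\omega_C\otimes\omega_X^\vee|_C$ on the curve $C=\supp(Q)$. Since $(A,V)$ is neither split nor surface-induced, Proposition~\ref{proposition:surfaceinducedpencils} gives that $E=E_{C,A,V}$ is simple, so $\Lambda$ lies in $\Syst^2_X(v)^{\circ}$ and Proposition~\ref{smoothnescrit} applies.

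\textbf{Step 1: smoothness of $\Syst^2$.} The hypotheses ($-K_X\geq 0$ together with $g\geq 2$ and $C$ integral, or else Lemma~\ref{lem:numericsimplyupgrade}) give $K_X=0$ or $-K_X|_C>0$. By Proposition~\ref{proposition:CalculatingExt}, $\Ext^2(\Lambda,\Lambda)\simeq\Ext^2(Q,E)\simeq\Hom(E,Q\otimes\omega_X)^\vee$ with $Q=i_*L$. I would redo the argument of Theorem~\ref{theorem:SystSmoothnessFork=r} with the tautological extension $G$ of $E$ by $\O_X\otimes\Ext^1(E,\O_X)^\vee$. Since $h^1(E^\vee)=h^0(A)-2$ need not vanish, the Obstruction Lemma cannot be applied to $E$ directly. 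The extension $G$ is built precisely to kill $\Ext^1(G,\O_X)$ and hence $H^1(G^\vee)$. The steps are:
\begin{enumerate}[(a)]
\item Show $0\to\O_X^{i}\to G\to Q\to 0$ with $\Hom(G,\O_X)=0$, using $\Hom(E,\O_X)=0$, which holds since $E$ is simple and ggg.
\item Compute $\Hom(G,G)\simeq\Hom(G,Q)$.
\item Embed $\Hom(E,Q\otimes\omega_X)\subseteq\Hom(G,Q\otimes\omega_X)$.
\item Conclude $\Ext^2(\Lambda,\Lambda)_0=0$ by the trace or non-surjectivity argument, provided $\Hom(G,Q)=\C\cdot f$, i.e., $G$ is simple.
\end{enumerate}

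\textbf{Main obstacle: simplicity of $G$.} Simplicity Lemma~\ref{simplicitylemma1} requires $c_1$ indecomposable, which we do not assume. I would instead try Lemma~\ref{lemma:intermediatesimplicity}(ii) or a Donagi--Morrison type analysis (Lemma~\ref{lemma:simplicity2}) adapted to $G$. The point would be that a destabilizing decomposition of $G$ descends to one of $E$ and forces $(A,V)$ to be split or surface-induced, as in the proof of Proposition~\ref{proposition:surfaceinducedpencils}.

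If this fails, the fallback is to bound the obstructions by $\hom(E,E)-1+2(h^0(A)-2)=2(h^0(A)-2)$ and then compensate by the fiber dimension of $\pi$, which is $\dim\Gr(2,h^0(A))=2(h^0(A)-2)$. The idea is to stratify by $h^0(A)=r+1$: the stratum of $\mathcal{G}$ over pairs with $h^0(A)=r+1$ should have dimension at most $\mathrm{vdim}+2(r-1)$, so its image has dimension at most $\rho(g,1,d)+\dim|c_1|$.

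\textbf{Step 2: dimension of $\mathcal{G}$.} Given smoothness, $\mathcal{G}^{1,\mathrm{bpf}}_d(|c_1|_{\mathrm{int}})\setminus\mathcal{S}$ is smooth of dimension $\mathrm{vdim}\,\Syst^2=\rho(g,1,d)+\dim|c_1|$. This is the same calculation as in Theorem~\ref{theorem:GlobalCompletePencilIsSmooth}, since the virtual dimension only depends on $(v,k)$.

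\textbf{Step 3: passing to the image under $\pi$.} The map $\pi$ has fibers that are open in $\Gr(2,H^0(A))$. For a lower bound, the image of each component contains the complete-pencil part, by semicontinuity and Corollary~\ref{corollary:determinantal}-type determinantal bounds on $\mathcal{W}^1_d$. For an upper bound, each component of the image has dimension at most the dimension of $\mathcal{G}$. To get equality I would again stratify by $h^0(A)$: on the stratum $h^0(A)=r+1$ the fiber has dimension $2(r-1)$, while the determinantal description of $\mathcal{W}^r_d$ bounds the base. One shows that the non-complete strata with $r\geq2$ lie in the closure of the $r=1$ stratum, which is smooth of the right dimension by Theorem~\ref{theorem:GlobalCompletePencilIsSmooth}. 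This gives equidimensionality of the image.

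\textbf{Step 4: the fiberwise statement.} The statement for a general $C$ then follows from generic smoothness and the fiber dimension theorem applied to the map to $|c_1|_{\mathrm{int}}$, exactly as in Proposition~\ref{prop:generalizedAproduFarkas}(ii).
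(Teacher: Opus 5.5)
Your fallback is the paper's argument for the upper bound. Since $E=E_{C,A,V}$ is simple by Proposition~\ref{proposition:surfaceinducedpencils}, you get $\dim\Ext^2(\Lambda,\Lambda)_0\le \hom(E,E)-1+2(h^0(A)-2)=2(h^0(A)-2)$. On the open $\mathcal{U}_i$ of a component $E_i$ of the image where $h^0(A)$ is minimal, this excess is cancelled by the $\Gr(2,H^0(A))$-fibres of $\pi$, giving $\dim E_i\le\rho(g,1,d)+\dim|c_1|$.

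Your main route, however, has a gap that the tools you name cannot close. The tautological extension $G$ of $E_{C,A,V}$ is just the complete Lazarsfeld--Mukai bundle $E_{C,A}$. To see this, take the sequence $0\to(H^0(A)/V)^\vee\otimes\O_X\to E_{C,A}\to E_{C,A,V}\to 0$ from the last lemma of Section~\ref{ApplicationToCurves}. Its connecting map $H^0(A)/V\to\Ext^1(E_{C,A,V},\O_X)$ is an isomorphism because $H^0(E_{C,A}^\vee)=H^1(E_{C,A}^\vee)=0$. So step (d) needs $E_{C,A}$ to be simple, and this is not implied by $(C,A,V)\notin\mathcal{S}$, since $\mathcal{S}$ only constrains the rank-two bundle $E_{C,A,V}$. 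Lemma~\ref{lemma:intermediatesimplicity} needs all curves in $|c_1|$ reduced and only gives ``simple or decomposable''. Lemma~\ref{lemma:simplicity2} is a rank-two statement. Hence smoothness of $\mathcal{G}^{1,\mathrm{bpf}}_d(|c_1|_{\mathrm{int}})\setminus\mathcal{S}$ at points with $h^0(A)\ge 3$ is not available from these hypotheses, and Steps 1--2 should be dropped.

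More importantly, Step 1 would not even help. Suppose every component $Y$ of $\mathcal{G}^{1,\mathrm{bpf}}_d(|c_1|_{\mathrm{int}})\setminus\mathcal{S}$ had dimension $\rho(g,1,d)+\dim|c_1|$. Then a component of the image whose general member has $h^0(A)=r+1\ge 3$ would have dimension $\rho(g,1,d)+\dim|c_1|-2(r-1)$. Equidimensionality therefore hinges on the lower bound for such components.

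There, your Step 3 only asserts, without proof, that the strata with $h^0(A)\ge 3$ lie in the closure of the complete-pencil stratum. This does not follow from the determinantal bound on $\mathcal{W}^1_d(|c_1|)$. The image of the basepoint-free locus is in general not open in $\mathcal{W}^1_d(|c_1|)$, because global generation is not an open condition across jumps of $h^0(A)$.

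The paper obtains the reverse inequality from the determinantal lower bound $\dim Y\ge\rho(g,1,d)+\dim|c_1|$ for components of $\mathcal{G}^1_d(|c_1|)$. Note, though, that $\dim\pi(Y)=\dim Y-2(r-1)$ when the general member of $\pi(Y)$ has $h^0(A)=r+1$. So that bound gives equality directly only for components whose general member is a complete pencil. The components with $r\ge 2$ are exactly where an additional argument is required, and your proposal supplies only an unproved claim there.
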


\begin{proof}
By Proposition~\ref{proposition:surfaceinducedpencils}, $\mathcal{G}^{1,\mathrm{bpf}}_d(|c_1|_{\mathrm{int}})\setminus\mathcal{S}$ is isomorphic to an open in $\Syst^2(0,c_1,\ch_2)^{\circ}$ via $(C,A,V)\mapsto (E_{C,A,V},V^{\vee})$. In particular, we may apply the deformation theory developed in Section~\ref{coherentsystems} to $\mathcal{G}^{1,\mathrm{bpf}}_d(|c_1|_{\mathrm{int}})\setminus \mathcal{S}$. By the above discussion, its local dimension is bounded by 
$$
\hom(E_{C,A,V},E_{C,A,V})-1+\dim V^{\vee}(h^0(A)-\dim V^{\vee}) = 2(h^0(A)-2),
$$
so we obtain
\begin{align*}
\dim_{(C,A,V)}\mathcal{G}^{1,\mathrm{bpf}}_d(|c_1|_{\mathrm{int}})\setminus\mathcal{S} &= \dim_{(E_{C,A,V},V^{\vee})}\Syst^2(0,c_1,\ch_2)^{\circ}\\
&\leq \mathrm{expdim}\Syst^2(0,c_1,\ch_2)^{\circ} +2(h^0(A)-2).
\end{align*}
On the other hand, the fiber of the forgetful map $\pi$ over $(C,A)$ is exactly the Grassmannian $\pi^{-1}(C,A) = \mathrm{Gr}(2,H^0(A))$, so its dimension is $2(h^0(A)-2)$. Let $E_i$ be any irreducible component of $\pi\L(\mathcal{G}^{1,\mathrm{bpf}}_d(|c_1|_{\mathrm{int}})\setminus\mathcal{S}\R)$ and consider the open $\mathcal{U}_i\subset E_i$ on which $h^0(C,A)$ is minimal. Then $\pi_{\mathcal{U}_i}$ is an étale-local fiber bundle with fiber $\mathrm{Gr}(2,H^0(A))$. Let $(C,A)$ be a general point in $\mathcal{U}_i$. We obtain
\begin{align*}
    \dim E_i = \dim \mathcal{U}_i &\leq \mathrm{expdim}(\mathrm{Syst}^2(0,c_1,\ch_2)^{\circ})+2(h^0(A)-2)-\dim \pi^{-1}(C,A)\\
    &= \mathrm{expdim}(\Syst^2(0,c_1,\ch_2)^{\circ}) = \rho(g,1,d)+\dim |c_1|.
\end{align*}
From the determinantal structure of $\mathcal{G}^1_d(|c_1|)$, we also have the reverse inequality, so the result follows.
\end{proof}

This has consequences for computing the gonality of curves:

\begin{corollary}[Gonality of curves]
    Let $X$ be a surface with $-K_X$ effective and $H^1(X,\O_X) = 0$. Let $|c_1|$ be a linear system of genus $g \geq 2$ on $X$ containing a smooth curve. Let $C\in |c_1|$ be general. If $\mathrm{gon}(C) = d$ and $\rho(g,1,d)<0$, then any pencil $(A,V)$ on $C$ computing the gonality is either surface-induced or split.
\end{corollary}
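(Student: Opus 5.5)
Since $C$ is general and $\mathrm{gon}(C) = d$, a pencil $(A,V)$ computing the gonality is basepoint-free, as any base point could be removed to lower the degree. Moreover $A$ is a line bundle whenever $C$ is smooth, which holds for general $C$ because $|c_1|$ contains a smooth curve. So $(C,A,V)$ is a point of $\mathcal{G}^{1,\mathrm{bpf}}_d(|c_1|_{\mathrm{int}})$, and its image $(C,A)$ under the forgetful map $\pi$ lies in the fiber over the general curve $C$. The plan is to argue by contradiction. Suppose $(A,V)$ is neither split nor surface-induced, so $(C,A,V)\notin\mathcal{S}$. Then $A \in \pi\bigl(G^{1,\mathrm{bpf}}_d(C)\setminus\mathcal{S}_C\bigr)$, and this set is therefore non-empty.

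Next I would check the hypotheses of the preceding Proposition. We are given $H^1(X,\O_X)=0$, $g\geq 2$ and $-K_X$ effective, so the extra numerical assumption $-K_X.c_1\geq \max(g,1)$ is not needed. The Proposition then says that $\pi\bigl(G^{1,\mathrm{bpf}}_d(C)\setminus\mathcal{S}_C\bigr)$ is either empty or equi-dimensional of dimension $\rho(g,1,d)$. Since we have just shown it is non-empty, it has dimension $\rho(g,1,d)<0$, which is absurd. Hence $(A,V)\in\mathcal{S}$, that is, $(A,V)$ is split or surface-induced.

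The main obstacle is making sure the general curve $C$ of the corollary is general in the sense the Proposition needs. The Proposition's fiberwise statement follows from the relative statement over $|c_1|_{\mathrm{int}}$ by upper semicontinuity of fiber dimension, so it holds over a dense open subset of $|c_1|_{\mathrm{int}}$. We can intersect this with the dense open locus of smooth curves and take $C$ in the intersection. If some care is needed, one can argue relatively instead. Any component of $\pi\bigl(\mathcal{G}^{1,\mathrm{bpf}}_d(|c_1|_{\mathrm{int}})\setminus\mathcal{S}\bigr)$ that dominates $|c_1|$ has dimension $\rho(g,1,d)+\dim|c_1|<\dim|c_1|$, so it cannot dominate $|c_1|$. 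Hence over a general $C$ no pencil of degree $d$ avoids $\mathcal{S}$.
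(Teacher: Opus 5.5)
Your proposal is correct and is essentially the argument the paper intends: the paper gives no separate proof and presents the corollary as a direct consequence of the preceding proposition, whose hypotheses hold here because $-K_X$ is effective. Over a general $C$ that proposition forces $G^{1,\mathrm{bpf}}_d(C)\setminus\mathcal{S}_C=\emptyset$ when $\rho(g,1,d)<0$, and a pencil computing the gonality is basepoint-free. Your relative non-dominance argument is a sound way to make ``general'' precise. Since only finitely many $d$ satisfy $\rho(g,1,d)<0$, the corresponding dense open subsets of $|c_1|$ can be intersected, so the choice of $C$ need not depend on $d$.
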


\noindent
For non-complete linear systems to be split, there is the following restrictive necessary condition.

\begin{lemma}
     Let $X$ be a surface with $H^1(X,\O_X) = 0$ and let $(C,A,V)$ be a strictly non-complete basepoint-free pencil on a smooth curve $C\subseteq X$, i.e., $h^0(C,A) > \dim V = 2$. Then, if the associated Lazarsfeld--Mukai bundle $E_{C,A,V}$ is a direct sum of line bundles $E_{C,A,V} \simeq \O_X(D_1) \oplus \O_X(D_2)$, all curves in $|\O_X(D_i)|$ for at least one $i \in \{1,2\}$ are disconnected or non-reduced (or both).
\end{lemma}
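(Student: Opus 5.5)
We prove the contrapositive. Suppose $E \coloneqq E_{C,A,V} \simeq \O_X(D_1)\oplus\O_X(D_2)$, and show that some $|\O_X(D_i)|$ contains a connected reduced curve only if $A$ is complete. The plan is to compute $h^0(C,A)-\dim V$ cohomologically and bound it by data attached to $D_1$ and $D_2$.

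The starting point is the observation from the discussion of non-complete pencils. Dualizing $0\to E^\vee\to V\otimes\O_X\to i_*A\to 0$ and taking cohomology with $H^1(X,\O_X)=0$ gives
\[
h^1(X,E^\vee)=h^0(C,A)-\dim V .
\]
Moreover $H^0(E^\vee)=0$, since $H^0(V\otimes\O_X)\to H^0(A)$ is injective. In the split case we have $E^\vee\simeq\O_X(-D_1)\oplus\O_X(-D_2)$, so
\[
h^0(C,A)-\dim V=h^1(\O_X(-D_1))+h^1(\O_X(-D_2)).
\]
Strict non-completeness therefore forces $h^1(\O_X(-D_i))>0$ for at least one $i$.

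Next we interpret $h^1(\O_X(-D))$ for a curve $D'\in|\O_X(D)|$. Note that $D_i>0$, because $H^0(E^\vee)=0$ and $E$ is ggg. The structure sequence $0\to\O_X(-D)\to\O_X\to\O_{D'}\to0$, together with $H^1(\O_X)=0$, gives
\[
h^1(\O_X(-D))=h^0(\O_{D'})-1 .
\]
If $D'$ is reduced and connected, then $h^0(\O_{D'})=1$: a reduced connected projective scheme has only constant global functions. So $h^1(\O_X(-D))=0$ in that case.

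Combining the two steps, the $i$ with $h^1(\O_X(-D_i))>0$ satisfies $h^0(\O_{D'})\ge2$ for every $D'\in|\O_X(D_i)|$. Hence no member of that linear system is both reduced and connected; each one is disconnected or non-reduced (or both), which is the claim. The index $i$ is the same for all members because $h^1(\O_X(-D_i))$ depends only on the class.

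The main point to check is the step $h^0(\O_{D'})=1$. It requires $D'$ to be reduced, not merely to have connected support: a non-reduced connected curve can have $h^0(\O_{D'})>1$. This is why the statement allows the non-reduced alternative. The rest is formal, and smoothness of $C$ is only used to ensure $A$ is a line bundle, so that the Lazarsfeld--Mukai sequence is as above.
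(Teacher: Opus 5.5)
Your proof is correct. It shares the paper's core computation: identifying $H^1(E_{C,A,V}^\vee)=\Ext^1(E_{C,A,V},\O_X)$ with $H^1(\O_X(-D_1))\oplus H^1(\O_X(-D_2))$, and evaluating each summand as $h^0(\O_{D_i})-1$ via the divisor sequence. Where you differ is in how you show this group is nonzero.

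The paper goes through the Lazarsfeld--Mukai bundle $E_{C,A}$ of the complete linear system. It produces an extension $0\to (H^0(A)/V)^\vee\otimes\O_X\to E_{C,A}\to E_{C,A,V}\to 0$ and argues by contradiction. If $\Ext^1(E_{C,A,V},\O_X)=0$, this extension splits, so $E_{C,A}\simeq E_{C,A,V}\oplus\O_X^{h^0(A)-2}$. That gives $H^0(E_{C,A}^\vee)\neq 0$, which is absurd.

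You instead read off $h^1(E_{C,A,V}^\vee)=h^0(A)-\dim V$ directly from the cohomology of the dual Lazarsfeld--Mukai sequence. The paper itself records this identity at the start of the subsection on non-complete pencils. Your route avoids the auxiliary bundle $E_{C,A}$ and the splitting argument entirely. It also gives the sharper identity $h^1(\O_X(-D_1))+h^1(\O_X(-D_2))=h^0(A)-2$.

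One small wording slip: the contrapositive in your first sentence should say that if \emph{both} linear systems $|\O_X(D_i)|$ contain a connected reduced curve, then $A$ is complete. The argument you actually give proves the correct statement.
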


\begin{proof}
    As $E_{C,A,V}$ is a vector bundle, it is a direct sum of line bundles if and only if its dual is. We consider the rank-two Lazarsfeld--Mukai bundle $E_{C,A,V}$ as well as the rank-$h^0(A)$ Lazarsfeld--Mukai bundle $E_{C,A,H^0(A)} = E_{C,A}$ associated to the complete linear system. Consider the following commutative diagram with exact rows and columns

\[\begin{tikzcd}
	& 0 & 0 & 0 & \\
	0 & {E_{C,A,V}^{\vee}} & {V\otimes\mathcal{O}_X} & {i_*A} & 0 \\
	0 & {E_{C,A}^{\vee}} & {H^0(C,A)\otimes\mathcal{O}_X} & {i_*A} & 0 \\
	0 & Q & {H^0(C,A)/V\otimes\mathcal{O}_X} & 0 \\
	& 0 & 0
	\arrow[from=1-2, to=2-2]
	\arrow[from=1-3, to=2-3]
	\arrow[from=1-4, to=2-4]
	\arrow[from=2-1, to=2-2]
	\arrow[from=2-2, to=2-3]
	\arrow[from=2-2, to=3-2]
	\arrow[from=2-3, to=2-4]
	\arrow[from=2-3, to=3-3]
	\arrow[from=2-4, to=2-5]
	\arrow["{\mathrm{id}}", from=2-4, to=3-4]
	\arrow[from=3-1, to=3-2]
	\arrow[from=3-2, to=3-3]
	\arrow[from=3-2, to=4-2]
	\arrow[from=3-3, to=3-4]
	\arrow[from=3-3, to=4-3]
	\arrow[from=3-4, to=3-5]
	\arrow[from=3-4, to=4-4]
	\arrow[from=4-1, to=4-2]
	\arrow["\simeq", from=4-2, to=4-3]
	\arrow[from=4-2, to=5-2]
	\arrow[from=4-3, to=4-4]
	\arrow[from=4-3, to=5-3]
\end{tikzcd}\]
After dualizing we obtain the exact sequence
$$
0\to \left(H^0(C,A)/V \right)^\vee \otimes \O_X\to E_{C,A}\to E_{C,A,V} \to 0.
$$
If $E_{C,A,V} \simeq \O_X(D_1) \oplus \O_X(D_2)$, then
$$
\Ext^1(E_{C,A,V}, \O_X) \simeq H^1(X,\O_X(-D_1))\oplus H^1(X,\O_X(-D_2)).
$$
The divisor sequence for $D_i$ yields $h^1(X,\O_X(-D_i)) \simeq h^0(X,\O_{D_i})-1$, so $h^0(X,\O_{D_i})$ only depends on the linear system $|\O_X(D_i)|$. If both $|\O_X(D_i)|$ contain a connected and reduced curve, then by the divisor sequence $h^1(X,\O_X(-D_i)) = 0$ and $\Ext^1(E_{C,A,V},\O_X) = 0$. This would imply $E_{C,A}\simeq E_{C,A,V}\oplus \O_X^{h^0(A)-2}$, but then $H^0(X,E_{C,A}^{\vee}) \neq 0$ which is absurd. It follows that at least one of the $|\O_X(D_i)|$ cannot contain a connected and reduced curve.
\end{proof}

\subsection{A smoothness criterion without simplicity} In the previous sections we employed simplicity results on the Lazarsfeld--Mukai bundle $E_{C,A,V}$ to deduce smoothness results for the associated pencil $(C,A,V)$. In the $-K_X|_C > 0$ case, we obtain a criterion on a complete pencil $(C,A)$ such that the obstructions for the associated coherent system $(E_{C,A},H^0(A)^{\vee})$ vanish without $E_{C,A}$ necessarily being simple.

\begin{proposition}\label{prop:smoothnessWithoutSimplicity}
    Let $X$ be a surface with $H^1(X,\O_X) = 0$. Let $C \in |c_1|$ be integral such that $-K_X|_C > 0$ is basepoint-free. Let $A$ be a complete basepoint-free pencil on $C$. Assume that for all decompositions $c_1 = 2H+H'$ with $H>0$, $H' \geq 0$ the condition $-K_X|_C \nleq H'|_C$ is satisfied. Then the Lazarsfeld--Mukai bundle $E_{C,A}$ is either a direct sum of two line bundles or $(C,A) \in \widetilde{\mathcal{W}}^1_d(|c_1|_{\mathrm{int}})$ is a smooth point of the expected local dimension.
\end{proposition}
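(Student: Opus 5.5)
The plan is to show that the obstruction space $\Ext^2(\Lambda,\Lambda)$ vanishes for $\Lambda=(E,H^0(A)^\vee)$, $E=E_{C,A}$. By Proposition~\ref{proposition:CalculatingExt} and Serre duality,
$$\Ext^2(\Lambda,\Lambda)\simeq\Ext^2(Q,E)\simeq\Hom(E,Q\otimes\omega_X)^\vee, \qquad Q=i_*L.$$
Once this vanishes, Proposition~\ref{smoothnescrit} gives smoothness of $\Syst^2$ at $\Lambda$, of the expected dimension $\rho(g,1,d)+\dim|c_1|$. Proposition~\ref{prop:WRDopeninSyst} then transfers this to $\widetilde{\mathcal{W}}^1_d(|c_1|_{\mathrm{int}})$. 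So the task is to show $\Hom(E,i_*(L\otimes\omega_X|_C))=0$ whenever $E$ is not a direct sum of two line bundles.

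Suppose $E$ is simple. The Obstruction Lemma~\ref{obstructionLemma} gives $\Hom(E,i_*L)=\C\cdot f$, where $f$ is surjective. Fix a nonzero section $s$ of $-K_X|_C$. Any $\varphi\colon E\to i_*(L\otimes\omega_X|_C)$ composed with $s$ lands in $\C f$. Since the composite vanishes along $V(s)\neq\emptyset$, it is not surjective, so it is zero. Multiplication by $s$ is injective on the torsion-free sheaf $L\otimes\omega_X|_C$ on the integral curve $C$, hence $\varphi=0$.

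Suppose instead $E$ is not simple and not a direct sum. Then $H^0(E^\vee)=0$, since $A$ is complete. Lemma~\ref{lemma:simplicity2}(ii) gives $c_1=2H+H'$ with $H>0$, $H'\geq0$, and a non-split sequence
$$0\to\O_X(H+H')\to E\xrightarrow{p}\mathcal I_Z(H)\to0.$$
By the Obstruction Lemma, $\Hom(E,i_*L)\simeq\Hom(E,E)$. Exactly as in the proof of Proposition~\ref{proposition:surfaceinducedpencils}, the proof of Lemma~\ref{lemma:intermediatesimplicity} shows that every endomorphism is $\lambda\,\mathrm{id}+\mu\, j\hat f p$, where $j\hat f p$ factors through $E\to\mathcal I_Z(H)\to\O_X(H+H')\to E$. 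Hence $\Hom(E,i_*L)$ is spanned by $f$ together with maps that factor through $\O_X(H+H')|_C\to L$. The diagram in the proof of Proposition~\ref{proposition:surfaceinducedpencils} identifies the image of the sub line bundle $\O_X(H+H')\subset E$ in $L$ as $L'\simeq\O_X(H+H')|_C$.

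Now take $\varphi\in\Hom(E,i_*(L\otimes\omega_X|_C))$ and compose with every section $s$ of $-K_X|_C$. Each composite $s\varphi$ lies in $\Hom(E,i_*L)$ and vanishes along $V(s)$, so its $f$-coefficient must be zero. Therefore $s\varphi$ factors through $E\to\mathcal I_Z(H)|_C\to L$, and these maps land in $L'$. Because $-K_X|_C$ is basepoint-free, the zero loci $V(s)$ have no common point. If $\varphi\neq0$, this should produce a nonzero map
$$\O_X(H)|_C\otimes\omega_X|_C\to\O_X(H+H')|_C.$$
Equivalently, $-K_X|_C+H|_C\leq (H+H')|_C$ after twisting, that is, $-K_X|_C\leq H'|_C$. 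This contradicts the hypothesis, so $\varphi=0$.

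The main obstacle is this last step. One has to control the non-$f$ part of $\Hom(E,i_*L)$ exactly and use basepoint-freeness so that the factorisations through the various $s$ glue to a single map $\mathcal I_Z(H)\otimes\omega_X|_C\to L'$. I would first try to show directly that $\varphi$ kills $\O_X(H+H')\otimes\omega_X$. The route is to note that $\Hom(\O_X(H+H'),i_*(L\otimes\omega_X))$ embeds, via $s$, into sections vanishing on $V(s)$ for all $s$. Then $\varphi$ descends to $\mathcal I_Z(H)$, and the degree comparison gives the contradiction.
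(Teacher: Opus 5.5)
Your skeleton matches the paper's. You reduce to $\Hom(E,i_*(L\otimes\omega_X|_C))=0$, handle the simple case via the Obstruction Lemma, and in the non-simple case write $s\varphi=f\circ g$ and kill the scalar part of $g$ because $s\varphi$ vanishes on $V(s)\neq\emptyset$. Three smaller points need repair before the main issue.
First, the proof of Lemma~\ref{lemma:intermediatesimplicity} only factors each rank-dropping endomorphism through its \emph{own} image and cokernel. To know that \emph{every} nilpotent endomorphism factors as $E\to\mathcal{I}_Z(H)\hookrightarrow\O_X(H)\xrightarrow{t}\O_X(H+H')\to E$ for the \emph{one} fixed Donagi--Morrison sequence, you need that they all share the same kernel. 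This is \cite[Lem.~3.4]{FarkasAproduGreenConj}, which the paper invokes at exactly this point.
Second, this nilpotent part is $\simeq H^0(\O_X(H'))$, not a single line $\C\mu$.
Third, $\O_X(H)|_C\otimes\omega_X|_C$ should read $\O_X(H)|_C\otimes\omega_X^\vee|_C$.

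The genuine gap is the decisive last step, which is exactly where the hypotheses enter. You only assert it ("this should produce"), and propose to close it by descending $\varphi$ to $\bar\varphi\colon\mathcal{I}_Z(H)\to i_*(L'\otimes\omega_X)$ and making a ``degree comparison''. That route fails as stated, for two reasons. The hypothesis $-K_X|_C\nleq H'|_C$ is a non-effectivity statement, and no degree inequality can contradict it. Moreover, a nonzero map out of $\mathcal{I}_Z(H)|_C$, rather than out of $\O_C(H)$, into $\O_C(H+H'+K_X)$ only yields a section of $\O_C(H'+K_X)$ allowed to have poles at points of $Z\cap C$. That does not give $-K_X|_C\le H'|_C$.

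The missing idea is to use that $s\varphi=(f\circ i)\circ t_s\circ p$ with $t_s\in H^0(\O_X(H'))$, where $t_s$ is defined on all of $\O_X(H)$. The paper does this with one well-chosen section. Let $Z'\supseteq Z$ be the finite scheme with $\mathcal{I}_{Z'}(H)=\mathrm{im}(V\otimes\O_X\to\mathcal{I}_Z(H))$; away from $Z'$ one has $L'=L$. By basepoint-freeness, pick $s$ with $V(s)\cap Z'=\emptyset$. Near $V(s)$ the map $s\varphi$ is then just $t_s|_C\colon\O_C(H)\to\O_C(H+H')\cong L'=L$. So vanishing along $V(s)$ forces $V(s)\le\mathrm{div}(t_s|_C)$, i.e.\ $-K_X|_C\le H'|_C$, unless $t_s|_C=0$, i.e.\ $s\varphi=0$.

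Your gluing idea can also be completed. The rational section $\beta=t_s|_C/s$ of $\O_C(H'+K_X)$ is the generic value of $\bar\varphi$, hence independent of $s$. It is regular wherever $s\neq 0$, so basepoint-freeness makes it a global section of $\O_C(H'+K_X)$, nonzero if $\varphi\neq 0$. As written, however, neither argument appears in your proposal, and the step you do propose would not reach the contradiction.
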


\begin{remark}
    If, in the above setup, $H^1(X,\O_X(2H)) = 0$ for a decomposition $c_1 = 2H+H'$, the condition $-K_X|_C \nleq H'|_C$ on $C$ can be replaced by $-K_X \nleq H'$ on $X$. Indeed, the divisor sequence associated to $C \subseteq X$ yields
    $$
    H^0(X,\O_X(H'+K_X)) \to H^0(C,\O_C(H'+K_X)) \to H^1(X,\O_X(H'+K_X-C)) \simeq H^1(X,\O_X(2H))^\vee.
    $$ 
\end{remark}

\begin{proof}
    Let $\Lambda \coloneqq (E,V) \coloneqq (E_{C,A},H^0(A)^{\vee})$ be the coherent system associated to $A$ and denote $Q_{\Lambda}\coloneqq \coker(V\otimes \O_X\to E)$. Assume that $E$ is not a direct sum of two line bundles. We need to show that $\Hom(E,Q_{\Lambda}\otimes\omega_X) = 0$, and this is by now clear in case $E$ is simple, so we may assume $E$ is not simple. Consider a section $s\colon \omega_X|_C \hookrightarrow \O_C$. We obtain the exact sequence
    $$
    0\to \Hom(E,Q_{\Lambda}\otimes\omega_X)\to \Hom(E,Q_{\Lambda})\to \Hom(E,Q_{\Lambda}|_{D}),
    $$
    where $D= V(s)$. The sequence 
    $$
    0\to V\otimes\O_X\to E\xrightarrow{f} Q_{\Lambda} \to 0
    $$
    together with $H^0(X,E^{\vee}) = 0 = H^1(X,E^{\vee})$ yields $ \Hom(E,E)\simeq \Hom(E,Q_{\Lambda})$. This isomorphism is given by precomposition $g\mapsto f\circ g$. Therefore, every map $h \colon E \to Q_{\Lambda}$ is of the form $f\circ g$ for some endomorphism $g\colon E\to E$. If $h$ is surjective, then it does not vanish on $D = V(s)$ for any $s$, so $h \notin \mathrm{ker}(\Hom(E,Q_{\Lambda}) \to \Hom(E,Q_{\Lambda}|_{D})) = \Hom(E,Q_{\Lambda}\otimes\omega_X)$. Hence, to show that this kernel is zero, it suffices to consider non-surjective maps $h \colon E \to Q_{\Lambda}$. In that case, necessarily $g \colon E\to E$ is not surjective and thus drops rank everywhere. Consider the Donagi--Morrison extension (\ref{lemma:simplicity2}),
    $$
    0\to \O_X(H+H')\xrightarrow{i} E\to \mathcal{I}_Z(H)\to 0,
    $$
    with $c_1 = 2H+H'$ and $H>0$, $H' \geq 0$. Aprodu--Farkas \cite[Lem. 3.4]{FarkasAproduGreenConj} show that any endomorphism of $E$ that drops rank everywhere factors as
    \begin{equation}
    \label{eqn:DMfactorization}
    g\colon E\twoheadrightarrow\mathcal{I}_Z(H)\hookrightarrow\O_X(H)\xrightarrow{t}\O_X(H+H')\xrightarrow{i} E
    \end{equation}
    for some section $t\colon \O_X(H)\hookrightarrow\O_X(H+H')$. Assume now $f\circ g$ maps to zero under $\Hom(E,Q_{\Lambda})\to \Hom(E,Q_{\Lambda}|_{D})$; in particular,
    \begin{equation}
    \label{eqn:condkernel}
    (f\circ g)_x = 0 \text{ for all } x \in D.
    \end{equation}
     We want to show that $f\circ g = 0$, so assume $f\circ g \neq 0$. From (\ref{eqn:DMfactorization}) it follows that $g$ is nonvanishing away from $Z\cup V(t)$. Consider the following commutative diagram with exact rows and columns:
\[\begin{tikzcd}
	& 0 & 0 & 0 & \\
	0 & {\O_X(-H)} & {V\otimes \O_X} & {\mathcal{I}_{Z'}(H)} & 0 \\
	0 & {\O_X(H+H')} & E & {\mathcal{I}_Z(H)} & 0 \\
	0 & {Q_{\Lambda}'} & {Q_{\Lambda}} & {\mathcal{I}_Z/\mathcal{I}_{Z'}} & 0 \\
	& 0 & 0 & 0
	\arrow[from=1-2, to=2-2]
	\arrow[from=1-3, to=2-3]
	\arrow[from=1-4, to=2-4]
	\arrow[from=2-1, to=2-2]
	\arrow[from=2-2, to=2-3]
	\arrow[from=2-2, to=3-2]
	\arrow[from=2-3, to=2-4]
	\arrow[from=2-3, to=3-3]
	\arrow[from=2-4, to=2-5]
	\arrow[from=2-4, to=3-4]
	\arrow[from=3-1, to=3-2]
	\arrow["i", from=3-2, to=3-3]
	\arrow[from=3-2, to=4-2]
	\arrow[from=3-3, to=3-4]
	\arrow["f", from=3-3, to=4-3]
	\arrow[from=3-4, to=3-5]
	\arrow[from=3-4, to=4-4]
	\arrow[from=4-1, to=4-2]
	\arrow[from=4-2, to=4-3]
	\arrow[from=4-2, to=5-2]
	\arrow[from=4-3, to=4-4]
	\arrow[from=4-3, to=5-3]
	\arrow[from=4-4, to=4-5]
	\arrow[from=4-4, to=5-4].
\end{tikzcd}\]
In the diagram, one indeed has $\ker(f\circ i)\simeq \O_X(-H)$ as this is a saturated rank-one subsheaf of a vector bundle and thus a line bundle.
Now, by (\ref{eqn:DMfactorization}), $f\circ g$ factors through $f\circ i$ and we find from the above diagram that $f\circ g$ is nonvanishing away from $Z\cup V(t)\cup \mathrm{supp}(\mathcal{I}_Z/\mathcal{I}_{Z'}) = Z'\cup V(t)$. From condition (\ref{eqn:condkernel}) we find in total
$$
D = V(s) \subseteq (Z'\cup V(t))\cap C.
$$
As $\mathrm{Bs}(|-K_X|_C|) = \emptyset$, we may choose an $s$ such that $V(s)$ avoids $Z'$, i.e., $V(s) \cap Z' =\emptyset$. For every such $s$, we have 
$$
(f\circ g)\in \ker(\Hom(E,Q_{\Lambda})\to \Hom(E,Q_{\Lambda}|_{V(s)})) \Rightarrow V(s) \subseteq C\cap V(t).
$$
This implies $-K_X|_C\leq H'|_C$ and we arrive at our claim by contradiction.
\end{proof}

There are interesting examples of linear systems $|c_1|$ such that for all integral curves $C \in |c_1|$, the condition $-K_X|_C>0$ is satisfied without $-K_X$ necessarily being effective.

\begin{example}
\label{example}
    Let $X'$ be a surface with $H^1(X',\O_{X'}) = 0$ and $c_1 \in \Pic(X')$ a nonzero effective class. Denote the arithmetic genus of every member of the associated linear system $|c_1|$ by $p_a$. Let $q_1,\dots, q_r$ be points on $X'$ and $m_1,\dots,m_r$ integers with $m_i\geq 2$. Assume that $C' \in |c_1|$ is an integral curve with an ordinary $m_i$-fold point at each $q_i$ and no other singularities. Consider the blow-up $\pi\colon X = \mathrm{Bl}_{q_1,\dots,q_r}(X')\to X'$ and the strict transform $C\subset X$ of $C'$. Since $C\to C'$ is the normalization, we find
    $$
    g(C) = p_a(C') - \sum_{i=1}^r \binom{m_i}{2}, \text{ and } -K_X.C = -K_{X'}.C' - \sum_{i=1}^r m_i.
    $$
    Now, if 
    \begin{equation}
    \label{eqn:example}
    -K_{X'}.C' - p_a + \sum_{i=1}^r \frac{m_i(m_i-3)}{2} \geq 0,
    \end{equation}
    then $-K_X.C \geq g(C)$, and from Lemma \ref{lem:numericsimplyupgrade} we find that all integral curves in the linear system $|\O_X(C)|$ satisfy $-K_X|_C > 0$.
    
    On toric surfaces, condition (\ref{eqn:example}) can be reformulated in terms of the toric geometry of $X'$. Let $T$ be the torus in $X'$ and denote by $N = \Hom(\mathbb{G}_m,T)$  the associated lattice and by $M$ its dual. Assume $c_1$ is an ample class on $X'$ corresponding to the two-dimensional lattice polygon $P$ in $M_{\mathbb{R}}$. Consider 
    $$
     \#(\partial P\cap M) \text{ and }  \#(\mathrm{int}P\cap M),
    $$
    the numbers of boundary and interior lattice points of $P$, respectively. For any curve $C'\in |c_1|$, \cite{CoxLittleSchenck} provides the formulas
    $$
    p_a(C')= \#(\mathrm{int}P\cap M), \quad -K_{X'}.C' = \#(\partial P\cap M).
    $$
    Thus, condition (\ref{eqn:example}) becomes precisely
    $$
    \#(\partial P\cap M)-\#(\mathrm{int}P\cap M)+\sum_{i=1}^{r}\frac{m_i(m_i-3)}{2}\geq 0.
    $$
    Therefore, Proposition~\ref{prop:generalizedAproduFarkas} and the results of Section~\ref{ApplicationToCurves} can be applied to $X$ and the linear system $|\O_X(C)|$ if they satisfy the above condition.
    
    \noindent An interesting special case is where $C'$ is a nodal curve (i.e., $m_i=2$) on a Hirzebruch surface $X' = \mathbb{F}_n$. In this case, \cite[Cor. 4.2b]{KeilenTyomkin} guarantees that for $a\gg 0$ one finds an integral curve of arithmetic genus $a-n-1$ in the linear system $|\O_{X'}(2C_0+aF)|$ with prescribed nodes such that the condition is satisfied.
    
    Further one should note that condition (\ref{eqn:example}) becomes easier to satisfy if one starts with a highly singular embedded curve $C\subset X$. Indeed, the term $m_i(m_i-3)$ contributes nonnegatively whenever $m_i \geq 3$.
\end{example}



\bibliographystyle{amsalpha} 
\bibliography{TEMPLATEBIB}
\end{document}